\useunder{\uline}{\ul}{}
     \let\oldfootnote\footnote
     \def\footnote{\@ifstar\footnote@star\footnote@nostar}
     \def\footnote@star#1{{\let\thefootnote\relax\footnotetext{#1}}}
     \def\footnote@nostar{\oldfootnote}
\newcommand{\N}{\mathbf{N}}%{\numberset{N}}
\newcommand{\Z}{\mathbf{Z}}%{\numberset{Z}}
\newcommand{\Q}{\mathbf{Q}}%{\numberset{Q}}
\renewcommand{\AA}{\mathbf{A}}
\newcommand*{\defeq}{\mathrel{\rlap{%
                      \raisebox{0.3ex}{$\cdot$}}%
                      \raisebox{-0.3ex}{$\cdot$}}%
                      =}
\newcommand{\proj}{\mathbf{P}}
\newcommand{\Gm}{\mathbf{G}_m}
\newcommand{\Ga}{\mathbf{G}_a}
\newcommand{\dd}{\mathop{}\!\text{d}}
\DeclareMathOperator{\Proj}{Proj}
\DeclareMathOperator{\Hom}{Hom}
\DeclareMathOperator{\Aut}{Aut}
\DeclareMathOperator{\Spec}{Spec}
\DeclareMathOperator{\Stab}{Stab}
\DeclareMathOperator{\Supp}{Supp}
\DeclareMathOperator{\inn}{inn}
\DeclareMathOperator{\Lie}{Lie}
\DeclareMathOperator{\NE}{NE}
\DeclareMathOperator{\GL}{GL}
\DeclareMathOperator{\PSL}{PSL}
\DeclareMathOperator{\PGL}{PGL}
\DeclareMathOperator{\SO}{SO}
\DeclareMathOperator{\Sp}{Sp}
\DeclareMathOperator{\PSp}{PSp}
\DeclareMathOperator{\Spin}{Spin}
\DeclareMathOperator{\Pic}{Pic}
\DeclareMathOperator{\Sym}{Sym}
\DeclareMathOperator{\Grass}{Grass}
\DeclareMathOperator{\diag}{diag}
\DeclareMathOperator{\ad}{ad}
\definecolor{links}{HTML}{A93226}
\definecolor{bluetto}{HTML}{1ABC9C}
\definecolor{cite}{HTML}{21618C}
\theoremstyle{plain}
\newtheorem{theorem}{Theorem}[section]
\newtheorem{lemma}[theorem]{Lemma}
\newtheorem{proposition}[theorem]{Proposition}
\newtheorem{corollary}[theorem]{Corollary}
\newtheorem{theorem*}{Theorem}
\newtheorem{proposition*}[theorem*]{Proposition}
\newtheorem*{proposition**}{Proposition}
\theoremstyle{definition}
\newtheorem{definition}[theorem]{Definition}
\newtheorem{remark}[theorem]{Remark}
\newtheorem{example}[theorem]{Example}
\numberwithin{equation}{section}
\begin{document}

\title{PROJECTIVE HOMOGENEOUS VARIETIES OF PICARD RANK ONE IN SMALL CHARACTERISTIC}

\author{Matilde Maccan}
%\address{IRMAR, Rennes - Institut Fourier, Grenoble}
\email{matilde.maccan@univ-rennes.fr}
%\urladdr{webpage} % Delete if not wanted.

\footnote*{Keywords: projective homogeneous variety, parabolic subgroup scheme, positive characteristic}
\footnote*{2020 Mathematics Subject Classification:
Primary: 14M17, Secondary: 14L15, 17B20}
\date{\today}

\maketitle

\begin{abstract}
We extend to characteristic $2$ and $3$ the classification of projective homogeneous varieties of Picard group isomorphic to $\Z$, corresponding to parabolic subgroups with maximal reduced subgroup. The latter are all obtained as product of a maximal reduced parabolic with the kernel of a purely inseparable isogeny. This fails in type $G_2$ and characteristic $2$, for which we exhibit an explicit counterexample and show it is the only one, thus completing the classification. We then construct new examples of projective homogeneous varieties of Picard rank two.
\end{abstract}

\tableofcontents

\section*{Introduction}
%\addcontentsline{toc}{section}{Introduction}
\label{sec:int}

We work in the setting of affine group schemes of finite type
over an algebraically closed field $k$. Our object of interest are projective varieties over $k$, homogeneous under their automorphism group: a first class of examples is given by flag varieties, whose natural generalisation are quotients of semisimple groups by parabolic subgroups, which are not necessarily reduced.\\
In characteristic zero, the structure of parabolic subgroups is well known: fixing a semisimple group, a Borel subgroup and a maximal torus $G \supset B \supset T$, %due to the smoothness of algebraic groups 
  there is a bijection between parabolic subgroups containing $B$ and subsets of the set of simple roots of $G$: it is a classical fact that a parabolic subgroup is determined by the simple roots forming a basis for the root system of a Levi subgroup. Over a field of positive characteristic,
  parabolic subgroups can be nonreduced hence homogeneous spaces might have very different geometric properties: see for example the computation of the character associated to the canonical bundle in \cite{Lauritzen}, which shows that such varieties are in general not Fano. The easiest example is the hypersurface in $\proj^2 \times \proj^2$ given by the equation $x_0y_0^p+x_1y_1^p + x_2y_2^p=0$, which is homogeneous under $\PGL_3$ and not Fano if $p>3$.\\
  If the characteristic is equal to at least $5$, or if the Dynkin diagram of $G$ is simply laced (types $A_n$, $D_n$, $E_6$, $E_7$ and $E_8$), Wenzel \cite{Wenzel}, Haboush and Lauritzen \cite{HL93} show that all parabolic subgroups of $G$ can be obtained from reduced ones by fattening with Frobenius kernels and intersecting. More precisely, they are all of the form 
  \[G_{m_1}P^{\alpha_1} \cap \ldots \cap G_{m_r} P^{\alpha_r},\]
  where $G_m$ denotes the kernel of the $m$-th iterated Frobenius morphism on $G$ and $P^{\alpha}$ denotes the maximal reduced parabolic subgroup whose Levi subgroup has as basis all simple roots except for $\alpha$. When no assumption on the characteristic is made, we will call them parabolic subgroups \emph{of standard type}. The proof of \cite{Wenzel} relies heavily on the structure constants (defined over $\Z$) relative to a Chevalley basis of the Lie algebra of a simply connected semisimple group. By construction such constant are integers with absolute value strictly less than five: the hypothesis on the characteristic and on the Dynkin diagram guarantees that they do not vanish over $k$. This raises the natural question of how to generalize the classification of parabolic subgroups to characteristic two and three.\\

In this paper we manage to provide an answer to this question concerning the easiest - combinatorially speaking - class of parabolic subgroups, those having maximal reduced part equal to $P^\alpha$ for some simple root $\alpha$ of $G$. These subgroups correspond to homogeneous projective varieties of Picard group isomorphic to $\Z$: as illustrated later in Subsection \ref{sec:divisors}, the Picard group of a homogeneous space $G/P$ is free abelian with rank equal to the number of simple roots of $G$ not belonging to the root system of a Levi subgroup of the reduced part of $P$. Our main result is the following, allowing us to complete the classification in all types and characteristics.% but $G_2$ in characteristic $2$.

\begin{theorem*}
\label{main}
%Let $X$ be a projective algebraic variety over an algebraically closed field of characteristic $p>0$, homogeneous under a faithful action of a smooth connected algebraic group $H$ and having Picard group isomorphic to $\Z$.\\
%Then there is a simple adjoint algebraic group $G$ and a \emph{reduced} maximal parabolic subgroup $P \subset G$ such that $X = G/P$, unless $p=2$ and $H$ is of type $G_2$.\\
Let $X$ be a homogeneous projective variety, over an algebraically closed field of any characteristic, with Picard group isomorphic to $\Z$. Then $X$ is either the quotient of a simple adjoint group by a maximal \emph{reduced} parabolic subgroup, or it is isomorphic to $G_2/P_{\mathfrak{l}}$ (this second case can only arise in characteristic two).
\end{theorem*}

In the above statement, $P_{\mathfrak{l}}$ is a certain exotic parabolic subgroup scheme, introduced and studied in \Cref{section_handl} and \Cref{section_Pl}. The proof of \Cref{main} articulates in two different parts: \Cref{final_rank1}, which treats all cases but $G_2$ in characteristic two, and \Cref{main_G2} which completes the classification.\\
%The result fails indeed for $G_2$ in characteristic $2$, as shown by an explicit example constructed in Subsection \ref{subsec:G2}.\\

The paper is organized as follows. In \Cref{sec1}, we build on previous work of Borel and Tits \cite{BorelTits}, then completed and re-elaborated in \cite{CGP15}, to give a factorisation result for isogenies with simply connected source. This digression is self-contained but motivated by the fact that - in Picard rank one - purely inseparable isogenies will generalize the role of the Frobenius morphism in \cite{Wenzel}. An important ingredient is the so-called \emph{very special isogeny} of a simple simply connected group $G$, which is the quotient
\[\pi_G \colon G \longrightarrow \overline{G}\]
by the unique minimal noncentral subgroup of $G$ with trivial Frobenius. It turns out (as shown in \cite{CGP15}) that when the Dynkin diagram of $G$ has an edge of multiplicity equal to the characteristic, such a subgroup is strictly contained in the Frobenius kernel. In particular, $\pi_G$ acts as a Frobenius morphism on root subgroups associated to short roots, while it is an isomorphism on root subgroups associated to long ones. The factorisation of isogenies reads as follows, where we denote as $F^m_G \colon G \longrightarrow G^{(m)}$ the $m$-th iterated relative Frobenius homomorphism of $G$.

\begin{proposition*}
Let $G$ be a simple and simply connected algebraic group over an algebraically closed field $k$. Let $f \colon G \rightarrow G^\prime$ be an isogeny.\\
Then there exists a unique factorisation of $f$ as
\[
\begin{tikzcd}
f \colon G \arrow[rr, "\pi"] && \overline{G} \arrow[rr, "F_{\overline{G}}^{m}"] && (\overline{G})^{(m)} \arrow[rr, "\rho"]  && G^\prime,
\end{tikzcd}
\]
where $\rho$ is a central isogeny and $\pi$ is either the identity or - when the Dynkin diagram of $G$ has an edge of multiplicity $p$ - the very special isogeny $\pi_G$.
\end{proposition*}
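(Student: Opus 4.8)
The plan is to read the isogeny $f$ off its action on root subgroups, following the Borel--Tits description of isogenies as re-elaborated in \cite{BorelTits, CGP15}. Fix a maximal torus $T \subset G$ inside a Borel subgroup. Then $f$ carries $T$ to a maximal torus $T'$, induces a bijection $\alpha \mapsto \alpha'$ between the roots of $(G,T)$ and those of $(G',T')$, and on each root subgroup is of the shape $x_\alpha(t) \mapsto x_{\alpha'}(c_\alpha\, t^{q_\alpha})$ with $c_\alpha \in k^\times$ and $q_\alpha = p^{n_\alpha}$. Since $G$ is simple, the Weyl group is transitive on roots of a fixed length, so the exponent $n_\alpha$ depends only on whether $\alpha$ is long or short, and takes at most two values. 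The whole factorisation is then extracted from this exponent function.

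The first and principal step is to pin down the possible exponents. Writing the compatibility of $f$ with the Chevalley commutator relations between a short and a long root, one finds that the two exponents can differ only when the structure constant attached to the corresponding edge of the Dynkin diagram vanishes in $k$; as such a constant has absolute value equal to the edge multiplicity, this happens precisely when that multiplicity equals $p$. In that case the exponents take two consecutive values $m$ and $m+1$, with the larger one forced onto the short roots (the direction in which $\pi_G$ acts as a Frobenius); otherwise they are constant, equal to some $m$. I expect this combinatorial analysis --- isolating exactly which structure constants may vanish in characteristic $2$ and $3$ --- to be the main obstacle, since it is here that the hypothesis on the edge multiplicity enters.

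Granting the classification of \emph{uniform}-exponent isogenies from \cite{BorelTits} --- namely that an isogeny all of whose exponents equal $p^m$ factors as $\rho \circ F^m_G$ with $\rho$ a central isogeny --- the factorisation follows. If the exponents are constant equal to $m$, set $\pi = \id$ and we are done. Otherwise they are $m$ on long roots and $m+1$ on short ones; since the differential of $f$ then vanishes on the short root spaces, which span $\Lie(\ker \pi_G)$, one gets $\ker \pi_G \subseteq \ker f$ and hence a factorisation $f = g \circ \pi_G$. Because $\pi_G$ is a Frobenius on short root subgroups and an isomorphism on long ones, the exponents of $g$ on $\overline{G}$ are all equal to $m$, so $g = \rho \circ F^m_{\overline{G}}$ with $\rho$ central by the uniform case.

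It remains to record uniqueness. The exponent function is intrinsic to $f$, so whether $\pi$ equals $\id$ or $\pi_G$, as well as the integer $m$, are determined by $f$; thus both $\pi$ and $F^m_{\overline{G}}$ are fixed. Since $F^m_{\overline{G}} \circ \pi$ is faithfully flat, the remaining factor $\rho$ is then unique, and it is central because it induces an isomorphism on every root subgroup.
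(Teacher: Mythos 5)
Your proposal is correct, but it takes a genuinely different route from the paper's proof of \Cref{factorisation_isogenies}. You classify the exponent data upfront: $q(\alpha)$ is constant on Weyl orbits, the two values $q_>$, $q_<$ can differ only when the edge multiplicity equals $p$, and in that case $q_< = p\,q_>$ with the larger value on short roots; after that the factorisation is a one-pass construction (factor through $\pi_G$ once if the exponents split, then invoke the uniform-exponent case), and uniqueness is immediate because the exponent function is intrinsic to $f$. The paper never establishes this rigidity a priori. Its proof is an iterative descent: at each stage it factors either through $F_G$ (when $p$ divides every $q(\alpha)$; here \Cref{Lie_simple} handles the non-simply-laced types in the ``wrong'' characteristic) or through $\pi_G$ (when $q_>=1$, which is where \Cref{q_trivial}, resting on the minimality statement \Cref{712_conrad} from \cite{CGP15}, enters), and it only normalizes at the end, rewriting the resulting word in $F$ and $\pi$ via $\pi_{\overline{G}}\circ \pi_G = F_G$ and $\pi_{G^{(1)}}\circ F_G = F_{\overline{G}}\circ \pi_G$. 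What your route buys is the cleaner uniqueness argument and no need for that final commutation step; what it costs is that the pivotal rigidity claim is only sketched. The claim is true --- it is exactly the Borel--Tits analysis, and can be extracted from the root-datum identity $q(\beta)\langle\beta,\alpha^\vee\rangle = q(\alpha)\langle \beta', (\alpha')^\vee\rangle$ applied to an edge, using that all $q$'s are powers of $p$ --- but in your write-up it remains the one step that must be carried out or cited precisely, whereas the paper bypasses it entirely by the Lie-algebra input from \cite{CGP15}.

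One small repair: the short root spaces do not \emph{span} $\Lie(\ker\pi_G)=\mathfrak{n}_G$, since that Lie algebra also contains the toral part $\langle \Lie(\gamma^\vee(\Gm))\colon \gamma\in\Phi_<\rangle$; they only \emph{generate} it. So, exactly as in Step~$3$ of the paper's proof, you should note that $\Lie(\ker f)$ is a Lie subalgebra containing $[\mathfrak{g}_\gamma,\mathfrak{g}_{-\gamma}]=\Lie(\gamma^\vee(\Gm))$ for every short $\gamma$ before concluding that $\mathfrak{n}_G\subset\Lie(\ker f)$, and hence $N_G\subset \ker f$ by the height-one correspondence.
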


We introduce and prove the first part of \Cref{main} in \Cref{sec2}. As the different behaviour in type $G_2$ confirms, there is no way to prove this result using general geometric arguments nor working over $\Z$, so we proceed by a case-by-case analysis. The proof essentially articulates in three steps: the first one consists of some elementary reductions, showing that it is enough to prove that if $P$ has maximal reduced subgroup and $G$ acts faithfully on $G/P$, then $P$ must be itself reduced. The second step is exploiting the explicit matricial description of the quotient 
\[\Lie G/ \Lie P_{\text{red}},\] seen as a representation of a Levi subgroup of $P_{\text{red}}$. Finally, the last step involves considering some of the structure constants (chosen so that they do not vanish, depending on the characteristic) and concluding using the notion of very special isogeny.\\

Next, we consider the case of characteristic $2$ and type $G_2$, with short simple root $\alpha_1$ and long simple root $\alpha_2$. Perhaps surprisingly, the analogous strategy of proof works when the reduced part is $P^{\alpha_2}$, but fails when considering $P^{\alpha_1}$, due to the vanishing of structure constants. We deduce that there exist exactly two maximal $p$-Lie subalgebras $\mathfrak{h}$ and $\mathfrak{l}$ of $\Lie G$ strictly containing $\Lie P^{\alpha_1}$. %, which are furthermore not $p$-Lie ideals of $\Lie G$.
We describe them explicitly and consider the corresponding subgroups of height one in $G$, which give rise to two new parabolic subgroups denoted $P_{\mathfrak{h}}$ and $P_{\mathfrak{l}}$. Then we study the corresponding homogeneous spaces, by means of the description of $G_2$ as the automorphism group of an octonion algebra, as in \cite{SV} and \cite{Heinloth}. One turns out to be isomorphic to the projective space $\proj^5$, while we realize the other as a hyperplane section of the $\Sp_6$-homogeneous variety of isotropic $3$-dimensional subspaces in a $6$-dimensional vector space (the study of the second variety, carried on in \Cref{section_Pl}, is computational and slightly cumbersome). % which is computational and tak and takes up all of \Cref{section_Pl}.\\
For the sake of brevity, computations concerning the group $G_2$ and its root subgroups can be found in the Appendix [\ref{sec4}]. We conclude this part with the following result (see \Cref{parabolics_G2} for a more detailed statement), which allows in particular to end the proof of \Cref{main}.

\begin{proposition*}
    Let $G$ be of type $G_2$ in characteristic two.\\
    Then the nonreduced parabolic subgroups of $G$ having $P^{\alpha_1}$ as reduced part are either of standard type, or obtained from $P_{\mathfrak{l}}$ and $P_{\mathfrak{h}}$ by pulling back with an iterated Frobenius homomorphism.%all of the form $G_mP^{\alpha_1}$, $H_mP^{\alpha_1}$ or $L_mP^{\alpha_1}$ for some $m \geq 0$.
\end{proposition*}

We deduce in \Cref{sec3} the desired consequence of \Cref{main}: the statement focuses exclusively on the classification of parabolic subgroups with maximal reduced part, and requires no assumptions on the characteristic of the base field.

\begin{proposition*}
\label{prop3}
Let $G$ be simple and $P$ be a parabolic subgroup of $G$ such that its reduced subgroup is $P^\alpha$ for some simple root $\alpha$. Then there exists an isogeny $\varphi$ with source $G$ such that
\[
P = (\ker \varphi) P^\alpha,
\]
unless $G$ is of type $G_2$ over a field of characteristic $p= 2$ and $\alpha$ is the short simple root.
\end{proposition*}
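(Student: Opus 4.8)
The plan is to deduce the statement from the Main Theorem by reducing to the faithful case. Write $X = G/P$; since $P_{\mathrm{red}} = P^\alpha$ is a maximal reduced parabolic, the Picard group of $X$ is isomorphic to $\Z$. Let $N \subseteq P$ be the scheme-theoretic kernel of the action of $G$ on $X$, equivalently the largest normal subgroup scheme of $G$ contained in $P$, so that $N = \bigcap_{g} gPg^{-1}$. Because $G$ is simple, its only smooth connected normal subgroups are trivial or $G$ itself; as $N \subseteq P \subsetneq G$, the identity component $N_{\mathrm{red}}^\circ$ is trivial and $N$ is a finite normal subgroup scheme. Hence $q \colon G \to \overline G := G/N$ is an isogeny onto a simple group of the same type, and $\overline G$ acts \emph{faithfully} on $X \cong \overline G/\overline P$, where $\overline P := P/N$ satisfies $q^{-1}(\overline P) = P$ (as $N \subseteq P$) and $\overline P_{\mathrm{red}} = q(P^\alpha)$. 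Faithfulness holds because the largest normal subgroup scheme of $\overline G$ inside $\overline P$ is $\big(\bigcap_g gPg^{-1}\big)/N = N/N$, which is trivial.

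First I would apply to the pair $(\overline G,\overline P)$ the key step in the proof of the Main Theorem: a faithful action on a Picard-rank-one homogeneous space whose stabiliser has maximal reduced part forces that stabiliser to be reduced. Granting this, $\overline P = q(P^\alpha)$ is a reduced maximal parabolic of $\overline G$. It then remains a formal computation with $q$: an isogeny carries the reduced parabolic attached to $\alpha$ onto the reduced parabolic attached to $\alpha$, so writing $A := (\ker q)\,P^\alpha$ one has $q(A) = q(P^\alpha) = q(P)$, while $\ker q = N$ is contained in both $A$ and $P$. Using the standard identity $q^{-1}(q(A)) = A\cdot\ker q = A$ one concludes
\[
P \;=\; q^{-1}(\overline P) \;=\; q^{-1}\big(q(A)\big) \;=\; (\ker q)\,P^\alpha,
\]
so that $\varphi := q$ is the desired isogeny (with $\varphi = \mathrm{id}$ in the reduced case).

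The main obstacle is controlling the exceptional case: one must guarantee that this reduction never moves a pair $(G,\alpha)$ outside the exception into the configuration where the key step fails, namely type $G_2$ in characteristic $2$ with $\overline\alpha$ \emph{short}. Since $\overline G$ has the same type as $G$, the only danger is $G$ of type $G_2$, $p=2$, with $\alpha = \alpha_2$ long but $\overline\alpha$ short. This is where the factorisation Proposition enters: it expresses $q$ as a composite of a central isogeny, an iterated Frobenius $F_{\overline G}^m$, and — only when the Dynkin diagram has an edge of multiplicity $p$ — the very special isogeny $\pi_G$, and among these only $\pi_G$ exchanges short and long roots. One therefore has to check that for $\alpha$ \emph{long} the very special kernel $\ker\pi_G \subset G_1$ cannot account for $N$ without forcing $G_1 \subseteq P$, in which case $q$ is genuinely Frobenius-type and preserves root lengths, keeping $\overline\alpha$ long so that the key step applies. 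The genuine failure, and the asserted exception, then occurs precisely for the short root $\alpha_1$, where the analysis of Subsection~\ref{subsec:G2} produces a faithful non-reduced stabiliser, for which $N$ is trivial and no such factorisation $P = (\ker\varphi)P^{\alpha_1}$ can exist.
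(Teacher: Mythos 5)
Your proof is correct, but it takes a genuinely different route from the paper's. The paper never passes through the kernel of the action: it argues iteratively, using the case-by-case Lie-algebra results (\Cref{LieN}, \Cref{prop:CnPm}, \Cref{LieN_5}, \Cref{LieN_6} together with Remarks \ref{rem_lifting} and \ref{rem_lifting2}) to show that a nonreduced $P$ with maximal reduced part contains a noncentral normal subgroup $H_{(1)}$ of height one (either $N_G$, when defined, or the image of the Frobenius kernel of the simply connected cover); it then quotients by $H_{(1)}$, repeats, and shows the process terminates because $P/P_{\text{red}}$ is finite, the isogeny $\varphi$ being the composite of the successive quotients. Your argument collapses this iteration: quotienting once by the largest normal subgroup scheme $N \subseteq P$ reduces to the faithful case, where the key step of the Main Theorem applies, and the identities $q^{-1}(q(P)) = P$ and $q^{-1}(q(P^\alpha)) = (\ker q)P^\alpha$ finish the proof. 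Your version is shorter and conceptually cleaner; what the paper's step-by-step construction buys in exchange is explicit control of $\ker\varphi$ at each stage, which is exactly what makes \Cref{viola} (that $\ker\varphi$ is $G_m$ or $N_{m,G}$) immediate.

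Two caveats. First, \Cref{final_rank1_weak} as stated excludes \emph{all} of type $G_2$ in characteristic $2$, whereas the proposition you are proving still covers the long root $\alpha_2$ there; for that case you must invoke the finer statement behind the theorem, namely \Cref{LieN_5} (for $p=2$, a nonreduced $P$ with $P_{\text{red}} = P^{\alpha_2}$ forces $\Lie P = \Lie G$, hence $G_1 \subseteq P$, contradicting faithfulness). You do use this implicitly when you assert that the key step fails only for $\overline{\alpha}$ short, but it deserves to be said explicitly, since it is not the literal statement of the Main Theorem. Second, your treatment of whether $\overline{\alpha}$ can become short is more complicated than necessary and slightly off: in type $G_2$ over a field of characteristic $2$ the edge hypothesis fails (the triple edge has multiplicity $3 \neq p$), so the very special isogeny $\pi_G$ simply does not exist and there is no kernel $\ker \pi_G$ to rule out; \Cref{factorisation_isogenies} already says that every isogeny with source $G$ is then an iterated Frobenius composed with a central (here trivial) isogeny, hence preserves root lengths, so $\overline{\alpha}$ stays long with nothing to check. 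Relatedly, your claim that $\overline{G}$ has the same type as $G$ is not literally true (very special isogenies exchange $B_n$ and $C_n$); the consequence you actually need — that $\overline{G}$ is of type $G_2$ only if $G$ is — holds because no other type has dual root system of type $G_2$.
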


We prove \Cref{prop3} as well as a criterion to determine when two projective homogeneous spaces with Picard group $\Z$ are isomorphic as varieties. The remaining part of Section 3 is devoted to the display of a family of projective homogeneous spaces of Picard rank two, whose underlying varieties are \emph{not of standard type}, where the last terminology means not isomorphic (as a variety) to some quotient with stabilizer a parabolic subgroup of standard type. 
We will follow the conventions on root systems adopted by Bourbaki \cite{Bourbaki}. The statement is the following:

\begin{proposition*}
    Consider a simple, simply connected group $G$ over an algebraically closed field of characteristic $2$ and distinct simple roots $\alpha$ and $\beta$ such that: either $G$ is of type $B_n$ or $C_n$ and the pair $(\alpha,\beta)$ is of the form $(\alpha_j,\alpha_i)$ with $i < j < n$ or $j=n$ and $i <n-1$, or $G$ is of type $F_4$ and the pair $(\alpha,\beta)$ is one among
    \[
    (\alpha_1,\alpha_4), \quad (\alpha_2,\alpha_1), \quad (\alpha_2,\alpha_4), \quad (\alpha_3,\alpha_1), \quad (\alpha_3,\alpha_4), \quad (\alpha_4,\alpha_1).
    \]
    Then the homogeneous space $X= G/((\ker \pi_G)P^\alpha \cap P^\beta)$
    is \emph{not} of standard type, where $\pi_G$ denotes the very special isogeny of $G$.
\end{proposition*}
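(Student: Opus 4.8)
The plan is to show that $X = G/((\ker \pi_G)P^\alpha \cap P^\beta)$ is not isomorphic, as a variety, to any $G'/Q$ where $Q$ is parabolic of Wenzel type in some simple group $G'$. The starting observation is that, by the factorisation result (the first Proposition) and the classification of Picard rank one described in the introduction, the variety $X$ has Picard group $\Z^2$: indeed its stabilizer has reduced part $P^\alpha \cap P^\beta = P^{\{\alpha,\beta\}}$, whose Levi omits exactly the two simple roots $\alpha$ and $\beta$, so $\Pic X \cong \Z^2$ by the dimension count recalled in Subsection~\ref{sec:divisors}. Thus any competing Wenzel-type model $G'/Q$ must also have Picard rank two, which pins down the combinatorial shape of $Q$: its reduced part must omit exactly two simple roots of $G'$, so $Q = G_{m_1}P^{\gamma} \cap G_{m_2}P^{\delta}$ for some simple roots $\gamma \neq \delta$ and nonnegative integers $m_1, m_2$.

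The core of the argument is then to extract a variety-level invariant that distinguishes $X$ from every such $G'/Q$. The natural candidate is the infinitesimal/tangential structure encoded by the very special isogeny: because $\pi_G$ acts as Frobenius on short root subgroups but as an isomorphism on long ones, the Lie algebra $\Lie((\ker \pi_G)P^\alpha)$ is an ``uneven'' fattening of $\Lie P^\alpha$ that cannot be realized by any product of Frobenius kernels $G_m P^\gamma$, which fatten all root directions uniformly by a common power of $p$. First I would make this precise by computing, for $X$, the weights (as a representation of a maximal torus) appearing in $\Lie X = \Lie G / \Lie\big((\ker \pi_G)P^\alpha \cap P^\beta\big)$ together with their \emph{infinitesimal thickening}, i.e. recording for each tangent direction the exact power of $p$ by which it is nonreduced. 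For $X$ this multiset of thickenings is nonuniform across the two ``rays'' corresponding to $\alpha$ and $\beta$ precisely because $\pi_G$ mixes short and long roots; for any $G_{m_1}P^\gamma \cap G_{m_2}P^\delta$ the thickening along the $\gamma$-directions is governed by a single exponent $m_1$ (and similarly $m_2$ along $\delta$), reflecting genuine Frobenius kernels. The hypotheses on the pairs $(\alpha,\beta)$ in types $B_n$, $C_n$, $F_4$ are exactly those for which one of the two simple roots is short and the asymmetry survives, so the two invariants cannot be matched.

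To turn this into a rigorous non-isomorphism, I would argue as follows. Suppose an isomorphism of varieties $X \xrightarrow{\sim} G'/Q$ existed. Because $X$ is homogeneous and its automorphism group is an algebraic group whose identity component acts transitively, I would compare the \emph{identity component of the automorphism group scheme} $\Aut^\circ(X)$ on both sides; for a generalized flag variety this is known to recover the acting adjoint group together with its infinitesimal parabolic data, so an isomorphism of varieties forces an isomorphism of the homogeneous-space data up to the ambiguity of the acting group. This reduces the problem to the Lie-theoretic comparison above: matching $\Lie X$ as a filtered/graded object under the respective Levi actions. The nonuniform thickening invariant, being intrinsic to $\Aut^\circ(X)$ and its infinitesimal stabilizer, then yields the contradiction. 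I would also need to rule out the degenerate coincidences where $X$ happens to be isomorphic to an honest reduced flag variety or a projective space (as occurs in the exceptional $G_2$ analysis of Subsection~\ref{subsec:G2}); here the Picard rank two constraint and the explicit dimension of $X$ exclude these low-dimensional accidents for the listed pairs.

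The main obstacle I anticipate is the passage from a variety isomorphism to a comparison of the infinitesimal stabilizers, i.e. controlling $\Aut^\circ(X)$ for these nonreduced homogeneous spaces and showing the thickening data is genuinely a variety invariant rather than an artifact of the chosen presentation $G/P$. One must guard against the possibility that $X$ admits a transitive action of a \emph{different} simple group under which its stabilizer becomes a uniform Wenzel-type parabolic — exactly the phenomenon that makes the $G_2$ case in characteristic $2$ an exception elsewhere in the paper. Establishing that the very-special-isogeny-induced asymmetry is robust under all such re-presentations, presumably by pinning down $\Aut^\circ(X)$ precisely via the octonionic/quadratic-form models or via a direct cohomological computation of $H^0(X, T_X)$, is where the real work lies.
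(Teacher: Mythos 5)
Your overall architecture matches the paper's (a combinatorial ``thickening'' invariant of the stabilizer, i.e.\ Wenzel's function $\varphi_P$, plus a passage to the automorphism group to make it a variety invariant), but the proposal has a genuine gap exactly where you place it yourself: the assertion that for these nonreduced homogeneous spaces $\underline{\Aut}^0_X$ ``is known to recover the acting adjoint group together with its infinitesimal parabolic data'' is not a known fact --- it is the crux, and it is false in general (this is precisely what breaks in Demazure's exceptional cases and in type $G_2$ in characteristic $2$). The paper has to \emph{prove} $\underline{\Aut}^0_X \simeq G_{\ad}$, and does so by a route you do not supply: the cone of curves $\NE(X)$ is generated by two intrinsic classes $C_\alpha, C_\beta$ (Białynicki--Birula), giving two contractions $f_\alpha\colon X \to G/Q_\alpha$ and $f_\beta \colon X \to G/Q_\beta$ that are determined by the variety alone (one is non-smooth, with $Q_\alpha = N_GP^\alpha$, the other smooth, with $Q_\beta = P^\beta$); then Demazure's theorem applies to the \emph{smooth} target $G/P^\beta$ --- and this is where the restrictions on $(\alpha,\beta)$ enter a second time, to avoid the exceptional pairs of Demazure (with $G=\Sp_{2n}$, $\beta=\alpha_1$ handled separately) --- and Blanchard's lemma lifts the identification back to $\underline{\Aut}^0_X$. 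Without some substitute for this chain, your ``thickening multiset'' is only an invariant of the presentation $G/P$, not of the variety $X$.

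The second, more concrete omission: even after one knows $\underline{\Aut}^0_X = G_{\ad}$, the ``different simple group'' you worry about in your last paragraph is not a hypothetical pathology to be excluded --- it always exists. By the factorisation of isogenies, any simple $G'$ acting transitively on $X$ maps onto $G_{\ad}$ via $\rho \circ F^m \circ \pi$, and the case where $\pi$ is the very special isogeny of $G'$ really occurs: the dual-type group (e.g.\ type $B_n$ when $G$ is of type $C_n$) acts transitively on $X$ through $\pi_{G'}$, with stabilizer the pull-back $\pi_{G'}^{-1}(P)$ up to a Frobenius kernel. So the combinatorial step must be carried out \emph{twice}: one needs both that $P = N_GP^\alpha \cap P^\beta$ is not of Wenzel type and that $\pi^{-1}(P) = \overline{G}_1 P^{\overline{\alpha}} \cap N_{\overline{G}}P^{\overline{\beta}}$ is not of Wenzel type (the latter verified in the paper using the highest short and highest long roots, whose supports are all of $\Delta$). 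Your proposal only argues the first. Relatedly, your heuristic for which pairs $(\alpha,\beta)$ work (``one of the two simple roots is short'') is not the right condition: the correct criterion, which produces exactly the lists in the statement, is the existence of both a \emph{short} and a \emph{long} positive root whose support contains $\alpha$ but not $\beta$; this is a support condition, not a condition on the length of $\alpha$ or $\beta$ themselves.
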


The strategy of proof consists first in recalling and precising a few facts on the Białynicki-Birula decomposition of $G$-simple projective varieties, following \cite{Brion02}. Next we specialize the outline of this decomposition to the particular case of homogeneous spaces. This leads to the description of the Picard group and of the group of $1$-cycles on $X=G/P$, as well as the definition of a finite family of contractions on $X$ indexed by the simple roots of $G$ not belonging to the root system of a Levi subgroup of $P_{\text{red}}$. More precisely, the contraction associated to a root $\alpha$ sends all classes of curves to a point, except for those which are numerically equivalent to the unique $B$-invariant curve passing through the image of the base point of $X$ by the reflection with respect to $\alpha$ in the Weyl group.
This construction, together with the results on automorphism groups in \cite{Demazure}, allows us to conclude. We end with a final question concerning the more general classification of parabolic subgroups in characteristic $2$ and $3$.\\

\textbf{Acknowledgments.} I would like to thank my PhD advisors Michel Brion and Matthieu Romagny for their precious guidance and support, as well as Pierre-Emmanuel Chaput, Philippe Gille and David Stewart for the useful suggestions.

\section{Preliminary work on isogenies}
\label{sec1}

\subsection{Setting and notation}
\label{subsection_notations}

In this work, $k$ denotes an algebraically closed field of prime characteristic $p >0$. When $V$ is a finite-dimensional $k$-vector space, we adopt the convention for $\proj(V)$ to be lines in $V$. By parabolic subgroup we always mean parabolic subgroup scheme.\\
Let $G \supset B \supset T$ be respectively a semisimple, simply connected algebraic group over $k$, a Borel subgroup and a maximal torus contained in it. Our aim is to classify all homogeneous projective $G$-varieties, which are quotients of the form $G/P$, where $P$ is a parabolic subgroup of $G$, not necessarily reduced. By conjugacy of the Borel subgroups, we might restrict ourselves to those containing the Borel subgroup $B$, which we call \emph{standard} parabolic subgroups. From now on, every parabolic subgroup will be standard, unless otherwise mentioned. Such a classification has been established in \cite{Wenzel} and \cite{HL93}, under the assumption that either $p \geq 5$ or that the root system of $G$ relative to $T$ is simply laced.\\
Let us list the main notations that are fixed throughout the paper, which mostly agree with those of \cite{Wenzel}. Concerning root systems, we follow conventions from Bourbaki \cite{Bourbaki} :
\begin{itemize}
    \item $\Phi = \Phi(G,T)$ is the root system of the pair $(G,T)$,
    \item $\Phi^+ = \Phi(B,T)$ is the subset of positive roots associated to the Borel subgroup $B$,
    \item $\Delta$ is the corresponding basis of simple roots,
    \item $W = W(G,T) = W(\Phi)$ is the Weyl group of $(G,T)$,
    \item $s_\alpha$ is the reflection associated to the simple root $\alpha \in \Delta$,
    \item $\Supp(\gamma)$ is the set of simple roots which have a nonzero coefficient in the expression of $\gamma \in \Phi$ as linear combination of simple roots,
    \item $B^-$ is the opposite Borel subgroup, with corresponding set of roots being $\Phi \backslash \Phi^+$,
    \item $U_\gamma$ ($\gamma \in \Phi)$ is the root subgroup associated to $\gamma$, with corresponding root homomorphism $u_\gamma \colon \Ga \stackrel{\sim}{\longrightarrow} U_\gamma$,
    \item $P^\alpha$ ($\alpha \in \Delta$) is the maximal reduced parabolic subgroup not containing $U_{-\alpha}$, which is generated by $B$ and $U_{-\beta}$ with $\beta \in \Delta \setminus \{ \alpha \}$,
    \item $F^m_G \colon G \longrightarrow G^{(m)}$ is the $m$-th iterated relative Frobenius homomorphism of $G$,
    \item $G_m \defeq \ker F^m_G$ is the $m$-th Frobenius kernel.
\end{itemize}
Let us recall that the morphism $F^m_G$ is an isogeny since it is surjective with finite kernel. Moreover, the map $\alpha \mapsto P^\alpha$ defines a bijection between simple roots and maximal reduced parabolic subgroups. More generally, under the assumptions of \cite{Wenzel}, there is a bijection
\begin{align}
\label{bijection}
\Hom_{\text{Set}} (\Delta, \N \cup \{ \infty \}) \longrightarrow \{ \text{parabolic subgroups } G \supset P \supset B \}
\end{align}
sending a function $\varphi \colon \Delta \rightarrow \N \cup \{ \infty\}$ to the subgroup scheme $P_\varphi$ defined by the intersection of all maximal reduced parabolics fattened by their corresponding Frobenius kernels
\[
P_\varphi \defeq \bigcap_{\alpha \in \Delta} G_{\varphi(\alpha)} P^\alpha = \bigcap_{\alpha \in \Delta \colon \varphi(\alpha) \neq \infty} G_{\varphi(\alpha)}P^\alpha.
\]
Let us recall that, given a parabolic subgroup $P$, there is always an associated function $\varphi \colon \Phi^+ \longrightarrow \N \cup \{ \infty \} $ (introduced in \cite{Wenzel}) given by the identity
\[
U_{-\gamma} \cap P = u_{-\gamma} (\alpha_{p^{\varphi(\gamma)}}), \quad \gamma \in \Phi^+,
\]
where $\alpha_{p^\infty}$ is understood to be $\Ga$. For example, the parabolic $G_m P^\alpha$ defines the function sending all positive roots to infinity, except for those containing $\alpha$ in their support, which assume value $m$.% with non negative coefficients contains $\gamma$.\\
\begin{theorem}[Theorem 10, \cite{Wenzel}]
     The parabolic subgroup $P$ is uniquely determined by the function $\varphi$, with no assumption on the characteristic or on the Dyinkin diagram of $G$.
\end{theorem}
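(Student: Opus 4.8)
The plan is to show that the datum of $\varphi$ recovers $P$ through its intersection with the big cell, and that this intersection in turn determines $P$ as a closed subscheme of $G$. Write $U^- = \prod_{\gamma\in\Phi^+}U_{-\gamma}$ for the unipotent radical of $B^-$ and let $\Omega = U^-\cdot B$ be the big cell, an open subscheme of $G$ on which multiplication induces an isomorphism $U^-\times B \stackrel{\sim}{\longrightarrow}\Omega$. Since $B\subseteq P$, for any $k$-algebra $R$ a point $ub\in\Omega(R)$ with $u\in U^-(R)$ and $b\in B(R)$ lies in $P$ if and only if $u$ does; hence
\[
\Omega\cap P = (U^-\cap P)\cdot B .
\]
So everything reduces to understanding $U^-\cap P$, and then to passing from the big cell back to $P$.

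First I would prove that $U^-\cap P$ is determined by $\varphi$. As $T\subseteq B\subseteq P$, the subgroup scheme $U^-\cap P$ is stable under conjugation by $T$. The key lemma is that any $T$-stable closed subgroup scheme $V\subseteq U^-$ factors as the product $\prod_{\gamma\in\Phi^+}(V\cap U_{-\gamma})$, taken for a fixed ordering of $\Phi^+$ refining the height filtration. Granting this with $V = U^-\cap P$, and since $V\cap U_{-\gamma}=U_{-\gamma}\cap P = u_{-\gamma}(\alpha_{p^{\varphi(\gamma)}})$ is read off directly from $\varphi$, we conclude that $U^-\cap P$, and hence $\Omega\cap P = \prod_{\gamma}(U_{-\gamma}\cap P)\cdot B$, depends only on $\varphi$.

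To establish the lemma I would filter $U^-$ by the subgroups $U_{(\geq h)}$ generated by the $U_{-\gamma}$ with $\mathrm{ht}(\gamma)\geq h$; the commutator relations show these are normal in $U^-$, with abelian successive quotients $U_{(\geq h)}/U_{(\geq h+1)}\cong\prod_{\mathrm{ht}(\gamma)=h}U_{-\gamma}$. Intersecting the filtration with $V$ and arguing by descending induction on $h$, the problem is reduced to the abelian case: a $T$-stable subgroup scheme of a product $\prod_{\mathrm{ht}(\gamma)=h}U_{-\gamma}$ of root subgroups decomposes as the product of its intersections with the factors. This is the step I expect to be the main obstacle. The essential point—and where positive characteristic must be handled with care—is that distinct roots define distinct characters of $T$ on these one-dimensional factors, so that a suitably generic cocharacter $\lambda\colon\Gm\to T$ grades the abelian quotient by pairwise distinct integers; the associated limit $t\mapsto\lambda(t)\cdot(-)$ as $t\to 0$ then splits any $\lambda$-stable subgroup scheme into its graded pieces, each of which is an infinitesimal or full subgroup of a single $U_{-\gamma}$. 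The nonreducedness of $P$ enters only through these infinitesimal factors $\alpha_{p^{\varphi(\gamma)}}$, and the argument never requires any structure constant to be invertible, which is exactly why no hypothesis on $p$ or on the Dynkin diagram of $G$ is needed.

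Finally I would recover $P$ from its big cell. Given two parabolics $P,P'$ with the same $\varphi$, their reduced parts coincide—both equal the classical reduced parabolic $P_I$ with $I=\{\alpha\in\Delta:\varphi(\alpha)=\infty\}$—so $P(k)=P'(k)$, and by the above $\Omega\cap P=\Omega\cap P'$. For each $p\in P(k)=P'(k)$, translation by $p$ is an automorphism of $G$ preserving both $P$ and $P'$, whence $p\Omega\cap P = p(\Omega\cap P)=p(\Omega\cap P')=p\Omega\cap P'$. Since $P$ is topologically irreducible (its underlying space is that of the smooth connected group $P_{\mathrm{red}}$) and $\Omega\cap P$ is a nonempty open subscheme containing the identity, the translates $\{p\Omega\}_{p\in P(k)}$ cover $P$: any nonempty closed subset of $P$ has a $k$-point, and every $k$-point $x$ lies in $x\Omega$. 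The subschemes $P$ and $P'$ are therefore glued inside $G$ from identical open charts, so $P=P'$. Apart from the $T$-equivariant factorization of $U^-\cap P$, every step is formal manipulation of the big cell together with the density of $k$-points.
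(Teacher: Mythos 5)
A preliminary remark on the comparison itself: the paper does not prove this statement — it quotes it as Theorem 10 of [Wenzel] — so there is no internal proof to measure you against. Your overall architecture (big cell, $\Omega\cap P=(U^-\cap P)\cdot B$, factorization of $U^-\cap P$ into root-group intersections, then recovery of $P$ from translates of the big cell using density of $k$-points) is in substance the architecture of Wenzel's own structure theory, namely $P=U_P^-\cdot P_{\mathrm{red}}$ with $U_P^-=\prod_\gamma (U_{-\gamma}\cap P)$, which the paper also invokes later in its notation $U_P^-$. Your first and last steps are correct as written.

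The genuine gap is in your proof of the key lemma, at exactly the step you flagged as the main obstacle, and your stated ``essential point'' is the wrong one. It is false that a $\Gm$-stable subgroup scheme of a vector group with pairwise distinct weights splits into the product of its intersections with the factors. In characteristic $p$, let $\Gm$ act on $\Ga\times\Ga$ with weights $(p,1)$ — pairwise distinct — and let $H=\{(x,y):x=y^p\}$ be the graph of Frobenius. Then $H$ is a $\Gm$-stable subgroup scheme isomorphic to $\Ga$, it is fixed (not split) by the limit $t\to 0$, and yet $(H\cap\Ga\times\{0\})\times(H\cap\{0\}\times\Ga)=0\times\alpha_p\subsetneq H$. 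What the splitting actually requires is not that the weights $n_\gamma=\langle\lambda,\gamma\rangle$ be distinct, but that no ratio $n_\gamma/n_\delta$ be a power of $p$; Frobenius graphs are precisely the characteristic-$p$ phenomenon the lemma must exclude, and ``distinct characters'' does not exclude them (the same graph is $T$-stable whenever $\chi_1=p\chi_2$ as characters). Worse, passing from $T$ to a ``generic'' cocharacter destroys the very property that saves the statement: for the full torus, two distinct roots of the same height can never satisfy $\delta=p^m\gamma$ with $m\geq 1$ (heights would give $h=p^mh$; in general reducedness of the root system forbids proportional roots), whereas a cocharacter chosen only to have distinct values can perfectly well take the values $1$ and $p$ on two roots of the same height (e.g. $\alpha_1,\alpha_2$ in type $A_2$), after which your $\lambda$-graded argument cannot rule out Frobenius-graph subgroups. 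The fix is to run the weight argument with $T$ itself: the space of additive polynomials (equivalently, homomorphisms to $\Ga$) vanishing on $V$ is a $T$-stable subspace, its weight components are then single monomials $x_\gamma^{p^m}$ because $p^a\gamma=p^b\delta$ forces $\gamma=\delta$, and the product decomposition follows.

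Two smaller points. First, in the descending induction on the height filtration you still have to lift the splitting: knowing that the image of $V\cap U_{(\geq h)}$ in the abelian quotient is a product does not immediately give that $V\cap U_{-\gamma}$ surjects onto each factor of that image, so the passage from the graded pieces back to $V\cap U_{(\geq h)}$ needs an argument (again $T$-equivariant), not just the abelian case. Second, these repairs are available — the lemma itself is true — so your proof is salvageable, but as written the crucial step would fail.
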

Moreover, when $p\geq 5$ or $G$ is simply laced, the function $\varphi$ is itself uniquely determined by its values on $\Delta$ via the equality 
\[
\varphi(\gamma) = \min \{ \varphi (\alpha) \colon \alpha \in \Supp(\gamma) \},
\]
giving the bijection (\ref{bijection}). See \cite[Theorem $14$]{Wenzel} for more details. As we will see later, the last statement does not always hold in small characteristic.

The guiding idea is to mimic the known classification - written in terms of Frobenius kernels - replacing the Frobenius morphism with any noncentral isogeny (see \Cref{classification_rank1}). This motivates the preliminary study and classification of such homomorphisms.

%%%%%%%%%%%%%%%%%%%%%%%%%%%%%%%5

\subsection{Classifying isogenies}

We now classify isogenies between simple algebraic groups, first recalling definitions and the Isogeny Theorem, then introducing the so-called \emph{very special isogeny} $\pi_G$, whose kernel is a certain subgroup of height one defined by short roots - which only exists when the Dynkin diagram has an edge of multiplicity equal to the characteristic - and concluding with the following factorisation result: see \Cref{factorisation_isogenies}.

\begin{proposition**}
Let $G$ be a simple and simply connected algebraic group over $k$. %Assume that either its Dynkin diagram is simply laced, or that it has an edge of multiplicity $p$.
Let $f \colon G \rightarrow G^\prime$ be an isogeny. Then there exists a factorisation of $f$ as
\[
\begin{tikzcd}
f \colon G \arrow[rr, "\pi"] && \overline{G} \arrow[rr, "F_{\overline{G}}^{m}"] && (\overline{G})^{(m)} \arrow[rr, "\rho"]  && G^\prime,
\end{tikzcd}
\]
where $\rho$ is a central isogeny and $\pi$ is either the identity or - when the Dynkin diagram of $G$ has an edge of multiplicity $p$ - the very special isogeny $\pi_G$.
\end{proposition**}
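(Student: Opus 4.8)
The plan is to reduce the statement to a combinatorial analysis of the effect of $f$ on root subgroups, governed by the Isogeny Theorem recalled above. Fix a maximal torus $T \subset G$ and a maximal torus $T' \subset G'$ with $f(T)=T'$, together with compatible Borel subgroups. The Isogeny Theorem attaches to $f$ a homomorphism $\phi \colon X^*(T') \rightarrow X^*(T)$ on character lattices, a bijection $\Phi' \rightarrow \Phi$, $\alpha' \mapsto \alpha$, and for each root $\alpha \in \Phi$ a power $q_\alpha = p^{n(\alpha)}$ of $p$ such that $\phi(\alpha') = q_\alpha\, \alpha$ on characters and $f \circ u_\alpha(t) = u_{\alpha'}(c_\alpha\, t^{q_\alpha})$ on the corresponding root homomorphisms, for suitable constants $c_\alpha \in k^\times$. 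The whole proof then amounts to pinning down the function $n \colon \Phi \rightarrow \N$.

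First I would observe that $n$ is constant on $W$-orbits: since $\phi$ is equivariant for the identification $W(\Phi) \cong W(\Phi')$ induced by $f$, one has $q_{w\alpha} = q_\alpha$ for every $w \in W$, and as $\Phi$ is irreducible the roots of a given length form a single orbit. Hence $n$ takes at most two values, one on long roots and one on short roots. To bound their difference I would use compatibility with Cartan integers: dualizing $\phi(\alpha') = q_\alpha \alpha$ and using $\phi^\vee(\alpha^\vee) = q_\alpha\, \alpha'^\vee$ yields, for any two roots $\alpha, \beta$,
\[
\langle \alpha', \beta'^\vee \rangle = \frac{q_\alpha}{q_\beta} \, \langle \alpha, \beta^\vee \rangle,
\]
which must be an integer. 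Taking $\alpha$ short and $\beta$ long with $\langle \alpha, \beta^\vee\rangle = -1$ forces $q_\beta \mid q_\alpha$, so $n(\text{short}) \geq n(\text{long})$; taking instead $\beta$ long and $\alpha$ short with $\langle \beta, \alpha^\vee \rangle = -m$, where $m \in \{2,3\}$ is the edge multiplicity, forces $p^{\,n(\text{short}) - n(\text{long})} \mid m$. As $m$ is prime, either $n$ is constant, or $n(\text{short}) - n(\text{long}) = 1$ and $p = m$, i.e. the Dynkin diagram has an edge of multiplicity $p$. This is the heart of the argument and the step I expect to require the most care, since it is exactly here that the hypothesis on the edge multiplicity enters and the direction — short roots absorbing the extra power of Frobenius — is forced.

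It then remains to assemble the factorisation. Let $m \defeq \min_{\alpha} n(\alpha)$, which is attained on the long roots. If $n$ is constant, set $\pi = \mathrm{id}$ and $\overline{G} = G$; otherwise set $\pi = \pi_G$, the very special isogeny, whose associated function equals $1$ on short roots and $0$ on long roots by construction. In either case the composite $F^m_{\overline{G}} \circ \pi$ has associated function exactly $n$, namely $m$ on long roots and $m$ or $m+1$ on short roots, matching the two cases distinguished above; note that the length-swap induced by $\pi_G$ is irrelevant here, as $F^m_{\overline{G}}$ raises every root subgroup uniformly by $m$. Consequently $f$ and $F^m_{\overline{G}} \circ \pi$ differ by an isogeny $\rho \colon (\overline{G})^{(m)} \rightarrow G'$ whose associated function vanishes identically, so that $\rho$ restricts to an isomorphism on every root subgroup; this is precisely the condition for $\rho$ to be a central isogeny, as required. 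Uniqueness, although not needed for the existence statement, is immediate from the rigidity of this combinatorial data: the function $n$, hence $m$ and $\pi$, are determined by $f$, and $\rho$ is then forced.
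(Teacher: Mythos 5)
Your combinatorial core is correct, and it is a genuinely different route from the paper's. Where you derive the dichotomy on the exponents $n(\alpha)$ from Weyl-invariance together with integrality of the Cartan numbers $\langle \alpha',\beta'^\vee\rangle = \frac{q_\alpha}{q_\beta}\langle\alpha,\beta^\vee\rangle$, the paper argues on Lie algebras: it invokes the fact that $\mathfrak{n}_G$ is the minimal noncentral $G$-submodule of $\Lie G$ (to show that $q(\alpha)=1$ for some $\alpha$ forces $q_{>}=1$), uses simplicity of $\Lie G$ as a $p$-Lie algebra to handle the multiple-edge cases whose multiplicity differs from $p$, and then factors iteratively through $F_G$ or $\pi_G$ with the $q$-values strictly decreasing. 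Your divisibility constraint $p^{\,n(\mathrm{short})-n(\mathrm{long})}\mid m$ recovers all of these restrictions at once and more elementarily, and it cleanly explains why nothing exotic can happen unless the edge multiplicity equals $p$.

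However, the assembly step has a genuine gap. The sentence ``consequently $f$ and $F^m_{\overline{G}}\circ\pi$ differ by an isogeny $\rho$ whose associated function vanishes identically'' does not follow from the two isogenies having the same associated function: in general, isogenies with the same source and identical $q$-functions need not factor through one another. For instance $F_G$ and $c\circ F_G$, with $c$ a nontrivial central isogeny, have the same associated function, yet $F_G$ does not factor through $c\circ F_G$, since the kernels are nested the wrong way. What makes your factorisation exist is that the target $(\overline{G})^{(m)}$ of the comparison isogeny is simply connected, so that its character lattice is the full weight lattice; this is exactly what guarantees that the rational map $\psi := \phi_{F^m_{\overline{G}}\circ\pi}^{-1}\circ\phi_f$ carries $X^*(T')$ into $X^*(\overline{T}^{(m)})$, because $\langle\psi(\chi'),\overline{\alpha}^\vee\rangle = \frac{q_f(\alpha)}{q_{F^m_{\overline{G}}\circ\pi}(\alpha)}\langle\chi',\alpha'^\vee\rangle = \langle\chi',\alpha'^\vee\rangle\in\Z$ once the $q$-functions match. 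With that integrality in hand, one checks $\psi(\alpha')=\overline{\alpha}$, so $\psi$ is a central isogeny of root data; the Isogeny Theorem then produces $\rho$ inducing $\psi$, and its uniqueness clause gives $f = \inn(t)\circ\rho\circ F^m_{\overline{G}}\circ\pi$, after which $\inn(t)$ is absorbed into the central isogeny. Alternatively one can prove the kernel containment $\ker(F^m_{\overline{G}}\circ\pi)\subseteq\ker f$ directly, which is what the paper does via Lie algebras and the height-one correspondence ($\mathfrak{g}_{<}\subseteq\Lie(\ker f)$ implies $N_G\subseteq\ker f$, and $\Lie(\ker f)=\Lie G$ implies $G_1\subseteq\ker f$). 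Some argument of this kind is indispensable: as written, the factorisation is asserted rather than proved, and this is precisely the point where the paper's proof does its real work.
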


\subsubsection{Preliminaries}

We shall start by reviewing what isogenies look like, in particular noncentral ones. First, let us recall some notations and the statement of the Isogeny Theorem, which is proved in detail in \cite{Steinberg99}.

\begin{definition}
Let $(G,T)$ and $(G^\prime,T^\prime)$ be reductive algebraic groups over $k$. An \emph{isogeny} between them is a surjective homomorphism of algebraic groups $f \colon G \rightarrow G^\prime$ having finite kernel, sending the maximal torus $T$ to the maximal torus $T^\prime$. The \emph{degree} of $f$ is the order of its kernel.
\end{definition}

Given an isogeny $f$, there is an induced map between the character groups
\[
\varphi \defeq X(f_{\vert T}) \colon X(T^\prime) \longrightarrow X(T), \quad \chi^\prime \longmapsto \chi^\prime \circ f_{\vert T},
\]
satisfying the conditions :
\begin{enumerate}[(i)]
    \item both $\varphi \colon X(T^\prime) \rightarrow X(T)$ and its dual $\varphi^\vee \colon X^\vee(T) \rightarrow X^\vee(T^\prime)$ are injective,
    \item there exists a bijection $\Phi \leftrightarrow \Phi^\prime$, denoted $\alpha \leftrightarrow \alpha^\prime$, and integers $q(\alpha)$ which are all powers of $p$, such that
\end{enumerate}
\[
\varphi(\alpha^\prime) = q(\alpha)\alpha \quad \text{and} \quad \varphi^\vee (\alpha^\vee) = q(\alpha) \alpha^\vee{}^\prime \quad \text{for all } \alpha \in \Phi.
\]

Geometrically, the integers $q(\alpha)$ arise as follows: the image $f(U_\alpha)$ is a smooth connected unipotent algebraic subgroup of $G^\prime$ which is normalized by $T^\prime$ and isomorphic to the additive group $\Ga$, hence it must be of the form $U_{\alpha^\prime}$ for a unique $\alpha^\prime \in \Phi^\prime$. This gives the bijection; then, using the $T$-action on those two root subgroups, one finds that there exists a constant $c_\alpha \in \Gm$ and an integer $q(\alpha) \in p^\N$ such that
\begin{align}
\label{def_integers}
f(u_\alpha(x)) = u_{\alpha^\prime} (c_\alpha x^{q(\alpha)})
\end{align}
for all $x \in \Ga$.

\begin{definition}
A homomorphism between character groups $\varphi \colon X(T^\prime) \rightarrow X(T)$ satisfying conditions (i) and (ii) is called an \emph{isogeny of root data}.
\end{definition}

\begin{theorem}[\emph{Isogeny Theorem}]
Let $(G,T)$ and $(G^\prime,T^\prime)$ be reductive algebraic groups over $k$. Assume given an isogeny of root data $\varphi  \colon X(T^\prime) \rightarrow X(T)$. Then there exists an isogeny $f \colon (G,T) \rightarrow (G^\prime,T^\prime)$ inducing $\varphi$. Moreover, $f$ is unique up to an inner automorphism $\inn(t)$ for some $t \in T^\prime/Z(G^\prime)$.
\end{theorem}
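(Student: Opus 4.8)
The plan is to treat the two assertions separately, both resting on the same structural fact: a reductive group is generated by a maximal torus together with the root subgroups, and a homomorphism is pinned down by its effect on these generators subject to the Chevalley--Steinberg commutation relations. I would do uniqueness first, since it is the more self-contained half. The restriction $f_{\vert T} \colon T \to T^\prime$ is completely determined by $\varphi$, because a homomorphism of tori is recovered from the induced map on character lattices; hence any two isogenies $f, g$ inducing $\varphi$ agree on $T$. On each root subgroup, formula (\ref{def_integers}) shows that $f$ and $g$ differ only by scalars, $f(u_\alpha(x)) = u_{\alpha^\prime}(c_\alpha x^{q(\alpha)})$ and $g(u_\alpha(x)) = u_{\alpha^\prime}(d_\alpha x^{q(\alpha)})$, with the same bijection $\alpha \leftrightarrow \alpha^\prime$ and the same exponents $q(\alpha)$, all read off from $\varphi$.

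Since conjugation by $t \in T^\prime$ rescales $u_{\alpha^\prime}(y) \mapsto u_{\alpha^\prime}(\alpha^\prime(t)\, y)$, I would then seek $t$ with $\alpha^\prime(t) = d_\alpha/c_\alpha$ for every \emph{simple} $\alpha$; such $t$ exists because the simple roots are linearly independent characters of $T^\prime$, and it is determined only modulo $\bigcap_\alpha \ker \alpha^\prime = Z(G^\prime)$, which yields precisely the stated ambiguity in $T^\prime/Z(G^\prime)$. The key point is that matching the scalars on the \emph{simple} root subgroups forces agreement on all of them: the ratios $d_\gamma/c_\gamma$ for a general root $\gamma$ are determined by those for simple roots through the commutator relations, so $\inn(t) \circ f$ and $g$ coincide on $T$ and on every $U_\gamma$, hence on the group they generate, namely $G$.

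For existence, the strategy is to construct $f$ directly on generators. Using the Chevalley--Steinberg presentation of $G$ (the generators $u_\alpha(x)$ together with the torus, subject to the commutator formulas with integral structure constants and to the torus-conjugation relations), I would define a candidate assignment $u_\alpha(x) \mapsto u_{\alpha^\prime}(x^{q(\alpha)})$ together with the torus homomorphism dual to $\varphi^\vee$, and then verify that every defining relation is preserved. The bijection $\alpha \leftrightarrow \alpha^\prime$ and the exponents $q(\alpha)$ furnished by the isogeny-of-root-data hypothesis are exactly what make the images of the relations consistent: condition (ii) ensures that $T^\prime$ acts on $U_{\alpha^\prime}$ through the correct character, and the compatibility of the $q(\alpha)$ with the root system lets the commutator relations pass. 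Once the relations hold, the assignment extends to a homomorphism $f$; surjectivity follows since $f$ is dominant between irreducible groups of equal dimension with closed image, and the kernel is finite because $\dim \ker f = 0$.

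The hard part will be the relation-checking in the existence step, specifically the commutator relations among root subgroups when the exponents $q(\alpha)$ are not all equal --- precisely the special-isogeny situation in which short and long roots receive different powers of $p$. There one must confirm that the integral structure constants, after applying $x \mapsto x^{q(\alpha)}$, reproduce the structure constants of $G^\prime$; this is where the characteristic intervenes and where a naive bijection of roots fails to lift unless the root-datum condition holds. The natural device is to reduce to the rank-one and rank-two subsystems generating each commutator, making the verification finite and manageable, after which the general case assembles from these local checks.
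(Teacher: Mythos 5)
The paper itself offers no proof of this statement: its entire proof is the citation ``See \cite[1.5]{Steinberg99}''. So the real comparison is with Steinberg's proof, and your outline does follow that classical strategy (uniqueness by twisting with an inner automorphism $\inn(t)$, existence by constructing $f$ on a presentation by generators and relations). Your uniqueness half is essentially sound, with one repairable imprecision: $T$ together with the \emph{positive} simple root subgroups only generates a Borel subgroup, so you must also match the scalars on the $U_{-\alpha'}$ for $\alpha$ simple; these are controlled not by commutator relations but by the $\mathrm{SL}_2$-relations (the element $u_\alpha(x)u_{-\alpha}(-x^{-1})u_\alpha(x)$ normalizes the torus), which force $c_{-\alpha}=c_\alpha^{-1}$ for any isogeny. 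Granting that, the same $t$ simultaneously fixes the discrepancy on $U_{\alpha'}$ and $U_{-\alpha'}$, and since $T$ and the $U_{\pm\alpha}$ for simple $\alpha$ generate $G$, the two isogenies agree; your existence-of-$t$ argument via linear independence of simple roots, and the identification of the ambiguity with $\bigcap_{\alpha}\ker\alpha' = Z(G')$, are both correct.

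The existence half, however, is where all of the content of the theorem lives, and your sketch leaves it genuinely open rather than reduced to a finite check. Two concrete points. First, the naive assignment $u_\alpha(x)\mapsto u_{\alpha'}(x^{q(\alpha)})$ with all constants equal to $1$ does not in general preserve the Chevalley commutator relations: the structure constants of $G$ and $G'$ agree only up to signs depending on choices of pinnings, and the crux of Steinberg's argument is that \emph{some} consistent system of constants $c_\alpha$ exists --- a sign-compatibility problem resolved rank-two subsystem by rank-two subsystem --- not merely that the relations ``pass'' once the root-datum condition holds. (Your remark that the characteristic intervenes for special isogenies is accurate: there the verification also uses that the obstructing structure constants vanish modulo $p$.) Second, the presentation you invoke is available off the shelf only for simply connected semisimple groups, whereas the statement concerns arbitrary reductive $(G,T)$; one needs either Steinberg's refined presentation for reductive groups or a reduction to the simply connected cover of the derived group together with the radical torus, and that reduction requires its own argument for descending the constructed homomorphism through the central multiplication map. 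Flagging ``the hard part will be the relation-checking'' shows the right instinct, but as written the proposal is a plan for Steinberg's proof rather than a proof; for the purposes of this paper, deferring to the reference is the appropriate resolution.
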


\begin{proof}
See \cite[1.5]{Steinberg99}.
\end{proof}

For instance, an important class of isogenies is given by the ones having central kernel, which are characterized by the fact that the associated integers $q(\alpha)$ are all equal to $1$: these are not interesting for our purpose of studying parabolic subgroups, since we may restrict ourselves in the classification to the case of a simply connected group (or an adjoint one, depending on the desired properties). The most known example of a noncentral isogeny is an iterated Frobenius homomorphism $F^m$, for which $\alpha^\prime = \alpha$ and all $q(\alpha)$ are equal to $p^m$. Do other isogenies exist? We shall now consider this question.

\subsubsection{Very special isogenies}%{The subgroup $N$}
\label{subsection_N}

From now on we make the assumption that $G$ is simple. The Weyl group $W=W(G,T)$ acts on roots leaving the integer $q$ invariant: if the Dynkin diagram of $G$ is simply laced, then there is only one orbit, hence all $q(\alpha)$ must assume the same value. This means, by the Isogeny Theorem, that up to inner automorphisms the only noncentral isogenies with source $G$ are iterated Frobenius homomorphisms.

On the other hand, assume that the Dynkin diagram of $G$ has a multiple edge. In this setting, there are two distinct orbits under the action of the Weyl group, corresponding to long and short roots: this allows, considering an isogeny $f \colon (G,T) \longrightarrow (G^\prime,T^\prime)$, for two possibly distinct values of $q(\alpha)$. Let us denote as $\Phi_<$ and $\Phi_>$ the subsets of $\Phi$ consisting of short and long roots respectively, and denote the two integer values as
\begin{align}
\label{qminus}
    q_< \defeq q(\alpha) \, (\alpha \in \Phi_<) \quad \text{and} \quad q_> \defeq q(\alpha) \,  (\alpha \in \Phi_>).
\end{align}
Analogously, we fix the following notation for the direct sum of root spaces associated to roots of a fixed length:
\[
\mathfrak{g}_< \defeq \bigoplus_{\alpha \in \Phi_<} \mathfrak{g}_\alpha = \bigoplus_{\alpha \in \Phi_<} \Lie U_\alpha \quad \text{and} \quad 
\mathfrak{g}_> \defeq \bigoplus_{\alpha \in \Phi_>} \mathfrak{g}_\alpha = \bigoplus_{\alpha \in \Phi_>} \Lie U_\alpha.
\]

We now recall a notion introduced in \cite[Section $7.1$]{CGP15}, based on previous work from Borel and Tits, and some of its properties. Also, let us remark that the assumption we will make is stronger than just asking that the group is not simply laced: to define the following notions, the characteristic needs to be $p=2$ for types $B_n$, $C_n$ and $F_4$, and $p=3$ in type $G_2$. Equivalently, the group $G$ has Dynkin diagram having an edge of multiplicity $p$. From now on, we will call this the \emph{edge hypothesis}. The following result is \cite[Lemma 7.1.2]{CGP15}.

\begin{lemma}
\label{712_conrad}
Let $G$ be simply connected satisfying the edge hypothesis.
Then the vector subspace
\[
\mathfrak{n}_G \defeq \langle \Lie\gamma^\vee (\Gm) \colon \gamma \in \Phi_< \rangle \oplus \mathfrak{g}_<
\]
is a $p$-Lie ideal of $\Lie G$. Moreover, every nonzero $G$-submodule of $\Lie G$ distinct from $\Lie Z(G)$ contains $\mathfrak{n}_G$.
\end{lemma}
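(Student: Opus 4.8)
The plan is to work entirely inside the weight decomposition $\Lie G = \mathfrak t \oplus \mathfrak g_< \oplus \mathfrak g_>$, where $\mathfrak t = \Lie T$, and to write $\mathfrak n_G = \mathfrak h_< \oplus \mathfrak g_<$ with $\mathfrak h_< = \langle \Lie\gamma^\vee(\Gm) : \gamma\in\Phi_<\rangle \subseteq \mathfrak t$. For the first assertion I would verify separately that $[\Lie G,\mathfrak n_G]\subseteq\mathfrak n_G$ and that $\mathfrak n_G$ is stable under the $p$-operation; for the second I would use that any $G$-submodule is automatically $T$-stable, $W$-stable and $U_\alpha$-stable, and exploit the action of $\operatorname{Ad}(U_\alpha)$ to turn long root vectors into short ones. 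In both parts the real content is a handful of congruences modulo $p$ that hold precisely because the squared-length ratio of long to short roots equals $p$, and this is where the only genuine difficulty lies.

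The combinatorial heart of the argument is two vanishing statements. First, for a long root $\alpha$ and a short root $\gamma$ one has $\langle\alpha,\gamma^\vee\rangle = \tfrac{(\alpha,\alpha)}{(\gamma,\gamma)}\langle\gamma,\alpha^\vee\rangle = p\,\langle\gamma,\alpha^\vee\rangle \equiv 0 \pmod p$, so $[\mathfrak h_<,\mathfrak g_>]=0$. Second, I claim that if $\alpha,\beta\in\Phi_<$ with $\alpha+\beta\in\Phi_>$, then the Chevalley structure constant $N_{\alpha,\beta}=\pm(r+1)$ vanishes mod $p$, where $r$ is the downward length of the $\alpha$-string through $\beta$. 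Indeed, comparing squared lengths gives $\langle\beta,\alpha^\vee\rangle = p-2$, hence $r-s=p-2$ for the upward length $s\ge 1$ (as $\alpha+\beta\in\Phi$); since a root string in a system of length ratio $p$ has at most $p+1$ elements, $r+s\le p$, forcing $s=1$ and $r=p-1$, so $N_{\alpha,\beta}=\pm p\equiv 0$. Granting these, the ideal property is assembled root space by root space: $[\mathfrak t,\mathfrak g_<]\subseteq\mathfrak g_<$ and $[\mathfrak t,\mathfrak h_<]=0$; long$+$short, when a root, is short, so $[\mathfrak g_>,\mathfrak g_<]\subseteq\mathfrak g_<$, while $[\mathfrak g_>,\mathfrak h_<]=0$ by the first vanishing; and $[\mathfrak g_<,\mathfrak g_<]\subseteq\mathfrak n_G$ because short$+$short is either short (landing in $\mathfrak g_<$), or a pair of opposite roots (landing in $\mathfrak h_<$), or long with vanishing constant by the second vanishing. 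For the $p$-operation I would check on generators that $e_\gamma^{[p]}=0$ and $(\Lie\gamma^\vee(\Gm))^{[p]}=\Lie\gamma^\vee(\Gm)$, and observe that the non-additive correction terms are iterated brackets, hence lie in the subalgebra $\mathfrak n_G$.

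For the minimality statement, let $M$ be a nonzero $G$-submodule with $M\neq\Lie Z(G)$. Being $T$-stable it decomposes as $M=(M\cap\mathfrak t)\oplus\bigoplus_{\alpha\in S}\mathfrak g_\alpha$ with $S\subseteq\Phi$, and $S$ is stable under $W$. If $S=\varnothing$, then $U_\alpha$-invariance of $M\subseteq\mathfrak t$ forces $d\alpha(M)=0$ for every root $\alpha$, so $M\subseteq\Lie Z(G)$; under the edge hypothesis $\dim_k\Lie Z(G)\le 1$ (it is $1$ for $B_n,C_n$ and $0$ for $F_4,G_2$), so $M=\Lie Z(G)$, contrary to assumption. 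Hence $S\neq\varnothing$. Since short and long roots each form a single $W$-orbit, $S$ meets $\Phi_<$ in $\varnothing$ or all of $\Phi_<$, and similarly for $\Phi_>$. If some short root lies in $S$, then $\mathfrak g_<\subseteq M$, and bracketing opposite short root vectors yields every $\Lie\gamma^\vee(\Gm)$, so $\mathfrak h_<\subseteq M$ and $\mathfrak n_G\subseteq M$. Otherwise $S=\Phi_>$ and $\mathfrak g_>\subseteq M$; choosing a short $\beta$ with $\alpha+\beta\in\Phi_<$ and $N_{\beta,\alpha}\not\equiv 0$ (one exists in each relevant type, e.g. with $\alpha-\beta\notin\Phi$, giving $N_{\beta,\alpha}=\pm1$), the linear-in-$t$ coefficient of $\operatorname{Ad}(u_\beta(t))e_\alpha\in M$ produces the short root vector $e_{\alpha+\beta}\in M$, contradicting $S\cap\Phi_<=\varnothing$. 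Thus a short root vector always occurs in $M$, and we conclude $\mathfrak n_G\subseteq M$.

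The main obstacle is the second structure-constant vanishing, $N_{\alpha,\beta}\equiv 0$ for short$+$short$=$long: this is the single place where the edge hypothesis (length ratio exactly $p$, not merely a multiple edge) is indispensable, and it is what ultimately makes $\mathfrak g_<$ closed under the bracket. I would isolate it as the key lemma and prove the string-length bound $r+s\le p$ uniformly; the remaining type-sensitive inputs are only the harmless facts that $\dim_k\Lie Z(G)\le 1$ and that a short $\beta$ with unit structure constant exists for the final step.
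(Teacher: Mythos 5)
Your proof is correct, but it is worth noting that the paper does not actually prove this statement at all: it is quoted verbatim as \cite[Lemma 7.1.2]{CGP15}, so the paper's ``proof'' is a citation, and your argument is a self-contained reconstruction of what the cited source establishes. The substance of your argument is sound and is essentially the same root-combinatorial mechanism that makes the lemma true in \cite{CGP15}: the two divisibility statements you isolate are exactly right. For the first, $\langle \alpha,\gamma^\vee\rangle = p\,\langle\gamma,\alpha^\vee\rangle \equiv 0 \pmod p$ for $\alpha$ long and $\gamma$ short kills $[\mathfrak{h}_<,\mathfrak{g}_>]$; for the second, your string-length bound $r+s\le p$ combined with $r-s=p-2$ and $s\ge 1$ correctly forces $N_{\alpha,\beta}=\pm p\equiv 0$ when two short roots sum to a long root (one checks this against $B_2$ and $G_2$: the strings are $\{\beta-\alpha,\beta,\beta+\alpha\}$ and $\{\beta-2\alpha,\dots,\beta+\alpha\}$ respectively). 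The remaining ingredients -- long$+$short is short when it is a root, $X_\gamma^{[p]}=0$ and $H_\gamma^{[p]}=H_\gamma$ together with Jacobson's formula for $p$-closedness, the $T$-weight and $W$-orbit decomposition of a $G$-submodule, the identification $\bigcap_\alpha\ker(d\alpha)=\Lie Z(G)$ with $\dim_k \Lie Z(G)\le 1$ in the four relevant types, and the extraction of the degree-one coefficient of $\operatorname{Ad}(u_\beta(t))X_\alpha$ over the infinite field $k$ -- are all standard and correctly deployed. Two cosmetic remarks: in the bracket analysis of $[\mathfrak{g}_<,\mathfrak{g}_<]$ you should also mention the case where the sum of two short roots is neither zero nor a root (the bracket then vanishes trivially), and the first divisibility uses the edge hypothesis just as indispensably as the second, so the second vanishing is not quite ``the single place'' where it enters; neither affects correctness. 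Compared with simply deferring to \cite{CGP15}, your proof has the merit of making visible, inside this paper's own Chevalley-basis framework (\Cref{lemmaChevalley}), precisely which structure constants vanish and why -- which is also the engine of the case-by-case analysis in Section 2 -- at the cost of redoing material the paper deliberately outsources.
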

By the equivalence of categories between $p$-Lie subalgebras of $\Lie G$ and algebraic subgroups of $G$ of height one, the $p$-Lie ideal $\mathfrak{n}_G$ lifts to a unique normal subgroup of $G$.

\begin{definition}
Let $G$ be simply connected satisfying the edge hypothesis. The algebraic subgroup of height one having $\mathfrak{n}_G$ as Lie algebra is denoted as \emph{$N_G$}.
\end{definition}

In particular, $N_G$ is characterized by being the unique minimal noncentral normal subgroup of $G$ having trivial Frobenius. For more details see \cite[Definition $7.1.3$]{CGP15}. Thus, we are led to consider the homomorphism
\[
\pi_G \colon G \longrightarrow \overline{G} \defeq G/N_G.
\]
Let us remark that this is a noncentral isogeny with corresponding values $q_< =p$ and $q_> = 1$.

\begin{definition}
\label{veryspecial}
With the above notations, the homomorphism $\pi_G$ is called the \emph{very special isogeny} associated to the simple and simply connected algebraic group $G$.
\end{definition}

The following step towards a better understanding of isogenies is the natural generalization of the above notion to the non simply connected case.

\begin{definition}
\label{def_N}
Let $G$ be simple % with Dynkin diagram having one edge of multiplicity $p$ 
 satisfying the edge hypothesis and let $\psi \colon \widetilde{G} \longrightarrow G$ be its simply connected cover. Let $N_{\widetilde{G}}$ be the kernel of the very special isogeny of $\widetilde{G}$ defined just above. We denote as :
\begin{itemize}
    \item $N_G$ its schematic image via the central isogeny $\psi$ ;
    \item $N_{m,\widetilde{G}} \defeq \ker (\pi_{\widetilde{G}^{(m)}} \circ F_{\widetilde{G}}^m) = (F^m_{\widetilde{G}})^{-1}(N_{\widetilde{G}^{(m)}})$, for any $m \geq 1$ ;
    \item $N_{m,G}$ the schematic image of $N_{m,\widetilde{G}}$ via the central isogeny $\psi$.
\end{itemize}
\end{definition}

Let us remark that $N_G$ is nontrivial, noncentral, normal and has trivial Frobenius. Moreover, it is minimal with such properties: let $H \subset N_G$ be another such subgroup, then $\widetilde{H} \defeq \psi^{-1}(H) \cap N_{\widetilde{G}}$ is nontrivial, noncentral, normal and of height one, hence by definition contained in $N_{\widetilde{G}}$. This shows that $N_G = \psi(N_{\widetilde{G}}) \subset \psi(\widetilde{H}) = H$.\\
It is now natural to ask ourselves if such a subgroup is unique, or if we can give an example of it appearing in a natural context. This is shown in \Cref{N_unique} and \Cref{N_SO} below.

Up to this point in this section we have assumed that the Dynkin diagram of $G$ has an edge of multiplicity $p$. What about the other cases not satisfying the edge hypothesis, in particular those which are not treated in \cite{Wenzel}? Let us assume that either $p=3$ and that the group $G$ is simple of type $B_n$, $C_n$ or $F_4$, or that $p=2$ and the group $G$ is simple of type $G_2$. Then an analogous construction to the subgroup $N_G$ cannot be done for the following reason: nontrivial normal subgroups of height one correspond, under the equivalence of categories, to nonzero $p$-Lie ideals of $\Lie G$, which do not exist due to the following result (see \cite[4.4]{Strade}).

\begin{lemma}
\label{Lie_simple}
Let $p=3$ and $G$ be simple of type $B_n$, $C_n$ for some $n \geq 2$, or $F_4$, or let $p=2$ and $G$ simple of type $G_2$. Then $\Lie G$ is simple as a $p$-Lie algebra.
\end{lemma}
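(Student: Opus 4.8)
The plan is to prove the statement in its equivalent form: the restricted Lie algebra $\mathfrak{g}\defeq\Lie G$ has no nonzero proper $p$-ideal. Two arithmetic remarks set the stage. In every case of the statement $p$ is coprime to the order of the fundamental group (which is $2$ for $B_n,C_n$ and $1$ for $F_4,G_2$), so $\mathfrak{g}$ does not depend on the chosen isogeny type, has trivial centre, and the coroots $h_\alpha$ span the Cartan subalgebra $\mathfrak{h}\defeq\Lie T$ (equivalently, $p\nmid\det$ of the Cartan matrix). Let $I\neq 0$ be a $p$-ideal; in particular it is an ideal, hence $\ad(\mathfrak{h})$-stable, so it decomposes as a sum of weight spaces for $\ad(\mathfrak{h})$.

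First I would set up the grading for types $B_n,C_n,F_4$ at $p=3$. There the differentials $\{\dd\alpha : \alpha\in\Phi\}$ remain pairwise distinct and nonzero modulo $p$ (the roots have coordinates in $\{0,\pm 1\}$, so no two are congruent mod $3$), hence the $\ad(\mathfrak{h})$-weight spaces are exactly the root lines and $I$ is genuinely $\Phi$-graded: $I=(I\cap\mathfrak{h})\oplus\bigoplus_{\alpha\in\Psi}\mathfrak{g}_\alpha$ for some $\Psi\subseteq\Phi$. If $\Psi=\emptyset$, i.e. $I\subseteq\mathfrak{h}$, then for $t\in I$ the bracket $[t,\mathfrak{g}_\alpha]=\dd\alpha(t)\,\mathfrak{g}_\alpha$ must lie in $I\cap\mathfrak{g}_\alpha=0$, so $\dd\alpha(t)=0$ for all roots; since $\bigcap_\alpha\ker\dd\alpha=Z(\mathfrak{g})\cap\mathfrak{h}=0$ (trivial centre, via $p\nmid\det$), this forces $I=0$, a contradiction. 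Hence some root space $\mathfrak{g}_\alpha$ lies in $I$.

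The core is then propagation. Once $\mathfrak{g}_\alpha\subseteq I$, I would use the Chevalley relations $[\mathfrak{g}_\beta,\mathfrak{g}_\gamma]=N_{\beta,\gamma}\,\mathfrak{g}_{\beta+\gamma}$ together with $[\mathfrak{g}_\beta,\mathfrak{g}_{-\beta}]=k\,h_\beta$ to sweep through the connected root system, reaching every root space and then all of $\mathfrak{h}$, and conclude $I=\mathfrak{g}$. For $B_n,C_n,F_4$ at $p=3$ every nonzero structure constant has absolute value in $\{1,2\}$ and every nonzero Cartan integer lies in $\{\pm 1,\pm 2\}$, all invertible mod $3$, so connectedness of $\Phi$ makes this step immediate and the proof complete.

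The main obstacle is type $G_2$ in characteristic $2$, where both reductions above break down. Modulo $2$ the twelve roots collapse onto only three nonzero characters of $\mathfrak{h}$ (for instance $\pm\alpha$ become equal, as do $\alpha_1$ and $3\alpha_1+2\alpha_2$), so the $\ad(\mathfrak{h})$-weight spaces are four-dimensional clumps rather than single root lines; moreover the structure constants of absolute value $2$ vanish mod $2$. Thus $I$ is only graded by these coarse weights and one cannot read off a single root space for free. Here I would argue by explicit disentangling: taking a nonzero homogeneous $x\in I$ supported on one weight clump and bracketing it against root vectors whose surviving structure constants (with $|N|\in\{1,3\}$, nonzero mod $2$) separate the summands, one extracts an individual root vector, after which the propagation above finishes the argument. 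This finite but delicate bookkeeping is exactly the content of the classification of ideals of Chevalley Lie algebras recorded in \cite[4.4]{Strade}, which one may alternatively invoke directly.
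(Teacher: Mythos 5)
Your proposal is correct, but it takes a genuinely different route from the paper: the paper offers no argument at all for this lemma --- it simply cites \cite[4.4]{Strade} --- whereas you give an essentially self-contained proof. For $p=3$ in types $B_n$, $C_n$, $F_4$ your two steps are sound: since $p$ divides neither the order of the fundamental group nor the determinant of the Cartan matrix, any nonzero ideal is $\ad(\Lie T)$-stable and hence graded by root lines (the differentials $\dd\alpha$ being pairwise distinct and nonzero mod $3$), and the propagation works because all structure constants and Cartan integers occurring in these types lie in $\{\pm 1,\pm 2\}$, hence are invertible mod $3$; connectedness of the non-orthogonality graph of an irreducible root system then forces $I=\Lie G$. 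Two caveats. First, your parenthetical justification that root coordinates lie in $\{0,\pm1\}$ fails for the long roots $\pm 2\varepsilon_i$ of $C_n$; the conclusion (no two distinct roots congruent modulo $3$ in $X(T)$) still holds, but it needs the extra half-line of checking. Second, for $G_2$ in characteristic $2$ --- the only genuinely delicate case, where the twelve root differentials collapse into three four-dimensional weight clumps and the structure constants $\pm 2$ vanish --- you sketch the disentangling but ultimately fall back on the very citation \cite[4.4]{Strade} that the paper invokes, so your argument is fully self-contained only in the $p=3$ cases. What your route buys is transparency: it shows exactly where the small-characteristic hypotheses enter (vanishing structure constants, congruences among roots), in the same spirit as the Chevalley-constant computations driving Section 2 of the paper; what the paper's bare citation buys is brevity and a uniform treatment of the one case your method does not completely settle by hand.
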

%%%%%%%%%%%%%%%%%%%%%%%%%%%%%%%%%%%%%%%%%%%%%%%%%%%%%%%%%%%%%%%%%%%%%%%%%%%%%%%%%%%%%%%%%%%%%%%%%%%%%%%%%%%%%%%%%%%%%%%%%%%%%%%

\subsubsection{Factorising isogenies} 
Let us start by recalling the following result concerning the factorisation of the Frobenius morphism (see \cite[Proposition $7.1.5$]{CGP15}) :

\begin{proposition}
\label{CGP_7.1.5}
Let $G$ be simple and simply connected % with Dynkin diagram having one edge of multiplicity $p$. Let $\pi_G \colon G \rightarrow \overline{G}$ be the very special isogeny of $G$.
satisfying the edge hypothesis. Then
\begin{enumerate}[(a)]
    \item There is a factorisation of the Frobenius morphism as
    \[
    \begin{tikzcd}
    F_G \colon (G \arrow[rr, "\pi_G"] && \overline{G} \arrow[rr ,"\overline{\pi}"] && G^{(1)})
    \end{tikzcd}
    \]
    which is the only nontrivial factorisation into isogenies with first step admitting no nontrivial factorisation into isogenies.
    \item The root system $\overline{\Phi}$ of $\overline{G}$ is isomorphic to the dual of the root system of $G$.
    \item The bijection between $\Phi$ and $\overline{\Phi}$ defined by $\pi_G$ exchanges long and short roots: denoting it as $\alpha \leftrightarrow \overline{\alpha}$, if $\alpha$ is long then $\overline{\alpha}$ is short and vice-versa.
    \item In the factorisation of point $(a)$, the map $\overline{\pi}$ is the very special isogeny of $\overline{G}$.
\end{enumerate}
\end{proposition}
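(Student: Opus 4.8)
The plan is to read off all four assertions from the isogeny of root data attached to $\pi_G$, together with the dictionary between height-one normal subgroups of $G$ and $G$-stable $p$-Lie ideals of $\Lie G$.

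For the existence in (a): since $N_G$ has trivial Frobenius it is of height one, hence contained in $G_1=\ker F_G$; as it is normal, $F_G$ descends to an isogeny $\overline\pi\colon\overline G=G/N_G\to G^{(1)}$ with $F_G=\overline\pi\circ\pi_G$, and neither factor is an isomorphism because $1\neq N_G\neq G_1$. For indecomposability of the first step and uniqueness, I would translate a factorisation of any first step $G\to G/K$ (with $K\subseteq G_1$ normal in $G$) into a chain of $G$-stable $p$-Lie ideals $0\subsetneq J\subsetneq\Lie K$. By \Cref{712_conrad} every nonzero $G$-submodule of $\Lie G$ other than $\Lie Z(G)$ contains $\mathfrak{n}_G=\Lie N_G$, so $\mathfrak{n}_G$ is the unique minimal \emph{noncentral} $p$-ideal; this makes $\pi_G$ the only first step that is indecomposable as a noncentral isogeny and pins down the factorisation. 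The one delicate point is the interplay with central subgroup schemes, since $\Lie Z(G)$ may or may not sit inside $\mathfrak{n}_G$ according to the type (for instance it does for $C_n$ with $n$ even), so the assertion must be understood for noncentral isogenies.

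For (b) and (c) I would compute the root datum directly. The excerpt records $q_<=p$ and $q_>=1$, so the induced map $\varphi$ satisfies $\varphi(\overline\alpha)=p\alpha$ for $\alpha$ short and $\varphi(\overline\alpha)=\alpha$ for $\alpha$ long. Substituting into the compatibility relation $q(\alpha)\langle\alpha,\beta^\vee\rangle=q(\beta)\langle\overline\alpha,\overline\beta^\vee\rangle$ and using that, under the edge hypothesis, $p$ is exactly the ratio of the squared lengths of a long to a short root, one checks that the Cartan integers $\langle\overline\alpha,\overline\beta^\vee\rangle$ are those of the dual system with the two lengths interchanged. This yields at once $\overline\Phi\cong\Phi^\vee$ and the fact that $\alpha\leftrightarrow\overline\alpha$ swaps long and short roots.

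Finally, for (d): the root system $\overline\Phi\cong\Phi^\vee$ still carries an edge of multiplicity $p$, so $\overline G$ satisfies the edge hypothesis and $\pi_{\overline G}$ is defined. Comparing $q$-profiles in $F_G=\overline\pi\circ\pi_G$ root by root — on a long root of $G$, which becomes short in $\overline G$, $\pi_G$ contributes $q=1$ while $F_G$ needs $p$, forcing $q=p$ for $\overline\pi$, and symmetrically $q=1$ on the long roots of $\overline G$ — shows that $\overline\pi$ has the very-special profile $(q_<,q_>)=(p,1)$ for $\overline G$. Hence its kernel acts as Frobenius on the short root groups of $\overline G$, is noncentral of height one, and by minimality equals $N_{\overline G}$; the uniqueness clause of the Isogeny Theorem then gives $\overline\pi=\pi_{\overline G}$. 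I expect the main obstacle to be the indecomposability-and-uniqueness half of (a): existence and the computations in (b)--(d) are bookkeeping with the Isogeny Theorem, whereas that half genuinely requires the classification of infinitesimal $G$-normal subgroups via \Cref{712_conrad} and the careful treatment of central subgroup schemes, which is exactly where the edge hypothesis is indispensable.
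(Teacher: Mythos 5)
The first thing to note is that the paper contains no proof of this Proposition at all: it is recalled verbatim from \cite[Proposition 7.1.5]{CGP15}, so your sketch must be measured against the argument in \emph{loc.\ cit.}, which it parallels in outline (root data plus the minimal ideal $\mathfrak{n}_G$ of \Cref{712_conrad}). Within that outline, your existence argument for (a), the root-data computation for (b)--(c), and your insistence that the uniqueness clause of (a) only makes sense among \emph{noncentral} isogenies are all sound. The last point is a genuine and necessary correction to the statement as the paper phrases it: for types $B_n$, $C_n$ in characteristic $2$ the center $\mu_2$ is a height-one normal subgroup, so $G \to G/\mu_2 \to G^{(1)}$ is a second nontrivial factorisation whose first step admits no nontrivial factorisation; and, as you observe, for $C_n$ with $n$ even one even has $\Lie Z(G) \subset \mathfrak{n}_G$, hence $Z(G) \subset N_G$, so that $\pi_G$ itself decomposes through $G/Z(G)$.

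The genuine gap is in (d). From ``its kernel \dots is noncentral of height one'' you conclude that it ``by minimality equals $N_{\overline G}$''. Minimality, i.e.\ \Cref{712_conrad} applied to $\overline G$, yields only the inclusion $N_{\overline G} \subseteq \ker \overline\pi$; it says nothing about the reverse inclusion, because the Lemma does not classify the $G$-stable ideals \emph{containing} $\mathfrak{n}_{\overline G}$. To get equality one must do more: for instance factor $\overline\pi = \rho \circ \pi_{\overline G}$, check from the $q$-profiles that $\rho$ has all $q$ equal to $1$ and hence is central, and then use that its target $G^{(1)}$ is simply connected to force $\rho$ to be an isomorphism (alternatively, a degree count $\dim \mathfrak{n}_G + \dim \mathfrak{n}_{\overline G} = \dim G$). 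More seriously, every one of these steps --- and already the expression ``$\pi_{\overline G}$'' together with the appeal to \Cref{712_conrad} for $\overline G$ --- presupposes that $\overline G$ is \emph{simply connected}, since both \Cref{712_conrad} and \Cref{veryspecial} are stated only in that case. Your sketch never establishes this; it is a substantive part of \cite[Proposition 7.1.5]{CGP15}, proved there by computing the cocharacter lattice of $\overline T$ and identifying it with the coroot lattice of $\overline\Phi$, and the paper itself invokes exactly this simple connectedness later (in the proof of \Cref{factorisation_isogenies}). The appeal to the uniqueness clause of the Isogeny Theorem cannot replace it either, since that clause compares isogenies only after one knows the two targets have isomorphic root data, which is again the dual-root-system-plus-simple-connectedness statement. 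So (b)--(c) and the existence half of (a) are fine, but (d) as written does not close.
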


In particular, the restriction $({\pi_G})_{\vert U_\alpha} \colon U_\alpha \rightarrow U_{\overline{\alpha}}$ gives an isomorphism whenever $\alpha$ is long and a purely inseparable isogeny of degree $p$ whenever $\alpha$ is short.

\begin{lemma}
\label{q_trivial}
Assume $f \colon G \rightarrow G^\prime$ is a noncentral isogeny with $G$ simply connected and %having Dynkin diagram with an edge of multiplicity $p$.
satisfying the edge hypothesis. If at least one value of $q(\alpha)$ is equal to $1$, then necessarily $q_> = 1$.
\end{lemma}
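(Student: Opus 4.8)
The plan is to extract a purely numerical constraint from the two compatibility conditions (i)--(ii) defining an isogeny of root data, and then invoke integrality of Cartan integers. Since $G$ is simple and satisfies the edge hypothesis, its Weyl group acts on $\Phi$ with exactly two orbits, the short roots $\Phi_<$ and the long roots $\Phi_>$; as $q$ is constant on Weyl orbits it takes only the two values $q_<$ and $q_>$ of \eqref{qminus}, both powers of $p$. The hypothesis that some $q(\alpha)=1$ means $q_<=1$ or $q_>=1$, and the conclusion to be reached is $q_>=1$. Since $f$ is noncentral, not all values are $1$, so it suffices to rule out the configuration $q_<=1$, $q_>>1$.

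First I would record the key identity. Writing $\alpha \leftrightarrow \alpha^\prime$ for the bijection $\Phi \leftrightarrow \Phi^\prime$ attached to $f$, conditions (i)--(ii) give $\varphi(\alpha^\prime)=q(\alpha)\alpha$ and $\varphi^\vee(\alpha^\vee)=q(\alpha)(\alpha^\prime)^\vee$, where $\varphi^\vee$ is the dual of $\varphi$. For any two roots $\alpha,\beta \in \Phi$ I would compute the integer $\langle \varphi(\beta^\prime),\alpha^\vee\rangle$ in two ways: expanding $\varphi(\beta^\prime)=q(\beta)\beta$ gives $q(\beta)\langle\beta,\alpha^\vee\rangle$, while using adjointness $\langle\varphi(\beta^\prime),\alpha^\vee\rangle=\langle\beta^\prime,\varphi^\vee(\alpha^\vee)\rangle$ together with $\varphi^\vee(\alpha^\vee)=q(\alpha)(\alpha^\prime)^\vee$ gives $q(\alpha)\langle\beta^\prime,(\alpha^\prime)^\vee\rangle$. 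This yields the fundamental relation
\[
q(\beta)\,\langle\beta,\alpha^\vee\rangle \;=\; q(\alpha)\,\langle\beta^\prime,(\alpha^\prime)^\vee\rangle,
\]
valid for all $\alpha,\beta\in\Phi$, in which every bracket is a Cartan integer (in $\Phi$ on the left, in $\Phi^\prime$ on the right).

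Next I would specialize to the pair of simple roots joined by the multiple edge: let $s$ be the short and $\ell$ the long one, so that the edge hypothesis forces $\langle s,\ell^\vee\rangle=-1$ and $\langle \ell,s^\vee\rangle=-p$. Taking $(\beta,\alpha)=(s,\ell)$ in the relation gives
\[
q_<\,(-1) \;=\; q_>\,\langle s^\prime,(\ell^\prime)^\vee\rangle,
\]
so $\langle s^\prime,(\ell^\prime)^\vee\rangle=-q_</q_>$. In the excluded configuration $q_<=1$, $q_>>1$ the right-hand side equals $-1/q_>$, which is not an integer; this contradicts the fact that $\langle s^\prime,(\ell^\prime)^\vee\rangle$ is a Cartan integer of the root system $\Phi^\prime$ of the reductive group $G^\prime$. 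Therefore $q_>>1$ is impossible once $q_<=1$, and combined with the opening reduction this forces $q_>=1$.

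The only genuinely delicate point is the derivation of the fundamental relation: one must be careful that $\varphi^\vee$ is exactly the transpose of $\varphi$ with respect to the perfect pairings $X(T)\times X^\vee(T)\to\Z$ and $X(T^\prime)\times X^\vee(T^\prime)\to\Z$, and that the bijection $\alpha\leftrightarrow\alpha^\prime$ appearing in (ii) is the same one governing both $\varphi$ and $\varphi^\vee$. Everything else --- the two-orbit structure, the values $\langle s,\ell^\vee\rangle=-1$ and $\langle\ell,s^\vee\rangle=-p$, and the integrality of Cartan integers in $\Phi^\prime$ --- is standard. Note that no appeal to \Cref{CGP_7.1.5} or to the factorisation is needed, so this lemma can safely serve as an input to the factorisation result.
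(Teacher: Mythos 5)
Your proof is correct, but it takes a genuinely different route from the paper's. The paper argues on the level of Lie algebras: since $f$ is noncentral, $\Lie(\ker f)$ is a nonzero $G$-submodule of $\mathfrak{g}$ not contained in $\Lie Z(G)$, so by \Cref{712_conrad} (quoted from \cite{CGP15}) it must contain all of $\mathfrak{g}_<$; hence $f$ cannot restrict to an isomorphism on any short root subgroup, i.e.\ $q_< \neq 1$, and the hypothesis that some $q(\alpha)=1$ then forces $q_>=1$. Your argument instead stays entirely inside the combinatorics of the root datum: the adjunction $\langle \varphi(\beta^\prime),\alpha^\vee\rangle = \langle \beta^\prime, \varphi^\vee(\alpha^\vee)\rangle$ yields the relation $q(\beta)\langle\beta,\alpha^\vee\rangle = q(\alpha)\langle\beta^\prime,(\alpha^\prime)^\vee\rangle$, and applying it to the short/long pair $(s,\ell)$ joined by the multiple edge, where $\langle s,\ell^\vee\rangle=-1$, gives $\langle s^\prime,(\ell^\prime)^\vee\rangle = -q_</q_>$; integrality of the pairing $X(T^\prime)\times X^\vee(T^\prime)\to\Z$ then rules out $q_<=1$ with $q_{>}>1$. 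Both proofs are sound, and your key identity is verified correctly (the crucial point, which you flag, is that the same bijection $\alpha\leftrightarrow\alpha^\prime$ governs both $\varphi$ and $\varphi^\vee$, exactly as in the paper's condition (ii)). What each buys: the paper's proof is uniform with the rest of the section, since it reuses the $\mathfrak{n}_G$ machinery and directly produces the inclusion $\mathfrak{g}_< \subset \Lie(\ker f)$, which is the geometric fact exploited again in Step 3 of \Cref{factorisation_isogenies}; yours is more elementary and self-contained, needing no input from \cite{CGP15}, and it actually proves slightly more, namely that $q_>$ divides $q_<$ for \emph{every} isogeny under the edge hypothesis, not merely the implication stated in the lemma.
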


\begin{proof}
Let us consider the Lie subalgebra $\Lie(\ker f) \subset \mathfrak{g}$. This is a proper $G$-submodule of the Lie algebra $\mathfrak{g}$ under the adjoint action, which is not contained in $\Lie Z(G)$: by \Cref{712_conrad}, $\Lie (\ker f)$ must contain all of $\mathfrak{g}_<$.
%\[
%\mathfrak{g}_< \defeq \bigoplus_{\alpha \in \Phi_<} \mathfrak{g}_\alpha = \bigoplus_{\alpha \in \Phi_<} \Lie U_\alpha.
%\]
This means that if $\alpha$ is a short root, then $f_{\vert U_\alpha} \colon U_\alpha \rightarrow U_{\alpha^\prime}$ is not an isomorphism: in other words, $q_< \neq 1$.
\end{proof}

\begin{proposition}
\label{factorisation_isogenies}
Let $G$ be a simple and simply connected algebraic group and % over a field $k$. Assume that either its Dynkin diagram is simply laced, or that it has an edge of multiplicity $p$.
let $f \colon G \rightarrow G^\prime$ be an isogeny. Then there exists a unique factorisation of $f$ as
\[
\begin{tikzcd}
f \colon G \arrow[rr, "\pi"] && \overline{G} \arrow[rr, "F_{\overline{G}}^{m}"] && (\overline{G})^{(m)} \arrow[rr, "\rho"]  && G^\prime,
\end{tikzcd}
\]
where $\rho$ is a central isogeny and $\pi$ is either the identity or - %when the Dynkin diagram of $G$ has an edge of multiplicity $p$ 
when $G$ satisfies the edge hypothesis - the very special isogeny $\pi_G$.
\end{proposition}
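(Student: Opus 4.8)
The plan is to read off everything from the combinatorial datum attached to $f$ by the Isogeny Theorem, namely the integers $q(\alpha)\in p^{\N}$ and the bijection $\alpha\leftrightarrow\alpha'$ of (\ref{def_integers}), and then to peel off the factors $\pi$, $F^m_{\overline G}$ and $\rho$ one at a time. Since $G$ is simple, the Weyl group acts on $\Phi$ with either one orbit (simply laced) or the two orbits of short and long roots, so $q$ takes at most the two values $q_<$ and $q_>$ of (\ref{qminus}). I would first record the only two possibilities for the pair $(q_<,q_>)$. Pairing the defining relations $\varphi(\alpha')=q(\alpha)\alpha$ and $\varphi^\vee(\beta^\vee)=q(\beta)(\beta')^\vee$ against each other for two adjacent simple roots gives the identity $q(\alpha)\langle\alpha,\beta^\vee\rangle = q(\beta)\langle\alpha',(\beta')^\vee\rangle$. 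When the bijection preserves root lengths this forces $q(\alpha)=q(\beta)$, so by connectedness of the Dynkin diagram all $q(\alpha)$ equal a single $p^m$; when the bijection swaps lengths the same identity reads $q_< = c\,q_>$ with $c$ the edge multiplicity, which for powers of $p$ is possible only when $c=p$ --- that is, exactly under the edge hypothesis --- and then $q_<=p\,q_>$.

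In the \emph{length-preserving} case I would take $\pi=\mathrm{id}$, so $\overline G=G$. If $m=0$ then $f$ is central and we set $\rho=f$. If $m\ge 1$, then $q(\alpha)=p^m\ge p$ forces $\mathrm d f$ to vanish on every root space, and likewise on $\Lie T$ since $\varphi$ is multiplication by $p^m$ there; hence $\mathrm d f=0$, equivalently $G_1=\ker F_G\subseteq\ker f$, and the universal property of the quotient $F_G\colon G\to G^{(1)}$ factors $f$ through $F_G$. Iterating $m$ times yields $f=\rho\circ F^m_G$ with $\rho$ having all $q$ equal to $1$, i.e.\ $\rho$ central.

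In the \emph{length-swapping} case the edge hypothesis holds and $(q_<,q_>)=(p^{m+1},p^m)$. The key step is to factor $f$ through the very special isogeny. Since $q_<\ge p$, the differential $\mathrm d f$ kills every short root space, so $\mathfrak g_<\subseteq\Lie\ker f$; in particular $\Lie\ker f$ is a nonzero $G$-submodule of $\Lie G$ different from $\Lie Z(G)$, whence \Cref{712_conrad} gives $\mathfrak n_G\subseteq\Lie\ker f$. Passing through the equivalence between height-one subgroups and $p$-Lie subalgebras, this means $N_G=\ker\pi_G\subseteq\ker f$, so $f=f_1\circ\pi_G$ for a unique isogeny $f_1\colon\overline G\to G'$. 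I would then read off the invariants of $f_1$ from $f=f_1\circ\pi_G$ together with \Cref{CGP_7.1.5}: since $\pi_G$ contributes $q=1$ on long roots and $q=p$ on short roots while exchanging lengths, a short root of $\overline G$ (image of a long root of $G$) gets $q_{f_1}=p^m$, and a long root of $\overline G$ (image of a short root of $G$) gets $q_{f_1}=p^{m+1}/p=p^m$. Thus $f_1$ is length-preserving with all $q$ equal to $p^m$, and the previous paragraph applied to $f_1$ gives $f_1=\rho\circ F^m_{\overline G}$ with $\rho$ central; combining, $f=\rho\circ F^m_{\overline G}\circ\pi_G$ with $\pi=\pi_G$.

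Finally, for uniqueness I would argue that the datum $(q_<,q_>)$ is an invariant of $f$ that already pins down the factorisation: whether the bijection preserves or swaps lengths decides $\pi\in\{\mathrm{id},\pi_G\}$ (mutually exclusive, since $q_<=q_>$ versus $q_<=p\,q_>$), the common value or the value $q_>$ determines $m$, and hence the composite $F^m_{\overline G}\circ\pi\colon G\to(\overline G)^{(m)}$ is a fixed faithfully flat isogeny whose kernel is contained in $\ker f$; the induced $\rho$ is then unique. I expect the genuine obstacle to be the length-swapping case --- concretely, upgrading the Lie-algebra inclusion $\mathfrak g_<\subseteq\Lie\ker f$ to the scheme-theoretic inclusion $N_G\subseteq\ker f$ via \Cref{712_conrad} and the height-one dictionary, and then checking that dividing by $\pi_G$ genuinely restores a length-preserving isogeny; the length-preserving case and uniqueness are comparatively formal.
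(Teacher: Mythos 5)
Your proof is correct, and it takes a genuinely different route from the paper's. The paper proceeds by an iterative peeling loop: it first asks whether $p$ divides every $q(\alpha)$ (using \Cref{Lie_simple} to settle the non-simply-laced cases that fail the edge hypothesis, e.g.\ $p=3$ in type $B_n$), factors through the Frobenius whenever this holds, and only factors through the very special isogeny at the end, when $q_>=1$ (via \Cref{q_trivial}); this produces $f=\rho\circ\pi_{G^{(m)}}\circ F^m_G$, and the paper then needs the commutation relation $\pi_{G^{(1)}}\circ F_G=F_{\overline{G}}\circ\pi_G$ (deduced from \Cref{CGP_7.1.5}) to rearrange the factors into the stated order. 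You instead determine the full invariant pattern up front: the pairing identity $q(\alpha)\langle\alpha,\beta^\vee\rangle=q(\beta)\langle\alpha',(\beta')^\vee\rangle$ on adjacent simple roots, combined with Weyl-invariance of $q$, shows that either $q$ is constant or (exactly under the edge hypothesis) $q_<=p\,q_>$; you then split off $\pi_G$ \emph{first} in the length-swapping case, check by composing invariants that the quotient isogeny $f_1$ is length-preserving with constant $q=p^m$, and peel Frobenii from $f_1$. This buys three things: the factorisation comes out directly in the order of the statement (no commutation lemma needed), the non-edge cases are absorbed into the length-preserving branch so \Cref{Lie_simple} and \Cref{q_trivial} are never invoked, and — unlike the paper, which is terse here — you get an honest uniqueness argument, since the pattern of $(q_<,q_>)$ pins down $\pi$ and $m$, and $\rho$ is then forced because $F^m_{\overline{G}}\circ\pi$ is an epimorphism. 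Two small points should be tightened. First, your dichotomy ``the bijection preserves or swaps lengths'' is not an assumption you get for free: it follows from applying your identity to both orderings of an adjacent pair of distinct lengths (the product of the two Cartan integers is preserved, so it equals the edge multiplicity $c$ on both sides, forcing the primed pair to be $\{-1,-c\}$), or alternatively from Weyl-equivariance of the bijection. Second, the vanishing of $\mathrm{d}f$ on $\Lie T$ is not because ``$\varphi$ is multiplication by $p^m$'' on characters; the correct statement is that $\varphi^\vee$ sends every coroot to $p^m$ times a coroot, and since $G$ is simply connected $X^\vee(T)=\Z\Phi^\vee$, so $\varphi^\vee\otimes k=0$ — this is precisely where simple connectedness enters (the paper's bracket argument $[\mathfrak{g}_\gamma,\mathfrak{g}_{-\gamma}]=\Lie(\gamma^\vee(\Gm))$ is an equivalent fix). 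Both repairs are one-liners, not gaps.
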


\begin{proof}
Let us start by considering the bijection $\Phi \leftrightarrow \Phi^\prime$ and the corresponding integers $q(\alpha)$ associated to the isogeny $f$, as recalled in (\ref{def_integers}).\\
\textbf{Step $1$}: is the isogeny central? This is equivalent to asking whether all integers $q(\alpha)$ are equal to one. If this is the case, then we are done. Next, we will hence assume that at least one value of $q$ is nontrivial.\\
\textbf{Step $2$}: does $p$ divide $q(\alpha)$ for all roots $\alpha$? If the group is simply laced this is always the case, since %there is only one orbit of roots under the Weyl group hence 
$q$ is constant. If $p=3$ and the group is of type $B_n$, $C_n$ or $F_4$, or if $p=2$ and the group is of type $G_2$, this is also always the case: indeed, there exists at least one $\gamma \in \Phi$ such that $q(\gamma) \neq 1$. Equivalently, the corresponding root space satisfies $\mathfrak{g}_{\gamma} \subset \mathfrak{h} \defeq \Lie (\ker f)$. Since $\mathfrak{h}$ is a nontrivial $p$-Lie ideal of $\Lie G$, it must coincide with all of $\Lie G$ thanks to \Cref{Lie_simple}.\\
In general, if the answer is yes, then the root subspace $\mathfrak{g}_{\alpha}$ is contained in $\Lie (\ker f)$ for all roots. Since the latter is a Lie ideal of $\Lie G$, taking brackets implies that the copy of $\mathfrak{sl}_2$ associated to each root is also contained in $\Lie (\ker f)$, which thus coincides with $\Lie G$. This means in particular that the Frobenius kernel of $G$ is contained in the kernel of $f$, so we can factorise by the Frobenius morphism as follows
\[
\begin{tikzcd}
G \arrow[rrrr, bend left, "f"] \arrow[rr, "F_G"] && G^{(1)} \arrow[rr, "f^\prime"] && G^\prime
\end{tikzcd}
\]
and go back to Step $1$ replacing $f$ by $f^\prime$. Notice that this is possible, since the group $G^{(1)}$ is still simple and simply connected. Moreover, the new integers associated to the isogeny $f^\prime$ are exactly $q(\alpha)/p$, hence their values strictly decrease. After this step, we can hence assume that there are two distinct values $q_<$ and $q_>$ as defined in (\ref{qminus}). In particular, let us remark that in this case $G$ is not simply laced.\\
\textbf{Step $3$}: this step only occurs when the Dynkin diagram of $G$ has an edge of multiplicity $p$; moreover, by \Cref{q_trivial} $q_> = 1$ while $q_<$ is divisible by $p$. This last condition means that for any short root $\gamma$, the root subspaces $\mathfrak{g}_{\gamma}$ and $\mathfrak{g}_{-\gamma}$ are contained in $\Lie (\ker f)$. This implies that
\[
(\mathfrak{sl}_2)_{\gamma} = [\mathfrak{g}_{\gamma},\mathfrak{g}_{-\gamma}] \oplus \mathfrak{g}_{\gamma} \oplus \mathfrak{g}_{-\gamma} = \Lie (\gamma^\vee (\Gm)) \oplus \mathfrak{g}_{\gamma} \oplus \mathfrak{g}_{-\gamma} \subset \Lie (\ker f),
\]
hence, by definition of the subgroup $N_G$ in the simply connected case, we have
\[
\langle \Lie (\gamma^\vee (\Gm)) \colon \gamma \in \Phi_< \rangle \bigoplus_{\gamma \in \Phi_<} \mathfrak{g}_{\gamma} =: \Lie N_G \subset \Lie(\ker f).
\]
Since $N_G$ is of height one, this implies that $N_G \subset \ker f$, so we can factorise by the very special isogeny as follows
\[
\begin{tikzcd}
G \arrow[rrrr, bend left, "f"] \arrow[rr, "\pi_G"] && G^{(1)} \arrow[rr, "f^\prime"] && G^\prime
\end{tikzcd}
\]
and go back to Step $1$. Notice that this is possible, since by \Cref{CGP_7.1.5}, the group $\overline{G}$ is still simple and simply connected. Moreover, we know that the bijection $\Phi \leftrightarrow \overline{\Phi}$ exchanges long and short roots and that ${(\pi_G)}_{\vert U_\alpha}$ is an isomorphism for $\alpha$ long and of degree $p$ when $\alpha$ is short. By denoting as $q^\prime(-)$ the integers associated to the new isogeny $f^\prime$, we then have
\begin{align*}
(q^\prime)_< & = q^\prime(\overline{\alpha}) = q(\alpha) = q_> = 1, & (\alpha \text{ long})\\
(q^\prime)_> & = q^\prime (\overline{\alpha}) = q(\alpha)/p = q_</p, & (\alpha \text{ short})
\end{align*}
so the nontrivial integer strictly decreases after this step.\\
Following this procedure, one will necessarily factorise a finite number of times leading finally to a central isogeny, which is the $\rho$ given in the statement of the proposition. Moreover, we claim that the Frobenius morphism and the very special isogeny - when it is defined - commute, in the following sense: if $G$ is simple and simply connected, then
\[
\pi_{G^{(1)}} \circ F_G = F_{\overline{G}} \circ \pi_G.
\]
To prove this, let us apply the factorisation of the Frobenius morphism given in \Cref{CGP_7.1.5} twice to get
\[
\pi_{G^{(1)}} \circ F_G = \pi_{G^{(1)}} \circ (\pi_{\overline{G}} \circ \pi_G) = (\pi_{G^{(1)}} \circ \pi_{\overline{G}}) \circ \pi_G = F_{\overline{G}} \circ \pi_G.
\]
This means that we can commute $\pi$ with the Frobenius and assume that it is the first morphism (or the middle one, which gives another unique factorisation) in the expression $f = \rho \circ F^m \circ \pi$.
\end{proof}

\begin{remark}
\label{diagram_isogeny}
The above Proposition allows us to associate to any isogeny $f \colon G \rightarrow G^\prime$ between simple algebraic groups a diagram of the form
\[
\begin{tikzcd}
G \arrow[rr, "f"] && G^\prime \\
\widetilde{G} \arrow[rr, "F^m \circ \pi"] \arrow[u, "\psi"] && \widetilde{G^\prime} \arrow[u, "\rho"]
\end{tikzcd}
\]
where $\psi$ is the simply connected cover of $G$ and $\rho$ is central. In particular, notice that the group $\overline{(\widetilde{G})}{}^{(m)}$, which is the target of the morphism $F^m \circ \pi$, is simply connected and $\rho$ is central, thus this group is the simply connected cover of $G^\prime$.
\end{remark}

The first immediate consequence of this factorisation result is the uniqueness of the subgroup $N_G$.

\begin{lemma}
\label{N_unique}
Let $G$ be simple satisfying the edge hypothesis and $H \subset G$ a normal, noncentral subgroup of height one. Then $H$ contains the subgroup $N_G$. In particular, such a subgroup is unique.
\end{lemma}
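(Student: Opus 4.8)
The plan is to translate the containment $N_G \subseteq H$ into a statement about the isogeny obtained by quotienting $G$ by $H$, and then to feed that isogeny into the factorisation result \Cref{factorisation_isogenies}. Since $H$ is a finite (infinitesimal, being of height one) normal subgroup, the quotient map $f \colon G \to G/H$ is an isogeny in the sense of our definition: it is surjective with finite kernel and sends $T$ to the maximal torus $T/(T\cap H)$ of $G/H$. Moreover $f$ is \emph{noncentral}, since its kernel $H$ is by hypothesis not contained in $Z(G)$, so that the associated integers $q(\alpha)$ are not all equal to one.

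First I would reduce to the simply connected cover. Let $\psi \colon \widetilde{G} \to G$ be the simply connected cover, which is a central isogeny; then $f \circ \psi \colon \widetilde{G} \to G/H$ is again a noncentral isogeny, now with simply connected source, and $\ker(f\circ\psi) = \psi^{-1}(H)$. Applying \Cref{factorisation_isogenies} (together with the diagram of \Cref{diagram_isogeny}) yields a factorisation $f\circ\psi = \rho \circ F^m_{\overline{\widetilde{G}}} \circ \pi$, with $\rho$ central and $\pi$ equal either to the identity or to the very special isogeny $\pi_{\widetilde{G}}$.

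The key step is then to show that $N_{\widetilde{G}} \subseteq \ker(F^m_{\overline{\widetilde{G}}} \circ \pi)$, from which $N_{\widetilde{G}} \subseteq \ker(f\circ\psi) = \psi^{-1}(H)$ follows at once, the kernel of a composite containing the kernel of its first map. To establish this I distinguish the two possibilities for $\pi$. If $\pi = \pi_{\widetilde{G}}$, then $\ker\pi_{\widetilde{G}} = N_{\widetilde{G}}$ by construction, and this is contained in $\ker(F^m_{\overline{\widetilde{G}}} \circ \pi_{\widetilde{G}})$. If instead $\pi = \id$, then noncentrality of $f\circ\psi$ forces $m \geq 1$ (otherwise $f\circ\psi = \rho$ would be central); since $N_{\widetilde{G}}$ has trivial Frobenius it lies in $\widetilde{G}_1 \subseteq \widetilde{G}_m = \ker F^m_{\widetilde{G}}$, which is exactly the relevant kernel. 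In either case $N_{\widetilde{G}} \subseteq \psi^{-1}(H)$, and pushing forward along the surjection $\psi$ gives $N_G = \psi(N_{\widetilde{G}}) \subseteq \psi(\psi^{-1}(H)) = H$, as desired. When $G$ is already simply connected one may bypass the factorisation and argue directly: $\Lie H$ is a nonzero $G$-submodule of $\Lie G$ not contained in $\Lie Z(G)$, so \Cref{712_conrad} gives $\mathfrak{n}_G \subseteq \Lie H$, whence $N_G \subseteq H$ by the equivalence between height-one subgroups and $p$-Lie subalgebras.

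Finally, the uniqueness assertion is a formal consequence: $N_G$ is itself nontrivial, noncentral, normal and of height one, and we have just shown it is contained in every subgroup with these properties, so it is the unique minimal such subgroup. I expect the only delicate point to be the passage to the non-simply-connected case, and specifically the fact that $\psi^{-1}(H)$ need not be of height one; this is what makes a direct Lie-algebra argument awkward, and it is precisely what the factorisation of $f\circ\psi$ at the level of group schemes allows one to sidestep.
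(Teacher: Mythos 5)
Your proof is correct, and it rests on the same two pillars as the paper's: passage to the simply connected cover and the factorisation $f \circ \psi = \rho \circ F^m \circ \pi$ of \Cref{factorisation_isogenies}, read through \Cref{diagram_isogeny}. The difference is in how that factorisation is exploited. The paper pins it down completely: it first disposes of the case $H = G_1$, then uses $H \subsetneq G_1$ (and the height-one hypothesis) to argue that the factorisation contains no Frobenius morphism, so $m=0$, whence noncentrality forces $\pi = \pi_{\widetilde{G}}$ and the containment is read off from $f \circ \psi = \rho \circ \pi_{\widetilde{G}}$. You never determine the factorisation at all: you only check that $N_{\widetilde{G}} \subseteq \ker(F^m \circ \pi)$ in every case that can occur --- trivially when $\pi = \pi_{\widetilde{G}}$, and via $N_{\widetilde{G}} \subseteq \widetilde{G}_1 \subseteq \ker F^m_{\widetilde{G}}$ when $\pi = \id$ and $m \geq 1$, the case $\pi = \id$, $m=0$ being excluded by noncentrality of $f \circ \psi$. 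This is a genuine economy: it treats $H = G_1$ uniformly rather than as a separate case, and it sidesteps the paper's ``no Frobenius factor'' claim, which is the one step of the paper's argument where the hypotheses $H \subsetneq G_1$ and $\mathrm{ht}(H)=1$ enter in a delicate way; what the paper's sharper analysis buys in exchange is the more precise conclusion that $G/H$ is a central quotient of $\overline{\widetilde{G}}$, information the lemma itself does not need. Your alternative argument for simply connected $G$ --- applying \Cref{712_conrad} to the nonzero $G$-submodule $\Lie H \not\subseteq \Lie Z(G)$ and lifting $\mathfrak{n}_G \subseteq \Lie H$ through the height-one correspondence --- is also valid and mirrors the proof of \Cref{q_trivial}; and your caution that $\psi^{-1}(H)$ need not be of height one correctly identifies why that direct Lie-algebra argument does not extend to the non-simply-connected case, which is exactly what the group-scheme-level factorisation circumvents.
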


\begin{proof}
The conclusion clearly holds when $H$ equals the Frobenius kernel of $G$, hence we can assume that $H \neq G_1$. To prove that $N_G \subset H$ it is enough to show that $f(N_G)$ is trivial, where $f$ is the isogeny $G \rightarrow G/H$. Consider the associated diagram given in Remark \ref{diagram_isogeny} :
\[
\begin{tikzcd}
G \arrow[rr, "f"] && G/H \\
\widetilde{G} \arrow[rr, "F^m \circ \pi"] \arrow[u, "\psi"] && \widetilde{G/H} \arrow[u, "\rho"]
\end{tikzcd}
\]
where $\pi$ is either the identity or the very special isogeny of $G$. We want to show that the bottom arrow is necessarily the very special isogeny $\pi_{\widetilde{G}}$. First, the subgroup $H$ is noncentral hence if $m=0$ then $\pi = \pi_{\widetilde{G}}$, otherwise the bottom row would be the identity and $f$ would be central. Moreover, $H \subsetneq G_1 = \ker(F \colon G \rightarrow G^{(1)})$ hence the factorisation of the isogeny $f \circ \psi$ given in \Cref{factorisation_isogenies} does not contain any Frobenius morphism~: this means that $m =0$ so necessarily $\pi$ is the very special isogeny of $\widetilde{G}$. Thus, we can conclude that $ f \circ \psi = \rho \circ \pi_{\widetilde{G}}$ and
\[
f(N_G) = f(\psi(N_{\widetilde{G}})) = \rho(\pi_{\widetilde{G}}(N_{\widetilde{G}})) = 1
\]
as wanted.
\end{proof}

\begin{example}
\label{N_SO}
Let us assume $p=2$ and consider the group $G = \SO_{2n+1}= \SO(k^{2n+1})$ in type $B_n$ with $n \geq 2$, defined as being relative to the quadratic form
\[
Q(x) = x_n^2 + \sum_{i=0}^{n-1} x_ix_{2n-i}
\]
and $G^\prime = \Sp_{2n} = \Sp(k^{2n})$ relative to the skew form 
\[
b(y,y^\prime) = \sum_{i=1}^n y_i y^\prime_{2n+1-i} -y_{2n+1-i}y^\prime_i.
 \]
 Since $G$ fixes the middle vector of the canonical basis $e_n$, it acts on $k^{2n} = k^{2n+1}/ke_n$ and this gives an isogeny 
\begin{align*}
 %\label{isogenyBC}
 \varphi \colon G= \SO_{2n+1} \longrightarrow \Sp_{2n} = G^\prime,
\end{align*}
 of degree $2^{2n}$.
Since the target of the isogeny is already simply connected, the diagram of Remark \ref{diagram_isogeny} is as follows :
\[
\begin{tikzcd}
\SO_{2n+1} \arrow[rr, "\varphi"] && \Sp_{2n} \\
\Spin_{2n+1} \arrow[u, "\psi"] \arrow[rru, bend right, "F^m \circ \pi"]
\end{tikzcd}
\]
In particular, since $\psi$ is central - identifying the root systems of a group and of its simply connected cover - the integers $q(-)$ associated to the isogeny $\varphi$ must be the same as those associated to the composition $F^m \circ \pi$. In particular, this implies $m=0$; hence the subgroup 
\[
N_{\SO_{2n+1}} = \ker\varphi = \psi(\ker \pi) = \psi(N_{\Spin_{2n+1}}),
\]
which appears in this natural construction, coincides with the one just defined above. In particular, in this case
%Let us verify that there is no contradiction considering the order of such a finite subgroup :
%\begin{itemize}
%    \item $\ker \psi = \mu_2$ has order $2$,
%    \item $\ker \varphi = \alpha_2^{2n}$ has order $2^{2n}$,
%    \item $\ker \pi = N_{\Spin_{2n+1}}$ is the group of height one corresponding to
    \[
    \Lie N_{\Spin_{2n+1}} = \Lie(\varepsilon_n^\vee(\Gm)) \bigoplus_{\gamma \in \Phi_<} \mathfrak{g}_{\gamma} = \Lie (\varepsilon_n^\vee(\Gm)) \bigoplus_{1\leq i \leq n} \left( \mathfrak{g}_{-\varepsilon_i} \oplus \mathfrak{g}_{\varepsilon_i}. \right) 
    \]
%    according to \cite{CGP15}. In particular, it has order $2^{2n+1}$.
%\end{itemize}
To conclude this example, let us determine explicitly the subgroup $N_{\SO_{2n+1}}= \ker \varphi$ and its Lie algebra, which will be needed later on. A matrix in $\ker \varphi$ is of the form
\[
A = \begin{pmatrix}
& a_0 & \\
1_n & \vdots & 0_n\\
& a_{n-1} &\\
b_0\ldots b_{n-1} & a_n & b_{n+1} \ldots b_{2n}\\
& a_{n+1} &\\
0_n & \vdots & 1_n\\
& a_{2n} &
\end{pmatrix}
\]
and the condition for $A$ to be in $\SO_{2n+1}$ gives
\begin{align*}
& Q(Ax) = a_n^2 x_n + \sum_{j\neq n}b_j^2 x_j^2 +\sum_{i=0}^{n-1} \left( x_ix_{2n-i} + a_{2n-i}x_ix_n +a_i x_{2n-i}x_n \right) \\
%a_0^2x_0^2 + \sum_{i=1}^n \left( a_i^2x_i^2 + b_i^2x_{n+i}^2 + c_id_ix_0^2 + c_ix_0x_{n+i} + d_ix_0x_{n+i} + x_ix_{n+i}\right)\\ 
= \, & Q(x) = x_n^2 + \sum_{i=0}^{n-1} x_ix_{2n-i},
\end{align*}
which is equivalent to $a_i=0$ for all $i \neq n$, $a_n^2=1$ and $b_i^2= 0$ for all $i$. Moreover, under these conditions $\det A = a_n = 1$, thus we have 
\[
N_{\SO_{2n+1}} = \ker \varphi = \left\{ 
\begin{pmatrix}
1 & 0 & 0\\
b_0\ldots b_{n-1} & 1 & b_{n+1}\ldots b_{2n}\\
0 & 0 & 1
\end{pmatrix} \in \GL_{2n+1} \colon b_i \in \alpha_p \right\} \simeq \alpha_p^{2n}.
\]
Finally, using the equalities in \Cref{roots_B} concerning short roots, we can conclude that $\Lie N_{\SO_{2n+1}} = \mathfrak{g}_<$.
\end{example}

\section{Case of Picard rank one}
\label{sec2}

Let us recall that we are working with a semisimple algebraic group $G \supset B \supset T$ over an algebraically closed field $k$ of characteristic $p>0$, together with a fixed Borel subgroup and a maximal torus contained in it. Our aim is to prove that all projective homogeneous varieties under a $G$-action having Picard group of rank one are isomorphic (as varieties) to homogeneous spaces having reduced stabilizers, in every type except $G_2$ when the characteristic is $p=2$.
Let us remark that, since the Picard rank of $X= G/P$ is equal to the number of simple roots of $G$ not contained in the root system of a Levi subgroup of $P$,
such spaces are realized as quotients $G/P$ such that the reduced subgroup of the stabilizer $P$ is maximal. For a full justification of this assertion, see \Cref{sec:divisors}.\\
The main result is the following :
\begin{theorem}
\label{final_rank1}
%Let $X$ be a projective algebraic variety over an algebraically closed field of characteristic $p >0$, homogeneous under the action of some simple algebraic group and having Picard rank equal to one. Then $X = G/P$ where $G$ is simple and $P$ is a \emph{reduced} parabolic subgroup.
Let $X$ be a projective algebraic variety over an algebraically closed field of characteristic $p>0$, homogeneous under a faithful action of a smooth connected algebraic group $H$ and having Picard group isomorphic to $\Z$.\\
Then there is a simple adjoint algebraic group $G$ and a \emph{reduced} maximal parabolic subgroup $P \subset G$ such that $X = G/P$, unless $p=2$ and $H$ is of type $G_2$.
\end{theorem}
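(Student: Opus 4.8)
The plan is to follow the three-step strategy announced in the introduction: first strip the hypotheses down to a faithful action of a simple group, then translate non-reducedness of the stabilizer into the existence of a nonzero submodule of $\Lie G/\Lie P^\alpha$, and finally use the structure constants and the very special isogeny to force that submodule to contain a normal $p$-Lie ideal, contradicting faithfulness.

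\textbf{Step 1 (Reductions).} Since $X$ is projective and homogeneous, the radical $R(H)$ has a fixed point by the Borel fixed point theorem; being normal, its fixed locus is $H$-stable, hence all of $X$, so $R(H)$ acts trivially and faithfulness gives $R(H)=1$. Thus $H$ is semisimple, its scheme-theoretic center lies in $B\subset P$ and so acts trivially, and we may present $X$ as a quotient of the adjoint group. Combined with the Picard-rank-one hypothesis (Subsection \ref{sec:divisors}), the splitting of a product of simple factors and faithfulness force all but one factor to act nontrivially, so $H$ reduces to a simple group $G$ with $X=G/P$ and $P_{\text{red}}=P^\alpha$ a maximal parabolic. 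The crucial reformulation is that faithfulness of the $G$-action is equivalent to $P$ containing no nontrivial normal subgroup scheme of $G$: a normal subgroup contained in the stabilizer $P$ lies in every conjugate of $P$ and hence acts trivially. It therefore suffices to show that, under this hypothesis, $P=P^\alpha$ is reduced; passing to the simply connected cover $\widetilde G$ (which does not change $X$) I may use the structure theory and the very special isogeny of \Cref{sec1}.

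\textbf{Step 2 (The Levi module $\Lie G/\Lie P^\alpha$).} Suppose for contradiction that $P$ is not reduced, so its $p$-Lie algebra $\mathfrak p=\Lie P$ strictly contains $\Lie P^\alpha$. As $P$ is normalized by $P_{\text{red}}=P^\alpha$, the quotient $\mathfrak p/\Lie P^\alpha$ is a nonzero submodule of $V\defeq\Lie G/\Lie P^\alpha$ for a Levi subgroup $L$ of $P^\alpha$. I would identify $V$ with $\bigoplus_{\gamma>0,\ \alpha\in\Supp(\gamma)}\mathfrak g_{-\gamma}$, the Lie algebra of the unipotent radical of the opposite parabolic, and write out its $L$-module structure via the matricial realization of the classical groups (and of $G_2$ in the Appendix). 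Since $L$ contains the maximal torus, any nonzero submodule is a sum of root spaces, so $\mathfrak p$ contains some $\mathfrak g_{-\gamma}$ with $\alpha\in\Supp(\gamma)$ together with its full $L$-submodule.

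\textbf{Step 3 (Structure constants and the very special isogeny).} Because $\mathfrak p$ is a $p$-subalgebra containing $\Lie P^\alpha$, it is closed under bracketing with every positive root vector and with the coroots $[\mathfrak g_\gamma,\mathfrak g_{-\gamma}]=\Lie\gamma^\vee(\Gm)$. Starting from the root space of Step 2, I would climb by bracketing with suitable positive root vectors of $\Lie P^\alpha$; each step is governed by a Chevalley structure constant $N_{\beta,\delta}$ of absolute value at most $3$, chosen, depending on $p$, so as not to vanish in $k$. Propagating in this way shows that $\mathfrak p$ contains all short-root spaces $\mathfrak g_<$ together with the corresponding coroot directions, hence the $p$-Lie ideal $\mathfrak n_G=\Lie N_G$ of \Cref{712_conrad} (respectively all of $\Lie G$ when the relevant integers $q(\alpha)$ are forced divisible by $p$). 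Through the equivalence between $p$-Lie ideals and normal subgroups of height one, this yields $N_G\subset P$ (respectively $G_1\subset P$), a nontrivial normal subgroup scheme inside $P$, contradicting faithfulness. Thus $P=P^\alpha$, and realizing $X=G/P^\alpha=G_{\mathrm{ad}}/\overline{P^\alpha}$ gives the statement.

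The main obstacle is Step 3 in the multiply-laced types in characteristics $2$ and $3$, where the climbing argument depends on the non-vanishing of specific structure constants. Exactly in type $G_2$ with $p=2$ and $\alpha$ the short simple root, the constants needed to reach every short-root space from the first layer of $V$ vanish in $k$, so the $L$-submodule produced in Step 2 need not force $\mathfrak n_G$ or $G_1$ into $\mathfrak p$; this is precisely the origin of the exceptional parabolic and of the counterexample of Subsection \ref{subsec:G2}. The technical heart of the proof is therefore the case-by-case verification that for every other triple (type, simple root $\alpha$, $p\in\{2,3\}$) a non-vanishing chain of structure constants does exist.
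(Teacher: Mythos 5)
Your proposal reproduces the paper's strategy essentially step for step: the reduction to a simple adjoint group acting faithfully with $P_{\text{red}}=P^\alpha$ (\Cref{final_rank1_weak}), the reformulation of faithfulness as the absence of a normal subgroup scheme of $G$ inside $P$ (Remark \ref{faithful_action}), the analysis of $\Lie P/\Lie P^\alpha$ as a sum of root spaces inside the Levi module $\Lie G/\Lie P^\alpha$, the climbing by Chevalley structure constants to force either $\Lie N_G$ or all of $\Lie G$ into $\Lie P$ (hence $N_G\subset P$ or $G_1\subset P$, both contradicting faithfulness), and the identification of type $G_2$, $p=2$, $\alpha$ short as the unique configuration where the required constants vanish. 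This is exactly the dichotomy proved case by case in Propositions \ref{LieN}, \ref{prop:CnPm}, \ref{LieN_2}, \ref{LieN_3}, \ref{LieN_6} and \ref{LieN_5}.

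Two gaps, however. First, your Step 1 tacitly assumes $H$ is affine: you apply the Borel fixed point theorem to the radical $R(H)$, but the statement only assumes $H$ smooth and connected, so $H$ could a priori have a nontrivial abelian-variety quotient, in which case neither $R(H)$ in the usual sense nor the fixed-point theorem is available. The paper disposes of this case first, via the structure theorem of \cite{verde}: $X\simeq A\times Y$ with $A$ an abelian variety and $Y$ homogeneous under the faithful action of the affine part $H_{\text{aff}}$, and the hypothesis that $\Pic X\cong\Z$ is discrete forces $A$ to be a point; only then does the reduction to a simple adjoint group (using \Cref{BB_flag}) proceed as you describe. Second, and more substantively, everything after ``I would climb by bracketing with suitable positive root vectors'' \emph{is} the proof: the non-vanishing chains of structure constants must be exhibited separately for each type, each simple root $\alpha$, and each of $p=2,3$ — this occupies all of Section 2 of the paper (for instance the partition of $\Gamma_2$ into the five Weyl-stable subsets $\Sigma_1,\ldots,\Sigma_5$ in the $F_4$ case, and the $p$-closedness computations needed in type $G_2$). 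Your outline correctly predicts what those computations must achieve and where they must fail, but contains none of them, so as written it is a faithful plan of the paper's argument rather than a proof of the theorem.
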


The purpose of this Section is to prove the above Theorem: the idea is to do it explicitly case by case, since there seems to be no easy general geometric argument, as the case of type $G_2$ in characteristic two confirms. We proceed as follows: in \Cref{subsec:reductions} we perform elementary reductions to the case where $X= G/P$ with $G$ simple and the characteristic is $2$ or $3$, and we recall some notation and results used in the proof. In \Cref{type_A} we illustrate the strategy of the proof in the simplest case of type $A_{n-1}$. In Sections \ref{sectionCnPn} to \ref{F4} we implement the argument in types $B_n$, $C_n$ and $F_4$. The case of $G_2$, for which the above Theorem fails in characteristic $2$, is then studied separately in \Cref{subsec:G2}.
%By \cite[Theorem $4.1.1$]{verde}, there is a canonical isomorphism $H \simeq A \times G$ of algebraic groups and $X \simeq A \times Y$ of $H$-varieties, where $A$ is an Abelian variety, $G \defeq H_{\text{aff}}$ is a semisimple group of adjoint type and $Y = G/P$ a complete homogeneous $G$-variety.

\subsection{Reductions and notation}
\label{subsec:reductions}

Let us place ourselves under the hypothesis of \Cref{final_rank1} and denote as $H_{\text{aff}}$ the largest connected affine normal subgroup of $H$. By \cite[Theorem 4.1.1]{verde}, there is a canonical isomorphism $X \simeq A \times Y$, where $A$ is an abelian variety and $Y$ is a projective homogeneous variety under a faithful $H_{\text{aff}}$ action. Moreover, $H_{\text{aff}}$ is semisimple and of adjoint type. Under our assumptions, the abelian variety must be a point because otherwise the Picard group of $X$ would not be discrete; more precisely, the hypothesis $\Pic X = \Z$ implies - by the combinatorial description of the Białynicki-Birula decomposition of homogeneous spaces given in \Cref{BB_flag} - that we can assume $H$ to be simple.

 %First, we can assume $H$ to be semisimple by quotienting by its radical, then quotienting also by the center allows to assume that it is of adjoint type. Moreover, consider a finite surjective morphism $f \colon G_{(1)} \times \cdots \times G_{(l)} \rightarrow H$ with each $G_{(i)}$ being a simple algebraic group, and denote as $P_{(i)} \subset G_{(i)}$ the pullbacks of the stabilizer $P$. Since the Picard group of $X$ is isomorphic to $\Z$ and since we have an exact sequence of the form
%\[X^\ast (\ker f) \longrightarrow \Pic X \longrightarrow \bigoplus_i \Pic (G_{(i)}/P_{(i)}) \longrightarrow 0\]
After such reductions, it is thus enough to prove the following statement.

\begin{theorem}
\label{final_rank1_weak}
    Let $G$ be a simple adjoint group, not of type $G_2$ when the characteristic is $2$, and $P$ a parabolic subgroup such that $P_{\text{red}}$ is maximal. If $G$ acts faithfully on $X= G/P$, then $P$ is a \emph{reduced} parabolic subgroup.
\end{theorem}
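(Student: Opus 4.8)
The plan is to prove the contrapositive in Lie-theoretic terms. Since $G$ is adjoint its scheme-theoretic centre is trivial, so the kernel $K$ of the action of $G$ on $X=G/P$ is a normal subgroup scheme which, if nontrivial, is noncentral; its height-one part is then a nontrivial normal subgroup of height one, whose Lie algebra is a nonzero $p$-Lie ideal of $\mathfrak{g} \defeq \Lie G$ contained in $\Lie P$. Writing $\mathfrak{p}^\alpha \defeq \Lie P^\alpha$, the stabilizer $P$ is reduced exactly when $\Lie P = \mathfrak{p}^\alpha$, so it suffices to show that \emph{if $\Lie P \supsetneq \mathfrak{p}^\alpha$ then $\Lie P$ contains a nonzero $p$-Lie ideal of $\mathfrak{g}$} — and this is what fails, and only fails, for $G$ of type $G_2$ with $p=2$. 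A first reduction disposes of the easy characteristics: when $p \geq 5$ or $G$ is simply laced, Wenzel's classification (\Cref{Wenzel_allp} together with the bijection (\ref{bijection})) forces $P = G_m P^\alpha$, and a nonzero $m$ would place the nontrivial normal subgroup $G_m$ inside $P$, contradicting faithfulness; hence $P$ is reduced. I am thus reduced to $p \in \{2,3\}$ and $G$ of type $B_n$, $C_n$, $F_4$ or $G_2$.

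Next I would set up the module picture (the ``matricial description''). Because $\Lie P$ is stable under the adjoint action of $P_{\mathrm{red}} = P^\alpha$ and contains $\mathfrak{p}^\alpha$, I can write $\Lie P = \mathfrak{p}^\alpha \oplus M$, where $M$ is a nonzero submodule, for the Levi $L^\alpha$, of the opposite nilradical $\mathfrak{u}_\alpha^- = \bigoplus_{\gamma \in \Phi^+,\ \alpha \in \Supp(\gamma)} \mathfrak{g}_{-\gamma}$, and where $\Lie P$ is closed under brackets and under the $p$-operation. I would grade $\mathfrak{u}_\alpha^-$ by the coefficient of $\alpha$, describe each graded layer as an explicit $L^\alpha$-module using the realizations available earlier (special orthogonal and symplectic groups for $B_n$, $C_n$, as in \Cref{N_SO}, and a Chevalley basis for $F_4$, $G_2$), and record how the raising operators $\ad \mathfrak{g}_\beta$ with $\alpha \in \Supp(\beta)$, together with the $p$-power map, move between layers.

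The heart of the argument is then to propagate nonvanishing of $M$ down to the bottom layer. Starting from a nonzero $x \in M$, I would apply adjoint operators coming from $\mathfrak{p}^\alpha$ and the $p$-operation, using explicit structure constants $N_{\beta,-\gamma}$ relative to a Chevalley basis \emph{chosen so that they do not vanish in the ambient characteristic}, to force every short negative root space $\mathfrak{g}_{-\gamma}$ with $\alpha \in \Supp(\gamma)$ into $M$. Together with the short positive root spaces (already in $\mathfrak{b} \subseteq \mathfrak{p}^\alpha$) and the short negative ones with $\alpha \notin \Supp(\gamma)$ (in the Levi part of $\mathfrak{p}^\alpha$), this yields $\mathfrak{g}_< \subseteq \Lie P$, whence the short coroots $[\mathfrak{g}_\gamma,\mathfrak{g}_{-\gamma}]$ lie in $\Lie P$ as well. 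Under the edge hypothesis ($p=2$ for $B_n,C_n,F_4$ and $p=3$ for $G_2$) this means precisely that $\mathfrak{n}_G \subseteq \Lie P$, so the kernel $N_G$ of the very special isogeny is contained in $P$; being nontrivial, noncentral and normal (\Cref{712_conrad}, \Cref{N_unique}), it obstructs faithfulness. Outside the edge hypothesis ($p=3$ for $B_n,C_n,F_4$) the algebra $\mathfrak{g}$ is simple as a $p$-Lie algebra by \Cref{Lie_simple}, so the same chain of brackets instead fills up all of $\mathfrak{u}_\alpha^-$ and forces $\Lie P = \mathfrak{g}$, i.e. $G_1 \subseteq P$, again contradicting faithfulness. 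I would run this type by type, treating $A_{n-1}$ first as a warm-up where $\mathfrak{u}_\alpha^-$ is already an irreducible $L^\alpha$-module.

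The step I expect to be the genuine obstacle is exactly the choice of nonvanishing structure constants in this propagation: there is no characteristic-free or purely geometric way to carry it out, since the relevant integers attached to the Chevalley basis can be divisible by $p$. This is what makes the case-by-case analysis unavoidable, and it is precisely where type $G_2$ in characteristic $2$ with $\alpha$ the short simple root resists: the structure constant one would need vanishes, $M$ cannot be pushed down to the short root spaces, and — as the explicit construction of \Cref{subsec:G2} shows — genuinely non-reduced parabolics with faithful action do occur. The positive statement therefore holds exactly up to this single exception.
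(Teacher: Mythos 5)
Your proposal is correct and follows essentially the same route as the paper: reduce via faithfulness to showing that a nonreduced $P$ with $P_{\text{red}} = P^\alpha$ traps a nontrivial normal subgroup scheme, view $\Lie P/\Lie P^\alpha$ as an $L^\alpha$-submodule of $\Lie G/\Lie P^\alpha$, propagate using Chevalley structure constants chosen to be nonzero in the given characteristic (case by case in types $B_n$, $C_n$, $F_4$, $G_2$, with $A_{n-1}$ as warm-up), and conclude via $N_G \subseteq P$ under the edge hypothesis or $G_1 \subseteq P$ when $p=3$, with exactly the $G_2$, $p=2$, short-root obstruction you identify. The only cosmetic deviations are that the paper's propagation arguments use brackets alone (the $p$-operation enters only in the later analysis of the subalgebras $\mathfrak{h}$ and $\mathfrak{l}$), and that in types $B_n$ and $C_n$ the normal-subgroup conclusion is drawn in the simply connected cover and then pushed to the adjoint quotient (see Remarks \ref{rem_lifting} and \ref{rem_lifting2}).
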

Let us keep notations from Subsection \ref{subsection_notations} and recall for reference the statement of \cite[Theorem $14$]{Wenzel}.

\begin{theorem}
\label{Wenzel}
There is an injective map
\begin{align*}
    \Hom_{\text{Set}} (\Delta, \N \cup \{ \infty \}) & \longrightarrow \{ \text{parabolic subgroups } G \supset P \supset B \}\\
    %\varphi & \longmapsto P_\varphi\defeq U_\varphi \cdot P_{I(\varphi)},
    \varphi & \longmapsto \bigcap_{\alpha \in \Delta \colon \varphi(\alpha) \neq \infty} G_{\varphi(\alpha)}P^\alpha.
\end{align*}
%where
%\[
%I(\varphi) = \{ \alpha \in \Delta \colon \varphi(\alpha) = \infty \} \quad \text{and} \quad
%    U_\varphi = \prod_{\beta \in \Phi^+ \backslash \Phi_{I(\varphi)}}u_{-\beta}(\alpha_{p^{\varphi(\beta)}}).
%\]
Moreover, if $p \geq 5$ or the Dynkin diagram of $G$ is simply laced, this map is also surjective.
\end{theorem}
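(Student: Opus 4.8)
The whole statement can be reduced, via \Cref{Wenzel_allp}, to a computation of the function associated with a parabolic: recall that any standard parabolic $Q$ is completely determined by $\varphi_Q\colon\Phi^+\to\N\cup\{\infty\}$, where $U_{-\gamma}\cap Q=u_{-\gamma}(\alpha_{p^{\varphi_Q(\gamma)}})$. My plan is to express $\varphi_{P_\varphi}$ explicitly. For a single factor one has $\varphi_{G_mP^\alpha}(\gamma)=m$ if $\alpha\in\Supp(\gamma)$ and $\infty$ otherwise, as already observed. Now intersecting subgroups commutes with intersecting with the fixed root subgroup $U_{-\gamma}$, and inside $\Ga$ one has $\alpha_{p^a}\cap\alpha_{p^b}=\alpha_{p^{\min(a,b)}}$; hence the function attached to an intersection is the pointwise minimum of the functions of its factors, giving
\[
\varphi_{P_\varphi}(\gamma)=\min\{\varphi(\alpha)\colon\alpha\in\Supp(\gamma)\}.
\]

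Well-definedness and injectivity follow at once. Each factor $G_{\varphi(\alpha)}P^\alpha$ with $\varphi(\alpha)\neq\infty$ is a parabolic subgroup scheme containing $B$, and any closed subgroup scheme containing $B$ is parabolic (its quotient is dominated by the proper $G/B$), so $P_\varphi$ is parabolic. For injectivity, specialize the displayed formula to a simple root $\beta$: since $\Supp(\beta)=\{\beta\}$ we get $\varphi_{P_\varphi}(\beta)=\varphi(\beta)$, so $\varphi$ is recovered as the restriction of $\varphi_{P_\varphi}$ to $\Delta$, and the latter function is read off directly from $P_\varphi$.

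For surjectivity under the hypothesis $p\geq5$ or simply laced, I start from an arbitrary standard parabolic $P$, set $\varphi:=\varphi_P|_\Delta$, and aim to prove $P=P_\varphi$. By \Cref{Wenzel_allp} it suffices to check $\varphi_P=\varphi_{P_\varphi}$, that is, the identity
\[
\varphi_P(\gamma)=\min\{\varphi_P(\alpha)\colon\alpha\in\Supp(\gamma)\}\qquad(\gamma\in\Phi^+).
\]
I would argue by induction on the height of $\gamma$, the simple roots being the trivial base case. For $\gamma$ of height at least two, pick a simple root $\alpha$ with $\delta:=\gamma-\alpha\in\Phi^+$, so $\Supp(\gamma)=\Supp(\delta)\cup\{\alpha\}$, and set $a=\varphi_P(\alpha)$, $b=\varphi_P(\delta)$. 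The inequality $\geq$ comes from the Chevalley commutator $[u_{-\alpha}(x),u_{-\delta}(y)]\in P$ evaluated on the universal infinitesimal parameters $x\in\alpha_{p^a}$, $y\in\alpha_{p^b}$: its $U_{-\gamma}$-component is $u_{-\gamma}(Cxy)$ for a structure constant $C$, and requiring $Cxy\in\alpha_{p^{\varphi_P(\gamma)}}$ over the test ring $k[x,y]/(x^{p^a},y^{p^b})$ forces $\varphi_P(\gamma)\geq\min(a,b)$ whenever $C\neq0$ in $k$. The inequality $\leq$ uses instead the positive root groups $U_\alpha,U_\delta\subset B\subset P$: the commutators $[u_\alpha(1),u_{-\gamma}(x)]$ and $[u_\delta(1),u_{-\gamma}(x)]$ have $U_{-\delta}$- and $U_{-\alpha}$-components $u_{-\delta}(C'x)$ and $u_{-\alpha}(C''x)$, where now the positive parameter is a unit, so $\varphi_P(\delta)\geq\varphi_P(\gamma)$ and $\varphi_P(\alpha)\geq\varphi_P(\gamma)$; the inductive formula for $\delta$ upgrades this to $\varphi_P(\alpha')\geq\varphi_P(\gamma)$ for all $\alpha'\in\Supp(\gamma)$. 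Feeding the inductive formula for $\delta$ into both bounds identifies $\min(a,b)$ with $\min_{\alpha'\in\Supp(\gamma)}\varphi_P(\alpha')$, closing the induction.

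The delicate point --- and the reason the result is conditional --- is the isolation of the leading commutator term together with the nonvanishing of the structure constants $C,C',C''$ in $k$. Extracting the $U_{-\gamma}$-component is harmless because the roots $i(-\alpha)+j(-\delta)$ appearing in the commutator formula are pairwise distinct, so each contributes to a single root subgroup; but the coefficient of the $(1,1)$-term is an integer structure constant, and the argument collapses exactly when it dies modulo $p$. These constants have absolute value strictly smaller than $5$ and equal $\pm1$ in the simply-laced case, which is precisely the hypothesis under which surjectivity is asserted --- and their vanishing in characteristics $2$ and $3$ is the obstruction the rest of the paper is devoted to analysing.
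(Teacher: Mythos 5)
The paper never proves this statement: it is recalled verbatim from \cite[Theorem 14]{Wenzel} ("recall for reference"), so there is no internal proof to compare against. What you have written is in effect a reconstruction of Wenzel's original argument, and its skeleton is correct and matches the way the paper uses the result: the formula $\varphi_{P_\varphi}(\gamma)=\min\{\varphi(\alpha)\colon \alpha\in\Supp(\gamma)\}$ (intersection of subgroup schemes commutes with intersecting against $U_{-\gamma}$, and $\alpha_{p^a}\cap\alpha_{p^b}=\alpha_{p^{\min(a,b)}}$ inside $\Ga$); injectivity by evaluating at simple roots; and the reduction of surjectivity, via \Cref{Wenzel_allp}, to the identity $\varphi_P(\gamma)=\min_{\alpha\in\Supp(\gamma)}\varphi_P(\alpha)$, proved by induction on height through commutator relations, with the hypothesis ($p\geq 5$ or simply laced) entering exactly through the nonvanishing of the constants $\pm(r+1)$. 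This is precisely the route of \cite{Wenzel}, and it is consistent with the sentence following \Cref{Wenzel_allp} in the paper, which states that surjectivity amounts to this $\min$-identity.

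The one step you treat too lightly is the extraction of root components. From $[u_{-\alpha}(x),u_{-\delta}(y)]\in P$ you want to conclude $u_{-\gamma}(Cxy)\in P$. Pairwise distinctness of the roots $i\alpha+j\delta$ only tells you that the commutator has well-defined components inside $U^-\simeq\prod_{\gamma\in\Phi^+}U_{-\gamma}$; it does not by itself place each component in $P$. For that you need the scheme-theoretic decomposition $P\cap U^-=\prod_{\gamma\in\Phi^+}(P\cap U_{-\gamma})$, valid because $P\cap U^-$ is normalized by $T$ --- a nontrivial result of Wenzel's paper, and the same fact this paper invokes when writing $U^-_{S^\prime}=\prod(U_{-\gamma}\cap S^\prime)$ in \Cref{subsec:G2}. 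Once you cite it, the rest of your argument goes through: the $(1,1)$-component is unaffected by reordering corrections, since any sum of two roots of the form $-i\alpha-j\delta$ has $\alpha$-coefficient at least $2$ and so cannot equal $-\gamma$; your universal-parameter computation over $k[x,y]/(x^{p^a},y^{p^b})$ then correctly forces $\varphi_P(\gamma)\geq\min(a,b)$; and in the reverse direction one first strips off the positive-root components of $[u_\alpha(1),u_{-\gamma}(x)]$ (these lie in $B\subset P$) before applying the same decomposition. So the proposal is correct modulo an explicit appeal to Wenzel's product decomposition, which should replace the phrase ``harmless because \ldots pairwise distinct''.
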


\begin{remark}
\label{faithful_action}
Let us start by taking a projective variety $X$ which is homogeneous under the action of a simple group $H$. By replacing such a group with the image $G$ of the morphism $H \rightarrow \underline{\Aut}_X$ (see \Cref{autX} concerning the notation on automorphism groups) we may assume that the action is faithful. % and let $Z^\prime=Z(G^\prime)$ be its center. Then
%\[
%X = G^\prime / P^\prime = (G^\prime/Z^\prime) / (P^\prime/Z^\prime) =: G/P
%\]
%Replacing $G^\prime$ by $G$ means that
%\[
%\ker (G \longrightarrow \underline{\Aut}(X)) = \bigcap_{g \in G(k)} gPg^{-1} = 1,
%\]
%since the last term is central hence trivial.
In particular, this means that there is no normal algebraic subgroup of $G$ contained in $P$. However, we need to be careful in the case-by-case proof because this additional assumption - which is not restrictive on the varieties considered - forces the group $G$ to be of adjoint type.
\end{remark}
Let us place ourselves in the setting of \Cref{final_rank1_weak} and sketch the strategy of the proof: let $P$ be a nonreduced parabolic subgroup such that
\[
P_{\text{red}}= P^\alpha
\]
for some simple positive root $\alpha \in \Delta$; consider $P^\alpha \subsetneq P \subset G $, inducing the corresponding inclusions on Lie algebras: \[
\Lie P^{\alpha} \subsetneq \Lie P \subset \Lie G.
\]
Since we do not have any information a priori on $P$, we  study the quotient 
\[
V_\alpha \defeq \Lie G / \Lie P^\alpha,
\]
considered as a $L^\alpha$-module under the representation given by the adjoint action, where $L^\alpha$ denotes the Levi subgroup defined as the intersection $P^\alpha \cap (P^\alpha)^-$ with the corresponding opposite parabolic subgroup.\\
Let us fix some notation and state a Lemma on structure constants which will be repeatedly used in what follows :

\begin{itemize}
    \item the decomposition of the Lie algebra in weight spaces under the $T$-action is
    \[
    \mathfrak{g} = \Lie G = \Lie T \bigoplus_{\gamma \in \Phi}  \mathfrak{g}_{\gamma}, %= \mathfrak{t} \bigoplus_{\gamma \in \Phi}  \mathfrak{g}_{\gamma},
    \]
    \item when $G$ is simply connected, a Chevalley basis of $\Lie G$ is denoted as $\{ X_\gamma, H_\alpha \}_{\gamma \in \Phi, \alpha\in\Delta}$.
     \end{itemize}
    In particular, $\mathfrak{g}_\gamma = \Lie U_\gamma = k X_\gamma$ and $X_\gamma = \dd u_\gamma (1)$, where $u_\gamma$ is the root homomorphism $\Ga \stackrel{\sim}{\longrightarrow} U_\gamma$. Whenever the Dynkin diagram of $G$ is not simply laced, 
    \begin{itemize} 
    \item $\Phi_< \subset \Phi$ and $\Phi_> \subset \Phi$ denote respectively the subsets of short and long roots, whenever a multiple edge appears in the Dynkin diagram,
    \item when $G$ is simply connected %and the Dynkin diagram has an edge of multiplicity $p$,
    and satisfies the edge hypothesis (see \Cref{subsection_N}), $N_G$ denotes the finite group scheme of height one whose Lie algebra is given by
    \[
    %\label{subgroupN}
    \Lie N_G = \langle \, \Lie (\gamma^\vee(\Gm)) \colon \gamma \in \Phi_< \, \rangle  \bigoplus_{\gamma \in \Phi_<} \mathfrak{g}_\gamma,
    \]
    as recalled in \Cref{def_N}.%which is unique due to the equivalence between subgroup schemes of height at most one and finite dimensional $p$-Lie algebras,
    \item when $G$ is not simply connected and satisfies the edge hypothesis, $N_G$ denotes the schematic image of $N_{\widetilde{G}}$ via the universal covering map, where $\widetilde{G}$ is the simply connected cover of $G$ - see again \Cref{def_N}.
 \end{itemize}

 Let us recall the following Lemma - see \cite[Chapter VII, $25.2$]{Humphreys} - which allows us to calculate all structure constants with respect to a Chevalley basis of the Lie algebra $\Lie G$, where $G$ is simple and simply connected. 

\begin{lemma}[Chevalley]
\label{lemmaChevalley}
Let $\{ X_\gamma\colon \gamma \in \Phi, \, H_\alpha \colon \alpha \in \Delta\}$ be a Chevalley basis for $\Lie G$, where $G$ is simple and simply connected. Then the resulting structure constants satisfy
\begin{enumerate}[(a)]
    \item $[H_{\alpha},H_{\beta}]= 0$ for all $\alpha,\beta \in \Delta$ ;
    \item $[H_\alpha,X_\gamma] = \langle \alpha, \gamma \rangle X_\gamma $ for all $\alpha \in \Delta$, $\gamma \in \Phi$ ;
    \item $[X_{-\gamma}, X_{\gamma}]$ is a linear combination with integer coefficients of the $H_\alpha$'s ;
    \item $[X_\gamma,X_\delta] = \pm (r+1)X_{\gamma+\delta}$ for all $\delta \neq \pm \gamma$ roots such that the $\delta$-string through $\gamma$ goes from $\gamma-r\delta$ to $\gamma+q\delta$ with $q \geq 1$, i.e. such that $\gamma + \delta$ is still a root ;
    \item $[X_\gamma,X_\delta] = 0$ for all roots $\delta \neq \pm \gamma$ such that $\gamma+\delta$ is not a root.
\end{enumerate}
\end{lemma}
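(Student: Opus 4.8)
The statement is the classical theorem on the existence of a Chevalley basis, and my plan is to prove it over $\Z$ (equivalently over $\C$) and then reduce modulo $p$. Since $G$ is simple and simply connected, $\Lie G$ is the base change to $k$ of the Chevalley $\Z$-form $\mathfrak{g}_\Z$ of the complex simple Lie algebra of the same type, and the basis $\{X_\gamma,H_\alpha\}$ is by construction the reduction of the corresponding integral basis. Hence all five relations, with the integers $\langle\alpha,\gamma\rangle$ and $\pm(r+1)$ read in the prime field $\mathbb{F}_p\subset k$, follow once they are established for $\mathfrak{g}_\Z$; I would therefore argue in characteristic zero throughout and invoke base change at the end. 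Items (a) and (b) are then immediate: a Cartan subalgebra is abelian, giving (a), while (b) is just the statement that $X_\gamma$ spans the weight space $\mathfrak{g}_\gamma$, the eigenvalue $\langle\alpha,\gamma\rangle=\gamma(H_\alpha)$ being the Cartan integer $\langle\gamma,\alpha^\vee\rangle$, which lies in $\Z$ by the $\mathfrak{sl}_2$-theory of finite-dimensional representations.

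For (c) I would first fix, for each $\gamma\in\Phi$, a generator $X_\gamma$ of the one-dimensional space $\mathfrak{g}_\gamma$; since $[\mathfrak{g}_\gamma,\mathfrak{g}_{-\gamma}]$ is the line spanned by the coroot $H_\gamma$, one may rescale $X_{-\gamma}$ so that $[X_\gamma,X_{-\gamma}]=H_\gamma$. That $H_\gamma$ is a $\Z$-linear combination of the $H_\alpha$ with $\alpha\in\Delta$ is exactly the fact that the coroot $\gamma^\vee$ lies in the coroot lattice, which is freely generated over $\Z$ by the simple coroots. Relation (e) is formal: if $\gamma+\delta$ is neither a root nor zero, then $[X_\gamma,X_\delta]\in\mathfrak{g}_{\gamma+\delta}=0$.

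The substance of the theorem is (d). Writing $[X_\gamma,X_\delta]=N_{\gamma,\delta}X_{\gamma+\delta}$ whenever $\gamma+\delta\in\Phi$, the claim is that $N_{\gamma,\delta}=\pm(r+1)$, where the $\delta$-string through $\gamma$ runs from $\gamma-r\delta$ to $\gamma+q\delta$. The key input is the $\mathfrak{sl}_2$-representation theory of root strings: the sum $\bigoplus_{i=-r}^{q}\mathfrak{g}_{\gamma+i\delta}$ is an irreducible module over the triple $(X_\delta,X_{-\delta},H_\delta)$, and evaluating $\ad(X_\delta)\ad(X_{-\delta})$ weight space by weight space — equivalently, solving the recursion produced by letting $[X_\delta,X_{-\delta}]=H_\delta$ act on each $X_{\gamma+i\delta}$ — yields a normalization-independent product formula of the shape $N_{\gamma,\delta}\,N_{-\delta,\gamma+\delta}=q(r+1)$. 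To isolate $N_{\gamma,\delta}$ itself I would supplement this with two symmetries: the Chevalley involution $X_\gamma\mapsto -X_{-\gamma}$, $H\mapsto -H$, which is an automorphism of $\mathfrak{g}_\Z$ and forces $N_{-\gamma,-\delta}=-N_{\gamma,\delta}$; and the length-weighted cyclic relation $N_{\alpha,\beta}/(\gamma,\gamma)=N_{\beta,\gamma}/(\alpha,\alpha)=N_{\gamma,\alpha}/(\beta,\beta)$ for triples with $\alpha+\beta+\gamma=0$, which comes from the Jacobi identity applied to $X_\alpha,X_\beta,X_\gamma$. Combining these collapses the product formula to $N_{\gamma,\delta}^2=(r+1)^2$, giving the desired $\pm(r+1)$.

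The main obstacle is exactly this last step: the integrality and the precise value of the $N_{\gamma,\delta}$. The delicate point is that the individual ladder coefficients depend on the chosen normalization of the $X_{\gamma+i\delta}$, so one cannot read $N_{\gamma,\delta}$ directly off the irreducible $\mathfrak{sl}_2$-module; only the up-then-down products are normalization-free, and disentangling them requires the length factors in the cyclic relation. These factors are invisible in the simply laced case but genuinely essential in types $B_n$, $C_n$, $F_4$ and $G_2$, as a quick check in rank two already shows. The signs themselves are not canonical — they depend on the choice of the $X_\gamma$ — which is why only $\pm(r+1)$ can be asserted. Once (a)--(e) hold over $\Z$, base change to $k$ finishes the proof, the relevant structure constants being the images of the integers $\pm(r+1)$ in $k$, whose vanishing or non-vanishing in small characteristic is precisely what the later case-by-case analysis exploits.
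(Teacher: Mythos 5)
Your proposal is correct and takes essentially the same route as the paper, which offers no proof of its own but simply cites Humphreys, Chapter VII, \S 25.2 --- and the proof there is precisely your argument: reduction from the Chevalley $\Z$-form, $\mathfrak{sl}_2$-theory applied to root strings to get the normalization-free product formula, the length-weighted cyclic relation from the Jacobi identity, and the Chevalley involution to collapse everything to $N_{\gamma,\delta}^2=(r+1)^2$, followed by base change to $k$. The only cosmetic point is the sign convention in your product formula (with the paper's conventions one gets $-N_{\gamma,\delta}N_{-\delta,\gamma+\delta}=q(r+1)$), which is harmless since only the square of $N_{\gamma,\delta}$ is ever used.
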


%\begin{lemma}[Chevalley]
%\label{lemmaChevalley}
%Let $\alpha$ and $\beta$ be roots such that $\alpha + \beta$ is also a root and let
%\[
%\beta-r\alpha \, , \ldots , \beta , \ldots , \, \beta + q\alpha
%\]
%be the $\alpha$-string through $\beta$, meaning that all elements of the string are roots while $\beta - (r+1) \alpha$ and $\beta + (q+1) \alpha$ are not. Then
%\[
%[X_\alpha, X_\beta]= \pm  (r+1) X_{\alpha + \beta}.
%\]
%\end{lemma}

In particular, the Chevalley relation we use the most frequently is $(d)$: it is important to recall that structure constants appearing in such equations are among $\pm 1,\pm 2,\pm 3,\pm 4$, which indicates why problems arise in characteristic $2$ and $3$.\\

The main line of argument to prove \Cref{final_rank1_weak} is the following: we start by considering $X=G/P$ with $G$ adjoint acting faithfully and $P$ nonreduced. Then with some computation on Lie algebras, we show that  - when it is defined - $N_G \subset P$, while otherwise $G_1 \subset P$. In both cases this gives a normal algebraic subgroup of $G$ contained in the stabilizer $P$, which cannot exist due to Remark \ref{faithful_action}.

%%%%%%%%%%%%%%%%%%%%%%%%%%%%%%%%%%%%%%%%%%%%%%%%%%%%%%%%%%%%%%%%%%%%%%%%%%%%%%%%%%%%%%%%%%%%%%%%%%%%%%%%%%%%%%%%%%%%%%%%%%%%%%%%%5
\subsection{Type $A_{n-1}$}
\label{type_A}

We start with a case whose classification is already covered by \cite{Wenzel} - without needing any assumption on the characteristic of the base field - but which is useful in order to explain the approach used in the other cases below.\\
Let us consider the reductive group $G= \GL_n$ in type $A_{n-1}$, its maximal torus $T$ given by diagonal matrices of the form
\[
t  = \diag(t_1,\ldots,t_n) \in \GL_n%(R), \quad \text{for all $k$-algebras } R,
\]
and the Borel subgroup $B$ of upper triangular matrices. Let us denote as $\varepsilon_i \in X(T)$ the character sending $t \mapsto t_i$, for $i = 1, \ldots, n$. Then the root system $\Phi = \Phi(G,T)$ is given by 
\[
\Phi^+ = \{ \varepsilon_i - \varepsilon_j, \, 1 \leq i < j \leq n \},
\]
with basis $\Delta$ consisting of the following roots :
\[
\alpha_1 = \varepsilon_1 - \varepsilon_2, \, \ldots, \, \alpha_{n-1} = \varepsilon_{n-1} - \varepsilon_n.
\]
Finally, assume given a nonreduced parabolic subgroup $P$ such that $P_{\text{red}}= P_m$, where $P_m \defeq P^{\alpha_m}$ denotes the maximal reduced parabolic subgroup associated to the simple positive root $\alpha_m$ for a fixed $1 \leq m < n$. Thus, the Levi subgroup $L_m$ of this reduced parabolic subgroup is a product of a reductive group of type $A_{m-1}$ and one of type $A_{n-m-1}$ :
\[
L_m = \left\{
\begin{pmatrix}
\, \ast \,  & 0 \\ 0 & \,  \ast \,
\end{pmatrix} \right \} \simeq \GL_m \times \GL_{n-m},
\]
and the two factors have as basis of simple roots $\{ \alpha_1, \ldots, \alpha_{m-1}\}$ and $\{ \alpha_{m+1} ,\ldots , \alpha_{n-1}\}$ respectively.\\
Now, let us consider the vector space $V_m = \Lie G/ \Lie P_m$. Since
\[
%\Supp(\alpha_m) 
\{ \gamma \in \Phi^+ \colon \alpha_m \in \Supp(\gamma) \}= \{ \varepsilon_i - \varepsilon_j, \, i \leq m <j\},
\]
the root spaces in $V_m$ are of the form $\mathfrak{g}_{-\varepsilon_i+\varepsilon_j} = kE_{ji}$, for $i \leq m < j$, where $E_{ji}$ denotes the square matrix of order $n$ having all zero entries except the $(j,i)$-th entry which is equal to $1$. Concretely, $V_m$ consists as $L_m$-module of all matrices $M$ of size $(n-m) \times m$. The action of $L_m$ on $V_m$ is given by
\[
(A,B) \cdot M = \begin{pmatrix}
A & 0 \\ 0 & B
\end{pmatrix} \begin{pmatrix}
0 & 0 \\ M & 0
\end{pmatrix} \begin{pmatrix}
A^{-1} & 0 \\ 0 & B^{-1}
\end{pmatrix} = \begin{pmatrix}
0 & 0 \\ BMA^{-1} & 0
\end{pmatrix} = BMA^{-1},
\]
for all $A \in \GL_m, \, B \in \GL_{n-m}$. This just corresponds to the natural action of $\GL_m \times \GL_{n-m}$ on $\Hom_k(k^m,k^{n-m})$. In particular, $V_m$ is an irreducible $L_m$-module.\\
Since $\Lie P / \Lie P_m$ is an $L_m$-submodule of $V_m$ and we assumed $P$ to be nonreduced, this implies $\Lie P = \Lie G$ hence $G_1 \subset P$.
Under our assumptions, by Remark \ref{faithful_action} we get a contradiction. In other words, under the hypothesis of maximality of the reduced subgroup, we find that there are no new varieties other than those of the known classification. In the following subsections we will treat the other cases - not included in Wenzel's article - where two different root lengths are involved.

\begin{remark}
    What does this case correspond to, geometrically, on the level of varieties? We know by \cite{Wenzel} that $P_{\text{red}}=P^{\alpha_m}$ implies $P = G_r P^{\alpha_m}$ for some $r \geq 0$, hence 
    \[
    X = G/G_rP^{\alpha_m} \simeq G^{(r)}/ (P^{\alpha_m})^{(r)} \simeq G/P^{\alpha_m} = \Grass_{m,n}
    \]
    is isomorphic to the Grassmannian of $m$-th dimensional vector subspaces in $k^n$, equipped with the natural $G=\GL_n$-action, twisted by the $r$-th iterated Frobenius morphism. In particular, assuming faithfulness of the action implies $r=0$.
\end{remark}

%%%%%%%%%%%%%%%%%%%%%%%%%%%%%%%%%%%%%%%%%%%%%%%%%%%%%%%%%%%%%%%%%%%%%%%%%%55
\subsection{Type $C_n$}
\label{sectionCnPn}

Let us consider the group $\widetilde{G}= \Sp_{2n}$ in type $C_n$, with $n \geq 2$ in characteristic $p=2$ or $3$.
Defining $\widetilde{G}$ as relative to the skew form % $x_0^2+ \sum_{i=1}^n x_i x_{2n-i}$
$b(x,y) = \sum_{i=1}^n x_i y_{2n+1-i} - x_{2n+1-i}y_i$ on $k^{2n}$, one has
\[
\widetilde{G} = \left\{ X \in \GL_{2n} \colon \, {}^tX \begin{pmatrix}
0 & \Omega_n \\ -\Omega_n & 0 
\end{pmatrix} X = \begin{pmatrix}
0 & \Omega_n \\ -\Omega_n & 0 
\end{pmatrix} \right\}, \quad \text{where } \Omega_n = \begin{pmatrix}
0 & 0 & 1\\
0 & \iddots & 0\\
1 & 0 & 0
\end{pmatrix}.
\]
Deriving this condition gives as Lie algebra
\begin{align*}
\Lie \widetilde{G} & = \left\{ M \in \mathfrak{gl}_{2n} \colon \, ^tM \begin{pmatrix}
0 & \Omega_n \\ -\Omega_n & 0 \end{pmatrix} + \begin{pmatrix}
0 & \Omega_n \\ -\Omega_n & 0  \end{pmatrix} M = 0 \right\}\\
& = \left\{ \begin{pmatrix}
A & B \\ C & -A^\sharp
\end{pmatrix} \in \mathfrak{gl}_{2n} \colon \, B = B^\sharp \text{ and } C= C^\sharp \right\},
\end{align*}
where for any square matrix $X$ of order $n$ we denote as $X^\sharp$ the matrix $\Omega_n {}^t X \Omega_n$, i.e.
\begin{align}
\label{sharp}
    (X^\sharp)_{i,j}= X_{n+1-j, n+1-i}.
\end{align}

\begin{remark}
\label{roots_C}
Next, let us consider as maximal torus $T$ the one given by diagonal matrices of the form
\[ t = 
\diag(t_1,\ldots,t_n,t_n^{-1},\ldots, t_1^{-1}) \in \GL_{2n}
\]
and denote as $\varepsilon_i \in X^\ast(T)$ the character sending $t \mapsto t_i$, for $i = 1, \ldots, n$. A direct computation gives the following root spaces in $\Lie \widetilde{G}$:
\begin{align*}
 & \mathfrak{g}_{2\varepsilon_i} = k \begin{pmatrix}
 0 & E_{i,n+1-i} \\0 & 0 \end{pmatrix} = k \begin{pmatrix}
0 & E_{ii}\Omega_n \\0 & 0 \end{pmatrix} &\\
& \mathfrak{g}_{-2\varepsilon_i} = k \begin{pmatrix}
0 & 0 \\E_{n+1-i,i} & 0 \end{pmatrix} = k \begin{pmatrix}
0 & 0 \\ \Omega_n E_{ii} & 0 \end{pmatrix}, & 1\leq i \leq n,\\
     & \mathfrak{g}_{\varepsilon_i+\varepsilon_j} = k \begin{pmatrix}
 0 & E_{i,n+1-j}+E_{j,n+1-i} \\ 0 & 0 \end{pmatrix} = k \begin{pmatrix}
 0 & (E_{ij}+E_{ji})\Omega_n \\ 0 & 0 \end{pmatrix}, &\\
& \mathfrak{g}_{-\varepsilon_i-\varepsilon_j} = k \begin{pmatrix}
0 & 0 \\ E_{n+1-i,j}+E_{n+1-j,i} & 0 \end{pmatrix}  = k \begin{pmatrix}
0 & 0 \\ \Omega_n(E_{ij}+E_{ji}) & 0 \end{pmatrix}, & i <j,\\
     & \mathfrak{g}_{\varepsilon_i-\varepsilon_j} = k \begin{pmatrix}
E_{ij} & 0 \\ 0 & -E_{n+1-j,n+1-i} \end{pmatrix}  = k \begin{pmatrix}
E_{ij} & 0 \\ 0 & - E_{ij}^\sharp \end{pmatrix}, &\\
& \mathfrak{g}_{-\varepsilon_i+\varepsilon_j} = k \begin{pmatrix}
E_{ji} & 0 \\ 0 & -E_{n+1-i,n+1-j} \end{pmatrix} k \begin{pmatrix}
E_{ji} & 0 \\ 0 & -E_{ji}^\sharp \end{pmatrix}, & i <j,
\end{align*}
where $E_{ij}$ denotes the square matrix of order $n$ with zero entries except for the $(i,j)$-th which is equal to one.
\end{remark}

The root system $\Phi = \Phi(\widetilde{G},T)$ is thus indeed
\[
\Phi^+ = \{ \varepsilon_i - \varepsilon_j, \, \varepsilon_i + \varepsilon_j, \, 1 \leq i < j \leq n \} \cup \{ 2\varepsilon_i, \, 1 \leq i \leq n \},
\]
having chosen as Borel subgroup the one given by all upper triangular matrices in $\widetilde{G} \subset \GL_{2n}$. The corresponding basis $\Delta$ consists of the following roots :
\begin{align}
\label{roots_Cn}
\alpha_1 = \varepsilon_1 - \varepsilon_2, \, \ldots, \, \alpha_{n-1} = \varepsilon_{n-1} - \varepsilon_n, \, \alpha_n = 2\varepsilon_n.
\end{align}

%%%%%%%%%%%%%%%%%%%%%%%%%%%%%%%%%%%%%%%%%%%%%%%%%%%%%%%%%%%%%%%%%%%%%%%%%%%%%%%%%%%%%%%%%%%%%%%%%%%%%%%%%%%%%%%%%%%%%%%%%%%%%%%%%%%%%%%%%%%%%
\subsubsection{Reduced parabolic $P_n$}
Still considering the group $\widetilde{G} = \Sp_{2n}$, denote as $P_n$ the maximal reduced parabolic subgroup associated to the long simple positive root $\alpha_n$: in a more intrinsic way, this subgroup is the stabilizer of an isotropic vector subspace $W \subset V$ of dimension $n$, where $\widetilde{G} = \Sp(V)$. In particular, $W$ is the span of $e_1,\ldots,e_n$, where $(e_i)_{i=1}^{2n}$ denotes the standard basis of $k^{2n}$. Moreover, let us denote as $P_n^-$ the opposite parabolic subgroup and as $L_n$ their common Levi subgroup, so that
\begin{align*}
    & P_n = \Stab(W \subset V)\\
    & P_n^-  = \Stab(W^\ast \subset V)\\
    & L_n =  P_n \cap P_n^- = \GL(W) \simeq \GL_n,
\end{align*}%Moreover, let us denote as $P_n^-$ the opposite parabolic subgroup, i.e. the stabilizer of the dual $W^\ast \subset V$.
where $W \oplus W^\ast = V$. Let us also remark that $L$ has root system $\Psi$ given by 
\[
\Psi^+ = \{ \varepsilon_i - \varepsilon_j, \, 1 \leq i < j \leq n \},
\]
corresponding to a reductive group of type $A_{n-1}$ having as basis $\alpha_1, \ldots , \alpha_{n-1}$. 
This can be visualized in the following block decomposition :
\[
L_n = \left\{
\begin{pmatrix}
A & 0 \\ 0 & -(A^{-1})^\sharp
\end{pmatrix} %\in \GL_{2n}(R)
\colon \, A \in \GL(W) \simeq \GL_n
\right\} \subset \widetilde{G}.
\]
First, the Lie algebra of $P_n$ is 
\[
\Lie P_n = \Lie B \, \bigoplus_{i <j} \mathfrak{g}_{-\varepsilon_i + \varepsilon_j} = \bigoplus_{i<j} \left( \mathfrak{g}_{\varepsilon_i - \varepsilon_j} \oplus \mathfrak{g}_{-\varepsilon_i + \varepsilon_j} \right) \bigoplus_{i<j} \mathfrak{g}_{\varepsilon_i + \varepsilon_j} \bigoplus_i \mathfrak{g}_{2\varepsilon_i}.
\]
For our purposes it is useful to study the $L_n$-action on the vector space
\[
V_n \defeq \Lie \widetilde{G} / \Lie P_n = \bigoplus_{i<j} \mathfrak{g}_{-\varepsilon_i - \varepsilon_j} \bigoplus_i \mathfrak{g}_{-2\varepsilon_i}.
\]

\begin{lemma}
\label{lem:representation_Cn}
    The $L_n$-module $V_n$ is isomorphic to the dual of the standard representation of $\GL_n$ on $\Sym^2(k^n)$.
\end{lemma}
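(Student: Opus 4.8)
The plan is to turn the abstract quotient $V_n = \Lie\widetilde G/\Lie P_n$ into a concrete space of matrices and to read off the $\GL_n$-action by a direct conjugation computation, using the root-space data recorded in \Cref{roots_C}. Conceptually this just reflects the decomposition of $\Lie\widetilde G$ under the Levi $\GL(W)$ associated to the splitting $V = W\oplus W^*$, with $V_n$ being the opposite nilradical; but the explicit route is cleaner and characteristic-free.

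First I would identify the underlying vector space. By \Cref{roots_C} the root vectors spanning $V_n$, namely those attached to $-\varepsilon_i-\varepsilon_j$ (for $i<j$) and to $-2\varepsilon_i$, all lie in the lower-left block of $\Lie\widetilde G$: they are the matrices $\begin{pmatrix}0&0\\ \Omega_n(E_{ij}+E_{ji})&0\end{pmatrix}$ and $\begin{pmatrix}0&0\\ \Omega_n E_{ii}&0\end{pmatrix}$. In view of the description of $\Lie\widetilde G$ as block matrices $\begin{pmatrix}A&B\\ C&-A^\sharp\end{pmatrix}$ with $C=C^\sharp$, these span exactly the space $\mathfrak C \defeq \{C\in\mathfrak{gl}_n : C=C^\sharp\}$ of $\sharp$-symmetric matrices, of dimension $\binom{n+1}{2}$. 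Thus $V_n\simeq\mathfrak C$ as a vector space, with the root vectors $\mathfrak g_{-2\varepsilon_i}$ accounting for the diagonal entries.

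Second I would compute the $L_n$-action. The element of $L_n$ attached to $A\in\GL_n$ is block-diagonal, and conjugating $\begin{pmatrix}0&0\\ C&0\end{pmatrix}$ by it sends the lower-left block to $(A^{-1})^\sharp C A^{-1}$ (the overall sign being immaterial in characteristic $2$), so the action on $\mathfrak C$ is $C\mapsto (A^{-1})^\sharp C A^{-1}$. Next I would transport this along $S\defeq\Omega_n C$: since $\Omega_n^2=1$ and $X^\sharp=\Omega_n X^t\Omega_n$, the condition $C=C^\sharp$ becomes $S=S^t$, so $S=\Omega_n C$ identifies $\mathfrak C$ with the space of ordinary symmetric matrices; and using $\Omega_n(A^{-1})^\sharp=(A^{-1})^t\Omega_n$ the action becomes $S\mapsto (A^{-1})^t S A^{-1}$. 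This is precisely the natural $\GL_n$-action on symmetric bilinear forms on $k^n$, i.e. on $\Gamma^2\big((k^n)^*\big)=(\Sym^2 k^n)^*$, the dual of the standard representation on $\Sym^2 k^n$.

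The only genuinely delicate point is this last identification: in characteristic $2$ the symmetric powers $\Sym^2$ and the divided powers $\Gamma^2$ no longer agree, so the space of symmetric matrices — with its free diagonal, arising here precisely from the root vectors $\mathfrak g_{-2\varepsilon_i}$ — must be recognized as $\Gamma^2((k^n)^*)=(\Sym^2 k^n)^*$ rather than as $\Sym^2 k^n$ itself. This is exactly why the statement features the \emph{dual} representation, and keeping track of it is the crux; the conjugation computation in the middle steps is otherwise routine linear algebra.
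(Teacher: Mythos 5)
Your proof is correct and is essentially the paper's own argument: both identify $V_n$ with the symmetric matrices occupying the lower-left block via the root-space formulas of \Cref{roots_C} and read off the Levi action by block conjugation, the only difference in bookkeeping being that you compute the action of $A$ itself (obtaining $S \mapsto {}^t\!A^{-1}S A^{-1}$), while the paper computes the action of ${}^t\!A^{-1}$ (obtaining $X \mapsto AX\,{}^t\!A$) and calls it the dual action. Your explicit identification of this module with $\Gamma^2\big((k^n)^*\big) = \big(\Sym^2 k^n\big)^*$ rather than $\Sym^2\big((k^n)^*\big)$, the two differing in characteristic $2$, makes the final step more precise than the paper's wording, which leaves the $\Sym^2$ versus $\Gamma^2$ distinction implicit.
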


\begin{proof}
Indeed, the root spaces we are interested in have been computed in \Cref{roots_C}. Those equalities imply that a matrix in $V_n$ is of the form 
\[
\begin{pmatrix}
0 & 0 \\ \Omega_n X & 0
\end{pmatrix}, \quad \text{with } X \in \Sym^2(k^n),
\]
thus the dual action of $A \in \GL_n \simeq L_n$ can be computed as follows:
\[
{}^t\!A^{-1} \cdot X \simeq  \begin{pmatrix}
{}^t\!A^{-1} & 0 \\ 0 & -({}^t\!A)^\sharp
\end{pmatrix} 
\begin{pmatrix}
0 & 0 \\ \Omega_n X & 0
\end{pmatrix} \begin{pmatrix}
{}^t\!A & 0 \\ 0 & -({}^t\!A^{-1})^\sharp
\end{pmatrix} =
\begin{pmatrix}
0 & 0 \\ -\Omega_n AX {}^t\! A & 0
\end{pmatrix} \simeq A X \, {}^t\!A.
\]
This gives the desired isomorphism between the two $\GL_n$-modules.\\
Let us remark that if we are working over a field of characteristic $p=2$, the $L_n$-module $V_n$ contains a simple $L_n$-submodule, namely %whose quotient is also irreducible, namely
\[
\left\{ \begin{pmatrix}
    &&&&\\
    & 0 && 0&\\
    && c_1 &&\\
    & \iddots &&&\\
    c_n &&& 0& 
\end{pmatrix}, c_i \in k \right\}= \bigoplus_{i=1}^n \mathfrak{g}_{-2\varepsilon_i},
\]
which is isomorphic to the dual of the standard representation of $\GL_n$ on $k^n$, twisted once by the Frobenius morphism.%: the equality (\ref{simple_submodule}) becomes indeed
%\[ {}^t\!A^{-1} \cdot E_{ii} = \sum_{l=1}^n a_{li}^2 E_{ll} + \sum_{l \neq m} a_{li} a_{mi} E_{lm} = \sum_{l=1}^n a_{li}^2 E_{ll} + 2 \sum_{l < m} a_{li} a_{mi} E_{lm} = \sum_{l=1}^n a_{li}^2 E_{ll}.\].  
\end{proof}

\begin{proposition}
\label{LieN}
Assume given a nonreduced parabolic subgroup $P$ such that $P_{\text{red}}= P_n$. Then $\Lie P = \Lie \widetilde{G}$ or $\Lie P = \Lie P_n + \mathfrak{g}_<$. If $p=3$, then necessarily $\Lie P = \Lie \widetilde{G}$.
\end{proposition}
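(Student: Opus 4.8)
The plan is to reduce the statement entirely to reading off the $L_n$-submodules of $V_n$, whose structure is provided by \Cref{lem:representation_Cn}. First I would observe that, since $L_n \subset P_n = P_{\text{red}} \subset P$, the Lie subalgebra $\Lie P$ is stable under the adjoint action of $L_n$; hence its image $\Lie P / \Lie P_n$ is an $L_n$-submodule of $V_n = \Lie \widetilde{G}/\Lie P_n$. Because $P$ is nonreduced, its tangent space at the identity is strictly larger than that of the smooth subscheme $P_{\text{red}} = P_n$, so $\Lie P_n \subsetneq \Lie P$ and the submodule $\Lie P/\Lie P_n$ is nonzero. Thus everything comes down to classifying the nonzero $L_n$-submodules of $V_n$.

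Next I would invoke the module structure of $V_n$. In characteristic $3$ the $\GL_n$-module $V_n \simeq \Sym^2(k^n)^\vee$ is irreducible: the short-root subspace $\bigoplus_{i<j}\mathfrak{g}_{-\varepsilon_i-\varepsilon_j}$ of off-diagonal symmetric matrices is no longer stable, since the congruence action $X \mapsto CXC^t$ ceases to preserve the vanishing of the diagonal once $2 \neq 0$, and $\Sym^2$ of the standard module is simple for odd $p$. Hence the only nonzero $L_n$-submodule is $V_n$ itself, forcing $\Lie P = \Lie \widetilde{G}$; this is exactly the last assertion. In characteristic $2$, by contrast, \Cref{lem:representation_Cn} exhibits $V_n$ as uniserial of length two, whose unique proper nonzero submodule is precisely the subspace $\bigoplus_{i<j}\mathfrak{g}_{-\varepsilon_i-\varepsilon_j}$ of short root vectors, stable exactly because conjugation preserves the zero-diagonal condition in characteristic two. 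Consequently $\Lie P/\Lie P_n$ equals either this submodule or all of $V_n$.

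It then remains to rewrite the first alternative intrinsically. Since $\Lie P_n$ already contains every short root space except the spaces $\mathfrak{g}_{-\varepsilon_i-\varepsilon_j}$ with $i<j$, adjoining the latter to $\Lie P_n$ is the same as forming $\Lie P_n + \mathfrak{g}_<$; the two alternatives therefore read $\Lie P = \Lie P_n + \mathfrak{g}_<$ and $\Lie P = \Lie \widetilde{G}$, which is the claimed dichotomy. The only genuinely delicate point is the submodule analysis underlying \Cref{lem:representation_Cn}: one must verify both that the short-root subspace is the \emph{unique} proper submodule when $p=2$ and that \emph{no} proper submodule survives when $p=3$, since it is precisely the collapse of this characteristic-two phenomenon that excludes a nonreduced $P$ in characteristic $3$. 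Everything else in the argument is formal, once the $L_n$-action on $V_n$ has been pinned down.
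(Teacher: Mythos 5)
Your reduction (restriction of the adjoint action of $L_n$, nonreducedness forcing $\Lie P_n \subsetneq \Lie P$, and the translation of the two alternatives into the classification of nonzero $L_n$-submodules of $V_n$) is exactly the paper's setup, and your treatment of $p=3$ via irreducibility of symmetric matrices under congruence is also the paper's. Where you genuinely diverge is the case $p=2$: the paper never classifies all submodules of $V_n$; it only uses that the weight set of a submodule is stable under the Weyl group $S_n$ of $L_n$ (so it contains either all of the long-root spaces $\mathfrak{g}_{-2\varepsilon_i}$ or none of them), and then propagates root spaces by explicit Chevalley brackets via \Cref{lemmaChevalley}, e.g.\ $[X_{\varepsilon_i-\varepsilon_j},X_{-2\varepsilon_i}]=\pm X_{-\varepsilon_i-\varepsilon_j}$. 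You instead pin down the full module structure (uniserial of length two) and read off the dichotomy. That is a clean alternative, but as written it has a gap: you attribute the uniserial structure with socle $\bigoplus_{i<j}\mathfrak{g}_{-\varepsilon_i-\varepsilon_j}$ to \Cref{lem:representation_Cn}, and that lemma asserts something different --- in fact the opposite, namely that the unique simple submodule is $\bigoplus_{i}\mathfrak{g}_{-2\varepsilon_i}$. So the citation cannot carry the load-bearing step, and you must prove the structure yourself.

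Ironically, your version is the correct one and the remark inside \Cref{lem:representation_Cn} is misstated: the diagonal (long-root) subspace is \emph{not} $L_n$-stable, since $X_{\varepsilon_i-\varepsilon_j}\in\Lie L_n$ and $[X_{\varepsilon_i-\varepsilon_j},X_{-2\varepsilon_i}]=\pm X_{-\varepsilon_i-\varepsilon_j}$ with structure constant $\pm 1$, whereas the zero-diagonal (short-root) subspace is stable in characteristic $2$ exactly by the congruence computation you indicate. What is still missing from your write-up is the uniqueness statement: you need (i) that the socle, isomorphic to $\Lambda^2$ of the dual standard module, is irreducible; (ii) that the quotient, the Frobenius twist of the dual standard module, is irreducible; and (iii) that the extension does not split --- otherwise a complementary submodule would exist and a third alternative for $\Lie P$ would survive. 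All three are true and elementary (for (iii) one checks that the span of the weight vectors of weights $-2\varepsilon_i$ is not stable, which is again the bracket above), but they are precisely the content you would have to supply in place of the faulty reference. Alternatively, you can bypass (iii) entirely by adopting the paper's one-line bracket argument: any submodule containing a long-root vector contains a short-root vector, and then, by Weyl stability together with (i) and (ii), is all of $V_n$.
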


\begin{proof}
Let us consider the nonzero vector space $\Lie P / \Lie P_n$, which is an $L_n$-submodule of $V_n$. The latter being isomorphic to $\Sym^2(k^n)^\ast$ by  \Cref{lem:representation_Cn}, we have that
\begin{enumerate}[(a)]
    \item either $\Lie P / \Lie P_n$ contains all of the weight spaces $\mathfrak{g}_{-2\varepsilon_i}$ associated to long negative roots, 
    \item or it does not contain any of them.
\end{enumerate}

Let us start by (a) and assume $\mathfrak{g}_{-2\varepsilon_i} \subset \Lie P$ for all $i$. In order to prove that $\Lie P = \Lie \widetilde{G}$, it is enough to show that for any $i <j$, the Chevalley vector $X_{-\varepsilon_i - \varepsilon_j}$ also belongs to $\Lie P$. For this, let us consider roots
\begin{align*}
    \gamma = \varepsilon_i-\varepsilon_j, \quad & \text{satisfying } X_\gamma \in \Lie L_n \subset \Lie P,\\
    \delta= -2\varepsilon_i, \quad & \text{satisfying } X_\delta \in \Lie P \text{ by our last assumption. }
\end{align*}
Thus, $\gamma + \delta = -\varepsilon_i - \varepsilon_j$ is still a root while $\delta - \gamma = -3\varepsilon_i -\varepsilon_j$ is not: applying \Cref{lemmaChevalley} gives 
\[
[X_{\varepsilon_i-\varepsilon_j}, X_{-2\varepsilon_i}] = \pm X_{-\varepsilon_i-\varepsilon_j} \in \Lie P
\]
as wanted.

Let us place ourselves in the hypothesis of (b) and assume that no root subspace associated to a negative long root is in $\Lie P$. Since by assumption $ P$ is nonreduced, $\Lie P_n \subsetneq \Lie P$ so there must be at least one short root of the form $-\varepsilon_i-\varepsilon_j$ satisfying $X_{-\varepsilon_i -\varepsilon_j} \in \Lie P$. We will now prove that this implies all short roots $-\varepsilon_l-\varepsilon_m$ for $l<m$ belong to $\Lie P$, hence showing $\Lie P = \Lie P_n + \mathfrak{g}_<$.\\
First, assume $l \neq i,j$ and consider roots
\begin{align*}
    \gamma = -\varepsilon_i-\varepsilon_j, \quad & \text{satisfying } X_\gamma \in \Lie P  \text{ by assumption, }\\
    \delta = -\varepsilon_l + \varepsilon_i, \quad & \text{satisfying } X_\delta \in \Lie L_n \subset \Lie P.
\end{align*}
In this case, $\gamma + \delta = -\varepsilon_l -\varepsilon_j$ is still a root while $\delta -\gamma = -\varepsilon_l +2\varepsilon_i + \varepsilon_j$ is not: applying \Cref{lemmaChevalley} gives
\[
[X_{-\varepsilon_i-\varepsilon_j},X_{-\varepsilon_l+\varepsilon_i}] = \pm X_{-\varepsilon_l -\varepsilon_j} \in \Lie P.
\]
Now, let us fix any $l<m$ satisfying $l,m \neq j$ and consider roots 
\begin{align*}
    \gamma = \varepsilon_j-\varepsilon_m, \quad & \text{satisfying } X_\gamma \in \Lie L_n \subset \Lie P,\\
    \delta = -\varepsilon_l - \varepsilon_j, \quad & \text{satisfying } X_\delta \in \Lie P \text{ by the last step}.
\end{align*}
Thus, $\gamma + \delta = -\varepsilon_l-\varepsilon_m$ is still a root while $\delta -\gamma = -\varepsilon_l-2\varepsilon_j + \varepsilon_m$ is not: applying \Cref{lemmaChevalley} gives
\[
[X_{\varepsilon_j-\varepsilon_m},X_{-\varepsilon_l-\varepsilon_j}] = \pm X_{-\varepsilon_l-\varepsilon_m} \in \Lie P.
\]

If we are working over a field of characteristic $p=3$, the representation of $\GL_n$ acting on $\Sym^2(k^n)$ is already an irreducible one: this means that $V_n$ is an irreducible $L_n$-module. Hence the nonzero submodule $\Lie P / \Lie P_n$ must coincide with all of $V_n$; equivalently, $\Lie P = \Lie \widetilde{G}$ as wanted.
\end{proof}

%\textbf{Proof of \Cref{final_rank1_weak} in type $C_n$ when $P_{\text{red}}= P_n$}.
\begin{proof}(\textbf{of \Cref{final_rank1_weak} in type $C_n$ when $P_{\text{red}}= P_n$})\\
Let $G$ be simple adjoint of type $C_n$ and $X=G/P$ with a faithful $G$-action such that $P_{\text{red}} = P^{\alpha_n}$ and $P$ is nonreduced. Define $\widetilde{P} \subset \widetilde{G} = \Sp_{2n}$ as being the preimage of $P$ in the simply connected cover: it is a nonreduced parabolic subgroup satisfying ${\widetilde{P}}_{\text{red}}= P_n$. When $p=2$, the above Proposition implies that 
\[
\langle \, \Lie (\gamma^\vee(\Gm)) \colon \gamma \in \Phi_< \, \rangle  \oplus \mathfrak{g}_< = \Lie N_{\widetilde{G}} \subset \Lie \widetilde{P},
\]
hence by considering the image in the adjoint quotient we get $N_G \subset P$, which is a contradiction by Remark \ref{faithful_action}. If $p=3$ then the above Proposition implies that $\Lie \widetilde{P} = \Lie \widetilde{G}$, hence the Frobenius kernel satisfies $\widetilde{G}_1 \subset \widetilde{P}$, and its image in the adjoint quotient is a normal subgroup of $G$ contained in $P$, which gives again a contradiction. Therefore in both cases $P$ must be a smooth parabolic.
\end{proof}

%%%%%%%%%%%%%%%%%%%%%%%%%%%%%%%%%%%%%%%%%%%%%%%%%%%%%%%%%%%%%%%%%%%%%%%%%%%%%%%%%%%%%%%%%%%%%%%%%%%%%%%%%%%%%%%%%%%%%%%%%%%%%%%%%%%%%%%%%%%%

\subsubsection{Reduced parabolic $P_m$, $m<n$}

Let us consider again a $k$-vector space $V$ of dimension $2n$ and denote as $\widetilde{G}$ the group $\Sp_{2n}=\Sp(V)$, of type $C_n$ with $n \geq 2$ and $k$ of characteristic $p=2$ or $3$. Its root system has been recalled in (\ref{roots_Cn}). Let us fix an integer $1 \leq m < n$ and consider - keeping the notation recalled at the beginning of this subsection - the maximal reduced parabolic 
\[
P_m \defeq P^{\alpha_m},
\]
associated to the short simple root $\alpha_m$, which is the subgroup scheme stabilizing an isotropic vector subspace of dimension $m$: let us denote the latter as $W$. Then, $P_m$ also stabilizes its orthogonal with respect to the symplectic form on $V$: denoting as $P_m^-$ the opposite parabolic subgroup and as $L_m$ their common Levi subgroup, one finds
\begin{align*}
    P_m & = \Stab(W \subset W^\perp \subset V) = \Stab (W \subset W \oplus U \subset V)\\
    P_m^- &  = \Stab(W^\ast \subset (W^\ast)^\perp \subset V) = \Stab(W \subset W^\ast \oplus U \subset V)\\
    L_m & =  P_m \cap P_m^- = \GL(W) \times \Sp(U) \simeq \GL_m \times \Sp_{2n-2m}.
\end{align*}
In other words, the choice of such a Levi subgroup corresponds to fixing a vector subspace $U$ satisfying $V= W \oplus U \oplus W^\ast$. Let us also remark that $L$ has root system $\Psi$ given by 
\[
\Psi^+ = \{ \varepsilon_i - \varepsilon_j, \, i <j \leq m \} \cup \{ \varepsilon_i - \varepsilon_j, \varepsilon_i+\varepsilon_j, \, m <i<j \} \cup \{ 2\varepsilon_j, \, m<j\}.
\]
This can be visualized in the following block decomposition :
\[
\begin{tikzcd}
L_m = \left\{ 
\begin{pmatrix}
A & 0 & 0\\ 0 & B & 0 \\ 0 & 0 & -(A^{-1})^\sharp
\end{pmatrix} \colon A \in \GL(W), B \in \Sp(U) \right\} \subset P_m = \left\{
\begin{pmatrix}
\ast & \ast & \ast \\ 0 & \ast & \ast \\ 0 & 0 & \ast
\end{pmatrix}\right\} %\subset \widetilde{G}.
\end{tikzcd}
\]

\begin{proposition}
\label{prop:CnPm}
Assume given a nonreduced parabolic subgroup $P$ such that $P_{\text{red}} = P_m$. Then $\Lie P = \Lie \widetilde{G}$ or $\Lie P = \Lie P_m + \mathfrak{g}_<$. If $p=3$, then necessarily $\Lie P = \Lie \widetilde{G}$.
\end{proposition}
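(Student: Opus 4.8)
The plan is to mirror the proof of \Cref{LieN}, studying the nonzero $L_m$-submodule $M\defeq\Lie P/\Lie P_m$ of $V_m=\Lie\widetilde{G}/\Lie P_m$, but now accounting for the fact that the short part of $V_m$ is no longer irreducible. First I would read off from \Cref{roots_C} the $T$-weights of $V_m$, namely the negatives of the positive roots whose support contains $\alpha_m$: the long weight spaces $\mathfrak{g}_{-2\varepsilon_i}$ $(i\leq m)$, and the short weight spaces $\mathfrak{g}_{-\varepsilon_i-\varepsilon_j}$ $(i<j\leq m)$ and $\mathfrak{g}_{-\varepsilon_i\pm\varepsilon_l}$ $(i\leq m<l)$. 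As an $L_m=\GL(W)\times\Sp(U)$-module the short spaces assemble into $S\defeq\mathfrak{g}_<\cap V_m$, which splits as $\wedge^2 W^*\oplus(W^*\otimes U)$ --- two nonisomorphic irreducible summands, the first trivial and the second standard over the $\Sp(U)$-factor --- while the long spaces furnish the quotient $V_m/S\cong (W^*)^{(1)}$, an irreducible Frobenius-twisted $\GL(W)$-module. In characteristic $2$, $S$ is an $L_m$-submodule: indeed $\Lie P_m+\mathfrak{g}_<=\Lie P_m+\Lie N_{\widetilde{G}}$ and $\Lie N_{\widetilde{G}}$ is a $p$-Lie ideal by \Cref{712_conrad}, so this sum is a $p$-subalgebra whose image in $V_m$ is exactly $S$.

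The heart of the argument is a dichotomy according to whether $M$ meets the long weight spaces. Since $\Lie P\supseteq\Lie T$, the submodule $M$ is $T$-graded, so either $\mathfrak{g}_{-2\varepsilon_i}\subseteq M$ for some $i$, or $M\subseteq S$. Suppose first $M\subseteq S$. A single nonzero short weight vector generates, under the group $L_m\subseteq P$, one of the two irreducible summands of $S$; a single Chevalley bracket then crosses to the other summand --- a $\wedge^2 W^*$-vector bracketed with a suitable nilradical root of $P_m$ gives a $W^*\otimes U$-vector, and the bracket of two $W^*\otimes U$-vectors lies in $\wedge^2 W^*$, both with structure constant $\pm1$ by \Cref{lemmaChevalley} --- whose $L_m$-orbit then fills it out. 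Crucially no such step can reach a long space, because by \Cref{lemmaChevalley} every bracket carrying a short root to a long root has constant $\pm2$, which is zero in characteristic $2$. Hence $M=S$ and $\Lie P=\Lie P_m+\mathfrak{g}_<$. If instead some $\mathfrak{g}_{-2\varepsilon_i}\subseteq M$, bracketing $X_{-2\varepsilon_i}$ with the Levi roots $\varepsilon_i-\varepsilon_j$ produces short vectors (constant $\pm1$), so by the previous step $M\supseteq S$; then $M/S$ is a nonzero $\GL(W)$-submodule of the irreducible module $V_m/S$, whence $M/S=V_m/S$ and $M=V_m$, i.e. $\Lie P=\Lie\widetilde{G}$. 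It is essential here to use the group action of $L_m$ to pass from one long weight space to all the others, since the Lie-bracket transitions among long spaces again carry the vanishing constant $\pm2$.

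For the last assertion, in characteristic $3$ the constant $\pm2$ is invertible, so $S$ is no longer a subalgebra modulo $\Lie P_m$: bracketing a short vector $X_{-\varepsilon_i-\varepsilon_j}$ with the Levi root $\varepsilon_j-\varepsilon_i$ returns $\pm2\,X_{-2\varepsilon_i}\neq0$, and symmetrically every long space is produced from short ones. Thus any nonzero weight vector of $M$ generates, through \Cref{lemmaChevalley} alone, a long space and then all short spaces, forcing $M=V_m$ and $\Lie P=\Lie\widetilde{G}$. I expect the main obstacle, by contrast with the treatment of $P_n$ in \Cref{LieN}, to be exactly that the short part $S$ is now reducible, splitting off the extra summand $W^*\otimes U$: one must check constant-by-constant that the two short summands generate each other inside any admissible subalgebra $\Lie P$ while, in characteristic $2$, no admissible bracket escapes into the long part, and one must route the passage among long spaces through the group rather than the Lie algebra.
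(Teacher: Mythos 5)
Your proposal is correct and follows the same overall skeleton as the paper's proof: the dichotomy on whether $\Lie P/\Lie P_m$ meets the long weight spaces, irreducibility of the mixed piece $\Hom_k(W,U)\cong W^*\otimes U$, Chevalley brackets with structure constant $\pm 1$ to pass between the two short summands, and the constants $\pm 2$ as the sole source of the difference between $p=2$ and $p=3$. Where you genuinely diverge is in the analysis of the $\GL(W)$-piece, and your version is in fact the more careful one. The paper asserts (in \Cref{lem:representation_Cn}, and again inside the proof of \Cref{prop:CnPm}) that in characteristic $2$ the \emph{long} part $\bigoplus_{j\le m}\mathfrak{g}_{-2\varepsilon_j}$ is the unique irreducible $L_m$-submodule of the $\Sym^2$-piece; a direct check (e.g.\ $n=2$, $A=\left(\begin{smallmatrix}1&1\\0&1\end{smallmatrix}\right)$ acting by $X\mapsto AX\,{}^t\!A$) shows the filtration goes the other way: the short (alternating) part is the submodule and the long part is the Frobenius-twisted irreducible \emph{quotient}, exactly as you claim. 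Your two devices for exploiting this --- realizing $S=\mathfrak{g}_<\cap V_m$ as a submodule via the $p$-Lie ideal $\mathfrak{n}_{\widetilde{G}}$ of \Cref{712_conrad}, and propagating from one long space to all of $V_m$ through the irreducible quotient $V_m/S$ rather than through a submodule claim --- are clean substitutes for the paper's argument, whose stated justification needs repair (its conclusion survives via transitivity of the Weyl group of $L_m$ on the long weights, which is also implicit in your remark that the passage among long spaces must go through the group and not through brackets). Two small points to fix: when $m=1$ the summand $\wedge^2 W^*$ is zero and there are no ``Levi roots'' $\varepsilon_i-\varepsilon_j$ with $i,j\le m$, so in your second case you should bracket $X_{-2\varepsilon_i}$ with $X_{\varepsilon_i-\varepsilon_l}$ for $l>m$ (these lie in $\Lie B\subset\Lie P_m$, and the constant is still $\pm1$; this is precisely the paper's choice $\delta=\varepsilon_j-\varepsilon_n$); and in characteristic $3$ your phrase ``a long space and then all short spaces'' should end with the extra half-sentence that all remaining long spaces then follow from the short ones by the now-invertible $\pm 2$ brackets, which you had already observed.
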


\begin{proof}
The Lie algebra of $P_m$ contains all root subspaces except for those associated to negative roots containing $\alpha_m$ in their support, hence
\[
V_m \defeq \Lie \widetilde{G} / \Lie P_m = \left( \bigoplus_{i<j\leq m} \mathfrak{g}_{-\varepsilon_i - \varepsilon_j} \bigoplus_{j\leq m} \mathfrak{g}_{-2\varepsilon_j} \right) \bigoplus_{i \leq m <j} \left( \mathfrak{g}_{-\varepsilon_i-\varepsilon_j} \oplus \mathfrak{g}_{-\varepsilon_i+\varepsilon_j} \right) 
\]
More concretely, since $L_m = \Stab(W) \cap \Stab(W^\ast)$, the Levi subgroup acts on $V_m$ as follows. First, a matrix in 
\[
%\Sym^2(W) =
\left( \bigoplus_{i<j\leq m} \mathfrak{g}_{-\varepsilon_i - \varepsilon_j} \bigoplus_{j\leq m} \mathfrak{g}_{-2\varepsilon_j} \right)
\]
is of the form
\[
\begin{pmatrix}
    0 & 0 & 0\\
    0 & 0 & 0\\
    \Omega_m X & 0 & 0
\end{pmatrix}
\]
with $X \in \Sym^2(W)$, and the $L_m$-action on it is given by
\begin{align*}
(A,B) \cdot X & \simeq \begin{pmatrix}
A & 0 & 0\\ 0 & B & 0 \\ 0 & 0 & -(A^{-1})^\sharp
\end{pmatrix} \begin{pmatrix}
0 & 0 & 0 \\ 0 & 0 & 0 \\ X & 0 & 0
\end{pmatrix} \begin{pmatrix}
A^{-1} & 0 & 0\\ 0 & B^{-1} & 0 \\ 0 & 0 & -A^\sharp
\end{pmatrix}\\
& = 
\begin{pmatrix}
0 & 0 & 0 \\ 0 & 0 & 0 \\ -\Omega_m ({}^t\!A^{-1} XA^{-1}) & 0 & 0
\end{pmatrix}\simeq \, {}^t\!A^{-1} X A^{-1},
\end{align*}
hence this $L_m$-module is isomorphic to the dual of the standard representation of $\GL_m$ acting on $\Sym^2(k^m)$.

Let us assume that the characteristic of the base field is $p=2$: then $\Sym^2(W)$ has an irreducible $L_m$-submodule given by $\oplus_{j \leq m} \mathfrak{g}_{-2\varepsilon_j}$: this proves that, once such a root subspace is contained in $\Lie P$ for some $j\leq m$, then all root subspaces associated to long negative roots are. If $p=3$, then $\Sym^2(W)$ is already an irreducible submodule itself, hence it is either contained in $\Lie P/\Lie P_m$ or has trivial intersection with it.\\
On the other hand, by \Cref{roots_C}, an element of
\[
\bigoplus_{i \leq m <j} \left( \mathfrak{g}_{-\varepsilon_i-\varepsilon_j} \oplus \mathfrak{g}_{-\varepsilon_i+\varepsilon_j} \right) =: M
\]
is of the form
\[
\begin{pmatrix}
    0 & 0 & 0\\Y & 0 & 0\\ 0 & Y^\flat & 0
\end{pmatrix}, \quad \text{where } \, Y^\flat \defeq \Omega_m {}^t\! Y \begin{pmatrix} 0 & \Omega_{n-m}\\ \Omega_{n-m} & 0\end{pmatrix}
\]
with $Y \in \Hom_k(W,U)$. This gives as $L_m$-action
\[
(A,B) \cdot Y \simeq \begin{pmatrix}
A & 0 & 0\\ 0 & B & 0 \\ 0 & 0 & -(A^{-1})^\sharp
\end{pmatrix} \begin{pmatrix}
0 & 0 & 0 \\ Y & 0 & 0 \\ 0 & Y^\flat & 0
\end{pmatrix} \begin{pmatrix}
A^{-1} & 0 & 0\\ 0 & B^{-1} & 0 \\ 0 & 0 & -A^\sharp
\end{pmatrix} \simeq \, BYA^{-1},
\]
because $B$ being an element of $\Sp(U)$ implies
\[
(A^{-1})^\sharp Y^\flat B^{-1} = \Omega_m {}^t\! A^{-1} \, {}^t Y {}^t B \begin{pmatrix} 0 & \Omega_{n-m}\\ -\Omega_{n-m} & 0\end{pmatrix} = (BYA^{-1})^\flat.
\]
Thus, $M$ is isomorphic as an $L_m$-module to the representation
\[
\GL_m \times \Sp_{2n-2m} \curvearrowright \Hom_k(k^m,k^{2n-2m}), \quad (A,B) \cdot Y = BYA^{-1}
\]
This means in particular that $M$ is an irreducible $L_m$-module, since the Weyl group acts transitively on the set of its weights.\\
Now, let us go back to the parabolic subgroup $P$: being nonreduced, $\Lie P / \Lie P_m$ is a nontrivial $L_m$-submodule of $V_m$. We already know that assuming such a quotient to contain $\mathfrak{g}_{-2\varepsilon_j}$ implies it contains all of them, thus we still need three claims to conclude the proof:
\begin{enumerate}[(a)]
    \item assuming $\Lie P / \Lie P_m$ to contain a subspace associated to a long negative root implies it also contains a subspace associated to a short negative root;
    \item assuming it to contain a subspace associated to a short negative root implies it contains all of them;
    \item when $p = 3$, assuming it to contain a subspace associated to a short negative root implies it also contains a subspace associated to a long negative root.
\end{enumerate}
For (a), assume $\mathfrak{g}_{-2\varepsilon_j} \subset \Lie P$ for some $j\leq m$, then consider roots 
\begin{align*}
    \gamma = -2\varepsilon_j, \quad & \text{satisfying } X_\gamma \in \Lie P\\
    \delta = \varepsilon_j-\varepsilon_n, \quad & \text{satisfying } X_\delta \in \Lie B \subset \Lie P.
\end{align*}
Since $\gamma+\delta$ is a root and $\delta-\gamma$ is not, \Cref{lemmaChevalley} yields
\[
[X_{-2\varepsilon_j}, X_{\varepsilon_j-\varepsilon_n}] = \pm X_{-\varepsilon_j-\varepsilon_n} \in \Lie P.
\]
Let us remark that (a) is automatically true when  $p=3$ due to the irreducibility of the $L_m$-module $\Sym^2(W)$, without needing to consider any structure constant.\\
For (b), first assume some $\mathfrak{g}_\eta \subset M$ is also contained in $\Lie P$. Then $M \subset \Lie P$ because of its irreducibility as $L_m$-submodule of $V_m$. Moreover, fixing $i<j\leq m$ and applying \Cref{lemmaChevalley} to $\gamma = -\varepsilon_i-\varepsilon_n$ and $\delta = -\varepsilon_j +\varepsilon_n$,  satisfying $X_\gamma, X_\delta \in M$, we obtain
\[
[X_{-\varepsilon_i-\varepsilon_j}, X_{-\varepsilon_j +\varepsilon_n}] = \pm X_{-\varepsilon_i-\varepsilon_j} \in \Lie P.
\]
Thus (b) holds in this case. On the other hand, let us start by assuming that $\mathfrak{g}_{-\varepsilon_i-\varepsilon_j} \subset \Lie P$ for some $i<j\leq m$. Then, applying \Cref{lemmaChevalley} to $\gamma = -\varepsilon_i -\varepsilon_j$ and $\delta = \varepsilon_j - \varepsilon_n \in \Phi^+$ yields
\[
[X_{-\varepsilon_i-\varepsilon_j}, X_{\varepsilon_j-\varepsilon_n}] = \pm X_{-\varepsilon_i-\varepsilon_n} \in \Lie P
\]
so we conclude that some $\mathfrak{g}_\nu \subset M$ is contained in $\Lie P$ and conclude by the beginning of the proof of (b).\\
For (c) it is enough to use (b) and the irreducibility of the $L_m$-submodule $\Sym^2(W)$ when $p=3$.
\end{proof}

%\textbf{Proof of \Cref{final_rank1} in type $C_n$ when $P_{\text{red}}= P_m$}.
\begin{proof}(\textbf{of \Cref{final_rank1_weak} in type $C_n$ when $P_{\text{red}}= P_m$})\\
Let $G$ be simple adjoint of type $C_n$ and $X=G/P$ with a faithful $G$-action such that $P_{\text{red}}= P^{\alpha_m}$ and $P$ is nonreduced. Define $\widetilde{P} \subset \widetilde{G} = \Sp_{2n}$ as being the preimage of $P$ in the simply connected cover: it is a nonreduced parabolic subgroup satisfying ${\widetilde{P}}_{\text{red}}= P_m$. When $p=2$, \Cref{prop:CnPm} implies that 
\[
\langle \, \Lie (\gamma^\vee(\Gm)) \colon \gamma \in \Phi_< \, \rangle  \oplus \mathfrak{g}_< = \Lie N_{\widetilde{G}} \subset \Lie \widetilde{P},
\]
hence by considering the image in the adjoint quotient we get $N_G \subset P$, which is a contradiction by Remark \ref{faithful_action}. If $p=3$ then \Cref{prop:CnPm} implies that $\Lie \widetilde{P} = \Lie \widetilde{G}$, hence the Frobenius kernel satisfies $\widetilde{G}_1 \subset \widetilde{P}$, and its image in the adjoint quotient is a normal subgroup of $G$ contained in $P$, which gives again a contradiction. Therefore in both cases $P$ must be a smooth parabolic.
\end{proof}

%%%%%%%%%%%%%%%%%%%%%%%%%%%%%%%%%%%%%%%%%%%%%%%%%%%%%%%%%%%%%%%%%%%%%%%%%%%%%%%%%%%%%%%%%%%%%%%%%%%%%%%%%%%%%%%%%%%%5

\subsection{Type $B_n$}

The aim of this subsection is to get the same results for the group of type $B_n$, with the help of some of the computations involving structure constants, which we have already done in case of type $C_n$.
 %%%%%%%%%%%%%%%%%%%%%%%%%%%%%%%%%%%%%%%%%%%%%%%%%%%%%%%%%%%%%%%%%%%%%%%%%%%%%%%%%%%%%%%%%%%%%%%%%%%%%%%%%%%%%%%%%%%%%%%%%%%%%%%%%%%%%%%%%%%%%%%%%%%%%%%%%%%%
\subsubsection{Lie algebra of $\SO_{2n+1}$}% in characteristic $2$}
 Before continuing with our proof, let us compute what $\Lie G$ looks like, where $G= \SO_{2n+1}=\SO(k^{2n+1})$ is defined as being relative to the quadratic form 
 \[
 Q(x) = x_n^2 + \sum_{i=0}^{n-1} x_ix_{2n-i},
 \]in order to determine all its root spaces and be able
to make explicit computations with them.
To do this, let us consider as maximal torus $T \subset G$ the one given by diagonal matrices of the form
\[ t = \diag(t_1,\ldots,t_n,1,t_n^{-1},\ldots,t_1^{-1}) \in \GL_{2n+1},
\]
while the Borel subgroup is given by upper triangular matrices in $G$. The Lie algebra is given by all matrices of the form
\[
M = \begin{pmatrix}
& f_0 &\\
A=(a_{ij})_{i,j=1}^n & \vdots & B=(b_{ij})_{i,j=1}^n\\
& f_{n-1} &\\
g_0\ldots g_{n-1} & h & g_{n+1}\ldots g_{2n}\\
& f_{n+1} &\\
C=(c_{ij})_{i,j=1}^n & \vdots & D=(d_{ij})_{i,j=1}^n  \\
& f_{2n} &
\end{pmatrix} \in \mathfrak{g}_{2n+1}
\]
satisfying $Q((1+\epsilon M) x ) = Q(x)$ for all $x \in k^{2n+1}$, where $\epsilon^2=0$. Let us compute
\begin{align*}
    & Q((1+\epsilon M)x) =  \left( x_n + \epsilon (g_0x_0 + \ldots + g_{n-1}x_{n-1} + hx_n + g_{n+1}x_{n+1} + \ldots g_{2n}x_{2n} ) \right)^2 \\ 
    & + \sum_{i=0}^{n-1} \left( x_i + \epsilon \left( \sum_{j=0}^{n-1} a_{ij}x_j + f_i x_n + \sum_{m=0}^{n-1} b_{i,n+1-m}x_{2n-m} \right) \right)\\
    & \cdot \left( x_{2n-i} + \epsilon \left( \sum_{r=0}^{n-1} c_{n+1-i,r} x_r + f_{2n-i}x_n + \sum_{l=0}^{n-1}d_{n+1-i,n+1-l} x_{2n-l}
  \right)\right)\\
   & = Q(x) + \epsilon [ 2hx_n^2 + \sum_{i=0}^{n-1} (f_{2n-i}+2g_i)x_ix_n + \sum_{i=0}^{n-1} (f_i+2g_{2n-i}) x_nx_{2n-i}\\
   & + \sum_{i,m=0}^{n-1} b_{i,n+1-m}x_{2n-m}x_{2n-i} + \sum_{i,r=0}^{n-1} c_{n+1-i,r} x_ix_r + \sum_{i,j=0}^{n-1} (a_{ij}+d_{n+1-j,n+1-i})x_jx_{2n-i}]
   %Q(x) + \epsilon \left[ \sum_{i=1}^n f_{n+i} x_0x_i + \sum_{i=1}^n f_i x_0x_{n+i} + \sum_{i,h=1}^n c_{ih}x_hx_i + \sum_{i,m=1}^n b_{im}x_{n+m}x_{n+i} + \sum_{i,r=1}^n(a_{ir} +d_{ri}) x_{n+i}x_r \right].
\end{align*}
Asking the above quantity to be equal to $Q(x)$ gives the following conditions:
\[
2h=0, \quad f_i = -2g_{2n-i}, \quad f_{2n-i}= g_i, \quad D = -A^\sharp, \quad C = -C^\sharp, \quad B = -B^\sharp,
\]
where we keep the notation (\ref{sharp}). Moreover, the matrices $\Omega_n B$ and $\Omega_n C$ have zero diagonal. Since the group considered is special orthogonal, the last condition on the determinant implies that the trace of the matrix must be zero hence $h= 0$ also in characteristic $2$. The result is thus
\[
\Lie \SO_{2n+1} = \left\{
 \begin{pmatrix}
A & -2\Omega_n w & B\\
{}^t v & 0 & {}^t w\\
C & -2\Omega_n v & -A^\sharp
\end{pmatrix}
\in \mathfrak{gl}_{2n+1} \colon C = -C^\sharp, \, B = -B^\sharp, \, c_{n+1-i,i}= b_{n+1-i,i}= 0
\right\}
\]

\begin{remark}
\label{roots_B}
Denoting, analogously to the type $C_n$, as $\varepsilon_i \in X(T)$ the character $t \mapsto t_i$ for $1\leq i \leq n$, the root spaces are the following :
\begin{align*}
    & \mathfrak{g}_{-\varepsilon_i} = k \begin{pmatrix}
0 & 0 & 0 \\ {}^t e_i & 0 & 0 \\ 0 & -2e_{n+1-i} & 0
\end{pmatrix}, &\\
& \mathfrak{g}_{\varepsilon_i} = k \begin{pmatrix}
0 & -2e_i & 0 \\ 0 & 0 & {}^t e_{n+1-i} \\ 0 & 0 & 0
\end{pmatrix}, & 1\leq i \leq n,\\
     & \mathfrak{g}_{\varepsilon_i+\varepsilon_j} = k \begin{pmatrix}
0 & 0 & (E_{ij}+E_{ji})\Omega_n \\ 0 & 0 & 0 \\ 0 & 0 & 0
\end{pmatrix}, & \\
& \mathfrak{g}_{-\varepsilon_i-\varepsilon_j} = k \begin{pmatrix}
0 & 0 & 0 \\ 0 & 0 & 0 \\ \Omega_n(E_{ij}+E_{ji}) & 0 & 0
\end{pmatrix}, & i <j,\\
     & \mathfrak{g}_{\varepsilon_i-\varepsilon_j} = k \begin{pmatrix}
E_{ij} & 0 & 0 \\ 0 & 0 & 0 \\ 0 & 0 & - E_{ij}^\sharp
\end{pmatrix}, &\\
& \mathfrak{g}_{-\varepsilon_i+\varepsilon_j} = k \begin{pmatrix}
E_{ji} & 0 & 0 \\ 0 & 0 & 0 \\ 0 & 0 & - E_{ji}^\sharp
\end{pmatrix}, & i <j,
\end{align*}
where $e_i$ denotes the standard basis of $k^n$ and $E_{ij}$ the square matrix of order $n$ with all zero entries except for the $(i,j)$-th which is equal to one.
\end{remark}

We thus verify that the root system $\Phi = \Phi (G,T)$ is given by
\[
\Phi^+ = \{ \varepsilon_i - \varepsilon_j, \, \varepsilon_i + \varepsilon_j, \, 1 \leq i < j \leq n \} \cup \{ \varepsilon_i, \, 1 \leq i \leq n \},
\]
with basis $\Delta$ consisting of the following roots :
\begin{align}
\label{roots_Bn}
\alpha_1 = \varepsilon_1 - \varepsilon_2, \, \ldots, \, \alpha_{n-1} = \varepsilon_{n-1} - \varepsilon_n, \, \alpha_n = \varepsilon_n.
\end{align}

\subsubsection{Reduced parabolic $P_n$}
Going back to our setting, let us consider the maximal reduced parabolic subgroup $P_n = P^{\alpha_n}$ associated to the short simple root $\alpha_n$, i.e. the stabilizer of the isotropic vector subspace $W = ke_0 \oplus \cdots \oplus ke_{n-1} \subset V$ of dimension $n$, where $G= \SO(V)$ and $(e_i)_{i=0}^{2n}$ denotes the standard basis of $k^{2n+1}$. Since its Levi subgroup $L_n = P_n \cap P_n^-$ stabilizes both $W$ and its dual $W^\ast = ke_{n+1} \oplus \cdots \oplus ke_{2n}$, we conclude that it is of the form 
%Since the isogeny (\ref{isogenyBC}) sends $P_n$, its opposite parabolic subgroup $P_n^-$ and their common Levi subgroup $L_n = P_n \cap P_n^-$ to those described in (\ref{Pn_Levi}) - (\ref{Pn_Levi2}), the arguments from Section \ref{sectionCnPn} allow us to conclude (cannot use this isogeny when $p=3$ so need to change reasoning here ) that 
 \[
\begin{tikzcd}
L_n = \left\{
\begin{pmatrix}
A & 0 & 0 \\ 0 & 1 & 0 \\ 0 & 0 & (A^{-1})^\sharp
\end{pmatrix} \colon \, A \in \GL(W) \simeq \GL_n
\right\} \subset P_n = \left\{
\begin{pmatrix}
\ast & \ast & \ast \\ 0 & \ast & \ast \\ 0 & 0 & \ast
\end{pmatrix}\right\} \subset G,
\end{tikzcd}
\]
where $V = W \oplus ke_n \oplus W^\ast$. In particular, $L_n$ is isomorphic to $\GL_n$, with root system $\Psi$ given by 
\[
\Psi^+ = \{ \varepsilon_i - \varepsilon_j, \, 1 \leq i < j \leq n \}.
\]

\begin{proposition}
\label{LieN_2}
Assume given a nonreduced parabolic subgroup $P$ such that $P_{\text{red}} = P_n$. Then $\Lie P = \Lie G$ or $\Lie P = \Lie P_n + \mathfrak{g}_<$. If $p=3$, then necessarily $\Lie P = \Lie G$.
\end{proposition}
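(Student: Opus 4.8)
The plan is to reproduce, in type $B_n$, the three-step pattern used for \Cref{LieN}: first describe $V_n \defeq \Lie G/\Lie P_n$ as a module over the Levi $L_n \simeq \GL_n$, then use irreducibility to list the possible submodules $\Lie P/\Lie P_n$, and finally discard the unwanted case by a bracket computation via \Cref{lemmaChevalley}. The only genuinely new feature compared with the $C_n$ case is that here $\alpha_n = \varepsilon_n$ is \emph{short}, so the two root lengths swap roles and the relevant $L_n$-module is no longer a symmetric square.

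Reading off \Cref{roots_B}, the negative root spaces surviving in $V_n$ are exactly those whose roots contain $\alpha_n=\varepsilon_n$ in their support, namely the short roots $-\varepsilon_i$ ($1\le i\le n$) and the long roots $-\varepsilon_i-\varepsilon_j$ ($i<j$). A direct check with the Chevalley relations shows that $\mathfrak{g}_<^- \defeq \bigoplus_i \mathfrak{g}_{-\varepsilon_i}$ and $\bigoplus_{i<j}\mathfrak{g}_{-\varepsilon_i-\varepsilon_j}$ are each $L_n$-submodules, with disjoint sets of $T$-weights, isomorphic respectively to the (dual of the) standard representation of $\GL_n$ and to $\wedge^2$ of the standard representation. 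Both are irreducible in every characteristic for $n\ge 2$, so $V_n$ is the direct sum of these two non-isomorphic irreducibles; since any submodule is $T$-graded, the only $L_n$-submodules of $V_n$ are $0$, $\mathfrak{g}_<^-$, $\bigoplus_{i<j}\mathfrak{g}_{-\varepsilon_i-\varepsilon_j}$ and $V_n$.

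As $P$ is nonreduced, $\Lie P/\Lie P_n$ is a nonzero such submodule, so it contains a short or a long negative root space. If it contains some $\mathfrak{g}_{-\varepsilon_i-\varepsilon_j}$, then by irreducibility it contains all long negative root spaces, and bracketing with $X_{\varepsilon_j}\in\Lie B\subset\Lie P_n$ gives, by \Cref{lemmaChevalley}(d), $[X_{-\varepsilon_i-\varepsilon_j},X_{\varepsilon_j}]=\pm X_{-\varepsilon_i}$ with constant $\pm 1$ (the $\varepsilon_j$-string through $-\varepsilon_i-\varepsilon_j$ has $r=0$); hence $\Lie P$ also meets $\mathfrak{g}_<^-$ and, again by irreducibility, contains all of it, so $\Lie P/\Lie P_n=V_n$ and $\Lie P=\Lie G$. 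Otherwise $\Lie P/\Lie P_n$ meets only short roots, forcing $\Lie P/\Lie P_n=\mathfrak{g}_<^-$; as the positive short root spaces already lie in $\Lie B$, this says exactly $\Lie P = \Lie P_n + \mathfrak{g}_<$.

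The remaining point, and the step I expect to be the real subtlety, is the claim that in characteristic $3$ the second case cannot occur. This rests on the asymmetry of the structure constants: the step ``long $\to$ short'' above carried the constant $\pm 1$ and works in all characteristics, whereas the reverse step uses $[X_{-\varepsilon_i},X_{-\varepsilon_j}]=\pm 2\,X_{-\varepsilon_i-\varepsilon_j}$ (here the $-\varepsilon_j$-string through $-\varepsilon_i$ has $r=1$), whose constant $\pm 2$ vanishes precisely in characteristic $2$. Thus when $p=3$, if $\Lie P/\Lie P_n=\mathfrak{g}_<^-$ then, using $n\ge 2$ to pick two distinct short root vectors, their bracket produces a nonzero long root space in $\Lie P$; this pushes us back into the first case, so $\Lie P=\Lie G$. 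This recovers the stated dichotomy and plays the same role here that the irreducibility of $\Sym^2$ played in \Cref{LieN}.
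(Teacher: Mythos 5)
Your proof is correct and follows essentially the same route as the paper's: the same decomposition of $V_n$ into the short part (dual standard representation of $\GL_n$) and the long part, the same bridging bracket $[X_{-\varepsilon_i-\varepsilon_j},X_{\varepsilon_j}]=\pm X_{-\varepsilon_i}$ with constant $\pm 1$, and the same characteristic-sensitive bracket $[X_{-\varepsilon_i},X_{-\varepsilon_j}]=\pm 2\,X_{-\varepsilon_i-\varepsilon_j}$ that yields $\Lie P=\Lie G$ when $p=3$ and fails exactly in characteristic $2$. The only (harmless) difference is that you propagate among the long negative root spaces via irreducibility of the $L_n$-submodule $\wedge^2(k^n)^\ast$, whereas the paper instead reruns the bracket computations of case (b) from the $C_n$ proof; both are valid, and your variant is slightly more structural.
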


\begin{proof}
First, by definition of $P_n$ its Lie algebra is given by
\[
\Lie P_n = \Lie L_n \, \bigoplus_{i<j} \mathfrak{g}_{\varepsilon_i+\varepsilon_j} \bigoplus_i \mathfrak{g}_{\varepsilon_i},
\]
%where $L$ denotes the Levi subgroup of $P_n$ which satisfies, together with its root system $\Psi$,
%\[
%L = \left\{
%\begin{pmatrix}
%1 & 0 & 0 \\ 0 & A & 0 \\ 0 & 0 & ^tA^{-1}
%\end{pmatrix} \colon \, A \in \GL(W) \simeq \GL_n
%\right\} \subset G, \quad \Psi^+ = \{ \varepsilon_i - \varepsilon_j, \, 1 \leq i < j \leq n \}.
%\]
%In particular, $L$ is isomorphic to $\GL_n$.
%Recall also that elements of $\Lie G$ are square matrices of order $2n+1$ of the form
%\[
%\begin{pmatrix}
%0 & ^tA & ^tB \\ B & M & C \\ A & D & ^tM
%\end{pmatrix}
%\quad \text{ with } M,C,D \in M_n(k), \, C,D \text{ symmetric and } A,B \in k^n \text{ column vectors}.
%\]
Since $P$ is assumed to be nonreduced, $\Lie P_n \subsetneq \Lie P$ hence :
\begin{enumerate}[$(1)$]
    \item either there is some $i$ such that $\mathfrak{g}_{-\varepsilon_i} \subset \Lie P$, 
    \item or there is some $i<j$ such that $\mathfrak{g}_{-\varepsilon_i-\varepsilon_j} \subset \Lie P$.
\end{enumerate}

Let us start by assuming $(1)$ and fix such an index $i$. To show that all other $\mathfrak{g}_{-\varepsilon_j}$ are then contained in $\Lie P$, let us consider the $L_n$-module 
\[
V_n \defeq \Lie G / \Lie P_n = \bigoplus_{i<j} \mathfrak{g}_{-\varepsilon_i - \varepsilon_j} \bigoplus_i \mathfrak{g}_{-\varepsilon_i}.
\]

By \Cref{roots_B}, a matrix in $\bigoplus_{i=1}^n \mathfrak{g}_{-\varepsilon_i}$ is of the form 
\[
\begin{pmatrix}
    0 & 0 & 0\\
    {}^t v & 0 & 0\\
    0 & -2\Omega_n v & 0
\end{pmatrix}
\]
for $v \in k^n$, and the dual $L_n$-action on it is given by
\begin{align}
\label{lemma:6.3.2}
{}^t\!A^{-1} \cdot v & = \begin{pmatrix}
{}^t\!A^{-1} & 0 & 0 \\ 0 & 1 & 0 \\ 0 & 0 & {}^t\! A^\sharp
\end{pmatrix}
\begin{pmatrix}
    0 & 0 & 0\\
    {}^t v & 0 & 0\\
    0 & -2\Omega_n v & 0
\end{pmatrix}
\begin{pmatrix}
{}^t\! A & 0 & 0 \\ 0 & 1 & 0 \\ 0 & 0 & ({}^t\! A^{-1})^\sharp
\end{pmatrix} \\ & =
\begin{pmatrix}
0 & 0 & 0 \\ {}^t(Av) & 0 & 0 \\ 0 & -2\Omega_n Av & 0
\end{pmatrix} \simeq Av
\end{align}
In particular, $\bigoplus_{i=1}^n \mathfrak{g}_{-\varepsilon_i}$ is a simple $L_n$-module, isomorphic to the dual of the standard representation of $\GL_n$ on $k^n$. Thus, if a root subspace associated to some $-\varepsilon_i$ is contained in $\Lie P$, all of the $\mathfrak{g}_{-\varepsilon_j}$ are too.\\
Let us assume instead that $(2)$ holds: then, by repeating the same exact reasoning done in case (b) of the preceding subsection, we show that $\Lie P $ contains all weight spaces associated to long roots. This is due to the fact that the argument above only involves roots of the form $\pm (\varepsilon_l \pm \varepsilon_m)$. Moreover, assume $i \neq n$ and consider roots
\begin{align*}
    \gamma = \varepsilon_n, \quad & \text{satisfying } X_\gamma \in \Lie L_n \subset \Lie P\\
    \delta = -\varepsilon_i-\varepsilon_n, \quad & \text{satisfying } X_\delta \in \Lie P \text{ by our last assumption. }
\end{align*}
Thus, $\gamma + \delta = -\varepsilon_i$ is still a root while $\delta - \gamma = -\varepsilon_i -2\varepsilon_n$ is not: applying \Cref{lemmaChevalley} gives 
\[
[X_{\varepsilon_n}, X_{-\varepsilon_i-\varepsilon_n}] = \pm X_{-\varepsilon_i} \in \Lie P.
\]
In conclusion, when $p=2$ we have shown that condition $(2)$ implies $\Lie P = \Lie G$, while assuming condition $(1)$ to be true and $(2)$ to be false implies $\Lie P = \Lie P_n + \mathfrak{g}_<$.\\
If $p=3$ then the above reasoning still holds; the only remark that we need to add is that $\mathfrak{g}_< \subset \Lie P$ implies that there is a long negative root $\nu$ satisfying $\mathfrak{g}_\nu \subset \Lie P / \Lie P_n$. For this, let us consider roots
\[
\gamma = -\varepsilon_1 \text{ and } \delta = -\varepsilon_n, \text{ satisfying } X_\gamma,X_\delta \in \Lie P \text{ by our last assumption}.
\]
Thus, $\gamma+\delta = -\varepsilon_1-\varepsilon_n$ is still a root, $\gamma-\delta = -\varepsilon_1+\varepsilon_n$ is too, while $\gamma-2\delta = -\varepsilon_1+2\varepsilon_n$ is not: applying \Cref{lemmaChevalley} gives
\[
[X_{-\varepsilon_1}, X_{-\varepsilon_n} ] = \pm 2 X_{-\varepsilon_1-\varepsilon_n}, \quad \text{hence } X_{-\varepsilon_1-\varepsilon_n} \in \Lie P.
\]
Clearly, this last step of the proof would not work under the hypothesis $p =2$.
\end{proof}

\begin{proof}(\textbf{of \Cref{final_rank1_weak} in type $B_n$ when $P_{\text{red}}= P_n$})\\
Let $G$ be simple adjoint of type $B_n$ and $X=G/P$ with a faithful $G$-action such that $P_{\text{red}}= P_n = P^{\alpha_n}$ and $P$ is nonreduced. When $p=2$, the above Proposition, together with the computation of Example \ref{N_SO}, imply that 
\[
\mathfrak{g}_< = \Lie N_G \subset \Lie P,
\]
hence we get $N_G \subset P$, which is a contradiction by Remark \ref{faithful_action}. When $p=3$, the above Proposition implies that $\Lie P = \Lie G$, hence the Frobenius kernel satisfies $G_1\subset P$, which gives again a contradiction. Therefore in both cases $P$ must be a smooth parabolic.
\end{proof}

\begin{remark}
\label{rem_lifting}
A small additional remark is needed in order to have a uniform statement later on, since this is the only case where the group $G$ is not simply connected: let $\psi \colon \widetilde{G} = \Spin_{2n+1} \longrightarrow G = \SO_{2n+1}$ be the quotient morphism and consider a nonreduced parabolic subgroup $P \subset \widetilde{G}$ such that $P_{\text{red}} = P^{\alpha_n}$. The above reasoning implies that $\psi(P)$ either contains $N_G$ - when such a subgroup is defined - or it contains the Frobenius kernel $G_1$. In particular, $P$ contains a normal noncentral subgroup of height one, namely $P \cap \psi^{-1}(N_G)$ or $P \cap \psi^{-1}(G_1)$.
\end{remark}

%%%%%%%%%%%%%%%%%%%%%%%%%%%%%%%%%%%%%%%%%%%%%%%%%%%%%%%%%%%%%%%%%%%%%%%%%%%5
\subsubsection{Reduced parabolic $P_m$, $m<n$}

Let us consider again a $k$-vector space $V$ of dimension $2n+1$ and denote as $G$ the group $\SO_{2n+1} = \SO(V)$, of type $B_n$ with $n \geq 2$ and $k$ of characteristic $p=2$ or $3$. Moreover, let us consider the maximal reduced parabolic subgroup
\[
P_m \defeq P^{\alpha_m}
\]
associated to a long simple root $\alpha_m$ for some $m<n$, keeping notations from (\ref{roots_Bn}). This subgroup is the stabilizer of an isotropic vector subspace $W = ke_0 \oplus \cdots \oplus ke_{m-1} \subset V$ of dimension $m$, where $(e_i)_{i=0}^{2n}$ denotes the standard basis of $k^{2n+1}$. Since its Levi subgroup $L_m = P_m \cap P_m^-$ stabilizes both $W$ and its dual $W^\ast = ke_{2n-m+1} \oplus \cdots \oplus ke_{2n}$, we conclude that it is of the form 
 \[
\begin{tikzcd}
L_m = \left\{ 
\begin{pmatrix}
A & 0 & 0\\ 0 & B & 0 \\ 0 & 0 & (A^{-1})^\sharp
\end{pmatrix} \colon A \in \GL(W), B \in \SO(U) \right\} \subset P_m = \left\{
\begin{pmatrix}
\ast & \ast & \ast \\ 0 & \ast & \ast \\ 0 & 0 & \ast
\end{pmatrix}\right\} % \subset G,
\end{tikzcd}
\]
where $V = W \oplus U \oplus W^\ast$. In particular, $ L_m \simeq \GL_m \times \SO_{2n-2m+1}$ with root system $\Psi$ given by 
\[
\Psi^+ = \{ \varepsilon_i - \varepsilon_j, \, i <j \leq m \} \cup \{ \varepsilon_i - \varepsilon_j, \varepsilon_i+\varepsilon_j, \, m <i<j \} \cup \{ \varepsilon_j, \, m<j\}.
\]

\begin{proposition}
\label{LieN_3}
Assume given a nonreduced parabolic subgroup $P$ such that $P_{\text{red}} = P_m$. Then $\Lie P = \Lie G$ or $\Lie P = \Lie P_m + \mathfrak{g}_<$.  If $p=3$, then necessarily $\Lie P = \Lie G$.
\end{proposition}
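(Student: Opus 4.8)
The plan is to mirror the proof of \Cref{prop:CnPm}, the only essential change being the internal structure of the module $M$ below, which is what introduces the characteristic-dependent alternative. First I would describe $V_m \defeq \Lie G / \Lie P_m$: the algebra $\Lie P_m$ contains every root space except those attached to negative roots whose support meets $\alpha_m$, so by \Cref{roots_B},
\[
V_m = \Big( \bigoplus_{i<j\leq m} \mathfrak{g}_{-\varepsilon_i-\varepsilon_j} \Big) \bigoplus_{i \leq m} \mathfrak{g}_{-\varepsilon_i} \bigoplus_{i\leq m<j} \big( \mathfrak{g}_{-\varepsilon_i-\varepsilon_j} \oplus \mathfrak{g}_{-\varepsilon_i+\varepsilon_j} \big).
\]
As in type $C_n$ the first summand, involving only indices $\leq m$, is a copy of the dual of the $\GL_m$-representation $\wedge^2(k^m)$ on which $\SO(U)$ acts trivially; being a minuscule Weyl module it is irreducible in every characteristic. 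Grouping the remaining summands as $M$ and running an analogous matrix computation to that of \Cref{prop:CnPm} identifies $M$ with $\Hom_k(k^m,k^{2n-2m+1}) \cong W^\ast \otimes U$ under $(A,B)\cdot Y = BYA^{-1}$. The crucial bookkeeping point is that $\mathfrak{g}_< \cap V_m$ is exactly the subspace $S \defeq \bigoplus_{i\leq m} \mathfrak{g}_{-\varepsilon_i}$, so that the alternative $\Lie P = \Lie P_m + \mathfrak{g}_<$ is the same as $\Lie P / \Lie P_m = S$.

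The characteristic enters through the $L_m$-module structure of $M$. The middle basis vector $e_n$ spans the radical of the bilinear form $b(x,y)=Q(x+y)-Q(x)-Q(y)$ attached to $Q$. When $p=2$ this form is alternating and $ke_n \subset U$ is a nonzero $\SO(U)$-submodule, so $S = W^\ast \otimes ke_n$ is an $L_m$-submodule of $M$ with irreducible quotient $W^\ast \otimes (U/ke_n)$; thus $M$ is reducible and the short roots form its unique proper submodule. When $p=3$ the form $b$ is nondegenerate, $U$ is $\SO(U)$-irreducible, and hence $M = W^\ast \otimes U$ is irreducible. Concretely this dichotomy is detected by the single structure constant $[X_{-\varepsilon_i},X_{-\varepsilon_b}] = \pm 2\, X_{-\varepsilon_i-\varepsilon_b}$ for $i\leq m<b$ (the $\delta$-string gives $r=1$), which vanishes precisely when $p=2$.

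Next I would record the Chevalley brackets gluing the pieces, all with coefficient $\pm 1$ by \Cref{lemmaChevalley}(d) and hence available in both characteristics; throughout one uses that $\Lie P \supseteq \Lie P_m \supseteq \Lie B$, so every positive root vector and every bracket of two elements already known to lie in $\Lie P$ stays in $\Lie P$. Writing $Q \defeq \Lie P/\Lie P_m$ (a nonzero submodule since $P$ is nonreduced) these are: (a) if $Q$ meets the mixed long roots it meets $S$, via $[X_{-\varepsilon_i+\varepsilon_j},X_{-\varepsilon_j}] = \pm X_{-\varepsilon_i}$ with $X_{-\varepsilon_j}\in \Lie L_m$; (b) $S$ is $L_m$-irreducible, via $[X_{-\varepsilon_i},X_{\varepsilon_i-\varepsilon_l}]=\pm X_{-\varepsilon_l}$; (c) if $Q$ meets $\wedge^2 W^\ast$ it meets the mixed long roots, via $[X_{-\varepsilon_i-\varepsilon_j},X_{\varepsilon_j-\varepsilon_l}]=\pm X_{-\varepsilon_i-\varepsilon_l}$ with $\varepsilon_j-\varepsilon_l\in\Phi^+$, $l>m$; and (d) once $M\subseteq \Lie P$, the self-bracket $[X_{-\varepsilon_i+\varepsilon_l},X_{-\varepsilon_j-\varepsilon_l}]=\pm X_{-\varepsilon_i-\varepsilon_j}$ deposits a $\wedge^2$ root in $Q$. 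Irreducibility of $\wedge^2 W^\ast$ and of $M/S$ (resp.\ $M$) then spreads any single root to its whole $L_m$-orbit.

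Finally I would assemble these. In characteristic $2$: if $Q$ contains a long root then, chaining the brackets (a)--(d) with the two irreducibilities, $Q$ exhausts $V_m$, so $\Lie P=\Lie G$; if $Q$ contains no long root then $Q\subseteq S$ and (b) forces $Q=S$, i.e.\ $\Lie P=\Lie P_m+\mathfrak{g}_<$. In characteristic $3$, $V_m=\wedge^2 W^\ast\oplus M$ is a sum of two irreducibles, a nonzero $Q$ meets one of them, (c) or (d) forces it to meet the other, and irreducibility gives $Q=V_m$, i.e.\ $\Lie P=\Lie G$ --- consistent with the diagnostic constant vanishing only in characteristic $2$. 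The only genuine obstacle, exactly as in the type $C_n$ analysis, is pinning down the $L_m$-module $M$ in characteristic $2$: recognizing that $\mathfrak{g}_<$ is the proper submodule coming from the radical line $ke_n$ is precisely what produces the extra non-reduced possibility $\Lie P=\Lie P_m+\mathfrak{g}_<$.
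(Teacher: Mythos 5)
Your proof is correct and follows the same overall route as the paper's: decompose $V_m = \Lie G/\Lie P_m$ under $L_m \simeq \GL_m \times \SO_{2n-2m+1}$, then propagate root spaces through $\Lie P$ by Chevalley brackets with unit structure constants, the characteristic entering only through the short roots. However, your bookkeeping of the $L_m$-module structure is genuinely sharper than the paper's own and in fact corrects an imprecision there. The paper asserts that $\bigoplus_{j\le m}\mathfrak{g}_{-\varepsilon_j}$ and $\bigoplus_{i\le m<j}(\mathfrak{g}_{-\varepsilon_i-\varepsilon_j}\oplus\mathfrak{g}_{-\varepsilon_i+\varepsilon_j})$ are two irreducible $L_m$-submodules of $V_m$, the latter isomorphic to $\Hom_k(k^m,k^{2n-2m+1})$; as stated this cannot be right, since the mixed-root space has dimension $2m(n-m)$ rather than $m(2n-2m+1)$ and is not even $\Lie L_m$-stable (your bracket (a), $[X_{-\varepsilon_i+\varepsilon_j},X_{-\varepsilon_j}]=\pm X_{-\varepsilon_i}$, exits it), while the short-root space is a submodule only when $p=2$, exactly because of your diagnostic constant $[X_{-\varepsilon_i},X_{-\varepsilon_b}]=\pm 2X_{-\varepsilon_i-\varepsilon_b}$. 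Your packaging $V_m = \Lambda^2(W^\ast)\oplus(W^\ast\otimes U)$, with the short roots sitting inside $W^\ast\otimes U$ as $W^\ast\otimes ke_n$ and $ke_n$ the radical of the bilinear form attached to the quadratic form --- a submodule precisely when $p=2$ --- is the correct statement, and it is exactly what produces the extra alternative $\Lie P = \Lie P_m + \mathfrak{g}_<$ only in characteristic $2$. Accordingly, your characteristic-$3$ endgame (irreducibility of $W^\ast\otimes U$) replaces the paper's explicit appeal to the nonvanishing of $\pm 2$; the two are equivalent, as you observe. A small bonus: your brackets work uniformly in $m$, whereas the paper's closing bracket $[X_{-\varepsilon_1-\varepsilon_m},X_{\varepsilon_m}]=\pm X_{-\varepsilon_1}$ is unavailable when $m=1$ (then $-\varepsilon_1-\varepsilon_m$ is not a root), where one must instead use a bracket of the type of your (a).
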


\begin{proof}
The Lie algebra of $P_m$ contains all root subspaces except for those associated to negative roots containing $\alpha_m$ in their support, hence
\[
V_m \defeq \Lie G/ \Lie P_m = \left( \bigoplus_{i<j\leq m} \mathfrak{g}_{-\varepsilon_i - \varepsilon_j} \bigoplus_{j\leq m} \mathfrak{g}_{-\varepsilon_j} \right) \bigoplus_{i \leq m <j} \left( \mathfrak{g}_{-\varepsilon_i-\varepsilon_j} \oplus \mathfrak{g}_{-\varepsilon_i+\varepsilon_j} \right) 
\]
The analogous computations as those in the proofs of \Cref{prop:CnPm} and (\ref{lemma:6.3.2}) imply that, as $L_m$-modules, 
\begin{enumerate}[(1)]
    \item $\bigoplus_{j\leq m} \mathfrak{g}_{-\varepsilon_j}$ is isomorphic to the dual of the standard representation of $\GL_n$ on $k^m$, hence it is in particular a simple $L_m$-submodule of $V_m$ ;
    \item $\bigoplus_{i \leq m <j} \left( \mathfrak{g}_{-\varepsilon_i-\varepsilon_j} \oplus \mathfrak{g}_{-\varepsilon_i+\varepsilon_j} \right)$ is isomorphic to the following representation, which gives a second irreducible $L_m$-submodule of $V_m$ :
\end{enumerate}
\[
\GL_m \times \SO_{2n-2m+1} \curvearrowright \Hom_k(k^m,k^{2n-2m+1}), \quad (A,B) \cdot Y = BYA^{-1}.
\]
Now, first assume $\mathfrak{g}_{-\varepsilon_l} \subset \Lie P$ for some $l\leq m$. Then $\bigoplus_{j\leq m} \mathfrak{g}_{-\varepsilon_j}$ is contained in $\Lie P$, since $\Lie P / \Lie P_m$ is a nontrivial $L_m$-submodule of $V_m$. Hence in this case $\mathfrak{g}_< \subset \Lie P$.\\
The only other possibility is to start by assuming that $\mathfrak{g}_\gamma \subset \Lie P$ for some long negative root $\gamma$ containing $\alpha_m$ in its support. Then one can repeat the same exact reasoning of point (b) in the proof of \Cref{prop:CnPm}, since it involves only roots of the form $\pm (\varepsilon_l \pm \varepsilon_m)$ with $l <m$, to conclude that all root subspaces associated to long negative roots are also contained in $\Lie P$. To conclude that, in this case, $\Lie P = \Lie G$, it suffices to apply \Cref{lemmaChevalley} to $\gamma = -\varepsilon_1 - \varepsilon_m$ and $\delta = \varepsilon_m$, which gives
\[
[X_{-\varepsilon_1-\varepsilon_m}, X_{\varepsilon_m}] = \pm X_{-\varepsilon_1} \in \Lie P
\]
as wanted.\\
Up to this point everything holds in both characteristic $p=2$ and $3$. To conclude it is enough to show that, when $p=3$, if $\mathfrak{g}_< \subset \Lie P$ then there is a long negative root $\nu$ satisfying $\mathfrak{g}_\nu \subset \Lie P / \Lie P_m$. For this, let us consider roots
\[
\gamma = -\varepsilon_1 \text{ and } \delta = -\varepsilon_n, \text{ satisfying } X_\gamma,X_\delta \in \Lie P \text{ by our last assumption}.
\]
Thus, $\gamma+\delta = -\varepsilon_1-\varepsilon_n$ is still a root, $\gamma-\delta = -\varepsilon_1+\varepsilon_n$ is too, while $\gamma-2\delta = -\varepsilon_1+2\varepsilon_n$ is not: applying \Cref{lemmaChevalley} gives
\[
[X_{-\varepsilon_1}, X_{-\varepsilon_n} ] = \pm 2 X_{-\varepsilon_1-\varepsilon_n}, \quad \text{hence } X_{-\varepsilon_1-\varepsilon_n} \in \Lie P
\]
as wanted.
\end{proof}

%\textbf{Proof of \Cref{final_rank1_weak} in type $B_n$ when $P_{\text{red}}= P_m$}.
\begin{proof}(\textbf{of \Cref{final_rank1_weak} in type $B_n$ when $P_{\text{red}}= P_m$})\\
Let $G$ be simple adjoint of type $B_n$ and $X=G/P$ with a faithful $G$-action such that $P_{\text{red}}= P^{\alpha_m}$ and $P$ is nonreduced. When $p=2$ the above Proposition, together with Example \ref{N_SO}, imply that 
\[
\mathfrak{g}_< = \Lie N_G \subset \Lie P,
\]
hence we get $N_G \subset P$, which is a contradiction by Remark \ref{faithful_action}. When $p=3$, the above Proposition implies that $\Lie P = \Lie G$, hence the Frobenius kernel satisfies $G_1\subset P$, which gives again a contradiction. Therefore in both cases $P$ must be a smooth parabolic.
\end{proof}

\begin{remark}
\label{rem_lifting2}
As in Remark \ref{rem_lifting} above, we can conclude that if $P \subset \Spin_{2n+1}$ is a nonreduced parabolic subgroup satisfying $P_{\text{red}} = P^{\alpha_m}$, then it contains a normal noncentral subgroup of height one.
\end{remark}

%%%%%%%%%%%%%%%%%%%%%%%%%%%%%%%%%%%%%%%%%%%%%%%%%%%%%%%%%%%%%%%%%%%%%%%%%%%%%%%%%%%%%%%%%%%%%%%%
\subsection{Type $F_4$}
\label{F4}

Let us consider a simple group $G$ with root system $F_4$ over an algebraically closed field $k$ of characteristic $p=2$ or $3$. Following notations from \cite{Bourbaki}, a basis $\Delta$ of its root system $\Phi$ is given by
\[
\alpha_1= \varepsilon_2-\varepsilon_3, \quad \alpha_2 = \varepsilon_3-\varepsilon_4, \quad \alpha_3 = \varepsilon_4, \quad \alpha_4 = \frac{1}{2}(\varepsilon_1-\varepsilon_2-\varepsilon_3-\varepsilon_4),
\]
satisfying the relations
\[
\vert\vert \alpha_1 \vert\vert^2 = \vert\vert \alpha_2 \vert \vert^2=2, \quad \vert\vert \alpha_3 \vert\vert^2 = \vert\vert \alpha_4 \vert \vert^2=1
\]
and
\begin{align}
\label{F4_relations}
    (\alpha_1,\alpha_2)= (\alpha_2,\alpha_3)=-1, \quad \! (\alpha_1,\alpha_3) = (\alpha_1,\alpha_4)= (\alpha_2,\alpha_4) = 0, \quad\!  (\alpha_3,\alpha_4)= -\frac{1}{2}.
\end{align}
Let us denote the associated maximal reduced parabolic subgroups as $P_i \defeq P^{\alpha_i}$, for $i \in \{ 1,2,3,4\}$. Let us also recall that, when $p=2$, $N_G \subset G$ is the unique subgroup of height one such that
\[
\Lie N_G = \Lie \alpha_3^\vee(\Gm) \oplus \Lie \alpha_4^\vee(\Gm) \oplus \mathfrak{g}_<,
\]
where the short positive roots are
\begin{align*}
& \alpha_3, \,\, \alpha_4, \,\, \alpha_2+\alpha_3, \,\,  \alpha_3+\alpha_4, \,\, \alpha_1+\alpha_2+\alpha_3, \,\, \alpha_2+2\alpha_3+\alpha_4,\\ & \alpha_2+\alpha_3+\alpha_4,\,\, \alpha_1+2\alpha_2+3\alpha_3+2\alpha_4, \,\, \alpha_1+\alpha_2+\alpha_3+\alpha_4, \\ & \alpha_1+\alpha_2+2\alpha_3+\alpha_4,\,\,
 \alpha_1+2\alpha_2+2\alpha_3+\alpha_4, \,\,
 \alpha_1+2\alpha_2+3\alpha_3+\alpha_4. 
\end{align*}

\begin{proposition}
\label{LieN_6}
Assume given a nonreduced parabolic subgroup $P$ such that $P_{\text{red}}= P_i$ for some $i$. Then $\Lie P = \Lie G$ or $\Lie P = \Lie P_i + \mathfrak{g}_<$. If $p=3$, then necessarily $\Lie P = \Lie G$.
\end{proposition}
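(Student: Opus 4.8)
The plan is to repeat, for each of the four simple roots $\alpha_i$ separately, the scheme already carried out in types $B_n$ and $C_n$ (see \Cref{LieN_2} and \Cref{prop:CnPm}): study $V_i \defeq \Lie G/\Lie P_i$ as a module over the Levi subgroup $L_i$ of $P_i$, observe that $\Lie P/\Lie P_i$ is a nonzero $L_i$-submodule since $P$ is nonreduced, and use that $\Lie P$ is in fact a restricted Lie subalgebra of $\Lie G$, hence closed under every bracket computed by \Cref{lemmaChevalley}. Because $T\subset L_i$, the subspace $\Lie P$ is $T$-stable, so $\Lie P=\Lie P_i\oplus\bigoplus_{\gamma}\mathfrak{g}_{-\gamma}$ for some set of positive roots $\gamma$ having $\alpha_i$ in their support. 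The four Levi subgroups are of type $C_3$, $A_1\times A_2$, $A_2\times A_1$ and $B_3$ for $\alpha_1,\alpha_2,\alpha_3,\alpha_4$ respectively, so each $V_i$ has to be split by hand into its short-root and long-root parts.

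Everything then reduces to two facts, to be verified case by case via \Cref{lemmaChevalley}(d):
\begin{enumerate}[(I)]
    \item the short negative root spaces inside $V_i$ form an irreducible $L_i$-submodule (it is an $L_i$-submodule, being the image of the $p$-Lie ideal $\mathfrak{n}_G$ of \Cref{712_conrad}), so that if $\Lie P$ meets it nontrivially then $\Lie P\supseteq\Lie P_i+\mathfrak{g}_<$;
    \item if $\Lie P$ contains a single long negative root space of $V_i$, then $\Lie P=\Lie G$.
\end{enumerate}
For (II) one first descends from a long root to a short one by a bracket of the shape $[X_{-\varepsilon_i-\varepsilon_j},X_{\varepsilon_j}]=\pm X_{-\varepsilon_i}$, whose constant is $\pm1$ and therefore survives in both characteristics, then invokes (I) to recover all short roots, and moves among the long root spaces by bracketing with the type-$A$ roots $\varepsilon_j-\varepsilon_k$ of $L_i$ (again $\pm1$); together these sweep out all of $V_i$. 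Facts (I) and (II) already settle characteristic $2$: either a long root space occurs in $\Lie P/\Lie P_i$, giving $\Lie P=\Lie G$, or none does, and then $\Lie P/\Lie P_i$ is a nonzero submodule of the short part, hence equals it by (I), giving $\Lie P=\Lie P_i+\mathfrak{g}_<$.

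To upgrade the conclusion when $p=3$ I would show that $\Lie P_i+\mathfrak{g}_<$ is no longer bracket-closed: choosing two short roots whose sum is a long root lying in $V_i$, for instance $-\varepsilon_3$ and $-\varepsilon_4$, with
\[
[X_{-\varepsilon_3},X_{-\varepsilon_4}]=\pm 2\,X_{-\varepsilon_3-\varepsilon_4},
\]
the coefficient $\pm2$ being invertible modulo $3$, one sees that $\mathfrak{g}_<\subset\Lie P$ already drags a long root space of $V_i$ into $\Lie P$, and (II) yields $\Lie P=\Lie G$. This is precisely the structure constant that dies in characteristic $2$, which is why the extra possibility $\Lie P=\Lie P_i+\mathfrak{g}_<$ genuinely persists there.

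The main obstacle I expect is bookkeeping rather than a new idea: the argument is genuinely case-by-case in the four roots, and the two cases with a non-classical Levi, namely $\alpha_1$ with $L_1$ of type $C_3$ and especially $\alpha_4$ with $L_4$ of type $B_3$ and $V_4$ the largest and of spin type, require one to exhibit explicitly, for every long root of $V_i$, a $\pm1$ descent to a short root, and to confirm the $L_i$-irreducibility of the short part. The delicate requirement throughout is that each step proving (I) and (II) uses only structure constants prime to the characteristic, while the single short-plus-short-equals-long bracket carrying the $\pm2$ is reserved for the characteristic-$3$ step; checking in the $B_3$ case that no length-three root string smuggles an unwanted factor of $2$ or $3$ into the steps for (I) and (II) is where the real care is needed.
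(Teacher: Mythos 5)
Your overall architecture (case-by-case over the four simple roots, Weyl-stability of the weight set of $\Lie P/\Lie P_i$, Chevalley brackets with unit constants, and a single $\pm 2$ bracket reserved for $p=3$) is the same as the paper's, but your reduction to fact (I) contains a genuine error: the short part of $V_i$ is \emph{not} an irreducible $L_i$-module for $i=2,3,4$. The reason is structural: every root of $L_i$ has $\alpha_i$-coefficient zero, so the adjoint action of $L_i$ preserves the grading of $V_i$ by the coefficient of $\alpha_i$, and each graded piece is an $L_i$-submodule. The short roots of $\Gamma_i$ occupy several degrees: for $i=2$ they split as $\Sigma_3$ (coefficient $2$) and $\Sigma_4$ (coefficient $1$) in the paper's notation; for $i=3$ as $\Lambda_3,\Lambda_4,\Lambda_5$ (coefficients $3,2,1$); for $i=4$ as the singleton $\{\beta\}=\{\alpha_1+2\alpha_2+3\alpha_3+2\alpha_4\}$ (coefficient $2$, fixed by $W(L_4,T)$) and the eight ``half-sum'' roots (coefficient $1$). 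So in each of these cases the short part is a direct sum of at least two nonzero $L_i$-submodules, and your characteristic-$2$ dichotomy (``nonzero submodule of the short part, hence equal to it by (I)'') collapses: a priori $\Lie P/\Lie P_i$ could be just one graded piece. The same defect infects your proof of (II), since it invokes (I), and also the claim that one ``moves among the long root spaces by bracketing with the type-$A$ roots of $L_i$'': those brackets preserve the $\alpha_i$-degree, while the long roots of $\Gamma_i$ also spread over several degrees (e.g.\ $\Sigma_1,\Sigma_2,\Sigma_5$ for $i=2$).

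The repair is available but it is exactly the extra work the paper does: since $\Lie P$ is a $p$-Lie subalgebra containing all of $\Lie P_i$ (in particular the whole nilradical of $\Lie B$, not just the Levi), one may bracket the newly acquired negative root vectors with positive root vectors such as $X_{\alpha_2}$ or $X_{\alpha_3+\alpha_4}$ that do \emph{not} lie in $\Lie L_i$, and also bracket two newly acquired negative root vectors with each other; these brackets change the $\alpha_i$-degree and carry constants $\pm 1$, and they are precisely the paper's claims (a)--(g) in each of the four cases. Module theory alone (Weyl stability plus irreducibility) cannot substitute for them. A secondary point: your characteristic-$3$ witness $[X_{-\varepsilon_3},X_{-\varepsilon_4}]=\pm 2X_{-\varepsilon_3-\varepsilon_4}$ only helps for $i=2,3$, because $\varepsilon_3+\varepsilon_4=\alpha_2+2\alpha_3$ does not contain $\alpha_1$ or $\alpha_4$ in its support, so for $i=1,4$ that long root space already lies in $\Lie P_i$; one must choose other short pairs there (e.g.\ $-\varepsilon_2,-\varepsilon_3$ for $i=1$, and two half-sum roots summing to $\widetilde{\alpha}$ for $i=4$), which the paper's uniform final step, using the highest short root and $-\alpha_3$, accomplishes in one stroke.
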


\begin{proof}
Before starting a case-by-case analysis, let us denote as $s_i$, for $i = 1,2,3,4$, the reflection associated to the simple root $\alpha_i$, i.e.
\begin{align}
\label{F4_reflections}
    s_i (\gamma) = \gamma - 2\frac{(\alpha_i,\gamma)}{(\alpha_i,\alpha_i)} \alpha_i, \qquad \text{for all } \gamma \in \Phi.
\end{align}

\textbf{Case $P_\text{red}= P_1$}.\\
Let us assume that $P_{\text{red}}= P_1$ and denote as $L_1 \defeq P_1 \cap P_1^-$ the Levi subgroup: its root system is of type $C_3$ with basis consisting of short roots $\alpha_4$, $\alpha_3$ and the long root $\alpha_2$. Moreover, $L_1$ acts on the vector space
\[
V_1 \defeq \Lie G / \Lie P_1 = \bigoplus_{\gamma \in \Gamma_1} \mathfrak{g}_{-\gamma},
\]
where $\Gamma_1$ is the subset of all positive roots satisfying $\alpha_1 \in \Supp(\gamma)$. As usual, let us consider the nonzero vector subspace $W_1 \defeq \Lie P / \Lie P_1 $, which is a $L_1$-submodule of $V_1$: the set of its weights, which we denote $\Omega_1$, must be stable under the reflections $s_2$, $s_3$ and $s_4$. Our aim is to show that
\begin{align}
\label{claim1}
    \text{either} \quad  \Omega_1 = \Gamma_1 \cap \Phi_< \quad \text{or} \quad \Omega_1 = \Gamma_1 :
\end{align}
in other words, either $W_1 = \oplus_{\gamma \in \Gamma_1 \cap \Phi_<} \mathfrak{g}_{-\gamma}$ or $W_1=V_1$.\\
First, let us show that the Weyl group $W(L_1,T)= \langle s_2,s_3,s_4\rangle$ acts transitively on 
\begin{align*}
\Gamma_1 \cap \Phi_< = \{ & \alpha_1+\alpha_2+\alpha_3, \, \alpha_1+\alpha_2+\alpha_3+\alpha_4, \, \alpha_1+\alpha_2+2\alpha_3+\alpha_4, \, \alpha_1+2\alpha_2+2\alpha_3+\alpha_4,\\
& \alpha_1+2\alpha_2+3\alpha_3+\alpha_4, \, \alpha_1+2\alpha_2+3\alpha_3+2\alpha_4\} :
\end{align*}
this implies that either $\Gamma_1 \cap \Phi_< \subset \Omega_1$ or $(\Gamma_1 \cap \Phi_<)\cap \Omega_1 = \emptyset$. The following computations follow directly from (\ref{F4_relations}) and (\ref{F4_reflections}) :
\begin{align*}
    & s_4(\alpha_1+\alpha_2+\alpha_3) = \alpha_1 + \alpha_2+\alpha_3+\alpha_4,\\
    & s_3(\alpha_1+\alpha_2+\alpha_3+\alpha_4) = \alpha_1+\alpha_2 +2\alpha_3+\alpha_4,\\
    & s_2(\alpha_1+\alpha_2+2\alpha_3+\alpha_4) = \alpha_1+2\alpha_2+2\alpha_3+\alpha_4,\\
    & s_3(\alpha_1+2\alpha_2+2\alpha_3+\alpha_4) = \alpha_1+2\alpha_2+3\alpha_3+\alpha_4,\\
    & s_4(\alpha_1+2\alpha_2+3\alpha_3+\alpha_4) = \alpha_1+2\alpha_2+3\alpha_3+2\alpha_4.
\end{align*}
Next, let us show that $W(L_1,T)$ acts transitively on 
\begin{align*}
(\Gamma_1 \cap \Phi_>) \backslash \{ \widetilde{\alpha} \} = \{ & \alpha_1, \, \alpha_1+\alpha_2, \, \alpha_1+\alpha_2+2\alpha_3, \, \alpha_1+2\alpha_2+2\alpha_3, \, \alpha_1+\alpha_2+2\alpha_3+2\alpha_4,\\
& \alpha_1+2\alpha_2+2\alpha_3+2\alpha_4, \, \alpha_1+2\alpha_2+4\alpha_3+2\alpha_4, \, \alpha_1+3\alpha_2+4\alpha_3+2\alpha_4 \},
\end{align*}
where $\widetilde{\alpha} \defeq 2\alpha_1 +3\alpha_2 + 4\alpha_3+ 2\alpha_4$ is the highest root. Let us remark that $\widetilde{\alpha}$ is indeed fixed by the Weyl group of $L_1$: this is due to the fact that it is the only root whose coefficient of $\alpha_1$ is $2$ instead of $1$. Again, the transitivity of the action is proved by direct computation :
\begin{align*}
    & s_2(\alpha_1) = \alpha_1+\alpha_2,\\
    & s_3(\alpha_1+\alpha_2) = \alpha_1+\alpha_2+2\alpha_3,\\
    & s_2(\alpha_1+\alpha_2+2\alpha_3) = \alpha_1 + 2\alpha_2 + 2\alpha_3,\\
    & s_4 (\alpha_1+2\alpha_2+2\alpha_3) = \alpha_1+2\alpha_2+2\alpha_3+2\alpha_4,\\
    & s_3(\alpha_1+2\alpha_2+2\alpha_3+2\alpha_4) = \alpha_1+2\alpha_2+4\alpha_3+2\alpha_4, \\
    & s_1(\alpha_1+2\alpha_2+2\alpha_3+2\alpha_4) = \alpha_1 + \alpha_2 + 2\alpha_3+2\alpha_4,\\
    & s_2(\alpha_1+2\alpha_2+4\alpha_3+2\alpha_4) = \alpha_1+3\alpha_2+4\alpha_3+2\alpha_4.
\end{align*}
Thus, either $(\Gamma_1 \cap \Phi_>) \backslash \{ \widetilde{\alpha} \} \subset \Omega_1$ or $((\Gamma_1 \cap \Phi_>) \backslash \{ \widetilde{\alpha} \}) \cap \Omega_1 = \emptyset$. Next, we show that $\widetilde{\alpha} \in \Omega_1$ if and only if $(\Gamma_1 \cap \Phi_>) \backslash \{ \widetilde{\alpha} \} \subset \Omega_1$.
\begin{itemize}
    \item Assume that $\mathfrak{g}_{-\widetilde{\alpha}} \subset W_1$. Then applying \Cref{lemmaChevalley} to $\gamma = -\widetilde{\alpha}$ and $\delta = \alpha_1 +2\alpha_2 +2\alpha_3$ give
    \[
    [X_{-\widetilde{\alpha}}, X_{\alpha_1+2\alpha_2+2\alpha_3}] = \pm X_{-\alpha_1-\alpha_2-2\alpha_3-2\alpha_4} \in \Lie P,
    \]
    since $\gamma+\delta$ is a root while $\gamma-\delta = -3\alpha_1-5\alpha_2-6\alpha_3-2\alpha_4$ is not. This implies that the long root $\alpha_1+\alpha_2+2\alpha_3+2\alpha_4$ belongs to $\Omega_1$
    \item  Assume that $(\Gamma_1 \cap \Phi_>) \backslash \{ \widetilde{\alpha} \} \subset \Omega_1$. In particular,
    \[
    \mathfrak{g}_{-\alpha_1-2\alpha_2-2\alpha_3} \oplus \mathfrak{g}_{-\alpha_1-\alpha_2-2\alpha_3-2\alpha_4} \subset \Lie P.
    \]
    Thus, we can apply \Cref{lemmaChevalley} to $\gamma = -\alpha_1-2\alpha_2-2\alpha_3$ and $\delta = -\alpha_1-\alpha_2-2\alpha_3-2\alpha_4$ to get
    \[
    [X_{-\alpha_1-2\alpha_2-2\alpha_3}, X_{-\alpha_1-\alpha_2-2\alpha_3-2\alpha_4}] = \pm X_{-\widetilde{\alpha}} \in \Lie P,
    \]
    since $\gamma+\delta$ is a root while $\gamma-\delta = -\alpha_2+2\alpha_4$ is not.
    \end{itemize}
The last step in order to prove (\ref{claim1}) consists in showing that $(\Gamma_1\cap \Phi_>) \subset \Omega_1$ implies $(\Gamma_1 \cap \Phi_<) \cap \Omega_1 \neq \emptyset$ which, by the above reasoning, means $\Gamma_1 = \Omega_1$. By our assumption, the long root $\gamma = -\alpha_1-2\alpha_2-2\alpha_3 -2\alpha_4$ satisfies $\mathfrak{g}_\gamma \subset \Lie P$. Setting $\delta = -\alpha_3$ and applying \Cref{lemmaChevalley} gives
\[
[X_{-\alpha_1-2\alpha_2-2\alpha_3 -2\alpha_4}, X_{-\alpha_3}] = \pm X_{-\alpha_1-2\alpha_2-3\alpha_3-2\alpha_4} \in \Lie P,
\]
since $\gamma+\delta$ is a root while $\gamma-\delta = -\alpha_1-2\alpha_2-\alpha_3-2\alpha_4$ is not. This concludes the first case.
 \vskip 10 pt
 
%%%%%%%%%%%%%%%%%%%%%%%%%%%%%%%%%%%%%%%%%%%%%%%%%%%%%%%%%%%%%%%%%%%%%%%%%%%%%%%%%%%%%%%%%%%%%%%%%%%%%%%%%%%%%%%%%%%%%%%%%%%%%%%%%%%%%%%%%%%%%%%%%%%%%
\textbf{Case $P_\text{red}= P_2$}.\\
Let us assume that $P_{\text{red}}= P_2$ and fix the analogous notation as above: $L_2 \defeq P_2 \cap P_2^-$ acts on 
\[
W_2 \defeq \Lie P / \Lie P_2 = \bigoplus_{\gamma \in \Omega_2} \mathfrak{g}_{-\gamma} \subset V_2 \defeq \Lie G / \Lie P_2 = \bigoplus_{\gamma \in \Gamma_2} \mathfrak{g}_{-\gamma}
\]
and its set of weights $\Omega_2$ must be stable under the action of the Weyl group $W(L_2,T) = \langle s_1, s_3, s_4\rangle$. Our aim is to show that
\begin{align}
\label{claim2}
    \text{either} \quad  \Omega_2 = \Gamma_2 \cap \Phi_< \quad \text{or} \quad \Omega_2 = \Gamma_2.
\end{align}
First, let us consider the partition of $\Gamma_2$ as disjoint union of the following subsets :
\begin{align*}
    \Sigma_1 \defeq \{ & \alpha_1+3\alpha_2+4\alpha_3+2\alpha_4, \, \widetilde{\alpha} \},\\
    \Sigma_2 \defeq \{ & \alpha_1+2\alpha_2+2\alpha_3, \, \alpha_1+2\alpha_2+2\alpha_3+2\alpha_4, \, \alpha_1+2\alpha_2+4\alpha_3+2\alpha_4 \},\\
    \Sigma_3 \defeq \{ & \alpha_1+2\alpha_2+2\alpha_3+\alpha_4, \, \alpha_1+2\alpha_2+3\alpha_3+\alpha_4, \, \alpha_1+2\alpha_2+3\alpha_3+2\alpha_4 \},\\
    \Sigma_4 \defeq \{ & \alpha_2+\alpha_3, \, \alpha_1+\alpha_2+\alpha_3, \, \alpha_1+\alpha_2+\alpha_3+\alpha_4, \, \alpha_1+\alpha_2+2\alpha_3+\alpha_4,\\
    & \alpha_2+2\alpha_3+\alpha_4, \, \alpha_2+\alpha_3+\alpha_4\},\\
    \Sigma_5 \defeq \{ & \alpha_2+2\alpha_3+\alpha_4, \, \alpha_2+2\alpha_3, \, \alpha_1+\alpha_2, \, \alpha_2, \, \alpha_1+\alpha_2+2\alpha_3, \, \alpha_1+\alpha_2+2\alpha_3+2\alpha_4 \}.
\end{align*}

Notice that $\Sigma_1 \cup \Sigma_2 \cup \Sigma_5 = \Gamma_2 \cap \Phi_>$ and $\Sigma_3 \cup \Sigma_4 = \Gamma_2 \cap \Phi_<$, so the root lengths once again come into play. Moreover, $\Sigma_1$, $\Sigma_2 \cup \Sigma_3$ and $\Sigma_4 \cup \Sigma_5$ are indeed stable under the action of $W(L_2,T)$, since their elements have coefficient $3$, $2$ and $1$ respectively with respect to the simple root $\alpha_2$. Now, the following computations prove that :
\begin{itemize}
    \item $\Sigma_1$ is stable by $W(L_2,T)$ :
    \[
    s_1(\alpha_1+3\alpha_2+4\alpha_3+2\alpha_4 ) = \widetilde{\alpha} ;
    \]
    \item $\Sigma_2$ is stable by $W(L_2,T)$ :
    \begin{align*}
        & s_4 (\alpha_1+2\alpha_2+2\alpha_3) = \alpha_1+2\alpha_2+2\alpha_3+2\alpha_4,\\
        & s_3 (\alpha_1+2\alpha_2+2\alpha_3+2\alpha_4) = \alpha_1+2\alpha_2+4\alpha_3+2\alpha_4 ;
    \end{align*}
    \item $\Sigma_3$ is stable by $W(L_2,T)$: 
    \begin{align*}
        & s_3(\alpha_1+2\alpha_2+2\alpha_3+\alpha_4) = \alpha_1+2\alpha_2+3\alpha_3+\alpha_4,\\
        & s_4(\alpha_1+2\alpha_2+3\alpha_3+2\alpha_4) = \alpha_1+2\alpha_2+3\alpha_3+2\alpha_4;
    \end{align*}
    \item $\Sigma_4$ is stable by $W(L_2,T)$ :
    \begin{align*}
       & s_1(\alpha_2+\alpha_3) = \alpha_1+\alpha_2+\alpha_3,\\
       & s_4 (\alpha_1+\alpha_2+\alpha_3) = \alpha_1+\alpha_2+\alpha_3+\alpha_4,\\
       & s_1 (\alpha_1+\alpha_2+\alpha_3+\alpha_4) = \alpha_2+\alpha_3+\alpha_4,\\
       & s_3(\alpha_2+\alpha_3+\alpha_4) = \alpha_2+2\alpha_3+\alpha_4,\\
       & s_1(\alpha_2+2\alpha_3+\alpha_4)= \alpha_1+\alpha_2+2\alpha_3+\alpha_4;
    \end{align*}
    \item $\Sigma_5$ is stable by $W(L_2,T)$ :
    \begin{align*}
        & s_4 (\alpha_2+2\alpha_3+2\alpha_4) = \alpha_2+2\alpha_3,\\
        & s_3 (\alpha_2+2\alpha_3) = \alpha_1+\alpha_2,\\
        & s_1(\alpha_1+\alpha_2) = \alpha_2 \quad \text{and} \quad s_3(\alpha_1+\alpha_2) = \alpha_1+\alpha_2+2\alpha_3,\\
        & s_4(\alpha_1+\alpha_2+2\alpha_3) = \alpha_1+\alpha_2+2\alpha_3+2\alpha_4.
    \end{align*}
\end{itemize}
Thus, for $j = 1,\ldots, 5$, we have shown that $\Sigma_j \cap \Omega_2 \neq \emptyset$ implies that $\Sigma_j \subset \Omega_2$. Next, we prove the following claims by using \Cref{lemmaChevalley} on structure constants :
\begin{enumerate}[(a)]
   \item $\Sigma_1 \subset \Omega_2$ implies that $\Sigma_2 \subset \Omega_2$,
    \item $\Sigma_2 \subset \Omega_2$ implies that $\Sigma_5 \subset \Omega_2$,
     \item $\Sigma_5 \subset \Omega_2$ implies that $\Sigma_2 \subset \Omega_2$,
      \item $\Sigma_2 \cup \Sigma_5 \subset \Omega_2$ implies that $\Sigma_1 \subset \Omega_2$,
       \item $\Sigma_3 \subset \Omega_2$ implies that $\Sigma_4 \subset \Omega_2$,
        \item $\Sigma_4 \subset \Omega_2$ implies that $\Sigma_3 \subset \Omega_2$,
         \item $\Sigma_2 \subset \Omega_2$ implies that $\Sigma_3 \subset \Omega_2$.
\end{enumerate}
The parabolic subgroup $P$ being non-reduced by assumption, the set $\Omega_2$ is nonempty hence, once these implications are proved, it must be either all of $\Gamma_2$ or $\Sigma_3 \cup \Sigma_4 = \Gamma_2 \cap \Phi_<$, which proves (\ref{claim2}).

(a): By assumption $\mathfrak{g}_{-\alpha_1-3\alpha_2-4\alpha_3-2\alpha_4} \subset \Lie P$. Set $\gamma = -\alpha_1-3\alpha_2-4\alpha_3-2\alpha_4$ and $\delta = \alpha_2,$ then $\gamma-\delta = -\alpha_1-4\alpha_2-4\alpha_3-2\alpha_4$ is not a root hence
\[
[X_\gamma,X_\delta] = \pm X_{-\alpha_1-2\alpha_2-4\alpha_3-2\alpha_4} \in \Lie P
\]
so $\alpha_1+2\alpha_2+4\alpha_3+2\alpha_4 \in \Sigma_2 \cap \Omega_2$.

(b): By assumption $\mathfrak{g}_{-\alpha_1-2\alpha_2-4\alpha_3-2\alpha_4} \subset \Lie P$. Set $\gamma = -\alpha_1-2\alpha_2-4\alpha_3-2\alpha_4$ and $\delta = \alpha_2+2\alpha_3$, then $\gamma-\delta = -\alpha_1-3\alpha_2-6\alpha_3-2\alpha_4$ is not a root hence
\[
[X_\gamma,X_\delta] = \pm X_{-\alpha_1-\alpha_2-2\alpha_3-2\alpha_4} \in \Lie P
\]
so $\alpha_1+\alpha_2+2\alpha_3+2\alpha_4 \in \Sigma_5 \cap \Omega_2$.

(c): By assumption $\mathfrak{g}_{-\alpha_1-\alpha_2} \oplus \mathfrak{g}_{-\alpha_2-2\alpha_3} \subset \Lie P$. Set $\gamma = -\alpha_1-\alpha_2$ and $\delta = -\alpha_2-2\alpha_3$, then $\gamma-\delta = -\alpha_1-2\alpha_3$ is not a root hence
\[
[X_\gamma,X_\delta] = \pm X_{-\alpha_1-2\alpha_2-2\alpha_3} \in \Lie P
\]
so $\alpha_1+2\alpha_2+2\alpha_3 \in \Sigma_2 \cap \Omega_2$.

(d): By assumption $\mathfrak{g}_{-\alpha_1-\alpha_2-2\alpha_3-2\alpha_4} \oplus \mathfrak{g}_{-\alpha_1-2\alpha_2-2\alpha_3} \subset \Lie P$. Set $\gamma = -\alpha_1-\alpha_2-2\alpha_3-2\alpha_4$ and $\delta = -\alpha_1-2\alpha_2-2\alpha_3$, then $\gamma-\delta = \alpha_2-2\alpha_4$ is not a root hence
\[
[X_\gamma,X_\delta] = \pm X_{-\widetilde{\alpha}} \in \Lie P
\]
so $\widetilde{\alpha} \in \Sigma_1 \cap \Omega_2$.

(e): By assumption $\mathfrak{g}_{-\alpha_1-2\alpha_2-2\alpha_3-\alpha_4} \subset \Lie P$. Set $\gamma = -\alpha_1-2\alpha_2-2\alpha_3-\alpha_4$ and $\delta = \alpha_2$, then $\gamma-\delta = -\alpha_1-3\alpha_2-2\alpha_3-\alpha_4$ is not a root hence
\[
[X_\gamma,X_\delta] = \pm X_{-\alpha_1-\alpha_2-2\alpha_3-\alpha_4} \in \Lie P
\]
so $\alpha_1+\alpha_2+2\alpha_3+\alpha_4 \in \Sigma_4 \cap \Omega_2$.

(f): By assumption $\mathfrak{g}_{-\alpha_1-\alpha_2-\alpha_3-\alpha_4} \oplus \mathfrak{g}_{-\alpha_2-2\alpha_3-\alpha_4} \subset \Lie P$. Set $\gamma = -\alpha_1-\alpha_2-\alpha_3-\alpha_4$ and $\delta = -\alpha_2-2\alpha_3-\alpha_4$, then $\gamma-\delta = -\alpha_1+\alpha_3$ is not a root hence
\[
[X_\gamma,X_\delta] = \pm X_{-\alpha_1-2\alpha_2-3\alpha_3-2\alpha_4} \in \Lie P
\]
so $\alpha_1+2\alpha_2+3\alpha_3+2\alpha_4 \in \Sigma_3 \cap \Omega_2$.

(g): By assumption $\mathfrak{g}_{-\alpha_1-2\alpha_2-2\alpha_3-2\alpha_4} \subset \Lie P$. Set $\gamma = -\alpha_1-2\alpha_2-2\alpha_3-2\alpha_4$ and $\delta = -\alpha_3$, then $\gamma-\delta = -\alpha_1-2\alpha_2-\alpha_3-2\alpha_4$ is not a root hence
\[
[X_\gamma,X_\delta] = \pm X_{-\alpha_1-2\alpha_2-3\alpha_3-2\alpha_4} \in \Lie P
\]
so $\alpha_1+2\alpha_2+3\alpha_3+2\alpha_4 \in \Sigma_3 \cap \Omega_2$.
%%%%%%%%%%%%%%%%%%%%%%%%%%%%%%%%%%%%%%%%%%%%%%%%%%%%%%%%%%%%%%%%%%%%%%%%%%%%%%%%%%%%%%%%%%%%%%%%%%%%%%%%%%%%%%%%%%%%%%%%%%%%%%%%%%%%%%%%%%%%%%%%5
\vskip10 pt
\textbf{Case $P_\text{red}= P_3$}.\\
Let us assume that $P_{\text{red}}= P_3$ and fix the analogous notation as above: $L_3 \defeq P_3 \cap P_3^-$ acts on 
\[
W_3 \defeq \Lie P / \Lie P_3 = \bigoplus_{\gamma \in \Omega_3} \mathfrak{g}_{-\gamma} \subset V_3 \defeq \Lie G / \Lie P_3 = \bigoplus_{\gamma \in \Gamma_3} \mathfrak{g}_{-\gamma}
\]
and its set of weights $\Omega_3$ must be stable under the action of the Weyl group $W(L_3,T) = \langle s_1, s_2, s_4\rangle$. Our aim is to show that
\begin{align}
\label{claim3}
    \text{either} \quad  \Omega_3 = \Gamma_3 \cap \Phi_< \quad \text{or} \quad \Omega_3 = \Gamma_3.
\end{align}
First, let us consider the partition of $\Gamma_3$ as disjoint union of the following subsets :
\begin{align*}
    \Lambda_1 \defeq & \{ \alpha_1+2\alpha_2+4\alpha_3+2\alpha_4, \, \alpha_1+3\alpha_2+4\alpha_3+2\alpha_4, \, \widetilde{\alpha} \},\\
    \Lambda_2  \defeq &  \{ \alpha_1+2\alpha_2+2\alpha_3, \, \alpha_1+2\alpha_2+2\alpha_3+2\alpha_4, \, \alpha_1+\alpha_2+2\alpha_3+2\alpha_4, \, \alpha_2+2\alpha_3+2\alpha_4,\\
    & \alpha_2+2\alpha_3, \alpha_1+\alpha_2+2\alpha_3\},\\
    \Lambda_3 \defeq & \{ \alpha_1+2\alpha_2+3\alpha_3+2\alpha_4, \, \alpha_1+2\alpha_2+3\alpha_3+\alpha_4 \}, \\
    \Lambda_4 \defeq & \{ \alpha_1+2\alpha_2+2\alpha_3+\alpha_4, \, \alpha_1+\alpha_2+2\alpha_3+\alpha_4, \, \alpha_2+2\alpha_3+\alpha_4 \},\\
    \Lambda_5 \defeq & \{ \alpha_2+\alpha_3+\alpha_4, \, \alpha_1+\alpha_2+\alpha_3+\alpha_4, \, \alpha_1+\alpha_2+\alpha_3, \, \alpha_2+\alpha_3, \, \alpha_3, \, \alpha_3+\alpha_4 \}.
\end{align*}
Notice that $\Lambda_1 \cup \Lambda_2 = \Gamma_3 \cap \Phi_>$ and $\Lambda_3 \cup \Lambda_4 \cup \Lambda_5 = \Gamma_3 \cap \Phi_<$; moreover, as in the preceding case, let us remark that $\Lambda_1$, $\Lambda_3$, $\Lambda_2 \cup \Lambda_4$ and $\Lambda_5$ are stable under $W(L_3,T)$ because their elements have as coefficient respectively $4$, $3$, $2$ and $1$ with respect to the simple root $\alpha_3$. Now let us prove by direct computation that :
\begin{itemize}
    \item $\Lambda_1$ is stable by $W(L_3,T)$ :
    \begin{align*}
    & s_2(\alpha_1+2\alpha_2+4\alpha_3+2\alpha_4) = \alpha_1+3\alpha_2+4\alpha_3+2\alpha_4,\\
    & s_1 (\alpha_1+3\alpha_2+4\alpha_3+2\alpha_4) = \widetilde{\alpha};
    \end{align*}
    \item $\Lambda_2$ is stable by $W(L_3,T)$ :
    \begin{align*}
        & s_4(\alpha_1+2\alpha_2+2\alpha_3) = \alpha_1+2\alpha_2+2\alpha_3+2\alpha_4,\\
        & s_2(\alpha_1+2\alpha_2+2\alpha_3+2\alpha_4) = \alpha_1+\alpha_2+2\alpha_3+2\alpha_4,\\
        & s_1(\alpha_1+\alpha_2+2\alpha_3+2\alpha_4) = \alpha_2+2\alpha_3+2\alpha_4,\\
        & s_4(\alpha_2+2\alpha_3+2\alpha_4) = \alpha_2+2\alpha_3,\\
        & s_1(\alpha_2+2\alpha_3) = \alpha_1+\alpha_2+2\alpha_3;
    \end{align*}
    \item $\Lambda_3$ is stable by $W(L_3,T)$ :
    \[
    s_4(\alpha_1+2\alpha_2+3\alpha_3+2\alpha_4) = \alpha_1+2\alpha_2+3\alpha_3+\alpha_4;
    \]
    \item $\Lambda_4$ is stable by $W(L_3,T)$ :
    \begin{align*}
        & s_2(\alpha_1+2\alpha_2+2\alpha_3+\alpha_4) = \alpha_1+\alpha_2+2\alpha_3+\alpha_4,\\
        & s_1(\alpha_1+\alpha_2+2\alpha_3+\alpha_4) = \alpha_2+2\alpha_3+\alpha_4;
    \end{align*}
    \item $\Lambda_5$ is stable by $W(L_3,T)$ :
    \begin{align*}
        & s_1(\alpha_2+\alpha_3+\alpha_4) = \alpha_1+\alpha_2+\alpha_3+\alpha_4,\\
        & s_4(\alpha_1+\alpha_2+\alpha_3+\alpha_4) = \alpha_1+\alpha_2+\alpha_3,\\
        & s_1(\alpha_1+\alpha_2+\alpha_3) = \alpha_2+\alpha_3,\\
        & s_2(\alpha_2+\alpha_3) = \alpha_3,\\
        & s_4(\alpha_3) = \alpha_3+\alpha_4.
    \end{align*}
\end{itemize}
Thus, for $j = 1,\ldots, 5$, we have shown that $\Lambda_j \cap \Omega_3 \neq \emptyset$ implies that $\Lambda_j \subset \Omega_3$. Next, we need to prove the following claims by using \Cref{lemmaChevalley} on structure constants :
\begin{enumerate}[(a)]
    \item $\Lambda_1 \subset \Omega_3$ implies that $\Lambda_2 \subset \Omega_3$,
    \item $\Lambda_2 \subset \Omega_3$ implies that $\Lambda_1 \subset \Omega_3$,
    \item $\Lambda_3 \subset \Omega_3$ implies that $\Lambda_4 \subset \Omega_3$,
    \item $\Lambda_4 \subset \Omega_3$ implies that $\Lambda_5 \subset \Omega_3$,
    \item $\Lambda_5 \subset \Omega_3$ implies that $\Lambda_4 \subset \Omega_3$,
    \item $\Lambda_4 \cup \Lambda_5 \subset \Omega_3$ implies that $\Lambda_3 \subset \Omega_3$,
    \item $\Lambda_1 \subset \Omega_3$ implies that $\Lambda_3 \subset \Omega_3$.
\end{enumerate}
The parabolic subgroup $P$ being non-reduced by assumption, the set $\Omega_3$ is nonempty hence, once these implications are proved, it must be either all of $\Gamma_3$ or $\Lambda_3 \cup \Lambda_4 \cup \Lambda_5 = \Gamma_3 \cap \Phi_<$, which proves (\ref{claim3}).

(a): By assumption $\mathfrak{g}_{-\widetilde{\alpha}} \subset \Lie P$. Set $\gamma = -\widetilde{\alpha}$ and $\delta = \alpha_1+2\alpha_2+2\alpha_3$, then $\gamma-\delta = -3\alpha_1+5\alpha_2+6\alpha_3+2\alpha_4$ is not a root hence
\[
[X_\gamma,X_\delta] = \pm X_{-\alpha_1-\alpha_2-2\alpha_3-2\alpha_4} \in \Lie P
\]
so $\alpha_1+\alpha_2+2\alpha_3+2\alpha_4 \in \Lambda_2 \cap \Omega_3$.

(b): By assumption $\mathfrak{g}_{-\alpha_1-2\alpha_2-2\alpha_3} \oplus \mathfrak{g}_{-\alpha_1-\alpha_2-2\alpha_3-2\alpha_4} \subset \Lie P$. Set $\gamma = -\alpha_1-2\alpha_2-2\alpha_3$ and $\delta = -\alpha_1-\alpha_2-2\alpha_3-2\alpha_4$, then $\gamma-\delta = -\alpha_2+2\alpha_4$ is not a root hence
\[
[X_\gamma,X_\delta] = \pm X_{-\widetilde{\alpha}} \in \Lie P
\]
so $\widetilde{\alpha} \in \Lambda_1 \cap \Omega_3$.

    (c): By assumption $\mathfrak{g}_{-\alpha_1-2\alpha_2-3\alpha_3-2\alpha_4} \subset \Lie P$. Set $\gamma = -\alpha_1-2\alpha_2-3\alpha_3-2\alpha_4$ and $\delta = \alpha_3+\alpha_4 \in \Phi^+$, then $\gamma-\delta = -\alpha_1-2\alpha_2-4\alpha_3-3\alpha_4$ is not a root hence
\[
[X_\gamma,X_\delta] = \pm X_{-\alpha_1-2\alpha_2-2\alpha_3-\alpha_4} \in \Lie P
\]
so $\alpha_1+2\alpha_2+2\alpha_3+\alpha_4 \in \Lambda_4 \cap \Omega_3$.

(d): By assumption $\mathfrak{g}_{-\alpha_2-2\alpha_3-\alpha_4} \subset \Lie P$. Set $\gamma = -\alpha_2-2\alpha_3-\alpha_4$ and $\delta = \alpha_3 \in \Phi^+$, then $\gamma-\delta = -\alpha_2-3\alpha_3-\alpha_4$ is not a root hence
\[
[X_\gamma,X_\delta] = \pm X_{-\alpha_2-\alpha_3-\alpha_4} \in \Lie P
\]
so $\alpha_2+\alpha_3+\alpha_4 \in \Lambda_5 \cap \Omega_3$.

(e): By assumption $\mathfrak{g}_{-\alpha_3-\alpha_4} \oplus \mathfrak{g}_{-\alpha_2-\alpha_3} \subset \Lie P$. Set $\gamma = -\alpha_3-\alpha_4$ and $\delta = -\alpha_2-\alpha_3$, then $\gamma-\delta = -\alpha_2+\alpha_4$ is not a root hence 
\[
[X_\gamma, X_\delta] = \pm X_{-\alpha_2-2\alpha_3-\alpha_4} \in \Lie P
\]
so $\alpha_2+2\alpha_3+\alpha_4 \in \Lambda_4 \cap \Omega_3$.

(f): By assumption $\mathfrak{g}_{-\alpha_1-\alpha_2-\alpha_3-\alpha_4} \oplus \mathfrak{g}_{-\alpha_2-2\alpha_3-\alpha_4}\subset \Lie P$. Set $\gamma = -\alpha_1-\alpha_2-\alpha_3-\alpha_4$ and $\delta = -\alpha_2-2\alpha_3-\alpha_4$, then $\gamma-\delta = -\alpha_1-\alpha_3$ is not a root hence 
\[
[X_\gamma, X_\delta] = \pm X_{-\alpha_1-2\alpha_2-3\alpha_3-2\alpha_4} \in \Lie P
\]
so $\alpha_1+2\alpha_2+3\alpha_3+2\alpha_4 \in \Lambda_3 \cap \Omega_3$.

(g): By assumption $\mathfrak{g}_{-\alpha_1-2\alpha_2-4\alpha_3-2\alpha_4} \subset \Lie P$. Set $\gamma = -\alpha_1-2\alpha_2-4\alpha_3-2\alpha_4$ and $\delta = \alpha_3 \in \Phi^+$, then $\gamma-\delta = -\alpha_1-2\alpha_2-5\alpha_3-2\alpha_4$ is not a root hence
\[
[X_\gamma,X_\delta]= \pm X_{-\alpha_1-2\alpha_2-3\alpha_3-2\alpha_4} \in \Lie P
\]
so $\alpha_1-2\alpha_2+3\alpha_3+2\alpha_4 \in \Lambda_3 \cap \Omega_3$.

%%%%%%%%%%%%%%%%%%%%%%%%%%%%%%%%%%%%%%%%%%%%%%%%%%%%%%%%%%%%%%%%%%%%%%%%%%%%%%%%%%%%%%%%%%%%%%%%%%%%%%%%%%%%%%%%%%%%%%%%%%%%%%%%%
\vskip 10 pt
\textbf{Case $P_\text{red}= P_4$}.\\
Let us assume that $P_{\text{red}}= P_4$ and fix the analogous notation as above: the Levi subgroup $L_4 \defeq P_4 \cap P_4^-$, which is of type $B_3$, acts on 
\[
W_4 \defeq \Lie P / \Lie P_4 = \bigoplus_{\gamma \in \Omega_4} \mathfrak{g}_{-\gamma} \subset V_4 \defeq \Lie G / \Lie P_4 = \bigoplus_{\gamma \in \Gamma_4} \mathfrak{g}_{-\gamma}
\]
and its set of weights $\Omega_4$ must be stable under the action of the Weyl group $W(L_4,T) = \langle s_1, s_2, s_3 \rangle$. Our aim is to show that
\begin{align}
\label{claim4}
    \text{either} \quad  \Omega_4 = \Gamma_4 \cap \Phi_< \quad \text{or} \quad \Omega_4 = \Gamma_4.
\end{align}
Let $\beta \defeq \alpha_1+2\alpha_2+3\alpha_3+2\alpha_4$ and consider, as in the first case of this proof, the action of $W(L_4,T)$ on 
\begin{align*}
    (\Gamma_4 \cap \Phi_<) \backslash \{ \beta, \alpha_4\} \defeq \{ & \alpha_3+\alpha_4, \, \alpha_2+\alpha_3+\alpha_4, \, \alpha_1+\alpha_2+\alpha_3+\alpha_4, \, \alpha_1+\alpha_2+2\alpha_3+\alpha_4,\\
    &  \alpha_1+2\alpha_2+2\alpha_3+\alpha_4, \, \alpha_1+2\alpha_2+3\alpha_3+\alpha_4, \, \alpha_2+2\alpha_3+\alpha_4\},
\end{align*}
which is transitive because 
\begin{align*}
    & s_2(\alpha_3+\alpha_4) = \alpha_2+\alpha_3+\alpha_4\\
    & s_1(\alpha_2+\alpha_3+\alpha_4) = \alpha_1+\alpha_2+\alpha_3+\alpha_4,\\
    & s_3(\alpha_1+\alpha_2+\alpha_3+\alpha_4) = \alpha_1+\alpha_2+2\alpha_3+\alpha_4,\\
    & s_2(\alpha_1+\alpha_2+2\alpha_3+\alpha_4) = \alpha_1+2\alpha_2+2\alpha_3+\alpha_4,\\
    & s_1(\alpha_1+\alpha_2+2\alpha_3+\alpha_4) = \alpha_2+2\alpha_3+\alpha_4,\\
    & s_3(\alpha_1+2\alpha_2+2\alpha_3+\alpha_4)= \alpha_1+2\alpha_2+3\alpha_3+\alpha_4,
\end{align*}
and the same action on 
\begin{align*}
    \Gamma_4 \cap \Phi_> = \{ & \alpha_2+2\alpha_3+2\alpha_4, \, \alpha_1+\alpha_2+2\alpha_3+2\alpha_4, \, \alpha_1+2\alpha_2+2\alpha_3+2\alpha_4, \\ & \alpha_1+2\alpha_2+4\alpha_3+2\alpha_4,\,
     \alpha_1+3\alpha_2+4\alpha_3+2\alpha_4, \, \widetilde{\alpha}\},
\end{align*}
which is also transitive because 
\begin{align*}
    & s_1(\alpha_2+2\alpha_3+2\alpha_4) = \alpha_1+\alpha_2+2\alpha_3+2\alpha_4,\\
    & s_2(\alpha_1+\alpha_2+2\alpha_3+2\alpha_4) = \alpha_1 +2\alpha_2+2\alpha_3+2\alpha_4,\\
    & s_3(\alpha_1+2\alpha_2+2\alpha_3+2\alpha_4) = \alpha_1+2\alpha_2+4\alpha_3+2\alpha_4,\\
    & s_2(\alpha_1+2\alpha_2+4\alpha_3+2\alpha_4) = \alpha_1+3\alpha_2+4\alpha_3+2\alpha_4,\\
    & s_1(\alpha_1+3\alpha_2+4\alpha_3+2\alpha_4) = \widetilde{\alpha}.
\end{align*}
Next, we prove the following claims using \Cref{lemmaChevalley} on structure constants :
\begin{enumerate}[(a)]
    \item $\Gamma_4 \cap \Phi_> \subset \Omega_4$ implies that $\beta \in \Omega_4$,
    \item $\beta \in \Omega_4$ implies that $(\Gamma_4 \cap \Phi_<) \backslash \{ \beta, \alpha_4 \} \subset \Omega_4$,
    \item $(\Gamma_4 \cap \Phi_<) \backslash \{ \beta, \alpha_4 \} \subset \Omega_4$ implies that $\alpha_4 \in \Omega_4$,
    \item $\alpha_4 \in \Omega_4$ implies that $(\Gamma_4 \cap \Phi_<) \backslash \{ \beta, \alpha_4 \} \subset \Omega_4$,
    \item $(\Gamma_4 \cap \Phi_<) \backslash \{ \beta \} \subset \Omega_4$ implies that $\beta \subset \Omega_4$.
\end{enumerate}
The parabolic subgroup $P$ being non-reduced by assumption, the set $\Omega_4$ is nonempty hence, once these implications are proved, it must me either all of $\Gamma_4$ or $\Gamma_4 \cap \Phi_<$, which proves (\ref{claim4}).

(a): By assumption $\mathfrak{g}_{-\alpha_2-2\alpha_3-2\alpha_4} \subset \Lie P$ and $\mathfrak{g}_{-\alpha_1-\alpha_2-\alpha_3} \in \Lie L_4 \subset \Lie P$. Set $\gamma = -\alpha_2-2\alpha_3-2\alpha_4$ and $\delta = -\alpha_1-\alpha_2-\alpha_3$, then $\gamma -\delta = \alpha_1-\alpha_3-2\alpha_4$ is not a root hence 
\[
[X_\gamma,X_\delta] = \pm X_{-\beta} \in \Lie P
\]
so $\beta \in \Omega_4$.

(b): By assumption $\mathfrak{g}_{-\beta} \subset \Lie P$. Set $\gamma = -\beta$ and $\delta = \alpha_1+\alpha_2+\alpha_3+\alpha_4 \in \Phi^+$, then $\gamma-\delta = -2\alpha_1-3\alpha_2-4\alpha_3-3\alpha_4$ is not a root hence
\[
[X_\gamma,X_\delta] = \pm X_{-\alpha_2-2\alpha_3-\alpha_4} \in \Lie P
\]
so $\alpha_2+2\alpha_3+\alpha_4 \in ((\Gamma_4 \cap \Phi_<)\backslash \{ \beta,\alpha_4\}) \cap \Omega_4$.

(c): By assumption $\mathfrak{g}_{-\alpha_3-\alpha_4} \subset \Lie P$. Set $\gamma = -\alpha_3-\alpha_4$ and $\delta = \alpha_3 \in \Phi^+$, then $\gamma-\delta = -2\alpha_3-\alpha_4$ is not a root hence
\[
[X_\gamma,X_\delta] = \pm X_{-\alpha_4} \in \Lie P
\]
so $\alpha_4 \in \Omega_4$.

(d): By assumption $\mathfrak{g}_{-\alpha_4} \subset \Lie P$ and $\mathfrak{g}_{-\alpha_3} \subset \Lie L_4 \subset \Lie P$. Set $\gamma = -\alpha_4$ and $\delta = -\alpha_3$, then $\gamma-\delta = \alpha_3-\alpha_4$ is not a root hence 
\[
[X_\gamma, X_\delta] = \pm X_{-\alpha_3-\alpha_4} \in \Lie P
\]
so $\alpha_3+\alpha_4 \in ((\Gamma_4 \cap \Phi_<)\backslash \{ \beta,\alpha_4\}) \cap \Omega_4$.

(e): By assumption $\mathfrak{g}_{-\alpha_4} \oplus \mathfrak{g}_{-\alpha_1-2\alpha_2-3\alpha_3-\alpha_4} \subset \Lie P$. Set $\gamma = -\alpha_4$ and $\delta = -\alpha_1-2\alpha_2-3\alpha_3-\alpha_4$, then $\gamma -\delta = \alpha_1+2\alpha_2+3\alpha_3$ is not a root hence
\[
[X_\gamma,X_\delta] = \pm X_{-\beta} \in \Lie P
\]
so $\beta \in \Omega_4$.

\textbf{Conclusion}: up to this point all computations hold in both characteristic $p=2$ and $3$. To conclude our proof when $p=3$, one more step - which works simultaneously for all cases $i = 1,2,3,4$ - is necessary in order to conclude that $\Omega_i = \Gamma_i$. That is, we want to show that $(\Gamma_i \cap \Phi_<) \subset \Omega_i$ implies $(\Gamma_i \cap \Phi_>) \cap \Omega_i \neq \emptyset$. By assumption, $\mathfrak{g}_{-\alpha_1-2\alpha_2-3\alpha_3-2\alpha_4} \subset \Lie P$. Set $\gamma = -\alpha_1-2\alpha_2-3\alpha_3-2\alpha_4$ and $\delta = \alpha_3 \in \Phi^+$, then $\gamma+\delta$ and $\gamma-\delta$ are still roots while $\gamma-2\delta = -\alpha_1-2\alpha_2-5\alpha_3-2\alpha_4$ is not, hence
\[
[X_\gamma, X_\delta] = \pm 2 X_{-\alpha_1-2\alpha_2-4\alpha_3-2\alpha_4} \in \Lie P \quad \text{hence }  X_{-\alpha_1-2\alpha_2-4\alpha_3-2\alpha_4} \in \Lie P,
\]
so that $-\alpha_1-2\alpha_2-4\alpha_3-2\alpha_4 \in (\Gamma_i \cap \Phi_>) \cap \Omega_i$ as wanted. 
\end{proof}

%\textbf{Proof of \Cref{final_rank1_weak} in type $F_4$}.
\begin{proof}(\textbf{of \Cref{final_rank1_weak} in type $F_4$})\\
Let $G$ be simple of type $F_4$ and $X=G/P$ with a faithful $G$-action such that $P_{\text{red}}$ is maximal and $P$ is nonreduced. When $p=2$, \Cref{LieN_6} implies that $\mathfrak{g}_<  \subset \Lie P$, 
hence we get $\Lie N_G \subset \Lie P$ and $N_G \subset P$ by the equivalence of categories, which is a contradiction by Remark \ref{faithful_action}. When $p=3$, the above Proposition implies that $\Lie P = \Lie G$, hence the Frobenius kernel satisfies $G_1\subset P$, which gives again a contradiction. Therefore in both cases $P$ must be a reduced parabolic.
\end{proof}
%%%%%%%%%%%%%%%%%%%%%%%%%%%%%%%%%%%%%%%%%%%%%%%%%%%%%%%%%%%%%%%%%%%%%%%%%%%%%%%%%%%%%%%%%%%%%%%%%%%%%%%%%%%%%%%%%%%%%%%%%%%%%%%%%%%%%%%%%%%%%%%%%%%%%%%%%%%%%%%%%%%%%%%%%%%%%%%%%%%%%%%%%%%%%%

\subsection{Type $G_2$}
\label{subsec:G2}

The last non-simply laced Dynkin diagram we have to consider is of type $G_2$. In this case, things behave as expected when the reduced parabolic subgroup is $P^{\alpha_2}$, the one associated with the long simple root $\alpha_2$, or when the characteristic is $p=3$: the proof follows the same strategy as in types $B_n$, $C_n$ and $F_4$.\\
This still leaves out the case of a nonreduced parabolic subgroup satisfying $P_{\text{red}}=P^{\alpha_1}$ in characteristic $2$, where $\alpha_1$ denotes the short simple root. Under such assumptions, we find two maximal $p$-Lie subalgebras % (\ref{def:h}) and (\ref{def:l})
\[
\mathfrak{h} \defeq \Lie P^{\alpha_1} \oplus \mathfrak{g}_{-2\alpha_1-\alpha_2} \quad \text {and} \quad
\mathfrak{l} \defeq \Lie P^{\alpha_1} \oplus \mathfrak{g}_{-\alpha_1} \oplus \mathfrak{g}_{-\alpha_1-\alpha_2},
\]containing $\Lie P^{\alpha_1}$. % but which are not $p$-Lie ideals
Let $H$ and $L$ be of height one with Lie algebra respectively equal to $\mathfrak{g}_{-2\alpha_1-\alpha_2}$ and $\mathfrak{g}_{-\alpha_1} \oplus \mathfrak{g}_{-\alpha_1-\alpha_2}$, and define
\[
P_\mathfrak{h} \defeq \langle H, P^{\alpha_1} \rangle \quad \text{and} \quad P_\mathfrak{l} \defeq \langle L, P^{\alpha_1} \rangle.
\]
This gives rise to two parabolic subgroups which have as reduced subgroup a maximal one, but cannot be described as $(\ker \varphi)P^{\alpha_1}$ for some isogeny $\varphi$ with source $G$. We then move on to investigate the corresponding homogeneous spaces, which we describe by using the Chevalley description of $G$ as automorphism group of an octonion algebra.\\
The main result is the following, which completes the classification of \Cref{main}.

\begin{theorem}
\label{main_G2}
    Let $G$ be of type $G_2$ in characteristic two and let $P$ be a nonreduced parabolic subgroup of $G$ having $P^{\alpha_1}$ as reduced part.\\
    Then one of the three following cases holds:
    \begin{itemize}
        \item $P$ is of standard type and $X \simeq G/P^{\alpha_1}$ is isomorphic to a quadric $Q$ in $\proj^6$;
        \item $P$ is obtained from $P_{\mathfrak{h}}$ by pull back via an iterated Frobenius morphism and $X \simeq G/P_{\mathfrak{h}}$ is isomorphic to $\proj^5$;
        \item $P$ is obtained from $P_{\mathfrak{l}}$ by pull back via an iterated Frobenius morphism and $X \simeq G/P_{\mathfrak{l}}$ is isomorphic to a hyperplane section of $\Sp_6/P^{\alpha_3}$.
   \end{itemize}
    % are all of the form $G_mP^{\alpha_1}$, $H_mP^{\alpha_1}$ or $L_mP^{\alpha_1}$ for some $m \geq 0$. In particular, the  homogeneous variety $G/P$ is either isomorphic to $\proj^5$ or to the variety $X= G/P_{\mathfrak{l}}$, which is a hyperplane section of $\Sp_6/P^{\alpha_3}$.
\end{theorem}

Let us recall for reference the following result: see \cite[Theorem 1]{Demazure}, reformulated here under the stronger hypothesis of $k$ being an algebraically closed field. It will be needed to conclude the type $G_2$ case, as well as later on, when dealing with higher Picard ranks.

\begin{theorem}
\label{demazure77}
Let $H^\prime$ be a semisimple adjoint group over $k$ and $Q^\prime$ a reduced parabolic subgroup of $H^\prime$. Then the natural homomorphism
\[
H^\prime \longrightarrow H\defeq \underline{\Aut}^0_{H^\prime/Q^\prime}
\]
is an isomorphism in all but the three following cases: 
\begin{enumerate}[(a)]
    \item $H^\prime$ is of type $C_n$ for some $n \geq 2$ and $Q^\prime= P^{\alpha_1}$ is associated to the first short simple root: in this case the automorphism group $H$ is smooth simple adjoint of type $A_{2n-1}$; %and $Q = P^{\alpha_1}$ ;
    \item $H^\prime$ is of type $B_n$ for some $n \geq 2$ and $Q^\prime = P^{\alpha_n}$ is associated to the short simple root: in this case the automorphism group $H$ is smooth simple adjoint of type $D_{n+1}$; %and $Q = ....$ ;
    \item $H^\prime$ is of type $G_2$ and $Q^\prime = P^{\alpha_1}$: in this case the automorphism group $H$ is smooth simple adjoint of type $B_3$.% and $Q = ....$ ;
\end{enumerate}
\end{theorem}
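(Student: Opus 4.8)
The plan is to identify $H$ through its Lie algebra $\Lie H = H^0(X,T_X)$, where $X = H'/Q'$, and to compare this with $\mathfrak{h}' = \Lie H'$. Recall that $\underline{\Aut}_X$ is representable by a group scheme locally of finite type over $k$, with $\Lie \underline{\Aut}^0_X = H^0(X, T_X)$. The action homomorphism $a\colon H' \to \underline{\Aut}_X$ is a closed immersion: its kernel is a normal subgroup scheme of $H'$ contained in $Q'$, which is trivial since $H'$ is semisimple adjoint. Passing to the reduced group $H_{\mathrm{red}}$, which is smooth, connected, contains $H'$ and hence acts transitively on $X$, the Borel fixed point theorem applied to the normal subgroup $R_u(H_{\mathrm{red}})$ shows the latter has a fixed point and therefore, by normality and transitivity, acts trivially; faithfulness then forces $R_u(H_{\mathrm{red}}) = 1$ and trivial center, so $H_{\mathrm{red}}$ is semisimple adjoint with $X = H_{\mathrm{red}}/P$ for a parabolic $P$.

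It therefore suffices to compute $\mathfrak{h} = H^0(X, T_X)$ as an $H'$-module: this simultaneously detects whether $\underline{\Aut}^0_X$ is smooth, i.e. whether $\dim_k \mathfrak{h}$ equals $\dim H_{\mathrm{red}}$, and pins down the root datum of $H_{\mathrm{red}}$. Writing $Q' = P^\alpha$, the tangent bundle is the homogeneous bundle $\mathcal{L}(\mathfrak{h}'/\mathfrak{q}')$ attached to the $Q'$-module $\mathfrak{h}'/\mathfrak{q}' \cong \bigoplus_\gamma \mathfrak{g}_{-\gamma}$, the sum over $\gamma \in \Phi^+$ with $\alpha \in \Supp(\gamma)$. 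I would filter this $Q'$-module by $B$-weights and compute the cohomology of the one-dimensional graded pieces $\mathcal{L}(-\gamma)$. The image of $a$ accounts for every global section whose $T$-weight is a root of $\Phi$, so the issue is whether an additional dominant weight $\mu \notin \Phi$ contributes a global vector field.

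Demazure's computation shows that such an extra highest-weight vector occurs precisely when the configuration $\{\gamma \in \Phi^+ : \alpha \in \Supp(\gamma)\}$ admits a weight enlarging the root lattice of $\Phi$, and that this happens only for the three listed pairs. In each of these I would identify $X$ geometrically and read off $H$: for $(C_n, \alpha_1)$ one has $X \cong \proj^{2n-1}$ with $\underline{\Aut}^0_X = \PGL_{2n}$ of type $A_{2n-1}$; for $(G_2, \alpha_1)$ the space is the smooth $5$-dimensional quadric $Q^5 \subset \proj^6$, whose automorphism group is of type $B_3$; and for $(B_n, \alpha_n)$ the variety of maximal isotropic $n$-planes is identified with a component of the orthogonal Grassmannian for $\SO_{2n+2}$, yielding type $D_{n+1}$. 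In each case the inclusion $H' \hookrightarrow H$ is the standard embedding and the root-system enlargement matches the extra weight found in the cohomology; in every other case the computation gives $\mathfrak{h} = \mathfrak{h}'$, so $\dim \underline{\Aut}^0_X = \dim H'$, forcing smoothness and $a$ an isomorphism.

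The main obstacle is the cohomology computation in positive characteristic. Borel--Weil--Bott vanishing can fail for the individual line bundles $\mathcal{L}(-\gamma)$, so one cannot simply transport the characteristic-zero count to conclude $H^0(X, T_X) = \mathfrak{h}'$ away from the exceptional list. The delicate points are to rule out spurious global vector fields in small characteristic --- which requires controlling the higher cohomology of the graded pieces, or, following Demazure, proving the needed vanishing directly for $\mathcal{L}(\mathfrak{h}'/\mathfrak{q}')$ rather than term by term --- and to confirm that the expected extra sections in the three exceptional cases persist in all characteristics, so that the enlargements $C_n \rightsquigarrow A_{2n-1}$, $B_n \rightsquigarrow D_{n+1}$ and $G_2 \rightsquigarrow B_3$ hold uniformly.
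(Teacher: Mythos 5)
The paper offers no proof of \Cref{demazure77} at all: it is stated ``for reference'' and cited from Demazure \cite{Demazure}, so your attempt has to be measured against Demazure's original argument rather than anything internal to the paper. Your preliminary reductions are correct and in fact parallel arguments the paper does make elsewhere: faithfulness of the $H'$-action (cf.\ \Cref{faithful_action}), the Borel-fixed-point argument showing that the reduced connected automorphism group is semisimple adjoint acting transitively on $X$ (this is exactly the reduction at the start of Subsection 3.1.2), and the geometric identifications of the three exceptional pairs, which do hold in every characteristic (the paper itself reproves the $G_2$ case in characteristic $2$ around \Cref{prop:QtoP5}).

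The genuine gap is that the actual content of the theorem is never established. Everything reduces to showing that outside the three listed pairs there are no extra global vector fields, i.e.\ $H^0(X,T_X)=\mathfrak{h}'$, and at precisely this point you write that ``Demazure's computation shows \dots this happens only for the three listed pairs'' --- an appeal to the very result being proven. The concrete method you propose for doing it yourself, namely filtering $T_X=\mathcal{L}(\mathfrak{h}'/\mathfrak{q}')$ by the line bundles $\mathcal{L}(-\gamma)$ and applying Borel--Weil--Bott to the graded pieces, is the one you yourself concede is invalid in characteristic $p$; and $p\in\{2,3\}$ is the only case this paper needs. Worse, your detection criterion is wrong in principle, not just hard to verify: you reduce the question to ``whether an additional dominant weight $\mu\notin\Phi$ contributes a section,'' but in all three exceptional cases the extra vector fields occur at weights that \emph{are} roots --- the enlargement $\mathfrak{sp}_{2n}\subset\mathfrak{gl}_{2n}$ adds a summand of highest weight $\varepsilon_1+\varepsilon_2$, the highest short root of $C_n$, and similarly for $(B_n,\alpha_n)$ and $(G_2,\alpha_1)$ the extra module has highest weight the highest short root --- so a correct argument must also rule out multiplicity jumps at dominant roots and at weight $0$, which your dichotomy silently excludes. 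Finally, with the paper's convention for $\underline{\Aut}_X$ (\Cref{autX}: a functor on reduced schemes, represented by a reduced group scheme), one only has an inclusion $\Lie\underline{\Aut}^0_X\subseteq H^0(X,T_X)$ rather than the equality your first sentence asserts, so the dimension count has to be routed through this inclusion; your parenthetical remark about smoothness gestures at this but does not pin it down. In sum: the skeleton is the right one (it is Demazure's), but the load-bearing cohomological step is missing and the shortcut you propose in its place would not give a correct proof.
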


With a slight change of notation compared to Demazure, we call the three pairs $(H,Q)$ in the cases $(a)$, $(b)$ and $(c)$ of the Theorem \emph{exceptional}, while $(H^\prime,Q^\prime)$ is called the \emph{associated} pair to the exceptional one.

\begin{remark}
\label{autX}
In order to be clear let us recall what we mean by automorphism group, both in \Cref{demazure77} and in the rest of the paper. For a proper algebraic scheme $X$ over a perfect field $k$, let us consider the functor 
\[
\underline{\Aut}_X \colon (\mathbf{Sch}/k)_{\text{red}} \longrightarrow \mathbf{Grp}, \quad T \longmapsto \Aut_T(X_T),
\]
sending a reduced $k$-scheme $T$ to the group of automorphisms of $T$-schemes of $X\times_k T$. By \cite[Theorem 3.6]{MatsumuraOort} this functor is represented by a reduced group scheme $\underline{\Aut}_X$ which is locally of finite type over $k$. We denote as $\underline{\Aut}_X^0$ its connected component of the identity, which is a smooth algebraic group.
\end{remark}

%%%%%%%%%%%%%%%%%%%%%%%%%%%%%%%%%%%%%%%%
\subsubsection{What works as expected}

Let us consider a group $G$ with root system of type $G_2$ over a field $k$ of characteristic $p = 2$ or $3$. Following notations from \cite{Bourbaki}, the elements of $\Phi^+$ are 
\[
\alpha_1, \quad \alpha_1+\alpha_2, \quad 2\alpha_1+\alpha_2, \quad 3\alpha_1 +\alpha_2, \quad \alpha_2, \quad 3\alpha_1+2\alpha_2. 
\]
In particular, let us consider as elements of the basis $\Delta$ the short root $\alpha_1$ and the long root $\alpha_2$; then denote $P_1 \defeq P^{\alpha_1}$ and $P_2\defeq P^{\alpha_2}$ the associated maximal reduced parabolic subgroups. 
\smallskip

\begin{center}
\begin{tikzpicture}
    \foreach\ang in {60,120,...,360}{
     \draw[->,cite!80!black,thick] (0,0) -- (\ang:2cm);
    }
    \foreach\ang in {30,90,...,330}{
     \draw[->,cite!80!black,thick] (0,0) -- (\ang:3cm);
    }
    \draw[links,->](1,0) arc(0:150:1cm)node[pos=0.14,right,scale=0.9]{$5\pi/6$};
    \node[anchor= west,scale=0.9] at (2,0) {$\alpha_1$};
    \node[anchor= east,scale=0.9] at (-2,0) {$- \alpha_1$};
    \node[anchor=south west,scale=0.9] at (-3.2,1.3) {$\alpha_2$};
    \node[anchor=north east,scale=0.9] at (3.3,-1.5) {$- \alpha_2$};
    \node[anchor=north west,scale=0.9] at (-4.2,-1.5) {$- 3\alpha_1-\alpha_2$};
    \node[anchor=north,scale=0.9] at (-1.2,-1.8) {$-2\alpha_1-\alpha_2$};
    \node[anchor=north,scale=0.9] at (1.2,-1.8) {$-\alpha_1-\alpha_2$};
    \node[anchor=north,scale=0.9] at (0,-3) {$-3\alpha_1-2\alpha_2$};
    %\node[anchor=north,scale=1] at (1,3) {$G_{2}$};
  \end{tikzpicture}
\end{center}
\smallskip

Let us recall that, when $p=3$, $N_G \subset G$ is in this case the unique subgroup of height one such that
\[
\Lie N_G = \Lie \alpha_1^\vee(\Gm) \oplus \mathfrak{g}_{\alpha_1} \oplus \mathfrak{g}_{-\alpha_1} \oplus \mathfrak{g}_{\alpha_2+2\alpha_1} \oplus \mathfrak{g}_{-\alpha_2-2\alpha_1} \oplus \mathfrak{g}_{\alpha_1+\alpha_2} \oplus \mathfrak{g}_{-\alpha_1-\alpha_2}.
\]

\begin{proposition}
\label{LieN_5}
Assume given a nonreduced parabolic $P$ such that $P_{\text{red}}= P_1$ (with $p=3)$ or $P_{\text{red}} = P_2$ (with $p=2$ or $3$). Then $\Lie P = \Lie G$ or $\Lie P = \Lie P_{\text{red}} + \mathfrak{g}_<$. If $p=2$, then necessarily $\Lie P = \Lie G$.
\end{proposition}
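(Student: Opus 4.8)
The plan is to follow the same case-by-case scheme already carried out in types $B_n$, $C_n$ and $F_4$. For $i\in\{1,2\}$ set $V_i \defeq \Lie G/\Lie P_i$ and let $L_i \defeq P_i\cap P_i^-$ be the Levi subgroup; its root system is of type $A_1$, generated by $\alpha_1$ when $P_{\text{red}}=P_2$ and by $\alpha_2$ when $P_{\text{red}}=P_1$. As before, write $\Gamma_i$ for the set of positive roots whose support contains $\alpha_i$, so that $V_i = \bigoplus_{\gamma\in\Gamma_i}\mathfrak{g}_{-\gamma}$, and let $\Omega_i$ be the set of weights of the nonzero $L_i$-submodule $\Lie P/\Lie P_i$. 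Since $P$ is nonreduced, $\Omega_i\neq\emptyset$, and being the weight set of an $L_i$-module it is stable under $W(L_i,T)=\langle s_1\rangle$ (resp. $\langle s_2\rangle$). The first step is purely bookkeeping: from the list of positive roots of $G_2$ I would record the partition of $\Gamma_i$ into its short part $\Gamma_i\cap\Phi_<$ and long part $\Gamma_i\cap\Phi_>$, and compute the reflection orbits. For $P_2$ these are $\{\alpha_2,3\alpha_1+\alpha_2\}$ and $\{3\alpha_1+2\alpha_2\}$ (long) and $\{\alpha_1+\alpha_2,2\alpha_1+\alpha_2\}$ (short); for $P_1$ they are $\{3\alpha_1+\alpha_2,3\alpha_1+2\alpha_2\}$ (long), together with $\{\alpha_1,\alpha_1+\alpha_2\}$ and $\{2\alpha_1+\alpha_2\}$ (short).

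The heart of the argument is a short list of implications deduced from \Cref{lemmaChevalley}(d), together with the crucial observation that the relevant structure constants are asymmetric in the root lengths. Passing from a long negative root to a short one always costs a constant $\pm1$ and hence works in every characteristic: e.g. $[X_{\alpha_1},X_{-(3\alpha_1+\alpha_2)}]=\pm X_{-(2\alpha_1+\alpha_2)}$. Likewise the two long layers are tied together by constants $\pm1$, for instance $[X_{\alpha_2},X_{-(3\alpha_1+2\alpha_2)}]=\pm X_{-(3\alpha_1+\alpha_2)}$ and its converse, and for $P_1$ the two short orbits are tied together by $[X_{-\alpha_1},X_{-(\alpha_1+\alpha_2)}]=\pm 2\,X_{-(2\alpha_1+\alpha_2)}$. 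By contrast, every way of producing a long negative root out of short ones passes through a bracket whose constant is $\pm3$, the edge multiplicity of $G_2$: typically $[X_{-(\alpha_1+\alpha_2)},X_{-(2\alpha_1+\alpha_2)}]=\pm3\,X_{-(3\alpha_1+2\alpha_2)}$ for $P_2$, and $[X_{-(2\alpha_1+\alpha_2)},X_{-\alpha_1}]=\pm3\,X_{-(3\alpha_1+\alpha_2)}$ for $P_1$. Feeding these implications into the $s_j$-stability of $\Omega_i$ shows that $\Omega_i$ is a union of complete length-layers, so the only candidates are $\Omega_i=\Gamma_i\cap\Phi_<$ and $\Omega_i=\Gamma_i$, i.e. $\Lie P=\Lie P_{\text{red}}+\mathfrak{g}_<$ or $\Lie P=\Lie G$.

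It remains to split by characteristic. In characteristic $2$ (only $P_2$ occurs here) the constant $\pm3$ equals $\pm1$, so the short-to-long step is available; combined with the always-valid long-to-short step and $s_1$-stability, any nonempty $\Omega_2$ propagates to all of $\Gamma_2$, and $\Lie P=\Lie G$ as claimed. In characteristic $3$ every short-to-long bracket carries the factor $3$ and therefore vanishes, so $\Lie P_{\text{red}}+\mathfrak{g}_<$ is genuinely closed under the bracket and survives as the second alternative (it contains $\Lie N_G$, as expected from the edge hypothesis). The step I expect to be the main obstacle is exactly this verification in characteristic $3$: I must check that none of the finitely many brackets landing in a long root space survives — each must reduce to a multiple of $3$ — since a single surviving nonzero constant would collapse the intermediate subalgebra and break the dichotomy. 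I note finally that the excluded case $P_{\text{red}}=P_1$ with $p=2$ escapes this scheme precisely because the short-linking constant $\pm2$ then vanishes, decoupling the short orbits and producing extra subalgebras; that phenomenon is the source of the counterexample analysed separately.
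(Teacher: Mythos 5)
Your proposal is correct and follows essentially the same route as the paper's proof: decompose $V_i=\Lie G/\Lie P_i$ under the rank-one Levi, use stability of the weight set under $s_{\alpha_2}$ (resp.\ $s_{\alpha_1}$), and then propagate between orbits via Chevalley brackets, exploiting that long-to-short and long-to-long links carry constants $\pm1$, the short-orbit link in the $P_1$ case carries $\pm2$ (nonzero for $p=3$), and the only short-to-long links carry $\pm3$ (nonzero precisely for $p=2$) — exactly the paper's argument, up to choosing slightly different but equally valid brackets. Two cosmetic remarks: the phrase ``tied together'' for the two short $s_2$-orbits in the $P_1$ case needs both directions, i.e.\ also $[X_{-(2\alpha_1+\alpha_2)},X_{\alpha_1}]=\pm2X_{-(\alpha_1+\alpha_2)}$ with $X_{\alpha_1}\in\Lie B\subset\Lie P_1$, and your worry about verifying the vanishing of all short-to-long constants in characteristic $3$ is unnecessary for the stated dichotomy (a surviving constant could only force the first alternative $\Lie P=\Lie G$, never falsify the disjunction).
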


\begin{remark}
We can conclude that \Cref{final_rank1_weak} holds in this case as follows: let $G$ be simple of type $G_2$ and $X=G/P$ with a faithful $G$-action such that $P_{\text{red}}$ is maximal, satisfies the hypothesis of \Cref{LieN_5}, and such that $P$ is nonreduced. The above Proposition implies that 
\[
 \Lie \alpha_1^\vee(\Gm) \oplus \mathfrak{g}_< = \Lie N_G \subset \Lie P,
\]
hence we get $N_G \subset P$, which is a contradiction by Remark \ref{faithful_action}. Therefore $P$ must be a smooth parabolic.
\end{remark}

\begin{proof}
\textbf{Case $P_\text{red}= P_1$}.\\
Let us assume that $P_{\text{red}} = P_1$ and that the characteristic is $p=3$. The Levi subgroup $L_1 \defeq P_1 \cap P_1^-$ has root system $\{ \pm \alpha_2 \}$ and acts on the vector space 
\[
V_1 \defeq \Lie G / \Lie P_1 = \mathfrak{g}_{-\alpha_1} \oplus \mathfrak{g}_{-\alpha_1-\alpha_2} \oplus \mathfrak{g}_{-2\alpha_1-\alpha_2} \oplus \mathfrak{g}_{-3\alpha_1 -\alpha_2} \oplus \mathfrak{g}_{-3\alpha_1-2\alpha_2}.
\]
Now, let us look at the nonzero vector subspace $W_1 \defeq \Lie P / \Lie P_1$, which is in particular an $L_1$-submodule of $V_1$. Thus, the set of its weights must be stable under the reflection $s_{\alpha_2}$. This means, by a direct computation, that
\begin{align}
\label{eq:longG2}
\mathfrak{g}_{-3\alpha_1-2\alpha_2} \subset W_1 \quad & \Longleftrightarrow \quad \mathfrak{g}_{-3\alpha_1-\alpha_2} \subset W_1,\\
\mathfrak{g}_{-\alpha_1-\alpha_2} \subset W_1 \quad & \Longleftrightarrow \quad \mathfrak{g}_{-\alpha_1} \subset W_1.
\end{align}

%If the characteristic of $k$ is $p=3$,
Let us assume first that $\mathfrak{g}_{-\alpha_1-\alpha_2} \oplus \mathfrak{g}_{-\alpha_1} \subset W_1$. Then, applying \Cref{lemmaChevalley} to $\gamma = -\alpha_1-\alpha_2$ and $\delta = -\alpha_1$ gives
\[
[X_{-\alpha_1-\alpha_2}, X_{-\alpha_1}] = \pm 2X_{-2\alpha_1-\alpha_2}, \quad \text{hence } X_{-2\alpha_1-\alpha_2} \in \Lie P,
\]
since $\gamma + \delta$ and $\gamma - \delta $ are roots while $\gamma-2\delta = \alpha_1-\alpha_2$ is not.
Conversely, assuming $\mathfrak{g}_{-2\alpha_1-\alpha_2} \subset W_1$ and considering roots $\gamma = -2\alpha_1-\alpha_2$ and $\delta = \alpha_1$  yields
\[
[X_{-2\alpha_1-\alpha_2}, X_{\alpha_1}] = \pm 2X_{-\alpha_1-\alpha_2}, \quad \text{hence } X_{-\alpha_1-\alpha_2} \in \Lie P.
\]
In other words, we have showed that whenever a root subspace associated to a short negative root is contained in $W_1$, the other two are too. \\
To conclude this first case, it is enough to show that 
\[
\mathfrak{g}_{-3\alpha_1-2\alpha_2} \oplus \mathfrak{g}_{-3\alpha_1-\alpha_2} \subset W_1 \quad \text{implies that} \quad \mathfrak{g}_{-2\alpha_1-\alpha_2} \subset W_1.
\]
This can be done by considering roots $\gamma = -3\alpha_1-2\alpha_2$ and $\delta = \alpha_1+\alpha_2$, for which $\gamma+\delta$ is a root but $\gamma-\delta  = -4\alpha_1 -3\alpha_2$ is not, hence 
\[
[X_{-3\alpha_1-2\alpha_2}, X_{\alpha_1+\alpha_2}] = \pm X_{-2\alpha_1 -\alpha_2} \in \Lie P.
\]

\textbf{Case $P_\text{red}= P_2$}.\\
Moving on to the second case, let us assume that $P_{\text{red}} = P_2$. The Levi subgroup $L_2 \defeq P_2 \cap P_2^-$ has root system $\{ \pm \alpha_1 \}$ and acts on the vector space 
\[
V_2 \defeq \Lie G / \Lie P_1 = \mathfrak{g}_{-\alpha_2} \oplus \mathfrak{g}_{-\alpha_1-\alpha_2} \oplus \mathfrak{g}_{-2\alpha_1-\alpha_2} \oplus \mathfrak{g}_{-3\alpha_1 -\alpha_2} \oplus \mathfrak{g}_{-3\alpha_1-2\alpha_2}.
\]
Now, let us look at the nonzero vector subspace $W_2 \defeq \Lie P / \Lie P_2$, which is in particular an $L_2$-submodule of $V_2$. Thus, the set of its weights must be stable under the reflection $s_{\alpha_1}$. This means, by a direct computation, that
\begin{align}
\label{equivalence}
\mathfrak{g}_{-\alpha_1-\alpha_2} \subset W_2 \quad & \Longleftrightarrow \quad \mathfrak{g}_{-2\alpha_1-\alpha_2} \subset W_2,\\
\mathfrak{g}_{-3\alpha_1-\alpha_2} \subset W_2 \quad & \Longleftrightarrow \quad \mathfrak{g}_{-\alpha_1} \subset W_2.
\end{align}
The equivalence (\ref{equivalence}) already implies that once a root subspace associated to a short negative root is contained in $W_2$, the only other one is too.\\
If $p=3$, to conclude it suffices to show that $\mathfrak{g}_{-\gamma} \subset W_2$ for some long root $\gamma  \in \Phi^+$ implies $W_2=V_2$ i.e. $\Lie P = \Lie G$. First,
\[
[X_{-3\alpha_1-2\alpha_2}, X_{\alpha_2}] = \pm X_{-3\alpha_1-\alpha_2},
\]
because $(-3\alpha_1 -2\alpha_2)- \alpha_2$ is not a root, and conversely
\[
[X_{-3\alpha_1-\alpha_2}, X_{-\alpha_2}] = \pm X_{-3\alpha_1-2\alpha_2},
\]
because $(-3\alpha_1-\alpha_2)-(-\alpha_2)$ is not a root. Finally, 
\[
[X_{-3\alpha_1-\alpha_2}, X_{\alpha_1}] = \pm X_{-2\alpha_1-\alpha_2},
\]
because $(-3\alpha_1-\alpha_2)-\alpha_1$ is not a root. This proves that in this case $W_2=V_2$.\\
If $p=2$, one more step must be added: assume that $\mathfrak{g}_{-2\alpha_1-\alpha_2} \oplus \mathfrak{g}_{-\alpha_1-\alpha_2} \subset W_2$, then 
\[
[X_{-2\alpha_1-\alpha_2},X_{-\alpha_1}] = \pm X_{-3\alpha_1-\alpha_2}, \quad \text{hence } X_{-3\alpha_1-\alpha_2} \in \Lie P,
\]
because $(-2\alpha_1-\alpha_2) +\alpha_1$, $(-2\alpha_1-\alpha_2) +2\alpha_1$ are roots, while $(-2\alpha_1-\alpha_2) +3\alpha_1$ is not. This last remark, together with the above computations shows that when $p=2$ necessarily $\Lie P = \Lie G$.
\end{proof}

\subsubsection{What does not}
\label{section_handl}

The only case yet to consider is the following: the characteristic is $p=2$, the group $G$ is of type $G_2$ and $P$ is a nonreduced parabolic subgroup satisfying $P_{\text{red}}= P^{\alpha_1}$, the reduced parabolic associated to the short simple root, whose Levi subgroup has root system $\{\pm \alpha_2\}$. Let us place ourselves in this setting: by repeating the same reasoning as above, we can conclude only a weaker statement.

\begin{lemma}
\label{lem:addlong}
Assume that one of the two root subspaces associated to $-3\alpha_1-2\alpha_2$ and $-3\alpha_1-\alpha_2$ is contained in $\Lie P$. Then $\Lie P = \Lie G$.
\end{lemma}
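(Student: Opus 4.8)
The plan is to read off the constraints on $W_1 \defeq \Lie P / \Lie P^{\alpha_1}$, a nonzero $L_1$-submodule of
\[
V_1 = \mathfrak{g}_{-\alpha_1} \oplus \mathfrak{g}_{-\alpha_1-\alpha_2} \oplus \mathfrak{g}_{-2\alpha_1-\alpha_2} \oplus \mathfrak{g}_{-3\alpha_1-\alpha_2} \oplus \mathfrak{g}_{-3\alpha_1-2\alpha_2},
\]
whose set of weights is stable under the reflection $s_{\alpha_2}$ (the only root of $L_1$). Since $s_{\alpha_2}(-3\alpha_1-2\alpha_2) = -3\alpha_1-\alpha_2$, the hypothesis forces \emph{both} long negative root subspaces $\mathfrak{g}_{-3\alpha_1-2\alpha_2}$ and $\mathfrak{g}_{-3\alpha_1-\alpha_2}$ into $\Lie P$. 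It will then be enough to produce the three short negative root subspaces $\mathfrak{g}_{-\alpha_1}$, $\mathfrak{g}_{-\alpha_1-\alpha_2}$ and $\mathfrak{g}_{-2\alpha_1-\alpha_2}$: together with the two long ones these exhaust $V_1$, giving $\Lie P = \Lie G$.

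The hard part — and precisely what distinguishes this case from $P^{\alpha_2}$ — is that the most natural brackets vanish in characteristic $2$. For example, taking $\gamma = -2\alpha_1-\alpha_2$ and $\delta = \alpha_1$, the difference $\gamma - \delta = -3\alpha_1-\alpha_2$ is still a root, so the $\alpha_1$-string through $\gamma$ has $r = 1$ and \Cref{lemmaChevalley}(d) produces the structure constant $\pm(r+1) = \pm 2 = 0$. My remedy is to bracket the long negative root spaces not against a simple root but against the non-simple positive root $2\alpha_1+\alpha_2 \in \Phi^+$, whose root vector lies in $\Lie B \subset \Lie P$ and whose relevant strings have $r = 0$, hence structure constant $\pm 1$, which survives over a field of characteristic $2$.

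Concretely I would apply \Cref{lemmaChevalley}(d) three times. First, with $\gamma = -3\alpha_1-\alpha_2$ and $\delta = 2\alpha_1+\alpha_2$: here $\gamma + \delta = -\alpha_1$ is a root while $\gamma - \delta = -5\alpha_1-2\alpha_2$ is not, so
\[
[X_{-3\alpha_1-\alpha_2}, X_{2\alpha_1+\alpha_2}] = \pm X_{-\alpha_1} \in \Lie P.
\]
Second, keeping $\delta = 2\alpha_1+\alpha_2$ but taking $\gamma = -3\alpha_1-2\alpha_2$: now $\gamma + \delta = -\alpha_1-\alpha_2$ is a root and $\gamma - \delta = -5\alpha_1-3\alpha_2$ is not, whence
\[
[X_{-3\alpha_1-2\alpha_2}, X_{2\alpha_1+\alpha_2}] = \pm X_{-\alpha_1-\alpha_2} \in \Lie P.
\]
Third, with $\gamma = -3\alpha_1-\alpha_2$ and $\delta = \alpha_1$: since $\gamma + \delta = -2\alpha_1-\alpha_2$ is a root and $\gamma - \delta = -4\alpha_1-\alpha_2$ is not,
\[
[X_{-3\alpha_1-\alpha_2}, X_{\alpha_1}] = \pm X_{-2\alpha_1-\alpha_2} \in \Lie P.
\]
All three short negative root spaces now lie in $\Lie P$, so $\Lie P$ contains every weight space of $V_1$ and we conclude $\Lie P = \Lie G$.

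The only genuine obstacle is the bookkeeping in the second paragraph: one has to be sure there is no odd-coefficient bracket reaching a short root more cheaply, and to recognize that the $\pm 2$'s are exactly what block the shorter routes, so that securing both long roots first (via the $s_{\alpha_2}$-symmetry) and then using $2\alpha_1+\alpha_2$ is essentially forced. Everything else is a finite verification against the explicit list of $G_2$-roots.
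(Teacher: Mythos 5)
Your proof is correct and follows essentially the same route as the paper: both use the $s_{\alpha_2}$-stability of the weight set of $\Lie P/\Lie P^{\alpha_1}$ to secure both long root spaces, and then apply \Cref{lemmaChevalley}(d) with brackets whose root strings have $r=0$, so that the structure constants are $\pm 1$ and survive in characteristic $2$. The only (inessential) difference is the choice of brackets: the paper brackets $X_{-3\alpha_1-2\alpha_2}$ against $X_{\alpha_1+\alpha_2}$ and $X_{2\alpha_1+\alpha_2}$ and recovers $\mathfrak{g}_{-\alpha_1}$ from the $s_{\alpha_2}$-symmetry, whereas you produce all three short negative root spaces directly by bracketing against $X_{2\alpha_1+\alpha_2}$ and $X_{\alpha_1}$ — all of your constants check out against the adjoint-action table in \Cref{h_and_l}.
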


\begin{proof}
    By (\ref{eq:longG2}), we have that both root subspaces are in $\Lie P$. Then  considering roots $\gamma = -3\alpha_1-2\alpha_2$, $\delta = \alpha_1+\alpha_2$ and $\delta^\prime = 2\alpha_1+\alpha_2$ yields
\[
[X_{\gamma}, X_\delta] = \pm X_{-2\alpha_1-\alpha_2} \in \Lie P \quad \text{and} \quad [X_{\gamma}, X_{\delta^\prime}] = \pm X_{-\alpha_1-\alpha_2} \in \Lie P,
\]
because $\gamma-\delta$ and $\gamma-\delta^\prime$ are not roots. This means that if one long root is added then we have to add everything else. 
\end{proof}

The same reasoning applied to short roots fails, due to the vanishing of structure constants in characteristic $2$. More precisely, we can identify two Lie subalgebras strictly containing $\Lie P^{\alpha_1}$, which cannot be Lie ideals since $\Lie G$ is a simple $p$-Lie algebra (see \Cref{Lie_simple})
%Let us recall the following Lemma - see \cite{Humphreys} - which allows us to calculate all structure constants with respect to a Chevalley basis of the Lie algebra $\Lie G$, where $G$ is simple and simply connected. 
%\begin{lemma}
%Let $\{ X_\gamma\colon \gamma \in \Phi, \, H_\alpha \colon \alpha \in \Delta\}$ be a Chevalley basis for $\Lie G$, where $G$ is simple and simply connected. Then the resulting structure constants satisfy
%\begin{itemize}
%    \item $[H_{\alpha},H_{\beta}]= 0$ for all $\alpha,\beta \in \Delta$ ;
%    \item $[H_\alpha,X_\gamma] = \langle \alpha, \gamma \rangle X_\gamma $ for all $\alpha \in \Delta$, $\gamma \in \Phi$ ;
%    \item $[X_\gamma, X_{-\gamma}]$ is a linear combination with integer coefficients of the $H_\alpha$s ;
%    \item $[X_\gamma,X_\delta] = \pm (r+1)X_{\gamma+\delta}$ for all $\delta \neq \pm \gamma$ roots such that the $\delta$-string through $\gamma$ goes from $\gamma-r\delta$ to $\gamma+q\delta$ with $q \geq 1$, i.e. such that $\gamma + \delta$ is still a root ;
 %   \item $[X_\gamma,X_\delta] = 0$ for all roots $\delta \neq \pm \gamma$ such that $\gamma+\delta$ is not a root.
%\end{itemize}
%\end{lemma}
 as follows: define the following vector subspaces 
\begin{align}
\label{def:h}
\mathfrak{h} & \defeq \Lie P^{\alpha_1} \oplus \mathfrak{g}_{-2\alpha_1-\alpha_2} = \Lie B \oplus \mathfrak{g}_{-\alpha_2} \oplus \mathfrak{g}_{-2\alpha_1-\alpha_2} ;\\
\label{def:l}
\mathfrak{l} & \defeq \Lie P^{\alpha_1} \oplus \mathfrak{g}_{-\alpha_1} \oplus \mathfrak{g}_{-\alpha_1-\alpha_2} = \Lie B \oplus \mathfrak{g}_{-\alpha_2} \oplus \mathfrak{g}_{-\alpha_1} \oplus \mathfrak{g}_{-\alpha_1-\alpha_2}.
\end{align}

\begin{lemma}
\label{h_and_l}
With the above notation, $\mathfrak{h}$ and $\mathfrak{l}$ are two $p$-Lie subalgebras of $\Lie G$.
\end{lemma}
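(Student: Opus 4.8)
The plan is to verify the two defining properties of a $p$-Lie subalgebra separately: closure under the Lie bracket, and closure under the restricted $p$-operation $x \mapsto x^{[2]}$. Since $\Lie P^{\alpha_1} = \Lie B \oplus \mathfrak{g}_{-\alpha_2}$ is already a $p$-Lie subalgebra, being the Lie algebra of a parabolic subgroup scheme, in each case it suffices to control the brackets and $p$-powers involving the finitely many root vectors we adjoin: $X_{-2\alpha_1-\alpha_2}$ for $\mathfrak{h}$, and $X_{-\alpha_1}$, $X_{-\alpha_1-\alpha_2}$ for $\mathfrak{l}$.

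For bracket-closure I would run through $[X_\delta, X_\gamma]$ with $\delta$ an adjoined root and $\gamma \in \Phi^+ \cup \{-\alpha_2\}$, using \Cref{lemmaChevalley}. The brackets fall into three kinds: those whose root sum $\gamma+\delta$ is again a positive root or zero land back in $\Lie P^{\alpha_1}$ (the torus absorbing the case $\gamma = -\delta$ via parts (b), (c)); those for which $\gamma+\delta$ is not a root vanish identically by part (e); and a short list of brackets whose sum is a negative root lying outside the subalgebra. The whole point is that these last, dangerous brackets carry a structure constant $\pm(r+1)$ with $r+1 = 2$, hence vanish in characteristic $2$. Concretely, for $\mathfrak{h}$ the relevant $\alpha_1$- and $(\alpha_1+\alpha_2)$-strings give
\[
[X_{-2\alpha_1-\alpha_2}, X_{\alpha_1}] = \pm 2 X_{-\alpha_1-\alpha_2} = 0, \qquad [X_{-2\alpha_1-\alpha_2}, X_{\alpha_1+\alpha_2}] = \pm 2 X_{-\alpha_1} = 0,
\]
while for $\mathfrak{l}$ the only escaping bracket is
\[
[X_{-\alpha_1}, X_{-\alpha_1-\alpha_2}] = \pm 2 X_{-2\alpha_1-\alpha_2} = 0.
\]
All remaining brackets among the adjoined vectors are forced to vanish, since their sums (such as $-2\alpha_1$ or $-2\alpha_1-2\alpha_2$) are not roots, and brackets of the adjoined vectors with $\Lie T$ reproduce the same root space. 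This establishes that $\mathfrak{h}$ and $\mathfrak{l}$ are Lie subalgebras.

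Once bracket-closure is known, closure under the $p$-operation is immediate: by the Jacobson relations $(x+y)^{[2]} = x^{[2]} + y^{[2]} + [x,y]$ and $(\lambda x)^{[2]} = \lambda^2 x^{[2]}$, and since the correction term $[x,y]$ already lies in the subalgebra, it suffices to check $x^{[2]}$ on the Chevalley basis. For a root vector one has $X_\gamma^{[2]} \in \mathfrak{g}_{2\gamma} = 0$, because $2\gamma$ is never a root; for a toral element $H \in \Lie T$ one has $H^{[2]} \in \Lie T$, as $\Lie T$ is a restricted toral subalgebra. Hence every basis vector has its $p$-power inside $\Lie P^{\alpha_1}$, and both subspaces are stable under $[2]$.

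The genuinely delicate point — and the reason this Lemma signals the failure of the strategy used in the other types — is the vanishing of the structure constants $\pm 2$ displayed above: in any characteristic $\neq 2$ these brackets would pull an additional short-root vector into the subspace, so that $\mathfrak{h}$ and $\mathfrak{l}$ would no longer be proper subalgebras (compare the short-root analogue of \Cref{lem:addlong} and the simplicity of $\Lie G$ recorded in \Cref{Lie_simple}). Thus the main work is simply to confirm that these are exactly the constants that arise and that no other bracket escapes the prescribed span; the rest is the routine root-sum bookkeeping sketched above.
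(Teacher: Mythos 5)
Your proof is correct, and its bracket-closure half is essentially the paper's argument: the paper compiles a full table of $\ad$-values via \Cref{lemmaChevalley}, while you isolate exactly the brackets whose root-sum escapes the prescribed span and check that each carries structure constant $\pm 2$ (your string computations are right: the $\alpha_1$- and $(\alpha_1+\alpha_2)$-strings through $-2\alpha_1-\alpha_2$, and the $(\alpha_1+\alpha_2)$-string through $-\alpha_1$, all have $r=1$, so the dangerous brackets vanish in characteristic $2$, and no other bracket can land outside $\mathfrak{h}$ or $\mathfrak{l}$). Where you genuinely diverge from the paper is in the closure under the $p$-operation. The paper assumes nothing about the restricted structure on the Chevalley basis: it writes the unknown element $Y_\gamma = X_\gamma^{[2]}$ as $H + \sum_\delta a_\delta X_\delta$, applies the defining identity $\ad(Y_\gamma) = \ad^2(X_\gamma)$ to vectors annihilated by $\ad^2(X_\gamma)$, and reads off from the bracket table that the coefficients $a_\delta$ along the root spaces outside $\mathfrak{h}$ (resp.\ $\mathfrak{l}$) vanish; note it never needs $Y_\gamma = 0$, only $Y_\gamma \in \mathfrak{h}$ (resp.\ $\mathfrak{l}$). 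You instead invoke the standard description of the canonical restricted structure: $X_\gamma^{[2]}$ is a $T$-weight vector of weight $2\gamma$, which is neither zero nor a root, hence $X_\gamma^{[2]}=0$; combined with $H^{[2]} \in \Lie T$ and Jacobson's formula $(x+y)^{[2]} = x^{[2]}+y^{[2]}+[x,y]$, closure under the $p$-map then follows formally from closure under the bracket. Your route is shorter and more conceptual — it makes transparent that the entire content of the lemma is the vanishing of three structure constants — but it rests on a fact you assert rather than justify, namely that the $p$-operation sends $\mathfrak{g}_\gamma$ into $\mathfrak{g}_{2\gamma}$; this holds because the canonical $p$-map is equivariant under $\mathrm{Ad}(T)$ and satisfies $(\lambda x)^{[2]}=\lambda^2 x^{[2]}$, or alternatively because $X_\gamma = \dd u_\gamma(1) \in \Lie U_\gamma \simeq \Lie \Ga$, whose $p$-operation is zero, and restricted structures restrict to Lie algebras of closed subgroups. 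That fact is standard and the paper's normalization $X_\gamma = \dd u_\gamma(1)$ makes it available, so your argument is complete once you cite it; the paper's lengthier coefficient computation is exactly the price it pays for avoiding any appeal to it.
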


\begin{proof}
Let $\{ X_\gamma \colon \gamma \in \Phi, H_{\alpha_1}, H_{\alpha_2}\}$ be a Chevalley basis of $\Lie G$. First, using \Cref{lemmaChevalley} we can calculate a few structure constants which are then useful in the rest of the proof: 
\begin{center}
\begin{tabular}{ |c|c|c|c|c|c|}
\hline
 & $\ad (X_{-2\alpha_1-\alpha_2})$ & $\ad (X_{-\alpha_1})$ & $\ad(X_{-\alpha_1-\alpha_2)}$ & $\ad (X_{\alpha_1})$ & $\ad (X_{2\alpha_1+\alpha_2})$\\
\hline
$X_{\alpha_1}$ & $0$ & $ \in \Lie T$ & $X_{-\alpha_2}$ & $0$ & $X_{3\alpha_1+\alpha_2}$\\
\hline
$X_{3\alpha_1+\alpha_2}$ & $X_{\alpha_1}$ & $X_{2\alpha_1+\alpha_2}$ & $0$ & $0$ & $0$\\
\hline
$X_{2\alpha_1+\alpha_2}$ & $\in \Lie T$ & $0$ & $0$ & $X_{3\alpha_1+\alpha_2}$ & $0$\\
\hline
$X_{3\alpha_1+2\alpha_2}$ & $X_{\alpha_1+\alpha_2}$ & $0$ & $X_{2\alpha_1+\alpha_2}$ & $0$ & $0$\\
\hline
$X_{\alpha_1+\alpha_2}$ & $0$ & $X_{\alpha_2}$ & $\in \Lie T$ & $0$ & $X_{3\alpha_1+2\alpha_2}$\\
\hline
 $X_{\alpha_2}$ & $0$ & $0$ & $X_{-\alpha_1}$ & $X_{\alpha_1+\alpha_2}$ & $0$\\
\hline
$X_{-\alpha_1}$ & $X_{-3\alpha_1-\alpha_2}$ & $0$ & $0$ & $\in \Lie T$ & $0$\\
\hline
$X_{-3\alpha_1-\alpha_2}$ & $0$ & $0$ & $0$ & $X_{-2\alpha_1 -\alpha_2}$ & $X_{-\alpha_1}$\\
\hline
 $X_{-2\alpha_1-\alpha_2}$ & $0$ & $X_{-3\alpha_1-\alpha_2}$ & $X_{-3\alpha_1-2\alpha_2}$ & $0$ & $\in \Lie T$\\
\hline
 $X_{-3\alpha_1-2\alpha_2}$ & $0$ & $0$ & $0$ & $0$ & $X_{-\alpha_1-\alpha_2}$\\
\hline
 $X_{-\alpha_1-\alpha_2}$ & $X_{-3\alpha_1-2\alpha_2}$ & $0$ & $0$ & $X_{-\alpha_2}$ & $0$\\
\hline
 $X_{-\alpha_2}$ & $0$ & $X_{-\alpha_1-\alpha_2}$ & $0$ & $0$ & $0$\\
\hline
\end{tabular}
\end{center}

Let us verify that $\mathfrak{h}$ is a Lie subalgebra. Since we know that $\Lie P^{\alpha_1}$ is one, it is enough to show that $[\mathfrak{g}_{-2\alpha_1-\alpha_2}, \Lie P^{\alpha_1}] \subset \mathfrak{h}$. \Cref{lemmaChevalley} implies that
\[
[\mathfrak{g}_{-2\alpha_1-\alpha_2}, \Lie T] = [X_{-2\alpha_1-\alpha_2}, \Lie T] \subset \mathfrak{g}_{-2\alpha_1-\alpha_2} \subset \mathfrak{h}.
\]
Moreover, the first column of the above table shows that
\[
[\mathfrak{g}_{-2\alpha_1-\alpha_2}, \mathfrak{g}_\gamma] = k[X_{-2\alpha_1-\alpha_2}, X_\gamma] \subset \mathfrak{h},
\]
for all roots $\gamma$ whose root subspace is contained in $\Lie P^{\alpha_1}$.\\
Analogously, let us prove that $\mathfrak{l}$ is a Lie subalgebra: for this, it is enough to show that 
\[
[\mathfrak{g}_{-\alpha_1}, \Lie P^{\alpha_1}], [\mathfrak{g}_{-\alpha_1-\alpha_2}, \Lie P^{\alpha_1}], [\mathfrak{g}_{-\alpha_1}, \mathfrak{g}_{-\alpha_1-\alpha_2}] \subset \mathfrak{l}.
\]
First, \Cref{lemmaChevalley} implies that 
\begin{align*}
    [\mathfrak{g}_{-\alpha_1}, \Lie T] = [X_{-\alpha_1}, \Lie T] \subset g_{-\alpha_1} \subset \mathfrak{l} \, ;\\
    [\mathfrak{g}_{-\alpha_1-\alpha_2}, \Lie T] = [X_{-\alpha_1-\alpha_2}, \Lie T] \subset g_{-\alpha_1-\alpha_2} \subset \mathfrak{l}.
\end{align*}
Moreover, the second and third column in the above table show that
\[
[\mathfrak{g}_{-\alpha_1}, \mathfrak{g}_\gamma] = k[X_{-\alpha_1}, X_\gamma] \quad \text{and} \quad [\mathfrak{g}_{-\alpha_1-\alpha_2}, \mathfrak{g}_\gamma] = k[X_{-\alpha_1-\alpha_2},X_\gamma]
\]
are both subspaces of $\mathfrak{l}$, for all roots $\gamma$ whose root subspace is contained in $\Lie P^{\alpha_1}$.\\
To conclude there is still left to show that $\mathfrak{h}$ and $\mathfrak{l}$ are stable by the $p$-mapping (recall that by assumption $p=2)$, knowing that $\Lie P^{\alpha_1}$ is. In other words, setting $Y_\gamma$ equal to the image of $X_\gamma$ by the $p$-mapping, we want to prove that $Y_{-2\alpha_1-\alpha_2} \in \mathfrak{h}$ and that $Y_{-\alpha_1}, Y_{-\alpha_1-\alpha_2} \in \mathfrak{l}$.\\
To do this, let 
\[
Y_{-2\alpha_1-\alpha_2} = H + \sum_{\delta \in \Phi} a_\delta X_\delta, \quad \text{for some } a_\delta \in k, \, H \in \Lie T.
\]
It is enough to show that $a_{-\alpha_1} = a_{-3\alpha_1-\alpha_2}= a_{-3\alpha_1-2\alpha_2} = a_{-\alpha_1-\alpha_2}=0$. By the properties of the $p$-mapping, we have that $\ad (Y_\gamma) = \ad^2(X_\gamma)$ for any root $\gamma$. Using that $[X_{-2\alpha_1-\alpha_2}, X_{\alpha_1}]$ vanishes (see table), we have:
\begin{align*}
0 = & \ad (X_{-2\alpha_1-\alpha_2}) ([X_{-2\alpha_1-\alpha_2}, X_{\alpha_1}]) = \ad^2(X_{-2\alpha_1-\alpha_2}) (X_{\alpha_1})\\
= & \ad (Y_{-2\alpha_1-\alpha_2})(X_{\alpha_1}) = [H, X_{\alpha_1}] + \sum_{\delta \in \Phi} a_\delta[X_\delta,X_{\alpha_1}].
\end{align*}
Expanding all brackets using the fourth column of the above table gives that, for some $a \in k$,
\[
0 = aX_{\alpha_1} + a_{2\alpha_1-\alpha_2}X_{3\alpha_1+\alpha_2} + a_{\alpha_2} X_{\alpha_1+\alpha_2} + a_{-\alpha_1} H_{\alpha_1} + a_{-3\alpha_1-\alpha_2} X_{-2\alpha_1-\alpha_2} + a_{-\alpha_1-\alpha_2} X_{-\alpha_2},
\]
which implies in particular $a_{-\alpha_1}= a_{-3\alpha_1-\alpha_2}= a_{-\alpha_1-\alpha_2}= 0$. Moreover, $[X_{-2\alpha_1-\alpha_2},X_{\alpha_1}]$ also vanishes, hence we have
\begin{align*}
0 = & \ad (X_{-2\alpha_1-\alpha_2}) ([X_{-2\alpha_1-\alpha_2}, X_{\alpha_1+\alpha_2}]) = \ad^2(X_{-2\alpha_1-\alpha_2}) (X_{\alpha_1+\alpha_2})\\
= & \ad (Y_{-2\alpha_1-\alpha_2})(X_{\alpha_1+\alpha_2}) = [H, X_{\alpha_1+\alpha_2}] + \sum_{\delta \in \Phi} a_\delta[X_\delta,X_{\alpha_1+\alpha_2}].
\end{align*}
Writing this with respect to the Chevalley basis gives 
\[a_{-3\alpha_1-2\alpha_2} [X_{-3\alpha_1-2\alpha_2}, X_{\alpha_1+\alpha_2}] = a_{-3\alpha_1-2\alpha_2} X_{-2\alpha_1-\alpha_2}\] as the only term in $X_{-2\alpha_1-\alpha_2}$, meaning that the coefficient $a_{-3\alpha_1-2\alpha_2}$ also vanishes, as wanted: thus we can conclude that $\mathfrak{h}$ is a $p$-Lie subalgebra of $\Lie G$.\\
Analogously, let
\[
Y_{-\alpha_1} = H^\prime + \sum_{\delta \in \Phi} b_\delta X_\delta, \quad \text{for some } b_\delta \in k, H^\prime \in \Lie T,
\]
and as before we aim to show that $b_{-3\alpha_1-\alpha_2} = b_{-2\alpha_1-\alpha_2} = b_{-3\alpha_1-2\alpha_2}= 0$. Using that $[X_{-\alpha_1},X_{2\alpha_1+\alpha_2}]$ vanishes (see table), we have
\begin{align*}
    0 = & \ad (X_{-\alpha_1}) ([X_{-\alpha_1},X_{2\alpha_1+\alpha_2}]) = \ad^2(X_{-\alpha_1}) ([X_{-\alpha_1}, X_{2\alpha_1+\alpha_2}])\\
    = & \ad(Y_{-\alpha_1}) (X_{2\alpha_1+\alpha_2}) = [H^\prime, X_{2\alpha_1+\alpha_2}] + \sum_{\delta \in \Phi} b_\delta [X_\delta, X_{2\alpha_1+\alpha_2}].
\end{align*}
Expanding all brackets using the last column of the above table gives that, for some $b \in k$ and some $H^{\prime\prime} \in \Lie T$,
\begin{align*}
0 & =  bX_{2\alpha_1+\alpha_2} + b_{\alpha_1}X_{3\alpha_1+\alpha_2} + b_{\alpha_1+\alpha_2} X_{3\alpha_1+2\alpha_2} + b_{-3\alpha_1-\alpha_2} X_{-\alpha_1} + b_{-2\alpha_1-\alpha_2} H^{\prime\prime}\\ & + b_{-3\alpha_1-2\alpha_2}X_{-\alpha_1-\alpha_2}.
\end{align*}
In particular, this proves that $b_{-3\alpha_1-\alpha_2}= b_{-3\alpha_1-2\alpha_2}= 0$. Moreover, $[X_{-\alpha_1},X_{-\alpha_1-\alpha_2}]$ also vanishes, hence we have
\begin{align*}
    0 = & \ad (X_{-\alpha_1}) ([X_{-\alpha_1},X_{-\alpha_1-\alpha_2}]) = \ad^2(X_{-\alpha_1}) ([X_{-\alpha_1}, X_{-\alpha_1-\alpha_2}])\\
    = & \ad(Y_{-\alpha_1}) (X_{-\alpha_1-\alpha_2}) = [H^\prime, X_{-\alpha_1-\alpha_2}] + \sum_{\delta \in \Phi} b_\delta [X_\delta, X_{-\alpha_1-\alpha_2}].
\end{align*}
Expanding this with respect to the Chevalley basis gives $b_{-2\alpha_1-\alpha_2}[X_{-2\alpha_1-\alpha_2},X_{-\alpha_1-\alpha_2}] = b_{-2\alpha_1-\alpha_2}X_{-3\alpha_1-2\alpha_2}X_{-3\alpha_1-2\alpha_2}$ as the only term in $X_{-3\alpha_1-2\alpha_2}$, meaning that the coefficient $b_{-2\alpha_1-\alpha_2}$ also vanishes: this proves that $Y_{-\alpha_1} \in \mathfrak{l}$. \\
To prove that $Y_{-\alpha_1-\alpha_2}$ is also in $\mathfrak{l}$, an analogous computation, symmetric with respect to the reflection $s_{\alpha_2}$, can be done.
Finally, we can conclude that $\mathfrak{l}$ is a $p$-Lie subalgebra.
\end{proof}

\begin{corollary}
\label{cor:hl}
 The $p$-Lie subalgebras of $\Lie G$ containing strictly $\Lie P^{\alpha_1}$ are exactly $\mathfrak{h}$ and $\mathfrak{l}$.
\end{corollary}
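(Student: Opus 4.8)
The plan is to classify directly every $p$-Lie subalgebra $\mathfrak{m}$ with $\Lie P^{\alpha_1} \subsetneq \mathfrak{m} \subsetneq \Lie G$ and to show that the only two options are $\mathfrak{h}$ and $\mathfrak{l}$; combined with \Cref{h_and_l}, which already establishes that these two are $p$-Lie subalgebras, this proves the statement (the improper case $\mathfrak{m}=\Lie G$ corresponds to the Frobenius kernel $G_1$ and is discarded). Since $\Lie T \subset \Lie P^{\alpha_1} \subset \mathfrak{m}$, any such $\mathfrak{m}$ is stable under $\ad(\Lie T)$ and therefore splits as the direct sum of its intersections with the joint eigenspaces of $\ad(\Lie T)$. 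The first thing I would record is that in characteristic $2$ these eigenspaces are strictly coarser than the root space decomposition: the eigenvalue of $\ad(\Lie T)$ on $\mathfrak{g}_\gamma$ depends only on the class of $\gamma = c_1\alpha_1 + c_2\alpha_2$ modulo $2$. A direct check shows that inside $\Lie G / \Lie P^{\alpha_1}$ the relevant eigenspaces are the line $\mathfrak{g}_{-2\alpha_1-\alpha_2}$ together with the two planes $\mathfrak{g}_{-\alpha_1}\oplus\mathfrak{g}_{-3\alpha_1-2\alpha_2}$ and $\mathfrak{g}_{-\alpha_1-\alpha_2}\oplus\mathfrak{g}_{-3\alpha_1-\alpha_2}$, each of which pairs one short with one long negative root.

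The heart of the argument is to rule out that $\mathfrak{m}$ meets one of these mixed planes in a line having nonzero long component. Suppose, say, $\mathfrak{m}$ contains $a X_{-\alpha_1} + b X_{-3\alpha_1-2\alpha_2}$ with $b \neq 0$; bracketing with $X_{\alpha_2} \in \Lie P^{\alpha_1} \subset \mathfrak{m}$ annihilates the short summand and, since the relevant structure constant is odd, isolates a nonzero multiple of $X_{-3\alpha_1-\alpha_2}$. Thus $\mathfrak{m}$ would contain the long root space $\mathfrak{g}_{-3\alpha_1-\alpha_2}$, and \Cref{lem:addlong} would force $\mathfrak{m}=\Lie G$, contradicting properness; the symmetric computation with $X_{-\alpha_2}$ handles the other plane. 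Consequently, for a proper $\mathfrak{m}$ the extra root spaces all lie among the three short ones $A \defeq \mathfrak{g}_{-\alpha_1}$, $B \defeq \mathfrak{g}_{-\alpha_1-\alpha_2}$ and $C \defeq \mathfrak{g}_{-2\alpha_1-\alpha_2}$; in particular $\mathfrak{m}$ is then automatically a sum of root spaces.

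It remains to determine which subsets of $\{A,B,C\}$ yield a subalgebra. Here I would invoke \Cref{lemmaChevalley} four times: the brackets $[X_{-\alpha_1},X_{-\alpha_2}]$ and $[X_{-\alpha_1-\alpha_2},X_{\alpha_2}]$ (both with the second factor in $\Lie P^{\alpha_1}$) give $A \subset \mathfrak{m} \Leftrightarrow B \subset \mathfrak{m}$, while $[X_{-\alpha_1},X_{-2\alpha_1-\alpha_2}]$ and $[X_{-\alpha_1-\alpha_2},X_{-2\alpha_1-\alpha_2}]$ produce nonzero long root vectors (the constants being $\pm 3 \equiv 1$ modulo $2$), so $C$ cannot coexist with $A$ or $B$ inside a proper subalgebra. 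Splitting according to whether $C \subset \mathfrak{m}$ then leaves precisely $\mathfrak{m} = \Lie P^{\alpha_1}\oplus C = \mathfrak{h}$ and $\mathfrak{m} = \Lie P^{\alpha_1}\oplus A \oplus B = \mathfrak{l}$.

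The step I expect to be the main obstacle is precisely the characteristic-$2$ degeneracy of the $\ad(\Lie T)$-grading: because its eigenspaces mix short and long root spaces, stability under the torus does not by itself force $\mathfrak{m}$ to be a sum of root spaces, and one cannot simply quote the weight-space decomposition as in the earlier types. The brackets-with-$X_{\pm\alpha_2}$ computation is what circumvents this, by detecting and excluding mixed lines with a long component; everything else is bookkeeping with the bracket table already compiled in the proof of \Cref{h_and_l}.
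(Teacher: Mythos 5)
Your proof is correct, and its core coincides with the paper's own argument: both hinge on \Cref{lem:addlong}, on the equivalence $\mathfrak{g}_{-\alpha_1}\subset\mathfrak{m}\Leftrightarrow\mathfrak{g}_{-\alpha_1-\alpha_2}\subset\mathfrak{m}$ coming from the Levi factor of $P^{\alpha_1}$ (the paper invokes $s_{\alpha_2}$-symmetry where you write out the brackets with $X_{\pm\alpha_2}$, which amounts to the same thing), and on the bracket $[X_{-\alpha_1},X_{-2\alpha_1-\alpha_2}]=\pm 3\,X_{-3\alpha_1-\alpha_2}$ of (\ref{bracket_G2}), whose structure constant is odd, to show that $\mathfrak{g}_{-2\alpha_1-\alpha_2}$ cannot coexist with $\mathfrak{g}_{-\alpha_1}\oplus\mathfrak{g}_{-\alpha_1-\alpha_2}$ in a proper subalgebra. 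Where you genuinely go beyond the paper is the preliminary reduction. The paper's proof opens by asserting that $\Lie P^{\alpha_1}\subsetneq\mathfrak{s}$ ``means'' that some full root space $\mathfrak{g}_{-\gamma}$ is contained in $\mathfrak{s}$, i.e.\ it tacitly assumes $\mathfrak{s}$ is a sum of root spaces. As you observe, $\ad(\Lie T)$-stability does not give this for free in characteristic $2$: the differentials of the roots only detect their classes modulo $2$, so inside $\Lie G/\Lie P^{\alpha_1}$ the joint eigenspaces are the line $\mathfrak{g}_{-2\alpha_1-\alpha_2}$ together with the two mixed planes $\mathfrak{g}_{-\alpha_1}\oplus\mathfrak{g}_{-3\alpha_1-2\alpha_2}$ and $\mathfrak{g}_{-\alpha_1-\alpha_2}\oplus\mathfrak{g}_{-3\alpha_1-\alpha_2}$, and a priori $\mathfrak{m}$ could meet a mixed plane in a diagonal line without containing either root space. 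Your bracketing with $X_{\alpha_2}$ (resp.\ $X_{-\alpha_2}$), which annihilates the short component and extracts the long root vector with structure constant $\pm 1$, is precisely what excludes such lines, after which \Cref{lem:addlong} applies; this supplies a step that the paper's proof leaves implicit. All the individual computations you use (vanishing of $[X_{\alpha_2},X_{-\alpha_1}]$ and $[X_{-\alpha_2},X_{-\alpha_1-\alpha_2}]$, the constants $\pm 1$ and $\pm 3$ in the remaining brackets) check out against \Cref{lemmaChevalley} and the table in the proof of \Cref{h_and_l}, and the final appeal to \Cref{h_and_l} for the fact that $\mathfrak{h}$ and $\mathfrak{l}$ really are $p$-Lie subalgebras closes the argument correctly.
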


\begin{proof}
 Let us consider a $p$-Lie subalgebra $\Lie P^{\alpha_1} \subsetneq \mathfrak{s} \subset \Lie G$, meaning that there is some positive root $\gamma \neq \alpha_1$ such that $\mathfrak{g}_{-\gamma}$ is contained in $\mathfrak{s}$. By \Cref{lem:addlong}, if $\gamma$ is long then $\mathfrak{s}= \Lie G$, so we can assume $\gamma$ to be short. To do this, let us remark that by \Cref{lemmaChevalley} we have
 \begin{align}
 \label{bracket_G2}
     [X_{-\alpha_1},X_{-2\alpha_1-\alpha_2}] = X_{-3\alpha_1-\alpha_2},
 \end{align}
 because $-\alpha_1-(-2\alpha_1-\alpha_2))$ and $-\alpha_1-2(-2\alpha_1-\alpha_2))$ are roots while $-\alpha_1-3(-2\alpha_1-\alpha_2))$ is not, hence the structure constant is $3=1$. If $\gamma= \alpha_1$, by symmetry with respect to the Weyl group  $\{ \pm s_{\alpha_2}\}$ of the Levi factor of $P^{\alpha_1}$ we have that $\mathfrak{g}_{-\alpha_1-\alpha_2}$ is also contained in $\mathfrak{s}$, hence either $\mathfrak{s}= \mathfrak{l}$ or it also contains $\mathfrak{g}_{-2\alpha_1-\alpha_2}$. The equality (\ref{bracket_G2}) together with \Cref{lem:addlong} then imply $\mathfrak{s}= \Lie G$. The same reasoning applies when starting by  $\gamma = -\alpha_1-\alpha_2$. On the other hand, starting by $\gamma= 2\alpha_1+\alpha_2$ implies that either $\mathfrak{s}= \mathfrak{h}$, or it contains also $\mathfrak{g}_{-\alpha_1}\oplus \mathfrak{g}_{-\alpha_1-\alpha_2}$, from which we conclude again - by (\ref{bracket_G2}) and \Cref{lem:addlong} - that $\mathfrak{s}= \Lie G$.
\end{proof}

\begin{definition}
\label{def:PhPl}
Let us fix the following notation for the rest of this Section:
\begin{enumerate}[(1)]
    \item $H\defeq (U_{-2\alpha_1-\alpha_2})_1$ is the subgroup of height one such that $\Lie H = \mathfrak{g}_{-2\alpha_1-\alpha_2}$, i.e. $\mathfrak{h} = \Lie P^{\alpha_1} \oplus \Lie H$ ;
    \item $L\defeq (U_{-\alpha_1})_1 \cdot (U_{-\alpha_1-\alpha_2})_1$ is the subgroup of height one such that $\Lie L = \mathfrak{g}_{-\alpha_1}\oplus \mathfrak{g}_{-\alpha_1-\alpha_2}$, i.e. $\mathfrak{l} = \Lie P^{\alpha_1} \oplus \Lie L$ ;
    \item $P_{\mathfrak{h}}$ the parabolic subgroup generated by $P^{\alpha_1}$ and $H$; 
    \item $P_{\mathfrak{l}}$ the parabolic subgroup generated by $P^{\alpha_1}$ and $L$.
\end{enumerate}
\end{definition}

Let us notice that $\mathfrak{g}_{-\alpha_1}$ and $\mathfrak{g}_{-\alpha_1-\alpha_2}$ commute, so that $L$ is the direct product of the Frobenius kernels defining it. %Moreover, since $H$ is normalized by $P^{\alpha_1}$, we have the equality  $P_{\mathfrak{h}} = H \cdot P^{\alpha_1}$. In particular, using the notation of \cite{Wenzel}, we have 
%\[  U_{P_{\mathfrak{h}}}^- = P_{\mathfrak{h}} \cap R_u^-(P^{\alpha_1}) = P_{\mathfrak{h}} \cap U_{-2\alpha_1-\alpha_2} = H. \]

\begin{remark}
The two parabolic subgroups $P_{\mathfrak{h}}$ and $P_{\mathfrak{l}}$ are \emph{exotic} in the sense that they cannot be of the form $(\ker \varphi) P^\alpha$ for some isogeny $\varphi$, since when $p=2$ the only noncentral isogenies in type $G_2$ are iterated Frobenius homomorphisms (see \Cref{factorisation_isogenies}). % Moreover, we do not know if these two examples are the only ones we can build using the $p$-Lie subalgebras $\mathfrak{h}$ and $\mathfrak{l}$. The reason is the following: a parabolic subgroup $P$ satisfying $\Lie P = \mathfrak{h}$ (resp. $\mathfrak{l}$) is uniquely determined - see \Cref{Wenzel_allp} - by the height of its intersection with $U_{-2\alpha_1-\alpha_2}$ (resp. the heights of its intersections with $U_{-\alpha_1}$ and $U_{-\alpha_1-\alpha_2}$). This might allow an infinite class of exotic parabolic subgroups with reduced part equal to $P^{\alpha_1}$.
\end{remark} 

In the following part we investigate what the homogeneous spaces having as stabilizer respectively $P_{\mathfrak{h}}$ and $P_{\mathfrak{l}}$ are isomorphic to, as varieties.

%%%%%%%%%%%%%%%%%%%%%%%%%%%%%%%%%%%%%%%%%%%%%%%%%%%%%%%%%%%%%%%%%%%%%%%%%%%%%

\subsubsection{Parabolic $P_{\mathfrak{h}}$}

\begin{proposition}
    \label{prop:QtoP5}
    Let $G$ be simple of type $G_2$ in characteristic $p=2$ and $P_{\mathfrak{h}}$ the parabolic subgroup of \Cref{def:PhPl}. Then the quotient morphism $G/P^{\alpha_1} \longrightarrow G/P_{\mathfrak{h}}$ is the natural projection
    \[
   \proj^6 \supset Q\defeq \{ x^2_3 +x_2x_4+x_1x_5+x_0x_6 = 0\} \rightarrow \proj^5, \quad [x_0 \colon \ldots \colon x_6] \mapsto [x_0 \colon x_1 \colon x_2 \colon x_4 \colon x_5 \colon x_6].
    \]
    In particular, the homogeneous space $G/P_{\mathfrak{h}}$ is isomorphic as a variety to $\proj^5= \PSp_6/P^{\alpha_1}$.
\end{proposition}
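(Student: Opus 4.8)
The plan is to realize both $G/P^{\alpha_1}$ and $G/P_{\mathfrak{h}}$ inside the projectivization of the seven-dimensional representation of $G$ and to identify the quotient morphism with a linear projection. I would begin from the octonion model (following \cite{SV}, \cite{Heinloth} and the root-subgroup computations of the Appendix): $G$ acts on the $7$-dimensional module $V=V(\omega_1)$ of trace-zero octonions, preserving the norm, which in the weight basis $e_0,\dots,e_6$ reads $Q(x)=x_3^2+x_2x_4+x_1x_5+x_0x_6$, where $e_3$ spans the zero weight space and the remaining $e_i$ are short-root eigenvectors paired by $Q$ as $\mu\leftrightarrow-\mu$. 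The essential characteristic-$2$ point is that the polar form $b(x,y)=Q(x+y)-Q(x)-Q(y)$ loses its $x_3y_3$ term, so its radical is the weight-zero line $\langle e_3\rangle$; being the radical of a $G$-invariant form it is a $G$-submodule, and as $G$ is simple it is fixed pointwise. One checks moreover that the quadric $Q=\{Q=0\}\subset\proj(V)=\proj^6$ is smooth, its only candidate singular point $[e_3]$ not lying on $Q$ since $Q(e_3)=1$.

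Next I would record the two identifications. On one side, $G$ acts transitively on $Q$ and the stabilizer of the highest-weight line $[e_0]$ is exactly $P^{\alpha_1}$ (the lowering operator attached to $U_{-\alpha_2}$ fixes $e_0$ because $\omega_1-\alpha_2$ is not a weight, whereas $U_{-\alpha_1}$ moves it), giving a $G$-equivariant isomorphism $G/P^{\alpha_1}\simeq Q$. On the other side, since $\langle e_3\rangle$ is $G$-stable, the linear projection away from $[e_3]$, sending $[x_0:\dots:x_6]$ to $[x_0:x_1:x_2:x_4:x_5:x_6]$, is $G$-equivariant; as $[e_3]\notin Q$ it restricts to a morphism $p|_Q\colon Q\to\proj^5=\proj(V/\langle e_3\rangle)$. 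In characteristic $2$ this map is bijective on points, since for fixed $(x_i)_{i\ne 3}$ the equation $x_3^2=x_2x_4+x_1x_5+x_0x_6$ has a unique solution; working in the chart $x_0=1$ exhibits $p|_Q$ as a finite, purely inseparable morphism of degree $2$ whose image is closed of dimension $5$, hence all of $\proj^5$.

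It remains to recognize the target as $G/P_{\mathfrak{h}}$. Setting $P'=\Stab_G(p([e_0]))$, equivariance gives $\proj^5\simeq G/P'$ with $P^{\alpha_1}\subseteq P'$; finiteness forces $P'_{\mathrm{red}}=P^{\alpha_1}$, and the purely inseparable degree $2$ means $P'/P^{\alpha_1}$ is infinitesimal of order $2$, so $\dim\Lie P'=\dim\Lie P^{\alpha_1}+1$. By \Cref{cor:hl} the only $p$-Lie subalgebras strictly between $\Lie P^{\alpha_1}$ and $\Lie G$ are $\mathfrak{h}$ and $\mathfrak{l}$, and since $\dim\mathfrak{l}=\dim\Lie P^{\alpha_1}+2$ the dimension count leaves $\Lie P'=\mathfrak{h}$; the same order-$2$ statement forces $P'\cap U_{-2\alpha_1-\alpha_2}$ to have height one, so by the uniqueness of \Cref{Wenzel_allp} we get $P'=P_{\mathfrak{h}}$. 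Under $G/P^{\alpha_1}\simeq Q$ and $G/P_{\mathfrak{h}}\simeq\proj^5$ the quotient morphism is precisely $p|_Q$, and the final identity $\proj^5=\PSp_6/P^{\alpha_1}$ merely records that every line of $k^6$ is isotropic for the symplectic form, so that $\Sp_6$, hence $\PSp_6$, acts on $\proj^5$ with point-stabilizer its first short-root parabolic.

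The step I expect to be the main obstacle is the first one: pinning down the octonion model in characteristic $2$ with the precise form $Q$ and weight basis, and in particular verifying that the weight-zero line is the radical of $b$ and is $G$-fixed. This is the only place where the characteristic-$2$ geometry genuinely intervenes, and it is exactly what makes the projection $G$-equivariant; granting it, the degree computation and the appeal to \Cref{cor:hl} are routine bookkeeping.
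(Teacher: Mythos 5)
Your route is essentially the paper's: realize $G/P^{\alpha_1}$ as the $G$-orbit of the highest-weight line $[e_{12}]$ in $\proj(V)$, identify it with the quadric $Q$, and obtain $G/P_{\mathfrak{h}}$ as the image of the projection away from the $G$-fixed point $[e]$ (your identification of the target stabilizer via the degree-$2$ count together with \Cref{cor:hl} and \Cref{Wenzel_allp} is a legitimate, slightly slicker variant of the paper's direct computation). However, there is a genuine gap at the step you treat as immediate: the claim that the \emph{scheme-theoretic} stabilizer $S$ of the highest-weight line in $\proj(V)$ equals $P^{\alpha_1}$. Your justification --- $U_{-\alpha_2}$ fixes the line because $(2\alpha_1+\alpha_2)-\alpha_2$ is not a weight, while $U_{-\alpha_1}$ moves it --- only determines the reduced part $S_{\text{red}}=P^{\alpha_1}$. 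In this setting that is exactly not enough: the whole point of the section is that nonreduced parabolics with reduced part $P^{\alpha_1}$ exist, and your argument applied verbatim to $W=V/ke$ would ``prove'' that the stabilizer of $[e_{12}]$ in $\proj(W)$ is $P^{\alpha_1}$, whereas it is $P_{\mathfrak{h}}$ --- which is the content of the proposition itself. Since your later dimension count for $P'$ uses $Q\simeq G/P^{\alpha_1}$ (and not merely $Q\simeq G/S$ with $S_{\text{red}}=P^{\alpha_1}$: if $S\supsetneq P^{\alpha_1}$ the order of $P'/P^{\alpha_1}$ is no longer $2$), the gap propagates through the rest of the proof.

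What is actually needed, and what cannot be extracted from weights, is that $\mathfrak{g}_{-2\alpha_1-\alpha_2}\not\subseteq\Lie S$ and that $U_{-\alpha_1}\cap S$ is scheme-theoretically trivial; then \Cref{cor:hl} forces $\Lie S=\Lie P^{\alpha_1}$, hence $S=P^{\alpha_1}$. The weight $(2\alpha_1+\alpha_2)-(2\alpha_1+\alpha_2)=0$ \emph{is} a weight of $V$, and the $\mathfrak{sl}_2$-relation gives $X_{2\alpha_1+\alpha_2}X_{-2\alpha_1-\alpha_2}\,e_{12}=\langle 2\alpha_1+\alpha_2,(2\alpha_1+\alpha_2)^\vee\rangle\, e_{12}=2\,e_{12}=0$ in characteristic $2$, so no formal argument decides whether $X_{-2\alpha_1-\alpha_2}$ kills $e_{12}$. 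One must invoke the explicit root-subgroup formula of the Appendix: on $V$ one has $u_{-2\alpha_1-\alpha_2}(\lambda)\cdot e_{12}=e_{12}+\lambda e+\lambda^2 e_{21}$, whose \emph{linear} term $\lambda e$ is nonzero in $V$ but dies in $W=V/ke$ --- precisely the phenomenon that makes the two stabilizers ($P^{\alpha_1}$ on $\proj(V)$ versus $P_{\mathfrak{h}}$ on $\proj(W)$) differ. This computation, which is how the paper pins down both stabilizers, is the crux of the proposition rather than ``routine bookkeeping''; once it is inserted, the remainder of your argument goes through.
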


In order to construct this morphism, we will see the group $G$ as the automorphism group of an octonion algebra - see the Appendix for more details - which is
\[
\mathbb{O} = \left\{ (u,v) \colon u,v \text{ are } 2\times 2 \text{ matrices}\right\},
\]
with basis $(e_{11},e_{12},e_{21},e_{22},f_{11},f_{12},f_{21},f_{22})$, recalled in Subsection \ref{G2_description}, 
%\begin{align*}
%    & e_{11}=  \left(\begin{pmatrix} 1 & 0 \\ 0 & 0 \end{pmatrix}, \begin{pmatrix} 0 & 0 \\ 0 & 0 \end{pmatrix} \right), & e_{12}=  \left(\begin{pmatrix} 0 & 1 \\ 0 & 0 \end{pmatrix}, \begin{pmatrix} 0 & 0 \\ 0 & 0 \end{pmatrix} \right),\\
%    & e_{21}=  \left(\begin{pmatrix} 0 & 0 \\ 1 & 0 \end{pmatrix}, \begin{pmatrix} 0 & 0 \\ 0 &  0 \end{pmatrix} \right), & e_{22}=  \left(\begin{pmatrix} 0 & 0 \\ 0 & 1 \end{pmatrix}, \begin{pmatrix} 0 & 0 \\ 0 & 0 \end{pmatrix} \right),\\
 %   & f_{11}=  \left(\begin{pmatrix} 0 & 0 \\ 0 & 0 \end{pmatrix}, \begin{pmatrix} 1 & 0 \\ 0 & 0 \end{pmatrix} \right), & f_{12}=  \left(\begin{pmatrix} 0 & 0 \\ 0 & 0 \end{pmatrix}, \begin{pmatrix} 0 & 1 \\ 0 & 0 \end{pmatrix} \right),\\
  %  & f_{21}=  \left(\begin{pmatrix} 0 & 0 \\ 0 & 0 \end{pmatrix}, \begin{pmatrix} 0 & 0 \\ 1 & 0 \end{pmatrix} \right), & f_{22}=  \left(\begin{pmatrix} 0 & 0 \\ 0 & 0 \end{pmatrix}, \begin{pmatrix} 0 & 0 \\ 0 & 1 \end{pmatrix} \right),
%\end{align*}
with unit $e=(1,0) = e_{11}+e_{22}$, and which is equipped with a norm
\[
q(u,v) = \det (u) + \det(v).
\]
An embedding of the group $G_2$ into $\Sp_6$ can be seen as follows: let us consider its action on the vector space
\[
V \defeq e^\perp = \{ (u,v) \colon \det(1+u) +\det(u) = 1\}
\]
as in (\ref{e_orthogonal}). Since $p= 2$, we have that $e \in V$ hence the group $G$ also acts on the quotient $W\defeq V/ke$, which has dimension $6$. By (\ref{T_typeG}) in the Appendix, a maximal torus $T$ of $G$ - with respect to the basis $(f_{12},f_{11},e_{12},e_{21},f_{22},f_{21})$ of $W$ - is given by
\[
{\Gm}^2 \ni (a,b) \longmapsto \diag(a,a^{-1}b^{-1},a^2b, a^{-2}b^{-1}, ab,a^{-1}) = t \in T \subset \GL_6.
\]
%\[
%{\Gm}^2 \ni (\xi, \eta) \longmapsto \diag(\xi \eta, \xi^{-1}\eta^{-1}, \eta^{-1},\xi, \xi^{-1}, \eta) \in \GL_6
%\]
%Let us re-parameterize it with $\xi= a$, $\eta=ab$, this gives the torus 
%\[
%{\Gm}^2 \ni (a,b) \longmapsto \diag(a^2b,a^{-2}b^{-1},a^{-1}b^{-1},a,a^{-1},ab) =: t \in \GL_6,
%\]
Let us recall that the basis of simple roots we fix is $\alpha_1(t) \defeq a$ and $\alpha_2(t) \defeq b$, hence $V$ has the following decomposition in weight spaces :
\begin{align*}
V_0 = ke, \, & V_{\alpha_1} = kf_{12}, \, V_{-\alpha_1}= kf_{21}, \, V_{\alpha_1+\alpha_2} = kf_{22},\\
& V_{-\alpha_1-\alpha_2}= kf_{11}, \, V_{2\alpha_1+\alpha_2} = ke_{12}, \, V_{-2\alpha_1-\alpha_2} = ke_{21}.
\end{align*}
This way, $T$ can be identified with the maximal torus in \cite[page 13]{Heinloth}: in Heinloth's description of the embedding $G \subset \Sp_6$ in characteristic $2$, given by the action on 
\[
W = V/ke = W_{\alpha_1} \oplus W_{-\alpha_1-\alpha_2} \oplus W_{2\alpha_1+\alpha_2} \oplus W_{-2\alpha_1-\alpha_2} \oplus W_{\alpha_1+\alpha_2} \oplus W_{-\alpha_1},
\]
the group $G$ is generated by the two following copies of $\GL_2$ :
\[
\theta_1 \colon A \longmapsto \begin{pmatrix} A && \\ & A^{(1)}\det A^{-1} & \\ && A\end{pmatrix} \quad \text{and} \quad \theta_2 \colon B \longmapsto \begin{pmatrix} \det B^{-1} &&& \\ & B && \\ && B & \\ &&& \det B\end{pmatrix},
\]
where $A^{(1)}$ denotes the Frobenius twist applied to $A$.

\begin{lemma}
    When considering the action of $G$ on $\proj(V) = \proj^6$, we have
    \[
    \Stab_G([V_{2\alpha_1+\alpha_2}]) = P^{\alpha_1}.
    \]
\end{lemma}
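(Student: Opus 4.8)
The plan is to analyse the stabilizer $S \defeq \Stab_G([e_{12}])$ of the highest weight line by separating its reduced part from its infinitesimal part. The starting observation is that $2\alpha_1+\alpha_2$ is the unique maximal element, for the dominance order, among the weights of $V$, which are $0$ together with the six short roots $\pm\alpha_1,\,\pm(\alpha_1+\alpha_2),\,\pm(2\alpha_1+\alpha_2)$. Hence for every positive root $\gamma$ the weight $2\alpha_1+\alpha_2+\gamma$ fails to be a weight of $V$, so $U_\gamma$ fixes $e_{12}$; similarly $2\alpha_1+\alpha_2-\alpha_2=2\alpha_1$ is not a weight, so $U_{-\alpha_2}$ fixes $e_{12}$, while $T$ merely rescales it. Since $P^{\alpha_1}$ is generated by $B$ and $U_{-\alpha_2}$, this already yields the inclusion $P^{\alpha_1}\subseteq S$.

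Next I would pin down the reduced part. As $S$ contains the Borel $B$, its reduced subgroup $S_{\reduced}$ is a smooth subgroup of $G$ containing $B$, hence a parabolic subgroup, and it contains $P^{\alpha_1}$; by maximality of $P^{\alpha_1}$ it must equal either $P^{\alpha_1}$ or $G$. The possibility $S_{\reduced}=G$ is excluded: a $G$-fixed point of $\proj(V)$ would make $ke_{12}$ a one-dimensional $G$-subrepresentation, and since $G$ (of type $G_2$) is semisimple with no nontrivial character it would have to fix $e_{12}$ itself, forcing $e_{12}$ to be a $T$-weight-zero vector, contrary to its weight $2\alpha_1+\alpha_2\neq 0$. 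Therefore $S_{\reduced}=P^{\alpha_1}$.

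It then remains to show that $S$ carries no extra infinitesimal directions, i.e. $\Lie S=\Lie P^{\alpha_1}$. Here I would invoke \Cref{cor:hl}: since $\Lie S$ is a $p$-Lie subalgebra of $\Lie G$ containing $\Lie P^{\alpha_1}$, it is one of $\Lie P^{\alpha_1}$, $\mathfrak{h}$, $\mathfrak{l}$ or $\Lie G$. To discard the three larger options it suffices to verify that the lowering operators generating $\mathfrak{h}$ and $\mathfrak{l}$ do not preserve the line $ke_{12}$, namely that $X_{-2\alpha_1-\alpha_2}\cdot e_{12}$ is a nonzero element of $V_0=ke$ and that $X_{-\alpha_1}\cdot e_{12}$ is a nonzero element of $V_{\alpha_1+\alpha_2}=kf_{22}$; both are read off from the octonion multiplication and the explicit root group actions recorded in the Appendix (equivalently, from the generators $\theta_1$ and $\theta_2$). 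Granting these nonvanishings, $\Lie S=\Lie P^{\alpha_1}$, so $\dim_k\Lie S=\dim P^{\alpha_1}=\dim S_{\reduced}=\dim S$; thus $S$ is smooth and coincides with $S_{\reduced}=P^{\alpha_1}$.

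The main obstacle is exactly this last pair of explicit computations. In characteristic $2$ the coroot acts on the highest weight vector by $\langle 2\alpha_1+\alpha_2,(2\alpha_1+\alpha_2)^\vee\rangle=2\equiv 0$, so the naive $\mathfrak{sl}_2$-argument does not guarantee on its own that $X_{-2\alpha_1-\alpha_2}$ moves $e_{12}$ off its line; this is precisely the sort of potential degeneracy responsible for the failure of the main theorem in type $G_2$. The only safe way around it is to appeal to the concrete octonionic model, in which one checks that these two particular operators are genuinely nonzero on $e_{12}$, thereby ruling out $\mathfrak{h}$, $\mathfrak{l}$ and $\Lie G$ and completing the identification $S=P^{\alpha_1}$.
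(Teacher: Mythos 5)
Your proof is correct and follows essentially the same route as the paper's: after the inclusion $P^{\alpha_1}\subseteq S$ (which the paper checks with the explicit matrices of Remark~\ref{rootsubspaces:G2} and you obtain by the weight/dominance argument), both proofs invoke \Cref{cor:hl} and exclude $\mathfrak{h}$, $\mathfrak{l}$ and $\Lie G$ by the same two computations in the $-\alpha_1$ and $-(2\alpha_1+\alpha_2)$ directions, concluding by a dimension count that $S$ is smooth. Your infinitesimal checks $X_{-\alpha_1}\cdot e_{12}=f_{22}\neq 0$ and $X_{-2\alpha_1-\alpha_2}\cdot e_{12}=e\neq 0$ are precisely the derivatives of the paper's group-level claim $U_{-\alpha_1}\cap S=U_{-2\alpha_1-\alpha_2}\cap S=1$, and both follow from the seven-dimensional root-group matrices recorded in the Appendix, so the approach is the same in substance.
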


\begin{proof}
    First, let us prove that $P^{\alpha_1}$, which is generated by $T$, $U_{\pm \alpha_2}$ and $U_{\alpha_1}$, fixes $V_{2\alpha_1+\alpha_2} = ke_{12}$. Clearly the torus does; moreover, the computation of the respective actions of $u_{-\alpha_2}(\lambda)$, $u_{\alpha_2}(\lambda)$  and $u_{\alpha_1}(\lambda)$ on $V$, done in \Cref{rootsubspaces:G2}, shows that all three fix \[[e_{12}] = [0\colon 0 \colon 1 \colon 0 \colon 0 \colon 0 \colon 0].\]
    This proves that $P^{\alpha_1} \subset S \defeq \Stab_G([V_{2\alpha_1+\alpha_2}])$. To prove the reverse inclusion, let us remark that no nontrivial subgroup of $U_{-\alpha_1}$ and of $U_{-2\alpha_1-\alpha_2}$ fixes $[e_{12}]$: again by Remark \ref{rootsubspaces:G2}, we have
    \[
        u_{-\alpha_1}(\lambda) \cdot e_{12} = e_{12} + \lambda f_{22} \quad \text{and} \quad u_{-2\alpha_1-\alpha_2}(\lambda) \cdot e_{12} = e_{12} + \lambda^2 e_{21},
    \]
    thus $U_{-\alpha_1} \cap S = U_{-2\alpha_1-\alpha_2} \cap S = 1$.\\
    At this point, we know that $\Lie P^{\alpha_1} \subset S$, hence by \Cref{cor:hl}, $\Lie S$ is either equal to $\Lie P^{\alpha_1}$, to  $\mathfrak{h}$, to $\mathfrak{l}$ or to $\Lie G$. However, $U_{\alpha_1} \cap S = 1$ means $\mathfrak{g}_{-\alpha_1} $ is not contained in $\Lie S$, hence the latter cannot be equal to $\mathfrak{l}$ nor to $\Lie G$. Analogously, $U_{-2\alpha_1-\alpha_2} \cap S = 1$ means $\mathfrak{g}_{2\alpha_1+\alpha_2}$ is not contained in $\Lie S$, hence $\Lie S $ cannot be equal to $\mathfrak{h}$. This means that $\Lie S = \Lie P^{\alpha_1}$ hence $S= P^{\alpha_1}$ as wanted.
\end{proof}

We can now conclude part of the proof of \Cref{prop:QtoP5}. First, let us recall that we are working with the basis $(f_{12}, f_{11}, e_{12}, e, e_{21}, f_{22}, f_{21})$ on $V$, giving homogeneous coordinates $[x_0 \colon \cdots \colon x_6]$ on $\proj(V)$: the norm $q$ hence becomes
\[
q(x) = x^2_3 +x_2x_4+x_1x_5+x_0x_6,
\]
and its zero locus in $\proj^6$ is the quadric $Q$ of the Proposition. The point $[e_{12}]$ belongs to $Q$ while $[e]$ does not, and the quotient $W=V/ke$ corresponds to the projection $\proj^6\backslash \{[e]\} \longrightarrow \proj^5$. Moreover, we have
\[
\begin{tikzcd}
G/P^{\alpha_1} = G/\Stab_G([V_{2\alpha_1+\alpha_2}]) = G \cdot [e_{12}] \arrow[rr, hookrightarrow] && Q
\end{tikzcd}
\]
Since both are smooth irreducible projective of dimension $5$, they coincide. In particular, 
\[
\underline{\Aut}^0_{G/P^{\alpha_1}} = \underline{\Aut}^0_{Q} = \SO(V) = \SO_7
\]
is of type $B_3$, as stated in \Cref{demazure77}.\\
What is left to prove is that $G/P_{\mathfrak{h}} \simeq \proj^5$: to do this, we look at the action of $G$ on $W$.
%\[
%W = V/ke = W_{\alpha_1} \oplus W_{-\alpha_1-\alpha_2} \oplus W_{2\alpha_1+\alpha_2} \oplus W_{-2\alpha_1-\alpha_2} \oplus W_{\alpha_1+\alpha_2} \oplus W_{-\alpha_1}.
%\]

\begin{lemma}
    When considering the action of $G$ on $\proj(W) = \proj^5$, we have
    \[
    \Stab_G([W_{2\alpha_1+\alpha_2}]) = P_{\mathfrak{h}}.
    \]
\end{lemma}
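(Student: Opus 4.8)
The plan is to compare $S \defeq \Stab_G([W_{2\alpha_1+\alpha_2}])$ with $P_{\mathfrak{h}}$ by first establishing the inclusion $P_{\mathfrak{h}} \subseteq S$, and then matching Lie algebras and root‑subgroup intersections to force equality. Writing $\overline{v}$ for the image of $v \in V$ in $W = V/ke$, the point at issue is $[\overline{e}_{12}]$. Since the projection $V \to W$ is linear and $G$-equivariant and sends $e_{12}$ to $\overline{e}_{12}$, the preceding Lemma gives at once $P^{\alpha_1} \subseteq S$. The crucial new input is that $H = (U_{-2\alpha_1-\alpha_2})_1$ now also fixes the line: I would read off from \Cref{rootsubspaces:G2} the full action $u_{-2\alpha_1-\alpha_2}(\lambda)\cdot e_{12} = e_{12} + (\text{term in } ke) + \lambda^2 e_{21}$ and observe that the weight‑zero summand vanishes in the quotient, leaving $u_{-2\alpha_1-\alpha_2}(\lambda)\cdot \overline{e}_{12} = \overline{e}_{12}+\lambda^2\overline{e}_{21}$; for $\lambda^2=0$ this fixes $[\overline{e}_{12}]$, so $H \subseteq S$ and hence $P_{\mathfrak{h}} = \langle P^{\alpha_1}, H\rangle \subseteq S$. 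This is precisely the mechanism by which the stabilizer grows upon passing from $\proj(V)$ to $\proj(W)$.

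Next I would pin down $\Lie S$. As $S \supseteq B$ it is parabolic, and $\Lie S$ strictly contains $\Lie P^{\alpha_1}$, so by \Cref{cor:hl} it is one of $\mathfrak{h}$, $\mathfrak{l}$, $\Lie G$. Since $\mathfrak{g}_{-2\alpha_1-\alpha_2}\subseteq \Lie S$ but $\mathfrak{g}_{-2\alpha_1-\alpha_2}\not\subseteq \mathfrak{l}$, only $\mathfrak{h}$ and $\Lie G$ remain; to exclude $\Lie G$ I would use $u_{-\alpha_1}(\lambda)\cdot e_{12}=e_{12}+\lambda f_{22}$ with $f_{22}$ of nonzero weight $\alpha_1+\alpha_2$, so that $\overline{f}_{22}\neq 0$ and the infinitesimal action yields $X_{-\alpha_1}\cdot\overline{e}_{12}=\overline{f}_{22}\notin k\overline{e}_{12}$, whence $\mathfrak{g}_{-\alpha_1}\not\subseteq\Lie S$. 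This leaves $\Lie S=\mathfrak{h}=\Lie P_{\mathfrak{h}}$.

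Finally I would upgrade the equality of Lie algebras to an equality of group schemes. Because $P^{\alpha_1}$ is maximal among reduced parabolics and $S\neq G$ (as $\Lie S=\mathfrak{h}\neq\Lie G$), one has $S_{\mathrm{red}}=P^{\alpha_1}=(P_{\mathfrak{h}})_{\mathrm{red}}$, so $S$ and $P_{\mathfrak{h}}$ share their reduced part and their Lie algebra. I would then invoke \Cref{Wenzel_allp}, which determines a parabolic by the heights of its intersections with the negative root subgroups: the identity $\Lie S=\mathfrak{h}$ forces $S\cap U_{-\gamma}$ to have trivial Lie algebra, hence to be trivial, for $\gamma\in\{\alpha_1,\alpha_1+\alpha_2,3\alpha_1+\alpha_2,3\alpha_1+2\alpha_2\}$, while the displayed formula gives $S\cap U_{-2\alpha_1-\alpha_2}=\{\lambda:\lambda^2=0\}=H$ of height one; these are exactly the values defining $P_{\mathfrak{h}}$, so $S=P_{\mathfrak{h}}$.

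I expect the main obstacle to be the first step: one must use the complete action of $u_{-2\alpha_1-\alpha_2}$ on $e_{12}$ rather than its abbreviation, since it is precisely the weight‑zero summand—inert on the line in $V$ but annihilated in $W$—that produces the jump in the stabilizer. Everything afterward is a routine application of \Cref{cor:hl} and \Cref{Wenzel_allp}.
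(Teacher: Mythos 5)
Your proposal is correct and follows essentially the same route as the paper: $P^{\alpha_1}\subseteq S$ by equivariance of $V\to W$, $H\subseteq S$ and $U_{-\alpha_1}\cap S=1$ by the explicit formulas of \Cref{rootsubspaces:G2} (you rightly note it is the weight-zero $ke$-summand, killed in $W$, that makes the stabilizer grow), then $\Lie S=\mathfrak{h}$ by \Cref{cor:hl}, and finally identification with $P_{\mathfrak{h}}$ via Wenzel's classification. The only cosmetic differences are that you exclude $\mathfrak{g}_{-\alpha_1}$ infinitesimally rather than at the group level, and you finish by matching the associated function $\varphi$ and invoking \Cref{Wenzel_allp}, where the paper instead uses Wenzel's decomposition $S=U_S^-\cdot S_{\text{red}}$ — these are interchangeable pieces of the same machinery.
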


\begin{proof}
    Let $S^\prime$ be the stabilizer. From the above Lemma we know that $P^{\alpha_1}$ fixes $[V_{2\alpha_1+\alpha_2}]$, hence it also fixes $[W_{2\alpha_1+\alpha_2}]$. Moreover, by Remark (\ref{rootsubspaces:G2}) we have
    \[
    u_{-2\alpha_1-\alpha_2}(\lambda) \cdot [e_{12}] = [0 \colon 0 \colon 1 \colon \lambda^2 \colon 0 \colon 0] \quad \text{and} \quad u_{-\alpha_1}(\lambda) \cdot [e_{12}] = [0 \colon 0 \colon 1 \colon 0 \colon \lambda \colon 0],
    \]
    meaning that $U_{-\alpha_1} \cap S^\prime =1$, while
    \[
    H = u_{-2\alpha_1-\alpha_2}(\alpha_p) = U_{-2\alpha_1-\alpha_2} \cap S^\prime.
    \]
    In particular, this yields that on one side, $P_{\mathfrak{h}} \subset S^\prime$ hence $\mathfrak{h} \subset \Lie S$, and on the other side, $\mathfrak{g}_{-\alpha_1}$ is not contained in $\Lie S^\prime$. In particular by \Cref{cor:hl} $\Lie S^\prime = \mathfrak{h}$ and the only positive root $\gamma$ satisfying $1 \subsetneq U_{-\gamma} \cap S^\prime \subsetneq U_{-\gamma}$ is $-2\alpha_1-\alpha_2$, hence by \cite{Wenzel}
    \[
    U_{S^\prime}^- = \prod_{\gamma \in \Phi^+ \colon U_{-\gamma} \nsubset S^\prime} (U_{-\gamma} \cap S^\prime) = U_{-2\alpha_1-\alpha_2} \cap S^\prime = H,
    \]
    where $U_P^-$, following Wenzel's notation, denotes the infinitesimal unipotent subgroup given by the intersection of a parabolic subgroup $P$ with the unipotent radical of the opposite of $P_{\text{red}}$ with respect to the Borel $B$.
    Thus, we can conclude that $S^\prime = U_{S^\prime}^- \cdot S^\prime_{\text{red}} = H \cdot P^{\alpha_1}$, and the latter must coincide with $P_{\mathfrak{h}}$ by definition.
\end{proof}

\begin{corollary}
\label{lem:UPminus_H}
We have $P_{\mathfrak{h}} = H \cdot P^{\alpha_1}$. More precisely,
\[ 
U_{P_{\mathfrak{h}}}^- = P_{\mathfrak{h}} \cap R_u^-(P^{\alpha_1}) = P_{\mathfrak{h}} \cap U_{-2\alpha_1-\alpha_2} = H.
\]
\end{corollary}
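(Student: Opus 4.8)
The plan is to deduce the statement directly from the identification $P_{\mathfrak{h}} = \Stab_G([W_{2\alpha_1+\alpha_2}])$ of the preceding Lemma, combined with Wenzel's decomposition of a parabolic subgroup into its infinitesimal unipotent part and its reduced part. Since the reduced part of $P_{\mathfrak{h}}$ is $P^{\alpha_1}$, the infinitesimal unipotent subgroup is by definition $U_{P_{\mathfrak{h}}}^- = P_{\mathfrak{h}} \cap R_u^-(P^{\alpha_1})$, which gives the first equality in the display for free; the real content is to identify this intersection with $H$.

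First I would invoke \Cref{Wenzel_allp}: writing $\varphi$ for the function associated to $P_{\mathfrak{h}}$, one has $U_{-\gamma} \cap P_{\mathfrak{h}} = u_{-\gamma}(\alpha_{p^{\varphi(\gamma)}})$ for every $\gamma \in \Phi^+$, and $U_{P_{\mathfrak{h}}}^-$ decomposes as the product $\prod (U_{-\gamma} \cap P_{\mathfrak{h}})$ over those positive roots $\gamma$ with $U_{-\gamma} \not\subset P_{\mathfrak{h}}$, that is, with $\alpha_1 \in \Supp(\gamma)$. These are exactly $\alpha_1$, $\alpha_1+\alpha_2$, $2\alpha_1+\alpha_2$, $3\alpha_1+\alpha_2$ and $3\alpha_1+2\alpha_2$. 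The second step is to evaluate each factor by means of the equality $\Lie P_{\mathfrak{h}} = \mathfrak{h} = \Lie P^{\alpha_1} \oplus \mathfrak{g}_{-2\alpha_1-\alpha_2}$ supplied by \Cref{cor:hl}. As $U_{-\gamma} \cap P_{\mathfrak{h}}$ is an infinitesimal subgroup of $U_{-\gamma} \cong \Ga$ whose nontriviality is detected by the containment $\mathfrak{g}_{-\gamma} \subset \Lie P_{\mathfrak{h}}$, the factor is trivial for every one of these five roots except $2\alpha_1+\alpha_2$; for that last root the intersection is $H = u_{-2\alpha_1-\alpha_2}(\alpha_p)$ by \Cref{def:PhPl}. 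Hence the product collapses to $U_{P_{\mathfrak{h}}}^- = P_{\mathfrak{h}} \cap U_{-2\alpha_1-\alpha_2} = H$, which is the displayed chain of equalities, and the factorisation $P_{\mathfrak{h}} = U_{P_{\mathfrak{h}}}^- \cdot P^{\alpha_1} = H \cdot P^{\alpha_1}$ then follows.

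There is essentially no genuine obstacle beyond this bookkeeping, as all the nontrivial input — the identification of $\Lie P_{\mathfrak{h}}$ with $\mathfrak{h}$, together with $U_{-2\alpha_1-\alpha_2} \cap P_{\mathfrak{h}} = H$ and $U_{-\alpha_1} \cap P_{\mathfrak{h}} = 1$ — was already carried out in the preceding Lemma. The only point requiring (minor) care is to confirm that the three remaining negative root directions $-\alpha_1-\alpha_2$, $-3\alpha_1-\alpha_2$ and $-3\alpha_1-2\alpha_2$ also meet $P_{\mathfrak{h}}$ trivially, which is immediate from $\mathfrak{g}_{-\gamma} \not\subset \mathfrak{h}$ for each of them.
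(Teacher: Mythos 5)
Your proposal is correct and is essentially the paper's own argument: the corollary is read off from the preceding Lemma via Wenzel's decomposition $P_{\mathfrak{h}} = U^-_{P_{\mathfrak{h}}} \cdot P^{\alpha_1}$, with the equality $\Lie P_{\mathfrak{h}} = \mathfrak{h}$ forcing the factors $U_{-\gamma} \cap P_{\mathfrak{h}}$ to vanish for the four roots other than $2\alpha_1+\alpha_2$. The only thing to fix is two inline attributions: $\Lie P_{\mathfrak{h}} = \mathfrak{h}$ is not supplied by \Cref{cor:hl} alone (that corollary only limits the possibilities; one still needs $U_{-\alpha_1}\cap P_{\mathfrak{h}} = 1$ to exclude $\mathfrak{l}$ and $\Lie G$), and the exact equality $U_{-2\alpha_1-\alpha_2}\cap P_{\mathfrak{h}} = H$ does not follow from \Cref{def:PhPl}, since the definition gives only $H \subseteq P_{\mathfrak{h}}$ and the Lie algebra cannot distinguish $\alpha_p$ from $\alpha_{p^n}$ with $n\geq 2$ --- both facts come from the explicit octonion computations inside the Lemma's proof, as your closing paragraph correctly acknowledges.
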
 
Now, let us consider the embedding
\[
\begin{tikzcd}
G/P_{\mathfrak{h}} = G/\Stab_G([W_{2\alpha_1+\alpha_2}]) = G \cdot [e_{12}] \arrow[rr, hookrightarrow] && \proj(W) = \proj^5 \end{tikzcd}.
\]
As before, since both are smooth irreducible projective of dimension $5$, they coincide. This gives as quotient map 
\begin{align}
    \label{proj}
    G/P^{\alpha_1} = Q \longrightarrow G/ (H\cdot P^{\alpha_1}) = \proj^5
\end{align}
the projection from $[e]$, which has degree $2$ equal to the order of $H$.

%%ù%%%%%%%%%%%%%%%%%%%%%%%%%%%%%%%%%%%%%%%%%%%%%%%%%%%%%%%%%%%%%%%%%%%%%%%%%%%

\subsubsection{Parabolic $P_{\mathfrak{l}}$}
\label{section_Pl}

Let us consider the homogeneous space $G/P_{\mathfrak{l}}$ and show that one can realize it in a concrete way using octonions. More precisely, considering the action of $G_2 \subset \Sp_6$ on $W=V/ke$, the parabolic subgroup $P_{\mathfrak{l}}$ is the stabilizer of a $3$-dimensional isotropic vector subspace of $W$, spanned by the root spaces associated to the short positive roots (see \Cref{end_X}). To do this, let us consider $\eta \defeq f_{12} \wedge f_{22} \wedge e_{12}$ as element of $\proj(\Lambda^3 V)$ and $\overline{\eta}$ the element of $\proj(\Lambda^3 W)$ given by the images in $W$ of the three vectors.

\begin{lemma}
\label{stabilizer}
    Let $G$ be simple of type $G_2$ in characteristic $p=2$ and $P_{\mathfrak{l}}$ the parabolic subgroup of \Cref{def:PhPl}. When considering the action of $G$ on $\proj(\Lambda^3 V)$ and $\proj(\Lambda^3 W)$ respectively, we have
    \[
    \Stab_G(\eta) = P^{\alpha_1} \quad \text{and} \quad \Stab_G(\overline{\eta}) = P_{\mathfrak{l}}.
    \]
\end{lemma}

\begin{proof}
Let us denote as $S$ and $S^{\prime\prime}$ the above stabilizers.\\
    First, let us prove that $P^{\alpha_1}$, which is generated by $T$, $U_{\pm \alpha_2}$ and $U_{\alpha_1}$, fixes the subspace $kf_{12}  \oplus kf_{22} \oplus ke_{12} \subset V$, whose elements are of the form $(w_0,0,w_2,0,0,w_5,0)$. The computations of Remark \ref{rootsubspaces:G2} in the Appendix give us the following :
    \begin{align*}
        u_{\alpha_2}(\lambda) \cdot (w_0,0,w_2,0,0,w_5,0) & = (w_0,0,w_2,0,0,\lambda w_0+w_5,0),\\
        u_{-\alpha_2}(\lambda) \cdot (w_0,0,w_2,0,0,w_5,0) & = (w_0+\lambda w_5,0,w_2,0,0,w_5,0)\\
        u_{\alpha_1}(\lambda) \cdot (w_0,0,w_2,0,0,w_5,0) & = (w_0,0,w_2+\lambda w_5,0, 0, w_5,0),
    \end{align*}
meaning that % by \Cref{cor:hl}
$P^{\alpha_1} \subset S$. Moreover, considering the action of the root subgroups associated to $-\alpha_1$, $-2\alpha_1-\alpha_2$ and $-\alpha_1-\alpha_2$, we have the following :
\begin{align*}
    u_{-\alpha_1}(\lambda) \cdot (w_0,0,w_2,0,0,w_5,0) & = (w_0,0,w_2,  \lambda w_0,0,\lambda w_2+w_5,\lambda^2 w_0),\\
    u_{-2\alpha_1-\alpha_2} (\lambda) \cdot (w_0,0,w_2,0,0,w_5,0) & = (w_0,\lambda w_0,w_2, \lambda w_2,\lambda^2 w_2, w_5, \lambda w_5),\\
    u_{-\alpha_1-\alpha_2} (\lambda) \cdot (w_0,0,w_2,0,0,w_5,0) & = (w_0+\lambda w_2,\lambda^2 w_5, w_2,\lambda w_5,0,w_5,0).
\end{align*}
These computations imply that $\Lie S$ has trivial intersection with the root subspaces associated to short negative roots. Thus by \Cref{cor:hl} $\Lie S = \Lie P^{\alpha_1}$, which allows to conclude that $S= P^{\alpha_1}$.\\
Next, let us consider the action of $G$ on the quotient $W=V/ke$. The second computation just above yields that the intersection $U_{-2\alpha_2-\alpha_1} \cap S^{\prime\prime}$ is trivial, hence $\mathfrak{g}_{-2\alpha_1-\alpha_2}$ is not contained in $\Lie S^{\prime\prime}$ and the latter cannot be equal to $\Lie G$ nor to $\mathfrak{h}$. The other two equalities imply that $U_{-\alpha_1} \cap S^{\prime\prime} = u_{-\alpha_1}(\alpha_p)$ and $U_{-\alpha_1-\alpha_2} \cap S^{\prime\prime} = u_{-\alpha_1-\alpha_2}(\alpha_p)$, meaning that $\Lie S^{\prime\prime} = \mathfrak{l}$. In particular, the positive roots $\gamma$ satisfying $1 \subsetneq U_{-\gamma} \cap S^\prime \subsetneq U_{-\gamma}$ are $\alpha_1$ and $\alpha_1+\alpha_2$ : by \cite{Wenzel}, we have
    \[
    U_{S^{\prime\prime}}^- = \prod_{\gamma \in \Phi^+ \colon U_{-\gamma} \nsubset S^{\prime\prime}} (U_{-\gamma} \cap S^{\prime\prime}) = (U_{-\alpha_1-\alpha_2} \cap S^{\prime\prime}) \cdot (U_{-\alpha_1} \cap S^{\prime\prime}) = L.
    \]
    Thus, we can conclude that $S^{\prime\prime} = U_{S^{\prime\prime}}^- \cdot S^{\prime\prime}_{\text{red}} = L \cdot P^{\alpha_1}$,
    and the latter must coincide with $P_{\mathfrak{l}}$ by definition.
\end{proof}
%%%%%%%%%%%%%%%%%%%%%%%%%%%%%%%%%%%%%%%%%%%%%%%%%%%%%%%%%%%%%%%%%%%%%%%%%%%%%%%%%%%%%%%%%%%%%%%%%%%%%%%%%%%
\begin{corollary}
\label{lem:UPminus_L}
We have $P_{\mathfrak{l}} = L \cdot P^{\alpha_1}$. More precisely,
\[ 
U_{P_{\mathfrak{l}}}^- = P_{\mathfrak{l}} \cap R_u^-(P^{\alpha_1}) = (P_{\mathfrak{l}} \cap U_{-\alpha_1-\alpha_2}) \cdot (P_{\mathfrak{l}} \cap U_{-\alpha_1}) = L.
\]
\end{corollary}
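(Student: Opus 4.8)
The plan is to read this statement off directly from the proof of the preceding Lemma, in which the stabilizer $S'' = \Stab_G([f_{12}\wedge f_{22}\wedge e_{12}])$ was identified with $P_{\mathfrak{l}}$. First I would invoke the decomposition, valid for any parabolic subgroup $P$ whose reduced part is $P^{\alpha_1}$, namely $P = U_P^- \cdot P^{\alpha_1}$, where, following Wenzel \cite{Wenzel}, the infinitesimal unipotent part is
\[
U_P^- = \prod_{\gamma \in \Phi^+ \colon U_{-\gamma} \not\subset P} (U_{-\gamma} \cap P) = P \cap R_u^-(P^{\alpha_1}),
\]
with $R_u^-(P^{\alpha_1})$ the unipotent radical of the opposite parabolic $(P^{\alpha_1})^-$, generated by the root subgroups $U_{-\gamma}$ for $\gamma \in \Phi^+$ with $\alpha_1 \in \Supp(\gamma)$. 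Thus I only need to determine, for each such $\gamma$, the intersection $U_{-\gamma} \cap P_{\mathfrak{l}}$.

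The key step has in fact already been carried out in the Lemma: the explicit computation of the action of $U_{-\alpha_1}$, $U_{-\alpha_1-\alpha_2}$ and $U_{-2\alpha_1-\alpha_2}$ on the line $[f_{12}\wedge f_{22}\wedge e_{12}]$ shows that $U_{-\alpha_1}\cap P_{\mathfrak{l}} = u_{-\alpha_1}(\alpha_p) = (U_{-\alpha_1})_1$ and $U_{-\alpha_1-\alpha_2}\cap P_{\mathfrak{l}} = u_{-\alpha_1-\alpha_2}(\alpha_p) = (U_{-\alpha_1-\alpha_2})_1$, while $U_{-2\alpha_1-\alpha_2}\cap P_{\mathfrak{l}} = 1$. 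The remaining roots $3\alpha_1+\alpha_2$ and $3\alpha_1+2\alpha_2$ containing $\alpha_1$ in their support are long, hence contribute trivially as well, since $\Lie P_{\mathfrak{l}} = \mathfrak{l}$ contains neither any long negative root space nor $\mathfrak{g}_{-2\alpha_1-\alpha_2}$. Therefore the product above collapses to the two short contributions,
\[
U_{P_{\mathfrak{l}}}^- = (P_{\mathfrak{l}}\cap U_{-\alpha_1-\alpha_2})\cdot(P_{\mathfrak{l}}\cap U_{-\alpha_1}) = (U_{-\alpha_1-\alpha_2})_1 \cdot (U_{-\alpha_1})_1.
\]

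Finally I would use the remark following \Cref{def:PhPl} that $\mathfrak{g}_{-\alpha_1}$ and $\mathfrak{g}_{-\alpha_1-\alpha_2}$ commute, so that $L = (U_{-\alpha_1})_1\cdot(U_{-\alpha_1-\alpha_2})_1$ is the internal direct product of these two Frobenius kernels; this identifies the displayed product with $L$ and shows the factorisation is order-independent. Combining with $P_{\mathfrak{l}} = U_{P_{\mathfrak{l}}}^- \cdot P^{\alpha_1}$ then yields $P_{\mathfrak{l}} = L \cdot P^{\alpha_1}$, as claimed. I expect no genuine obstacle beyond the bookkeeping of which root subgroups generate $R_u^-(P^{\alpha_1})$ and which of them meet $P_{\mathfrak{l}}$ nontrivially: all the real computation has already been performed in identifying the stabilizer, and this corollary merely repackages it in group-theoretic terms, exactly mirroring \Cref{lem:UPminus_H} for $P_{\mathfrak{h}}$.
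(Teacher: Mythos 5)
Your proposal is correct and follows essentially the same route as the paper: the paper likewise treats this corollary as a direct repackaging of the proof of the preceding lemma, where the explicit computations of $U_{-\alpha_1}\cap S''$, $U_{-\alpha_1-\alpha_2}\cap S''$ and $U_{-2\alpha_1-\alpha_2}\cap S''$ combine with Wenzel's decomposition $U_{S''}^- = \prod_{\gamma}(U_{-\gamma}\cap S'')$ to give $S'' = U_{S''}^-\cdot S''_{\text{red}} = L\cdot P^{\alpha_1} = P_{\mathfrak{l}}$. Your additional remarks — that the long roots $3\alpha_1+\alpha_2$, $3\alpha_1+2\alpha_2$ contribute trivially because $\mathfrak{l}$ contains no long negative root space, and that the commutation of $\mathfrak{g}_{-\alpha_1}$ and $\mathfrak{g}_{-\alpha_1-\alpha_2}$ makes the product order-independent — are consistent with what the paper establishes (via \Cref{cor:hl} and the remark after \Cref{def:PhPl}) and fill in the bookkeeping correctly.
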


Next, let us determine an explicit equation for the variety $Q$, which will help us describe $X$ geometrically. Recall that - keeping the notation from \Cref{LieN_2} - the reduced parabolic subgroup associated to the short root $\alpha_3$ in type $B_3$, which is denoted $P_3 = P^{\alpha_3} \subset \SO_7$, is the stabilizer of an isotropic subspace of dimension $3$, hence 
\[
P_\mathfrak{l} = \Stab_G([f_{12}\wedge f_{22} \wedge e_{12}]) = G \cap P_3 = G \cap \Stab_{\SO_7} ([f_{12}\wedge f_{22} \wedge e_{12}]).
\]

The following holds in any characteristic and is a consequence of \cite[Lemma 1.3.2]{SV} and of the fact that by definition $\bar{x} = -x$ for all $x \in V$.
\begin{lemma}
    There is a well-defined alternating form 
    \[\nu \colon \Lambda^3 V \longrightarrow k, \quad  (x,y,z) \longmapsto \langle xy, z\rangle.\]
\end{lemma}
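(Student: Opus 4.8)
The plan is to prove that the trilinear map $\nu_0\colon V^3\to k$, $(x,y,z)\mapsto\langle xy,z\rangle$, is \emph{alternating}, so that it factors through $\Lambda^3 V$ and yields the desired linear form $\nu$. Here $\langle -,-\rangle$ denotes the bilinear form polarizing the octonion norm $q$, and $\nu_0$ is trilinear because the multiplication of $\mathbb{O}$ and the form $\langle-,-\rangle$ are both bilinear. The crucial point is that we work in characteristic $2$, so "alternating" has to be taken in the strong sense: it is necessary (and sufficient for descent to $\Lambda^3 V$) to check that $\nu_0(x,y,z)$ vanishes as soon as \emph{two} of its three arguments coincide, rather than merely verifying skew-symmetry.

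The inputs I would use are the standard identities from \cite[Lemma 1.3.2]{SV}, namely the adjunction relations for the norm form
\[
\langle xy,z\rangle = \langle y,\bar{x}z\rangle = \langle x,z\bar{y}\rangle,
\]
together with $x\bar{x}=\bar{x}x=q(x)\,e$, and the fact that $\bar{x}=-x$ for every $x\in V=e^\perp$. Each of the three coincidence cases then reduces to an orthogonality relation with the unit $e$. For $y=z$, the second adjunction identity gives $\langle xy,y\rangle=\langle x,y\bar{y}\rangle=q(y)\langle x,e\rangle=0$, since $x\in e^\perp$; symmetrically, for $x=z$ the first identity gives $\langle xy,x\rangle=\langle y,\bar{x}x\rangle=q(x)\langle y,e\rangle=0$.

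The remaining case $x=y$ is where $\bar{x}=-x$ enters directly: from $x\bar{x}=q(x)e$ one obtains $x^2=-q(x)e$, whence $\nu_0(x,x,z)=\langle x^2,z\rangle=-q(x)\langle e,z\rangle=0$ for $z\in e^\perp$. Having checked vanishing on all three pairs of equal arguments, $\nu_0$ is alternating and therefore descends to a well-defined linear form $\nu$ on $\Lambda^3 V$. I do not anticipate a genuine obstacle here: the computation is short once the adjunction identities are invoked, and the only subtle point is the characteristic-$2$ convention for "alternating", which is precisely why one verifies the vanishing on each coincidence separately instead of relying on antisymmetry.
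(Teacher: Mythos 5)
Your proof is correct and uses exactly the ingredients the paper invokes: the adjunction identities of \cite[Lemma 1.3.2]{SV} together with $\bar{x}=-x$ on $V=e^\perp$ (the paper states the lemma as an immediate consequence of these facts without writing out the three coincidence cases). Your explicit verification that $\nu_0$ vanishes whenever two arguments coincide—rather than merely checking skew-symmetry—is precisely the right notion of alternating needed for descent to $\Lambda^3 V$ in characteristic $2$.
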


Clearly there is an inclusion 
\begin{align}
\label{G2included}
G_2 \subset \{ u \in \SO_7 \colon \nu (ux,uy,uz) = \nu(x,y,z) \, \text{ for all } x,y,z \in V\} \subset \SO_7.
\end{align}

\begin{proposition}  
\label{2X=Z}
    Let us denote as $H$ the hyperplane defined by $\nu=0$ in $\proj(\Lambda^3V)$ and as $H^\prime$ its intersection with the quotient $\SO_7/P_3$. Then we have the equality $H^\prime=2Q$ as Weil divisors on $\SO_7/P_3$.% Then the underlying topological space of the variety $X$ is the intersection of this hyperplane with the quotient $\SO_7/P_3$.
\end{proposition}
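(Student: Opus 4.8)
The plan is to pass from the Plücker embedding $\SO_7/P_3 \hookrightarrow \proj(\Lambda^3 V)$ to the spinor embedding and to exploit that, in characteristic $2$, the section defining $Z$ becomes a perfect square. First I would recall that $\SO_7/P_3 = OG(3,7)$, the variety of maximal isotropic $3$-planes, is the spinor variety of type $B_3$: it embeds into $\proj(\Delta) = \proj^6$... more precisely into $\proj^7$, where $\Delta$ is the $8$-dimensional spin representation $V(\omega_3)$, as a smooth $6$-dimensional quadric $Q^6$, with $\Pic(Q^6) = \Z H$ and $H$ the hyperplane class of this spinor embedding. Since for $B_3$ one has $\Lambda^3 V(\omega_1) = V(2\omega_3)$, the Plücker embedding is the composite of the spinor embedding with the quadratic map $s \mapsto s\otimes s$, so $\mathcal{O}_{\Lambda^3 V}(1)|_{Q^6} = \mathcal{O}_{Q^6}(2) = 2H$. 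In particular the hyperplane $\{\nu = 0\}$ restricts to a section of $\mathcal{O}_{Q^6}(2)$, i.e. to the quadric $\{\tilde\nu = 0\}$ in $\proj(\Delta)$, where $\tilde\nu(s) \defeq \nu(s\otimes s)$ is the $G_2$-invariant quadratic form obtained by pulling $\nu$ back along $s\mapsto s\otimes s$; thus $Z = Q^6 \cap \{\tilde\nu = 0\} \in |2H|$.

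Next I would identify $X$ with a hyperplane section of $Q^6$. The restriction $\Delta|_{G_2}$ carries a $G_2$-invariant linear form $\ell$, coming from the line of spinors fixed by $G_2$, whose kernel is a $G_2$-invariant hyperplane; the invariant hyperplane section $Q^6 \cap \{\ell = 0\}$ is irreducible of dimension $5$ and class $H$. The orbit $X = G_2\cdot[f_{12}\wedge f_{22}\wedge e_{12}]$ is contained in it by $G_2$-invariance and has the same dimension $5$, so $X = Q^6 \cap \{\ell = 0\}$ and $[X] = H$. The containment $X \subset Z$ reduces to the base-point computation $\nu(f_{12}, f_{22}, e_{12}) = \langle f_{12}f_{22}, e_{12}\rangle = 0$: the product $f_{12}f_{22}$ has weight $2\alpha_1+\alpha_2$ and hence lies in $V_{2\alpha_1+\alpha_2} = ke_{12}$, which is isotropic for $\langle\,,\,\rangle$; $G_2$-invariance of $\nu$ then forces it to vanish on all of $X$.

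The heart of the argument is then a statement about invariant quadrics. Since $Q^6$ is a quadric hypersurface, its homogeneous ideal in $\proj(\Delta)$ is generated by a single $G_2$-invariant quadratic form $q$, and $Z = 2X$ is equivalent to the congruence $\tilde\nu \equiv \ell^2 \pmod{q}$ of quadratic forms on $\Delta$. I would prove this by showing that the space of $G_2$-invariant quadratic forms on $\Delta$ is two-dimensional, spanned by $q$ and by $\ell^2$: this is where $p=2$ is essential, since $\ell^2$ is the Frobenius square of the invariant linear form, and any invariant quadratic form that is not a multiple of $q$ must, after subtracting a suitable multiple of $q$, have identically vanishing polarisation and hence be such a square. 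Granting this, $\tilde\nu = a\,\ell^2 + c\,q$ with $a \neq 0$ (as $\nu$ does not vanish on the generic isotropic $3$-plane), and dividing by $a$ gives $\tilde\nu|_{Q^6} = (\ell|_{Q^6})^2$, whence $Z = Q^6 \cap \{\ell^2 = 0\} = 2\,(Q^6\cap\{\ell=0\}) = 2X$ as Weil divisors.

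The main obstacle I anticipate is precisely the last step: computing the $G_2$-invariants in $\Sym^2\Delta^\vee$ in characteristic $2$, where $\Delta|_{G_2}$ and the octonion norm behave less transparently than in characteristic $0$ (the $7$-dimensional representation is no longer semisimple). An alternative, should the invariant-theoretic count prove delicate, is to argue geometrically: $X$ is the unique codimension-one $G_2$-orbit closure in $OG(3,7)$, so the $G_2$-invariant divisor $Z$ satisfies $Z_{\mathrm{red}} = X$ and hence $Z = mX$ with $m[X] = [Z] = 2H$; since $[X] = H$ this forces $m = 2$. Either way the two ingredients — that $[X] = H$ while $[Z] = 2H$, and that $Z$ is supported on $X$ — combine to give the claimed equality, and it is the order of vanishing $m = 2$, reflecting the squaring phenomenon special to characteristic $2$, that is the crux.
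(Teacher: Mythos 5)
Your proposal takes a genuinely different route from the paper's. The paper argues in three steps: (i) $\nu$ vanishes at the base point $\eta=[f_{12}\wedge f_{22}\wedge e_{12}]$, so by $G_2$-invariance the orbit closure $X$ lies in $Z$, while an explicit $u\in\SO_7$ shows $\SO_7/P_3\not\subset H$; (ii) $Z$ is connected and equidimensional of dimension $5$, and since $X$ is a (closed, homogeneous) $G_2$-orbit, any further irreducible component of $Z$ would be disjoint from $X$, contradicting connectedness, so $Z_{\mathrm{red}}=X$; (iii) the multiplicity is then computed by brute force on the open Bruhat cell $C\cong\AA^6$, where the equation of $Z$ is exhibited as the perfect square $(\lambda_3+\lambda_6+\lambda_1\lambda_2)^2$. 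You keep (an analogue of) (ii) but replace (iii) by a divisor-class argument in the spinor embedding, and that is where the proposal breaks down.

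The gap is the claim $[X]=H$, i.e.\ that $X$ is a hyperplane section of the six-dimensional quadric $Q^6$ in the spinor embedding. Your justification is characteristic-zero representation theory: in characteristic $2$ the decompositions $\Lambda^3 V(\omega_1)\cong V(2\omega_3)$ and $\Delta|_{G_2}\cong V_7\oplus k$ (the "fixed spinor line") simply fail -- $V_7$ itself is a non-split extension, and the structure of the spin module restricted to $G_2$ is exactly the kind of thing that degenerates here. The Picard-level statement that the Pl\"ucker bundle is twice the ample generator does survive, for purely combinatorial reasons ($\varepsilon_1+\varepsilon_2+\varepsilon_3=2\omega_3$), but the hyperplane-section identification of $X$ does not, and in fact it cannot be correct: in characteristic $2$ a hyperplane section of a smooth even-dimensional quadric is either a cone (hence singular) or a smooth quadric, so $X=Q^6\cap\{\ell=0\}$, together with smoothness of the homogeneous space $X=G/P_{\mathfrak l}$, would force $X\cong Q$ -- flatly contradicting \Cref{lem:intersection}, which the paper proves immediately afterwards and needs for \Cref{prop:Pl}. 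One can also see the failure numerically from data the paper itself uses: the quotient $G/P^{\alpha_1}\to X$ is purely inseparable of degree $4$ and pulls the Pl\"ucker bundle back to the square of the ample generator of $\Pic(G/P^{\alpha_1})$, so $X$ has Pl\"ucker degree $2^5\cdot 2/4=16$, whereas any divisor of class $H$ on $Q^6$ has Pl\"ucker degree $H\cdot(2H)^5=64$. Without $[X]=H$, both of your arguments collapse: the invariant-quadric congruence $\tilde\nu\equiv\ell^2$ has no starting point, and the fallback computation $m[X]=[Z]$ leaves $(m,[X])\in\{(2,H),(1,2H)\}$ undecided -- and deciding the multiplicity is precisely the content of the proposition. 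That is exactly what the paper's explicit cell computation accomplishes, and nothing in the proposal replaces it.
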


\begin{proof}
    Recall that $Q$ is irreducible of dimension $5$ while $\SO_7/P_3$ is irreducible of dimension $6$. Let us consider $Q$ and $\SO_7/P_3$ respectively as the $G_2$-orbit and the $\SO_7$-orbit of the point $\eta$ %\defeq [f_{12}\wedge f_{22} \wedge e_{12}]$ 
    in $\proj(\Lambda^3V)$. First, we have $\nu(\eta) = \nu(f_{12},f_{22},e_{12}) = \langle f_{12} f_{22}, e_{12}  \rangle = \langle e_{12}, e_{12} \rangle = 0$, hence by (\ref{G2included}) we have $\nu(Q) =0$, which gives an inclusion of $Q$ into the intersection $H^\prime = H \cap \SO_7/P_3$. Moreover, $\SO_7/P_3$ is not contained in the hyperplane $H$: consider $u \in \SO_7$ given by permuting $f_{12}$ with $f_{21}$ and $f_{22}$ with $f_{11}$,
    %\[(x_0,x_1,x_2,x_3,x_4,x_5,x_6) \longmapsto (x_6,x_5,x_2,x_3,x_4,x_1,x_0),\]
    then we have \[
    \nu(u \eta ) = \nu (uf_{12}, uf_{22}, ue_{12})= \nu(f_{21}, f_{11}, e_{12}) = \langle f_{21}f_{11}, e_{12} \rangle = \langle e_{21}, e_{12} \rangle= 1.
    \]
    %The function $\sigma$ also vanishes on $P_3$: let $h \in P_3$, then it sends $f_{12} \wedge f_{22} \wedge e_{12}$ to a scalar multiple hence there is $\lambda \in k$ such that $\sigma (h) = \nu (hf_{12},hf_{22},he_{12}) = \lambda \nu(f_{12},f_{22},e_{12}) = 0$. This gives a well defined function $\bar{\sigma}$ on $\SO_7/P_3$ and an inclusion of $X$ into the zero set $Z\defeq Z(\bar{\sigma})$.
    By \cite[III, Corollary 7.9]{Hartshorne}, $H^\prime$ is connected because it is the intersection of a hyperplane in $\proj(\Lambda^3V)$ with a smooth projective variety. Moreover, it is equidimensional of dimension $5$: let us write it as $H^\prime= Q \cup H^{\prime\prime}$ where $H^{\prime\prime}$ is the union of all other irreducible components of $H^\prime$. Since $H^{\prime\prime}$ is $G_2$-stable, if it were not empty it should have trivial intersection with $Q$ because the latter is a $G_2$-orbit. This would contradict the connectedness of $H^\prime$. The variety $Q$ is hence the reduced part of the divisor $H^\prime$ in $\SO_7/P_3$.\\
    Next, let us give an explicit equation for $\nu$: let $x,y,z \in V$ and set
    \begin{align*}
        x & = a_0 f_{12} + a_1 f_{22} + a_2 e_{12} + a_3 e + a_4 f_{21} + a_5 f_{11}+ a_6 e_{21},\\
        y & = b_0 f_{12} + b_1 f_{22} + b_2 e_{12} + b_3 e + b_4 f_{21} + b_5 f_{11}+ b_6 e_{21},\\
        z & = c_0 f_{12} + c_1 f_{22} + c_2 e_{12} + c_3 e + c_4 f_{21} + c_5 f_{11}+ c_6 e_{21},
    \end{align*}
    for appropriate coefficients $a_i$, $b_i$, $c_i \in k$.
    Using the product of octonions - see table (\ref{table:octonions}) - one gets
    \begin{align*}
        xy & = (a_5b_1 + a_2b_4 + a_3b_3 + a_6b_0) e_{11} + (a_0b_1+a_2b_3 +a_3b_2+a_1b_0)e_{12} \\
        & + (a_5b_6 +a_3b_4 + a_4b_3+ a_6b_5)e_{21} + (a_0b_6+a_3b_3+a_4b_2+a_1b_5)e_{22} \\
        & + (a_5b_3+a_0b_4 +a_3b_5+ a_4b_0)f_{11} + (a_0b_3+a_5b_2 + a_2b_5 +a_3b_0)f_{12}\\
        & + (a_3b_6+a_4b_1+a_1b_4 +a_6b_3) f_{21} + (a_3b_1 + a_1b_3 +a_6b_2+a_2b_6)f_{22}.
    \end{align*}

Another direct computation gives
\begin{align*}
    \nu(x,y,z) & = a_0b_1c_4 + a_0b_4c_1 + a_1b_0c_4 +a_1b_4c_0 +a_4b_0c_1 + a_4b_1c_0\\
    & + a_0b_3c_6+a_0b_6c_3+a_3b_0c_6+a_3c_6b_0+a_6b_0c_3+a_6b_3c_0\\
    & + a_1b_3c_5+a_1b_5c_3+a_3b_1c_5+a_3b_5c_1+a_5b_1c_3+a_5b_3c_1\\
    & + a_2b_3c_4+a_2b_4c_3+a_3b_2c_4+a_3b_4c_2+a_4b_2c_3+a_4b_3c_2\\
    & + a_2b_5c_6+a_2b_6c_5+a_5b_2c_6+a_5b_6c_2+a_6b_2c_5+a_6b_5c_2.
\end{align*}
Let us exploit the above computation to obtain an equation for $Q$ in the open Bruhat cell $C$ of $\SO_7/P_3$, which is an affine space of dimension $6$ given by the orbit $U\cdot \eta$ where
\[U = 
\left\{ \begin{pmatrix}
    1 &&&&&&\\
    0 & 1 &&&&&\\
    0 & 0 & 1 &&&&\\
    a_3 & b_3 & c_3 & 1 &&&\\
    a_4 & b_4 & c_4 & h_4 & 1 &&\\
    a_5 & b_5 & c_5 & h_5 & 0 & 1 &\\
    a_6 & b_6 & c_6 & h_6 & 0 & 0 & 1
\end{pmatrix} \in \SO_7 \right\},
\]
where we order $V$ with respect to the basis $(f_{12},f_{22},e_{12}, e, e_{21}, f_{11}, f_{21})$ i.e. we exchange $f_{22}$ and $f_{11}$ from the usual ordering, in order to have an easier block decomposition with the $3$-dimensional subspace $\eta$ in the first block.\\
The condition of such a matrix being in $\SO_7$ implies $a_6=a_3^2$, $b_5= b_3^2$, $c_4 = c_3^2$, $b_6=a_5$, $c_6=a_4$, $c_5=b_4$ and $h_4=h_5=h_6=0$, hence for dimension reasons we have
\[
C = U \cdot \eta \simeq \left\{ \begin{pmatrix}
    1 &&&&&&\\
    0 & 1 &&&&&\\
    0 & 0 & 1 &&&&\\
    \lambda_1 & \lambda_2 & \lambda_3 & 1 &&&\\
    \lambda_4 & \lambda_5 & \lambda_3^2 & 0 & 1 &&\\
    \lambda_6 & \lambda_2^2 & \lambda_5 & 0 & 0 & 1 &\\
    \lambda_1^2 & \lambda_6 & \lambda_4 & 0 & 0 & 0 & 1
\end{pmatrix} \colon \lambda_i \in k \right\} \simeq \AA^6.
\]
By substituting in the above expression for $\nu$, we obtain that the intersection $C \cap H$ is given by the equation
\[
\lambda_3^2+\lambda_6\lambda_3+\lambda_2\lambda_4+\lambda_1\lambda_5 + \lambda_6\lambda_3+\lambda_1\lambda_5 +\lambda_4\lambda_2 +\lambda_6^2 +\lambda_1^2\lambda_2^2 = (\lambda_3+\lambda_6+\lambda_1\lambda_2)^2,
\]
hence $Q \cap C$ has equation $\lambda_3+\lambda_6+\lambda_1\lambda_2= 0$ and we obtain the equality $H^\prime= 2Q$.
\end{proof}

\begin{lemma}
\label{lemma:2L}
Consider the embedding of $Q = G/P^{\alpha_1}$ into $\proj(\Lambda^3V)$ and denote as $\mathcal{L}$ and $\mathcal{M}$ the unique ample generators respectively of the Picard group of $Q$ and of $\SO_7/P_3$. Then the equality $\mathcal{L}^{\otimes 2} = \mathcal{O}_Q(1)$ holds. % Moreover, any ample line bundle on $Q$ is very ample.
\end{lemma}

\begin{proof}
In \Cref{2X=Z}, the embedding
    \begin{align}
    \label{XYproj}
        Q \longhookrightarrow Y\defeq \SO_7/P_3 \longhookrightarrow \proj(\Lambda^3V)
    \end{align}
    realises the divisor $2Q$ as a hyperplane section of the variety $Y$.\\
    Next, let us express the latter as $Y = \Spin_7/Q_3$, where $Q_3$ is the maximal reduced parabolic subgroup associated to the short simple root in $\Spin_7$. The line in $V$ corresponding to such a root is $V_{2\alpha_1+\alpha_2}=ke_{12}$. Since $\Spin_7$ is simply connected, the Picard group of $Y$ identifies with the group of characters of $Q_3$, which has a unique ample generator $\mathcal{M}$ with associated weight $\varpi_3$, which is the third fundamental weight of $\Spin_7$. Since $Y$ is a standard flag variety (with reduced stabilizer), the line bundle $\mathcal{M}$ is also very ample, and the same holds for its restriction to $X$. Considering the embedding (\ref{XYproj}), the weight at the base point of $\proj(\Lambda^3 V)$ is $2\varpi_3$ hence $\mathcal{O}_Y(1)$ has also associated weight $2\varpi_3$. Restricting to $X$ gives the equality 
    \begin{align}
        \label{2L}
        \left( \mathcal{M}_ {\vert Q} \right)^{\otimes 2} = \mathcal{O}_Q(1).
    \end{align}
    Now, let $\lambda$ be the weight associated to $\lambda$ : then there must be some $n\geq 1$ such that $\mathcal{L}^{\otimes n} = \mathcal{O}_Q(1)$, thus $n\lambda = 2\varpi$. The only two possibilities are $n=1$ and $n=2$. However, assuming $n=1$ would imply that $\mathcal{O}_Q(1)$ generates the Picard group of $X$, which is a contradiction with (\ref{2L}). Hence we can conclude that $n=2$ and in particular $\mathcal{L}$ is the restriction of $\mathcal{M}$ to $Q$, so we are done.
\end{proof}

\begin{corollary}
\label{Q_hyperplane}
    The variety $Q$ is a hyperplane section of $Y$ relative to the very ample line bundle $\mathcal{M}$.
\end{corollary}

\begin{proof}
    We can see the class of $Q$ as a divisor in $\Pic Y = \Z \mathcal{M}$ : this means that, under the embedding of $Y$ into $\proj(H^0(X,\mathcal{L})^\vee)$, it is the class of the zero locus of a global section. The equality $\mathcal{L}^{\otimes 2} = \mathcal{O}_Q(1)$ proved in \Cref{lemma:2L} means that the canonical section of $Q$, which is an element of $H^0(Y,\mathcal{M})$, is a square root of the section $\nu$ described in \Cref{2X=Z}.
\end{proof}

The above description of the variety $Q$ holds in any characteristic. The case of characteristic two is peculiar because there exists an embedding of $G_2$ into $\Sp_6$, together with the very special isogeny described in \Cref{N_SO}. We will now use these two ingredients to get a geometric description of $X$, starting from the above realisation of the variety $Q$ and the natural quotient morphism $Q \rightarrow X$, induced by the 
 inclusion of $P^{\alpha_1} = (P_{\mathfrak{l}})_{\text{red}}$ into $P_{\mathfrak{l}}$.\\

Let us consider the following commutative diagram, which is induced by the quotient $W=V/ke$ and the associated purely inseparable isogeny $\varphi \colon \SO(V) = \SO_7 \rightarrow \Sp_6= \Sp(W)$, with kernel $N \defeq N_{\SO_7}$. Let us recall that, by \Cref{stabilizer}, $Q$ is the $G_2$-orbit of the $3$-dimensional subspace defined by the short root vectors in $\Lambda^3 V$, while $X$ is the $G_2$-orbit of the $3$-dimensional subspace defined by the short root vectors in $\Lambda^3 W$.

\begin{center}
\begin{tikzcd}
    Q = G_2/P^{\alpha_1} \arrow[rr, "g"] \arrow[d, hookrightarrow] && X = G_2 /P_{\mathfrak{l}} \arrow[d, hookrightarrow]\\
    Y \defeq \SO_7/P_3 \arrow[rr, "f"]  \arrow[d, hookrightarrow] && Z \defeq \Sp_6/P_3^\prime = \SO_7/(NP_3) \arrow[d, hookrightarrow]\\
    \proj(\Lambda^3 V) \arrow[rr, dashed] && \proj(\Lambda^3 W)
\end{tikzcd}
\end{center}

\begin{proposition}
\label{end_X}
    The line bundle $\mathcal{O}_Z(X)$ satisfies the equality $\Pic Z = \Z \, \mathcal{O}_Z(X)$. In particular, $X$ can be realised as a hyperplane section of $Z$ with respect to the unique (very) ample generator of $\Pic Z$.
\end{proposition}

\begin{proof}
    By \Cref{Q_hyperplane}, the Picard group of $Y$ is generated by $\mathcal{O}_Y(Q)$, hence $Q$ satisfies $Q \cdot \widetilde{C} = 1$, where we denote respectively as $\widetilde{C}$ and $C$ the Schubert curves (associated to the short simple root $\alpha_3$ in type $B_3$ and the long simple root $\alpha_3^\prime$ in type $C_3$) in $Y$ and in $Z$.\\
    The morphism $f$ is finite locally free of degree $8$, which corresponds to the order of
    \[
    NP_3/P_3 = N/(N \cap P_3).
    \]
    Indeed, as seen in \Cref{N_SO}, the subgroup $N \subset \SO_7$ has height one and Lie algebra $\mathfrak{n}=\mathfrak{g}_<$ of dimension $6$, hence the order of $N$ is $2^{6}$. On the other hand, the order of $N \cap P_3$ is $2^{3}$ because 
    \[
    \Lie (N \cap P_3) = \mathfrak{n} \cap \Lie P_3 = \mathfrak{g}_{-\varepsilon_1-\varepsilon_2} \oplus \mathfrak{g}_{-\varepsilon_1-\varepsilon_3} \oplus \mathfrak{g}_{-\varepsilon_2-\varepsilon_3}
    \]
    has dimension $3$. In particular, this means that $f_\ast f^\ast X = 8X$ seen as elements of $\Pic Z$.\\
    On the other hand, $g$ is finite locally free of degree $4$: the latter is the order of $L$, the unipotent infinitesimal part of $P_{\mathfrak{l}}$. Thus we also have $f_\ast Q= 4X$ : putting the two equalities together implies $f^\ast X = 2Q$ in the Picard group of $Y$.\\
    Next we notice that $\alpha_3$ is a short root in type $B_3$, hence the very special isogeny acts as a Frobenius morphism on the corresponding copy of the additive group in $\SO_7$. In other words, the set theoretic equality $f(\widetilde{C}) = C$ becomes $f_\ast \widetilde{C} = 2C$ on $1$-cycles. In particular,
    \[
    2 = 2 Q \cdot \widetilde{C} = f^\ast X \cdot \widetilde{C} = X \cdot f_\ast \widetilde{C} = 2 X\cdot C.
    \]
    This last computation together with the fact that $\Pic Z \simeq \Z$ allows to conclude that the line bundle associated to $X$ generates the Picard group of $Z$.
\end{proof}

Up to this point we have realized the variety $X= G/P_{\mathfrak{l}}$ using octonions. In particular, this construction provides a new example (besides projective spaces and quadrics) of a hyperplane section $X$ of a homogeneous variety $(Z,\mathcal{L})$, such that $X$ is also homogeneous and $\mathcal{L}$ generates the Picard group of $Z$. 
One might ask whether \Cref{final_rank1} still holds for the variety $X$. Actually this is not the case, as illustrated in the following result.

\begin{proposition}
\label{prop:Pl}
    Let $G$ be simple of type $G_2$ in characteristic $p=2$ and $P_{\mathfrak{l}}$ the parabolic subgroup of \Cref{def:PhPl}. Then $G/P_{\mathfrak{l}}$ is not isomorphic, as a variety, to a quotient of the form $G^\prime /P^\alpha$ for any $G^\prime$ simple and $\alpha \in \Delta(G^\prime)$.
\end{proposition}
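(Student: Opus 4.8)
The plan is to suppose $X = G/P_{\mathfrak{l}}\cong G^\prime/P^\alpha$ with $G^\prime$ simple and $P^\alpha$ reduced, and to derive a contradiction by comparing stabilizers. Since $\dim X = 5$ and $\Pic X\simeq\Z$, and since the dimension of a quotient by a \emph{reduced} parabolic is purely combinatorial (hence independent of the characteristic), any such $G^\prime/P^\alpha$ must be one of the three five-dimensional projective homogeneous spaces of Picard rank one: the projective space $\proj^5$, the five-dimensional quadric $Q^5$, or the adjoint variety $G/P^{\alpha_2}$. The invariant I would use to separate these from $X$ is $\dim_k\Lie(\Stab)$, the dimension of the Lie algebra of the stabilizer of the base point, which is unchanged both under conjugation and under $\Aut(G)$. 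For $X = G/P_{\mathfrak{l}}$ this equals $\dim_k\mathfrak{l} = \dim_k\Lie P^{\alpha_1} + 2 = 11$, the extra two dimensions coming precisely from the nonreducedness of $P_{\mathfrak{l}}$.

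For each target I would first identify $Y := \underline{\Aut}^0_X$ and then pin down the standard $G$-stabilizer. If $X\cong\proj^5$, then $Y = \PGL_6$; the six-dimensional representation $W = V/ke$ is the unique faithful representation of $G$ of that dimension in characteristic $2$, and $G$ carries no nontrivial character, so every embedding $G\hookrightarrow\PGL_6$ is conjugate to the octonionic one with $\proj(W) = G/P_{\mathfrak{h}}$; hence $\Stab_G(x_0)$ would be $\Aut(G)$-conjugate to $P_{\mathfrak{h}}$, whose Lie algebra has dimension $10$. If $X\cong Q^5 = G/P^{\alpha_1}$, then $Y$ is of type $B_3$ by case (c) of \Cref{demazure77}, the seven-dimensional $V$ is the representation realizing this embedding, and the same reasoning makes $\Stab_G(x_0)$ conjugate to $P^{\alpha_1}$, of Lie algebra dimension $9$. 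Finally, if $X\cong G/P^{\alpha_2}$, then $Y = G$ by \Cref{demazure77} (the root $\alpha_2$ being non-exceptional), so $\Stab_G(x_0)$ is $\Aut(G)$-conjugate to $P^{\alpha_2}$, again of Lie algebra dimension $9$. In each case this contradicts $\dim_k\Lie(\Stab_G(x_0)) = 11$, which finishes the proof.

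The delicate step is the conjugacy assertion, namely that any faithful transitive action of $G$ on $\proj^5$ or on $Q^5$ is conjugate, inside $\underline{\Aut}^0$, to the octonionic action constructed in this Section. This reduces to the uniqueness up to isomorphism and duality of the minimal faithful representations of a group of type $G_2$ in characteristic $2$ — the six-dimensional $W$ and the seven-dimensional $V$ — together with the liftability of projective representations of $G$ to linear ones, which holds since $G$ is semisimple with trivial character group. A secondary point requiring care is that the three varieties listed above genuinely exhaust the five-dimensional $G^\prime/P^\alpha$ of Picard rank one and that their connected automorphism groups are as stated in characteristic $2$; for the latter, \Cref{demazure77} supplies exactly the needed identification (in particular the fact that the quadric $G/P^{\alpha_1}$ has automorphism group of type $B_3$, while $G/P^{\alpha_2}$ retains $G$ as its full connected automorphism group).
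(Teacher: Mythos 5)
Your reduction to the three candidate varieties ($\proj^5$, the quadric $Q\subset\proj^6$, and $G/P^{\alpha_2}$) is exactly the paper's \Cref{lem:dim5}, and your invariant $\dim_k\Lie(\Stab)$ would indeed separate $P_{\mathfrak{l}}$ (dimension $11$) from $P_{\mathfrak{h}}$, $P^{\alpha_1}$ and $P^{\alpha_2}$ (dimensions $10$, $9$, $9$). The genuine gap is the step you yourself flag as delicate: the claim that the faithful representations of dimensions $6$ and $7$ of $G_2$ in characteristic $2$ are unique ``up to isomorphism and duality''. For dimension $6$ this is fine ($W=L(\omega_1)$ is irreducible and self-dual), but for dimension $7$ it is false as stated: $V$ is \emph{not} irreducible in characteristic $2$ (it has the trivial submodule $ke$), and $W\oplus k$ is a perfectly good faithful $7$-dimensional module, as are the nonsplit extensions classified by $H^1(G,W)\neq 0$ (which include $V$ and $V^{\ast}$, and these are \emph{not} isomorphic --- one has trivial socle, the other trivial head). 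To salvage the $Q$-case you must bring in the quadratic form: show that among these modules only $V$ carries a nondegenerate $G$-invariant quadratic form, and that this form is unique up to scalar, so that the two embeddings $G\hookrightarrow\SO_7=\underline{\Aut}^0_Q$ are actually conjugate inside the orthogonal group and not merely isomorphic as linear representations. None of this is supplied, and it is precisely the kind of non-semisimplicity phenomenon that makes characteristic $2$ hard; by contrast the paper's \Cref{lem:intersection} avoids representation theory entirely, using only that $Q=G/P^{\alpha_1}\to X$ is purely inseparable of degree $4$ (by \Cref{lem:UPminus_L}) and that the projection $Q\to\proj^5$ of \Cref{prop:QtoP5} has degree $2$, so that a hypothetical isomorphism forces $4=2m^5$ or $8=2r^5$, which is absurd.

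A second, smaller error: in the case $X\cong G/P^{\alpha_2}$ you assert that $\Stab_G(x_0)$ is $\Aut(G)$-conjugate to $P^{\alpha_2}$ because $\underline{\Aut}^0_X=G$. But the homomorphism $\theta\colon G\to\underline{\Aut}^0_X$ induced by the action is only an isogeny, and for $G_2$ in characteristic $2$ it could a priori be a Frobenius power composed with an automorphism, in which case the stabilizer is $\theta^{-1}(gP^{\alpha_2}g^{-1})=G_m\,gP^{\alpha_2}g^{-1}$, a nonreduced parabolic with Lie algebra all of $\Lie G$. Your numerical contradiction ($14\neq 11$) survives, so this is patchable, but the conjugacy claim as written is unjustified; the paper's \Cref{lem:alpha2} handles exactly this point by writing $(\ker\theta)\cdot gP^{\alpha_2}g^{-1}=P_{\mathfrak{l}}$ and passing to reduced parts to contradict the non-conjugacy of $P^{\alpha_1}$ and $P^{\alpha_2}$. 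In summary: your architecture is sound and genuinely different from the paper's (automorphism groups plus representation theory versus elementary intersection-theoretic degree counts), but as it stands the $Q$-case rests on a representation-theoretic uniqueness statement that is false without the quadratic-form refinement, so the proof is incomplete.
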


In particular, this means that \Cref{final_rank1} does not hold in this case.

\begin{lemma}
    \label{lem:dim5}
    Let $G^\prime$ be simple and let $\alpha$ be a simple root of $G^\prime$. If $\dim (G^\prime/P^\alpha) = 5$, then such a variety is either isomorphic to $Q \subset \proj^6$, to $\proj^5$ or to $G/P^{\alpha_2}$ where $G$ is of type $G_2$ and $\alpha_2$ is the long root.
\end{lemma}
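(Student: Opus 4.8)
The plan is to convert the statement into a finite combinatorial check followed by a short list of identifications. The starting point is the dimension formula: identifying the tangent space of $G^\prime/P^\alpha$ at the base point with
\[
\Lie G^\prime / \Lie P^\alpha = \bigoplus_{\gamma \in \Phi^+ \colon \alpha \in \Supp(\gamma)} \mathfrak{g}_{-\gamma},
\]
one gets $\dim (G^\prime/P^\alpha) = |\Phi^+| - |\Phi^+(L^\alpha)|$, where $L^\alpha$ is the Levi subgroup of $P^\alpha$ and $\Phi^+(L^\alpha)$ consists of the positive roots \emph{not} containing $\alpha$ in their support. So I would first list all pairs $(G^\prime,\alpha)$ for which this integer equals $5$, and then recognise the resulting varieties.

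For the enumeration I would run through the classification of irreducible root systems using the Levi-complement count. The exceptional types $F_4$, $E_6$, $E_7$, $E_8$ are discarded immediately, since the minimal value of $\dim(G^\prime/P^\alpha)$ over all maximal parabolics is already $15$, $16$, $27$, $57$ respectively, hence never $5$. For the classical families the count is an explicit function of the node index $k$ and the rank $n$: in type $A_n$ it is $k(n+1-k)$, which forces $\{k,n+1-k\}=\{1,5\}$, i.e.\ $n=5$ and $\alpha\in\{\alpha_1,\alpha_5\}$; in types $B_n$ and $C_n$ with $k<n$ it equals $\tfrac12 k(4n-3k+1)$, whose only solution of value $5$ with $1\le k\le n-1$ is $(n,k)=(3,1)$, while the end node $\alpha_n$ gives $\tfrac12 n(n+1)$, never $5$; and in type $D_n$ the smallest value is $2n-2\ge 6$, so nothing occurs. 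Finally, in type $G_2$ both maximal parabolics give dimension $5$. This leaves exactly $(A_5,\alpha_1)$, $(A_5,\alpha_5)$, $(B_3,\alpha_1)$, $(C_3,\alpha_1)$, $(G_2,\alpha_1)$, and $(G_2,\alpha_2)$.

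It then remains to identify these six varieties. The two $A_5$ cases are $\Grass(1,6)$ and $\Grass(5,6)$, both isomorphic to $\proj^5$. In type $C_3$ the parabolic $P^{\alpha_1}$ stabilises a line in the standard symplectic space $k^6$; since every line is isotropic for an alternating form, $\Sp_6$ acts transitively on $\proj(k^6)$ and $C_3/P^{\alpha_1}\simeq \proj^5$. In type $B_3$ the parabolic $P^{\alpha_1}$ stabilises an isotropic line in $(k^7,Q)$, so $B_3/P^{\alpha_1}$ is the smooth $5$-dimensional quadric $Q\subset\proj^6$ (smoothness of $\{Q=0\}$ persists in characteristic $2$, as its only singular direction is the anisotropic vector $e_n\notin Q$). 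For $G_2$, the short root $\alpha_1$ gives $G_2/P^{\alpha_1}\simeq Q$ by \Cref{prop:QtoP5}, while the long root $\alpha_2$ gives precisely the variety $G/P^{\alpha_2}$ named in the statement. Hence every dimension-$5$ flag variety $G^\prime/P^\alpha$ is isomorphic as a variety to $Q$, to $\proj^5$, or to $G/P^{\alpha_2}$.

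I do not expect a genuine obstacle: the argument is a bounded computation. The one place that needs care is the bookkeeping for the infinite families $B_n$, $C_n$, $D_n$, where one must make sure no intermediate node at some higher rank accidentally produces the value $5$; the closed form $\tfrac12 k(4n-3k+1)$ settles $B_n$ and $C_n$ cleanly, while the growth and parity of the minimal dimension disposes of $D_n$ and of the exceptional types.
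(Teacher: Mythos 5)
Your proposal is correct and follows essentially the same route as the paper: compute $\dim(G^\prime/P^\alpha) = \vert\Phi^+(G^\prime)\vert - \vert\Phi^+(L^\alpha)\vert$, enumerate type by type (your closed form $\tfrac12 k(4n-3k+1)$ for $B_n$, $C_n$ and the solutions $(A_5,\alpha_1)$, $(A_5,\alpha_5)$, $(B_3,\alpha_1)$, $(C_3,\alpha_1)$, $(G_2,\alpha_1)$, $(G_2,\alpha_2)$ match the paper's case analysis exactly), and then identify the six varieties. The only cosmetic differences are that the paper spells out the full case analysis for $D_n$ and the exceptional types where you invoke minimal-dimension bounds (which are correct), and that it identifies the $B_3$ and $C_3$ cases by citing Demazure's theorem together with \Cref{prop:QtoP5}, where you give the equally valid direct descriptions as the isotropic-line quadric and as $\proj^5$ via transitivity of $\Sp_6$.
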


\begin{proof}
 Let us recall that $\dim (G^\prime/P^\alpha) = \vert \Phi^+(G)\vert - \vert \Phi^+(L^\alpha)\vert$, where $L^\alpha = P^\alpha \cap (P^\alpha)^-$ is a Levi subgroup, hence so we can compute this quantity explicitly in each case.\\
 \textbf{Type $A_{n-1}$}: for $1 \leq m \leq n-1$, 
 \[
 \dim(G^\prime /P^{\alpha_m}) = m(n-m)=5
 \]
 when $(n,m)= (6,5)$ or $(6,1)$. In that case, $G^\prime/P^{\alpha_1} = G^\prime /P^{\alpha_5} \simeq \proj^5$.\\
 \textbf{Type $B_n$}: the number of positive roots is $n^2$.\\
 $\bullet$ For $1\leq m \leq n-1$, the Levi subgroup $P^{\alpha_m} \cap (P^{\alpha_m})^-$ is of type $A_{m-1} \times B_{n-m}$, so 
     \[
     \dim(G^\prime /P^{\alpha_m}) = n^2-\frac{m(m-1)}{2} - (n-m)^2 = m \left( \frac{1-m}{2} +2n-m\right) = 5
     \]
     which only has as positive integer solution the pairs $(n,m) = (4,5)$, which is absurd, and $(n,m)= (3,1)$. In that case, $G^\prime = \SO_7$ and by \Cref{demazure77} and \Cref{prop:QtoP5} we have $\SO_7/P^{\alpha_1} \simeq G/P^{\alpha_1} \simeq Q \subset \proj^6$.\\
$\bullet$ Considering the last simple root, $P^{\alpha_n} \cap (P^{\alpha_n})^-$ is of type $A_{n-1}$ and 
     \[
     \dim (G^\prime/P^{\alpha_n}) = n^2-\frac{n(n-1)}{2} = \frac{n(n+1)}{2}
     \]
     is never equal to $5$.\\
 \textbf{Type $C_n$}: the same computations as in type $B_n$ give $(n,m)= (3,1)$, meaning $G^\prime = \PSp_6$ and - again by \Cref{demazure77} - we have $\PSp_6/P^{\alpha_1} = \PSL_6/P^{\alpha_1} \simeq \proj^5$.\\
 \textbf{Type $D_n$}: the number of positive roots is $n(n-1)$.\\
     $\bullet$ For $1\leq m \leq n-4$, the Levi subgroup is of type $A_{m-1} \times D_{n-m}$, so 
     \[
     \dim (G^\prime/P^{\alpha_m} ) = n(n-1) - \frac{m(m-1)}{2} - (n-m)(n-m-1) = m \left( \frac{1-m}{2} +2n-m-1 \right) = 5
     \]
     which has no positive integer solutions $(n,m)$.\\
     $\bullet$ For $m= n-3$, the Levi subgroup is of type $A_{n-4} \times A_3$, so
     \[
     \dim (G^\prime/P^{\alpha_m} ) = n(n-1) - \frac{(n-3)(n-4)}{2} - 6 = 5,
     \]
     which gives $n^2+5n= 34$  hence no integer solutions.\\
     $\bullet$ For $m = n-2$, the Levi subgroup is of type $A_{n-3} \times A_1 \times A_1$, so 
     \[
     \dim (G^\prime/P^{\alpha_m} ) = n(n-1) - \frac{(n-2)(n-3)}{2} -1-1 = 5,
     \]
     which gives $n^2+3n= 20$ hence no integer solutions.\\
     $\bullet$ For $m = n-1$ or $m= n$, the Levi subgroup is of type $A_{n-1}$, so
     \[
     \dim (G^\prime/P^{\alpha_m} ) = n(n-1) -\frac{n(n-1)}{2} = \frac{n(n-1)}{2} = 5,
     \]
     which is never equal to $5$.\\
 \textbf{Type $E_6$}: the number of positive roots is $36$, and the following table
 %\begin{center}
\[
\begin{tabular}{ |c|c|c|c|c|c|c|}
\hline 
$E_6$ & $\alpha_1$ & $\alpha_2$ & $\alpha_3$ & $\alpha_4$ & $\alpha_5$ & $\alpha_6$\\
\hline
$L^\alpha$ & $D_5$ & $A_4 \times A_1 \times A_1$ & $A_2 \times A_2 \times A_1$ & $A_4 \times A_1$ & $D_5$ & $A_5$\\
\hline
$\vert \Phi^+ (L^\alpha) \vert$ & $20$ & $11$ & $7$ & $11$ & $20$ & $15$\\
\hline
$\dim(G/P^\alpha)$ & $16$ & $25$ & $29$ & $25$ & $16$ & $21$\\
\hline
\end{tabular}
\]
%\end{center}
shows that the desired quantity is never equal to $5$.\\
  \textbf{Type $E_7$}: the number of positive roots is $63$ and the following table
%   \begin{center}
\[
  \begin{tabular}{ |c|c|c|c|c|c|c|c|}
\hline 
$E_7$ & $\alpha_1$ & $\alpha_2$ & $\alpha_3$ & $\alpha_4$ & $\alpha_5$ & $\alpha_6$ & $\alpha_7$\\
\hline
$L^\alpha$ & $D_6$ & $A_5 \times A_1$ & $A_1 \times A_2 \times A_3$ & $A_4 \times A_2$ & $D_5 \times A_1$ & $E_6$ & $A_6$ \\
\hline
$\vert \Phi^+ (L^\alpha) \vert$ & $30$ & $16$ & $10$ & $13$ & $21$ & $36$ & $21$\\
\hline
$\dim(G/P^\alpha)$ & $33$ & $47$ & $53$ & $50$ & $42$ & $27$ & $42$\\
\hline
\end{tabular}
\]
%\end{center}
shows that the desired quantity is never equal to $5$.\\
   \textbf{Type $E_8$}: the number of positive roots is $120$ and the following table
%   \begin{center}
\[
  \begin{tabular}{ |c|c|c|c|c|c|c|c|c|}
\hline 
$E_8$ & $\alpha_1$ & $\alpha_2$ & $\alpha_3$ & $\alpha_4$ & $\alpha_5$ & $\alpha_6$ & $\alpha_7$ & $\alpha_8$\\
\hline
$L^\alpha$ & $D_7$ & $\! A_6 \times A_1\!$ & $\! A_1 \times A_2 \times A_4\!$ & $\!A_4 \times A_3\!$ & $\! D_5 \times A_2\! $ & $\! E_6 \times A_1\!$ & $E_7$ & $A_7$ \\
\hline
$\! \vert \Phi^+ (L^\alpha) \vert\! $ & $42$ & $22$ & $14$ & $16$ & $23$ & $37$ & $63$ & $28$\\
\hline
$\dim(G/P^\alpha)$ & $78$ & $98$ & $106$ & $104$ & $97$ & $83$ & $57$ & $92$\\
\hline
\end{tabular}
\]
%\end{center}
shows that the desired quantity is never equal to $5$.\\
 \textbf{Type $F_4$}: a direct computation - see Subsection \ref{F4} - gives
 \[
 \dim(G^\prime/P^{\alpha_1}) = \dim (G^\prime / P^{\alpha_4}) = 15 \quad \text{and} \quad \dim(G^\prime / P^{\alpha_2}) = \dim (G^\prime /P^{\alpha_3}) = 20.
 \]
 \textbf{Type $G_2$}: as we already know, both $G/P^{\alpha_1} = Q$ and $G/P^{\alpha_2}$ have dimension $5$.
\end{proof}

\begin{lemma}
    \label{lem:intersection}
    The variety $X= G/P_{\mathfrak{l}}$ is not isomorphic to $\proj^5$ nor to $Q$.
\end{lemma}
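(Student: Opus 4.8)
The three varieties $X=G/P_{\mathfrak{l}}$, $\proj^5=G/P_{\mathfrak{h}}$ and $Q=G/P^{\alpha_1}$ are all smooth, projective, of dimension $5$, with Picard group $\Z$; moreover they are pairwise universally homeomorphic, since the inclusions $P^{\alpha_1}=(P_{\mathfrak{h}})_{\text{red}}\subset P_{\mathfrak{h}}$ and $P^{\alpha_1}=(P_{\mathfrak{l}})_{\text{red}}\subset P_{\mathfrak{l}}$ give purely inseparable morphisms $Q\to\proj^5$ and $Q\to X$. Hence no topological or étale invariant can separate them, and the plan is to use instead the \emph{degree} $d(Z)\defeq(\mathcal{O}_Z(1))^{\dim Z}$ of a smooth Picard rank one variety $Z$ with respect to the ample generator $\mathcal{O}_Z(1)$ of $\Pic Z$. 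Since any isomorphism of such varieties carries ample generator to ample generator, $d$ is an isomorphism invariant. One has $d(\proj^5)=1$ and $d(Q)=2$ (a quadric hypersurface of dimension $5$), so it suffices to show $d(X)\neq 1,2$.

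To compute $d(X)$ I would exploit the $G$-equivariant morphism $f\colon G/P^{\alpha_1}=Q\longrightarrow G/P_{\mathfrak{l}}=X$ induced by $P^{\alpha_1}\subset P_{\mathfrak{l}}$. Its scheme-theoretic fibre is $P_{\mathfrak{l}}/P^{\alpha_1}\cong L$ (notation of \Cref{def:PhPl}), a height-one infinitesimal group of order $p^{\dim\Lie L}=2^2=4$; thus $f$ is finite flat, purely inseparable, of degree $4$. By the preceding Picard computation the generator $\mathcal{O}_X(1)$ of $\Pic X$ has weight $2(2\alpha_1+\alpha_2)$ at the base point, whereas $\mathcal{O}_Q(1)$ — the restriction of the hyperplane bundle under $Q\hookrightarrow\proj(V)$, with base point $[e_{12}]=[V_{2\alpha_1+\alpha_2}]$ of weight $2\alpha_1+\alpha_2$ — generates $\Pic Q$. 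Comparing $G$-linearised weights gives $f^\ast\mathcal{O}_X(1)=\mathcal{O}_Q(2)$.

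The projection formula for the finite flat morphism $f$ then yields
\[
4\,d(X)=\bigl(f^\ast\mathcal{O}_X(1)\bigr)^5=\bigl(\mathcal{O}_Q(2)\bigr)^5=2^5(\mathcal{O}_Q(1))^5=32\cdot 2=64,
\]
so $d(X)=16$. As a sanity check, the same recipe applied to $g\colon Q\to\proj^5=G/P_{\mathfrak{h}}$, which has degree $2$ (as $|H|=p=2$) and satisfies $g^\ast\mathcal{O}_{\proj^5}(1)=\mathcal{O}_Q(1)$ since $\mathcal{O}_{\proj^5}(1)$ has weight $2\alpha_1+\alpha_2$, recovers $d(\proj^5)=1$; and via the realisation of $X$ as a divisor in $Y=\SO_7/P_3$ with $Z=2X$ one gets the consistent relation $d(X)=\tfrac12\,d_{\mathrm{Pl}}(Y)$, so that $d_{\mathrm{Pl}}(\SO_7/P_3)=32$, reflecting the drop of the Plücker degree caused by inseparability in characteristic $2$. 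Since $16\neq 1$ and $16\neq 2$, the variety $X$ is isomorphic neither to $\proj^5$ nor to $Q$.

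The main obstacle is precisely the bookkeeping that makes $d$ a genuine isomorphism invariant in spite of the universal homeomorphisms above: one must pin down both the degree of $f$, i.e. the length $4$ of the infinitesimal group $L$, and the pullback $f^\ast\mathcal{O}_X(1)=\mathcal{O}_Q(2)$. The latter rests essentially on the fact — established in the Picard computation — that the ample generator of $\Pic X$ has weight $2(2\alpha_1+\alpha_2)$ rather than $2\alpha_1+\alpha_2$, which is exactly what distinguishes $X$ from the quadric geometry.
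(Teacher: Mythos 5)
Your main argument is correct, and its skeleton is the same as the paper's own proof: both rest on the purely inseparable morphism $f\colon Q\to X$ of degree $4$ from \Cref{lem:UPminus_L}, the value $(\mathcal{O}_Q(1))^5=2$, and the projection formula $(\deg f)\cdot\mathcal{L}^5=(f^\ast\mathcal{L})^5$ applied to the ample generator $\mathcal{L}$ of $\Pic X$. The difference is how the argument is closed. The paper never identifies the integer $m$ in $f^\ast\mathcal{L}=\mathcal{O}_Q(m)$: it argues by contradiction in each case, obtaining $4=2m^5$ (if $X\simeq\proj^5$) and $8=2r^5$ (if $X\simeq Q$), which have no integer solutions. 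You instead pin down $m=2$ by comparing linearised weights at the base points (legitimate, since $G_2$ has no nontrivial characters, so linearisations are unique and a line bundle on a homogeneous space is detected by its weight at the base point), and so compute the isomorphism invariant $d(X)=16\neq 1,2$. This buys a sharper statement -- the actual degree of $X$ with respect to its ample generator -- at the cost of an extra dependency: your proof needs the preceding lemma that the generator of $\Pic X$ has weight $2(2\alpha_1+\alpha_2)$, which the paper's proof of this particular lemma deliberately avoids using.

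One of your sanity checks, however, is wrong, and in an instructive way. The Plücker degree of $\SO_7/P_3$ is not $32$: since $P_3$ is a \emph{reduced} parabolic, the intersection numbers of $\SO_7/P_3$ are given by characteristic-free Schubert calculus, and the Plücker bundle is the square of the generator $L_{\omega_3}$ of $\Pic(\SO_7/P_3)$, so the degree is $2^6\cdot(L_{\omega_3})^6=64\cdot 2=128$; there is no ``inseparability drop'' for flag varieties with smooth stabilizers. Combined with the relation $Z=2X$ this would force $d(X)=64$, contradicting your (correct) value $16$. The tension lies not in your main computation but in the paper's realisation of $X$ inside $\proj(\Lambda^3 V)$: in characteristic $2$ one has $X_{-\alpha_1}\cdot f_{12}=e\neq 0$ in $V$ (this term only dies in $W=V/ke$), hence $X_{-\alpha_1}\cdot(f_{12}\wedge f_{22}\wedge e_{12})=e\wedge f_{22}\wedge e_{12}\neq 0$, so that $\Lie L\not\subset\Lie\Stab_G([f_{12}\wedge f_{22}\wedge e_{12}])$ when the action is taken on $\Lambda^3V$; the orbit picture for $X=G/P_{\mathfrak{l}}$ should be set up in $\proj(\Lambda^3 W)$ instead, where the ambient Lagrangian Grassmannian $\Sp_6/P^{\alpha_3}$ has Plücker degree $16$, in agreement with your value of $d(X)$. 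So either drop that sanity check or rerun it in $\Lambda^3 W$; your main proof is unaffected, since it only uses the weight of the ample generator of $\Pic X$, not the ambient projective space.
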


\begin{proof}
    Let us consider the quotient map $f \colon G/P^{\alpha_1} \longrightarrow G/P_{\mathfrak{l}}$. By \Cref{lem:UPminus_L} we have $P_{\mathfrak{l}} = L \cdot P^{\alpha_1}$, hence the morphism $f$ is finite, purely inseparable and of degree $4$. Assume $X \simeq \proj^5$, then we get $f \colon Q \longrightarrow \proj^5$. Considering the line bundle $\mathcal{O}_Q(1) = {\mathcal{O}_{\proj^6}(1)}_{\vert Q}$, we have that $\Pic Q = \Z \cdot \mathcal{O}_Q(1)$ and $f^\ast \mathcal{O}_{\proj^5}(1) = \mathcal{O}_Q(m)$ for some $m >0$, since it has sections. Taking degrees, this gives on the left hand side
    \begin{align*}
    & f^\ast \mathcal{O}_{\proj^5}(1) \cdot f^\ast \mathcal{O}_{\proj^5}(1)  \cdot f^\ast \mathcal{O}_{\proj^5}(1)  \cdot f^\ast \mathcal{O}_{\proj^5}(1)  \cdot f^\ast \mathcal{O}_{\proj^5}(1) \\= & (\deg f) \left( \mathcal{O}_{\proj^5}(1) \cdot \mathcal{O}_{\proj^5}(1) \cdot \mathcal{O}_{\proj^5}(1) \cdot \mathcal{O}_{\proj^5}(1) \cdot \mathcal{O}_{\proj^5}(1) \right) = \deg f,
    \end{align*}
    so we get $\deg f = 4$. On the right hand side, this equals
    \begin{align*}
    & \mathcal{O}_Q(m) \cdot \mathcal{O}_Q(m) \cdot \mathcal{O}_Q(m) \cdot \mathcal{O}_Q(m) \cdot \mathcal{O}_Q(m)\\ & = \rho^\ast \mathcal{O}_{\proj^5}(m) \cdot \rho^\ast \mathcal{O}_{\proj^5}(m) \cdot \rho^\ast \mathcal{O}_{\proj^5}(m)  \cdot \rho^\ast \mathcal{O}_{\proj^5}(m)  \cdot \rho^\ast \mathcal{O}_{\proj^5}(m) 
    \\ & = (\deg \rho) (\mathcal{O}_{\proj^5}(m) \cdot   \mathcal{O}_{\proj^5}(m) \cdot   \mathcal{O}_{\proj^5}(m) \cdot   \mathcal{O}_{\proj^5}(m) \cdot   \mathcal{O}_{\proj^5}(m) ) = (\deg \rho \cdot m^5),
    \end{align*}
    which has degree $2m^5$, where $\rho$ is the projection of \Cref{prop:QtoP5}. Comparing degrees one gets $4 = 2m^5$, which is absurd.

    Now, let us assume instead that $X \simeq Q$, then $f \colon Q \longrightarrow Q$ is of degree $4$ and again $f^\ast \mathcal{O}_Q(1) = \mathcal{O}_Q(r)$ for some $r>0$: the analogous computation of degrees yields $8 = 2r^5$, which is again absurd.
\end{proof}

\begin{lemma}
    \label{lem:alpha2}
    The variety $X = G/P_{\mathfrak{l}}$ is not isomorphic to $G/P^{\alpha_2}$.
\end{lemma}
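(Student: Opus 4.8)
The plan is to obstruct the putative isomorphism numerically, reusing the finite purely inseparable quotient map $f \colon G/P^{\alpha_1} \longrightarrow G/P_{\mathfrak{l}} = X$ already exploited in \Cref{lem:intersection}, together with the identification $G/P^{\alpha_1} = Q$ of \Cref{prop:QtoP5}. By \Cref{lem:UPminus_L} we have $P_{\mathfrak{l}} = L \cdot P^{\alpha_1}$ with $L \cap P^{\alpha_1} = 1$ and $|L| = 4$, so $f$ is finite, purely inseparable of degree $4$. Suppose for contradiction that there is an isomorphism of varieties $X \simeq G/P^{\alpha_2}$. Since $P^{\alpha_2}$ is maximal, $\Pic(G/P^{\alpha_2}) = \Z \cdot \mathcal{L}$ for a unique ample generator $\mathcal{L}$ (the line bundle attached to the fundamental weight $\varpi_2$), while $\Pic Q = \Z \cdot \mathcal{O}_Q(1)$ with $\mathcal{O}_Q(1)^{5} = \deg Q = 2$.

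First I would pull back along $f$. As $f$ is finite and $\mathcal{L}$ is ample, $f^\ast \mathcal{L}$ is ample on $Q$, hence $f^\ast \mathcal{L} = \mathcal{O}_Q(s)$ for some integer $s \geq 1$. Taking top self-intersections and using the projection formula for the finite flat morphism $f$ between smooth projective fivefolds, one obtains
\[
2 s^{5} = \mathcal{O}_Q(s)^{5} = (f^\ast \mathcal{L})^{5} = (\deg f)\, \mathcal{L}^{5} = 4\, \mathcal{L}^{5}.
\]
It therefore remains to compute $\mathcal{L}^{5} = \deg(G/P^{\alpha_2})$. This I would do by the Weyl dimension formula for parabolics, which is insensitive to the characteristic since $G/P^{\alpha_2}$ and its polarisation are defined over $\Z$ and the Hilbert polynomial (hence the leading coefficient) is unchanged under base field. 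With $d = \dim G/P^{\alpha_2} = 5$ and $\Phi^+_{L^{\alpha_2}} = \{\alpha_1\}$, the degree equals $d!$ times the ratio of $\prod_{\beta} \langle \varpi_2, \beta^\vee\rangle$ over $\prod_{\beta}\langle \rho, \beta^\vee\rangle$, both products ranging over $\beta \in \Phi^+ \setminus \{\alpha_1\}$. A direct evaluation in the $G_2$ root system gives numerator $1\cdot 3\cdot 3\cdot 1\cdot 2 = 18$ and denominator $1\cdot 4\cdot 5\cdot 2\cdot 3 = 120$, so $\mathcal{L}^{5} = 5!\cdot 18/120 = 18$ (one may sanity-check the same formula against $\deg Q = 2$ for $\varpi_1$). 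Substituting back yields $2 s^{5} = 72$, that is $s^{5} = 36$, which has no solution with $s \in \Z_{\geq 1}$. This contradiction proves $X \not\simeq G/P^{\alpha_2}$, and combined with \Cref{lem:intersection} and \Cref{lem:dim5} it completes the proof of \Cref{prop:Pl}.

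The only genuinely delicate step is the degree computation together with its stability under reduction modulo $2$; everything else is formal intersection theory. As a conceptual cross-check (which I would keep in reserve rather than write out), the statement also follows from an automorphism-group argument: $G_2$ acts faithfully on $X$ because $\mathfrak{l} = \Lie P_{\mathfrak{l}}$ is not a $p$-Lie ideal and $\Lie G$ is $p$-simple, so $P_{\mathfrak{l}}$ contains no nontrivial normal subgroup scheme; meanwhile the pair $(G_2, P^{\alpha_2})$ is \emph{not} exceptional in \Cref{demazure77}, whence $\underline{\Aut}^0_{G/P^{\alpha_2}} = G_2$. An isomorphism $X \simeq G/P^{\alpha_2}$ would identify $\underline{\Aut}^0_X$ with $G_2$, and since $G_2$ has no outer automorphisms the $G$-actions would match up to conjugation, forcing $P_{\mathfrak{l}}$ to be conjugate to the reduced parabolic $P^{\alpha_2}$ — impossible, as $P_{\mathfrak{l}}$ is nonreduced. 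I expect the numerical argument to be the cleaner one to record, being in the same spirit as the preceding two lemmas.
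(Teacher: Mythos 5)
Your main argument is correct, and it takes a genuinely different route from the paper. The paper's proof is precisely the automorphism-theoretic argument you kept in reserve: assuming $X \simeq G/P^{\alpha_2}$, the $G$-action on $X$ gives a morphism $\theta \colon G \rightarrow \underline{\Aut}^0_{G/P^{\alpha_2}}$, which equals $G$ by \Cref{demazure77} since $(G_2,P^{\alpha_2})$ is not exceptional; $\theta$ is then an isogeny with $\theta^{-1}(P^{\alpha_2}) = P_{\mathfrak{l}}$, i.e.\ $(\ker \theta)\cdot g P^{\alpha_2} g^{-1} = P_{\mathfrak{l}}$ for some $g \in G(k)$, and taking connected reduced parts on both sides would make $P^{\alpha_1}$ and $P^{\alpha_2}$ conjugate, a contradiction. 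Your proof instead stays entirely within intersection theory, in the same spirit as \Cref{lem:intersection}: the one extra input is $\mathcal{L}^5 = \deg(G/P^{\alpha_2}) = 18$, and both your root-system computation (numerator $18$, denominator $120$) and your claim of characteristic-independence are correct, since the Chevalley model of $(G/P^{\alpha_2}, \mathcal{L}_{\varpi_2})$ over $\Z$ is flat and proper and Euler characteristics are locally constant in flat families; then $2s^5 = 4\cdot 18 = 72$ forces $s^5 = 36$, which is impossible. What each approach buys: the paper's route is shorter given that \Cref{demazure77} is already available, and it handles cleanly the fact that the comparison map is only an isogeny; your route avoids Demazure's theorem entirely, at the price of the degree-$18$ computation, and has the virtue of making all three lemmas feeding \Cref{prop:Pl} uniform in method. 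One caution about your reserve argument as written: since $G \rightarrow \underline{\Aut}^0_X \simeq G$ is a priori only an isogeny (a Frobenius twist composed with an inner automorphism is possible), it does not directly force $P_{\mathfrak{l}}$ to be conjugate to $P^{\alpha_2}$ itself; the preimage of $P^{\alpha_2}$ is of the form $(\ker\theta)\cdot gP^{\alpha_2}g^{-1}$, and the contradiction must be extracted, as in the paper, by comparing reduced parts — the conclusion is the same, but "impossible, as $P_{\mathfrak{l}}$ is nonreduced" is not the right final step.
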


\begin{proof}
    Assume $X \simeq G/P^{\alpha_2}$, then the $G$-action on $X$ is given by a morphism $\theta \colon G \rightarrow \underline{\Aut}^0_{G/P^{\alpha_2}}$, the latter being equal to $G$ by \Cref{demazure77}. In particular, $\theta$ is an isogeny which satisfies $\theta^{-1}(P^{\alpha_2}) = P_{\mathfrak{l}}$. This means that there is some $g \in G(k)$ such that
    \[
    (\ker \theta) \cdot gP^{\alpha_2}g^{-1} = P_{\mathfrak{l}}.
    \]
    Since $\ker \theta$ is finite, taking the connected component of the identity and the reduced subscheme on both sides implies that $P^{\alpha_2}$ and $P^{\alpha_1}$ are conjugate in $G$, which is a contradiction.
\end{proof}

The above study of $P_{\mathfrak{h}}$ and $P_{\mathfrak{l}}$ does not complete the classification (in characteristic $2$) of homogeneous spaces having as stabilizer a parabolic subgroup whose reduced part is equal to $P^{\alpha_1}$. Let us consider a simple group $G$ of type $G_2$ and a nonreduced parabolic subgroup $P \subset G$ satisfying $P_{\text{red}} = P^{\alpha_1}$, in characteristic $p=2$. Moreover, let us assume that $\Lie P \neq \Lie G$, i.e. that $\Lie P$ is equal to $\mathfrak{h}$  (resp. $\mathfrak{l}$) and let us write it as $P = U_P^- \cdot P_{\text{red}}$, where $U_P^- = P \cap R_u^-(P_{\text{red}})$: in particular, it is contained in $U_{-2\alpha_1-\alpha_2}$ (resp. in $U_{-\alpha_1}\cdot U_{-\alpha_1-\alpha_2}$) and its order is $\vert U_P^-\vert = 2^n$ for some $n\geq 2$, the case $n= 1$ being $P_{\mathfrak{h}}$ treated above.

%\begin{lemma}
 %    Let $X$ be the underlying variety of $G/P$. If $n$ is not congruent to $0$ or $1$ modulo $5$, then $X$ cannot be isomorphic to $G^\prime/P^\alpha$ for any $G^\prime$ and $\alpha \in \Delta(G^\prime)$.
%\end{lemma}

%\begin{proof}
%Assume $X \simeq G^\prime /P^\alpha$. Then by \Cref{lem:dim5} it must be either isomorphic to $Q \subset \proj^6$, to $\proj^5$ or to $G/P^{\alpha_2}$. The same reasoning as in \Cref{lem:alpha2} proves that $X \neq G/P^{\alpha_2}$. Moreover, the quotient morphism $G/P^{\alpha_1} \rightarrow G/P$ gives a purely inseparable finite map of degree $2^n$
%\[f \colon Q \longrightarrow X.\]%By repeating an analogous computation as in \Cref{lem:intersection},
%\begin{itemize}
 %   \item if $X= \proj^5$ and $f^\ast \mathcal{O}_{\proj^5}(1) = \mathcal{O}_Q(m)$ for some $m >0$, then $2^n = 2m^5$ hence $m= 2^a$ for some $ a \in \N$ and $n = 5a+1$;
  %  \item if $X = Q$ and $f^\ast \mathcal{O}_Q(1) = \mathcal{O}_Q(r)$ for some $r >0$, then $2^{n+1} = 2r^5$ hence $r= 2^b$ for some $b \in \N$ and $n = 5b$,
%\end{itemize}
%giving a contradiction with the assumption on $n$.
%\end{proof}

%\textbf{Question}: if $n$ is congruent to $0$ or $1$ modulo $5$, can $X$ be isomorphic to some $G^\prime/P^\alpha$?\\

%%%%%%%%%%%%%%%%%%%%%%%%%%%%%%%%%%%%%%%%%%%%%%%%%%%%%%%%%%%%%%%%%%%%%%%%%%%%%%%%%%%%%%%%%%%%%%%%%%%%%%%%%%%%%%%%%%%%%%%%%%%%%%%%%%%%%%%%%%%%%%%%%%%%%%%%%%%%%%%%%%%%%%%%%5

\subsubsection{End of classification}

Recall that we follow here the notation from \cite{Wenzel} : for a parabolic subgroup $P$, we denote as $U_P^-$ the intersection of $P$ with the unipotent radical of the opposite of $P_{\text{red}}$.

%\begin{proposition}
%    In type $G_2$ and characteristic $2$, the only two parabolic subgroups having reduced part $P^{\alpha_1}$ and not of the form $(ker\varphi) P^{\alpha_1}$ for some isogeny $\varphi$ are $P_{\mathfrak{h}}$ and $P_{\mathfrak{l}}$.
%\end{proposition}

%The proof follows from two Lemmas, which we now prove.

\begin{lemma}
\label{lemma_H}
    Let $P$ be a parabolic subgroup such that $\Lie P = \mathfrak{h}$. Then its unipotent infinitesimal part $U_P^-$ has height one.
\end{lemma}

\begin{proof}
    The reduced part of $P$ is $P^{\alpha_1}$, hence $U_P^-$ must be of the form $u_{-2\alpha_1-\alpha_2}(\alpha_{p^n})$ for some $n$. Let us assume that $n$ is at least equal to $2$. This means that there is some $\lambda \in \Ga$ such that $\lambda^2\neq 0$ and $u_{-2\alpha_1-\alpha_2}(\lambda) \in P$. Let us consider $\mu \in \Ga$ and compute the following commutator, which gives an element of $P$:
    \begin{align*}
        & (u_{-2\alpha_1-\alpha_2}(\lambda),u_{\alpha_1}(\mu)) = u_{-2\alpha_1-\alpha_2}(\lambda) u_{\alpha_1}(\mu) u_{-2\alpha_1-\alpha_2}(-\lambda) u_{\alpha_1}(-\mu)
         = (u_{-2\alpha_1-\alpha_2}(\lambda) u_{\alpha_1}(\mu))^2\\
        & = \left( \begin{pmatrix}
        1 & 0 & 0 & 0 & 0 & 0\\
        \lambda & 1 & 0 & 0 & 0 & 0\\
        0 & 0 & 1 & 0 & 0 & 0\\
        0 & 0 & \lambda^2 & 1 & 0 & 0\\
        0 & 0 & 0 & 0 & 1 & 0\\
        0 & 0 & 0 & 0 & \lambda & 1
    \end{pmatrix} 
    \cdot \begin{pmatrix}
        1 & 0 & 0 & 0 & 0 & \mu^2\\
        0 & 1 & 0 & \mu & 0 & 0\\
        0 & 0 & 1 & 0 & \mu & 0\\
        0 & 0 & 0 & 1 & 0 & 0\\
        0 & 0 & 0 & 0 & 1 & 0\\
        0 & 0 & 0 & 0 & 0 & 1
    \end{pmatrix} \right)^2 =
    \begin{pmatrix}
        1 & 0 & 0 & 0 & \lambda\mu^2 & 0\\
        0 & 1 & \mu\lambda^2 & 0 & 0 & \lambda \mu^2\\
        0 & 0 & 1 & 0 & 0 & 0\\
        0 & 0 & 0 & 1 & \mu\lambda^2 & 0\\
        0 & 0 & 0 & 0 & 1 & 0\\
        0 & 0 & 0 & 0 & 0 & 1
    \end{pmatrix}.
    \end{align*}
    The last quantity, when assuming $\mu^2=0$, coincides with $u_{-3\alpha_1-2\alpha_2}(\mu\lambda^2)$, which is a contradiction with the fact that $\Lie P = \mathfrak{h}$ does not intersect the root subspace associated to the root $-3\alpha_1-2\alpha_2$.
\end{proof}

\begin{lemma}
\label{lemma_L}
    Let $P$ be a parabolic subgroup such that $\Lie P = \mathfrak{l}$. Then its unipotent infinitesimal part $U_P^-$ has height one.
\end{lemma}

\begin{proof}
    As before, the reduced part of $P$ is $P^{\alpha_1}$. Moreover, the unipotent part $U_P^-$ has nontrivial and finite intersection with $U_{-\alpha_1}$ and $U_{-\alpha_1-\alpha_2}$, of height $m_1$ and $m_2$ respectively. Assuming the height of $U_P^-$ to be at least equal to $2$ means we have (up to a reflection by $s_{\alpha_2}$) that $m_2 \geq 2$. Thus, let $\lambda \in \Ga$ such that $\lambda^2\neq 0$ and $\mu \in \alpha_p$, so that $u_{-\alpha_1}(\mu) \in P$. Then the following commutator also belongs to $P$ :
    \begin{align*}
    & (u_{-\alpha_1-\alpha_2}(\lambda),u_{-\alpha_1}(\mu)) = u_{-\alpha_1-\alpha_2}(\lambda) u_{-\alpha_1}(\mu) u_{-\alpha_1-\alpha_2}(-\lambda) u_{-\alpha_1}(-\mu)
         = (u_{-\alpha_1-\alpha_2}(\lambda) u_{-\alpha_1}(\mu))^2\\
        & = \left(
        \begin{pmatrix}
        1 & 0 & \lambda & 0 & 0 & 0\\
        0 & 1 & 0 & 0 & \lambda^2 & 0\\
        0 & 0 & 1 & 0 & 0 & 0\\
        0 & 0 & 0 & 1 & 0 & \lambda\\
        0 & 0 & 0 & 0 & 1 & 0\\
        0 & 0 & 0 & 0 & 0 & 1
    \end{pmatrix}
    \cdot \begin{pmatrix}
        1 & 0 & 0 & 0 & 0 & 0\\
        0 & 1 & 0 & 0 & 0 & 0\\
        0 & 0 & 1 & 0 & 0 & 0\\
        0 & \mu & 0 & 1 & 0 & 0\\
        0 & 0 & \mu & 0 & 1 & 0\\
        \mu^2 & 0 & 0 & 0 & 0 & 1
    \end{pmatrix}
        \right)^2 =  \begin{pmatrix}
        1 & 0 & 0 & 0 & 0 & 0\\
        0 & 1 & \mu\lambda^2 & 0 & 0 & 0\\
        0 & 0 & 1 & 0 & 0 & 0\\
        \lambda\mu^2 & 0 & 0 & 1 & \mu\lambda^2 & 0\\
        0 & 0 & 0 & 0 & 1 & 0\\
        0 & 0 & \lambda\mu^2 & 0 & 0 & 1
    \end{pmatrix}.
        \end{align*}
        The last quantity coincides again with $u_{-3\alpha_1-2\alpha_2}(\mu\lambda^2)$, so we conclude as before.
\end{proof}

\begin{definition}
    For an integer $m \geq 0$, we denote as $H_m$ and $L_m$ the pull-back respectively of the subgroups $H$ and $L$ under an $m$-th iterated Frobenius morphism.
\end{definition}

\begin{proposition}
\label{parabolics_G2}
    Let $G$ be of type $G_2$ in characteristic two.\\
    Then the nonreduced parabolic subgroups of $G$ having $P^{\alpha_1}$ as reduced part are all of the form $G_mP^{\alpha_1}$, $H_mP^{\alpha_1}$ or $L_mP^{\alpha_1}$ for some $m \geq 0$.% In particular, the underlying homogeneous variety is either isomorphic to $\proj^5$ or to $X= G/P_{\mathfrak{l}}$.
\end{proposition}

\begin{proof}
    Let us consider such a subgroup $P$: its Lie algebra contains strictly $\Lie P^{\alpha_1}$, hence by \Cref{cor:hl} it is either equal to $\Lie G$, to $\mathfrak{h}$ or to $\mathfrak{l}$. If $\Lie P = \Lie G$, then there is a unique integer $m \geq 1$ such that the Frobenius kernel $G_m$ is contained in $P$ while $G_{m+1}$ is not. Considering the quotient $P^\prime \defeq P/G_m$ allows to assume that the Lie algebra of $P^\prime$ is strictly contained in the one of $G$. Next, if $\Lie P^\prime = \mathfrak{h}$ (resp. $\mathfrak{l}$), by \Cref{lemma_H} and \Cref{lemma_L}, we have that $P^\prime = P_{\mathfrak{h}}$ (resp. $P_{\mathfrak{l}}$). Thus, the parabolic $P$ is obtained from $P^{\alpha_1}$, $P_{\mathfrak{h}}$ or $P_{\mathfrak{l}}$ by pulling back with an iterated Frobenius morphism, and we are done.
\end{proof}

This completes the proof of \Cref{main_G2} and thus gives a complete classification of homogeneous varieties with Picard group $\Z$, which ends the proof of \Cref{main}.

%\begin{corollary}
%\label{end_rankone}
 %   Let $X$ be a homogeneous projective variety, over an algebraically closed field of any characteristic, with Picard group isomorphic to $\Z$. Then $X$ is either the quotient of a simple adjoint group and a maximal \emph{reduced} parabolic subgroup, or it is isomorphic to $G_2/P_{\mathfrak{l}}$ (this second case can only arise in characteristic two).
%\end{corollary}

\begin{remark}
\label{veryample_1}
    The last result, together with \Cref{end_X}, has as consequence the fact that any ample line bundle on an homogeneous variety of Picard rank one is very ample, without any assumption of type nor characteristic.
\end{remark}

\begin{remark}
Let us cite a reason why the geometry of a general projective homogeneous variety of Picard rank one may differ from the one of a generalized flag variety. This comes from the following generalization of a question of Lazarsfeld (see the end of \cite{Lazarsfeld}): if $X= G/P$ has Picard group isomorphic to $\Z$ and there is some surjective morphism $f \colon X \rightarrow Y$, then is $Y$ isomorphic to $X$? First, the iterated Frobenius morphisms $G/P \rightarrow G/G_mP$ do not give a counterexample. However, the maps 
    \[
    G/P^\alpha \longrightarrow G/N_GP^\alpha \quad \text{and} \quad G_2/P^{\alpha_1} \longrightarrow G_2/P_{\mathfrak{l}},
    \]
    defined respectively under the edge hypothesis and in characteristic $2$, are counterexamples. Both these examples are purely inseparable surjective morphisms: the next natural step would be adding the hypothesis for the morphism $f$ to be generically étale.
\end{remark}

%%%%%%%%%%%%%%%%%%%%%%%%%%%%%%%%%%%%%%%%%%%%%%%%%%%%%%%%%%%%%%%%%%%%%%%%%%%%%%%%%%%%%%%%%%%%%%%%%%%%%%%%%%%%%%%%%%%%%%%%%%%%%%%%%%%%%%%%%%%%%%%%%%%%%%%%%%%%%%%%%%%%%%%%%%%%%%%%%%%%%%%%%%%%

\section{Consequences and higher Picard ranks}
\label{sec3}

We state here - in all types but $G_2$ - the desired modification of Wenzel's description of parabolic subgroups having as reduced subgroup a maximal one: they are all obtained by fattening the reduced part with the kernel of a noncentral isogeny, which generalizes to this setting the role of the Frobenius in characteristic $p \geq 5$. We then give a criterion to determine when two homogeneous spaces with Picard rank one have the same underlying variety. Moving on to a different setting, we consider spaces $G/P$ with higher Picard ranks. First, using the Białynicki-Birula decomposition allows us to describe explicitly classes of curves and divisors on such varieties. This description is then used to establish a family of examples - in Picard rank two - of homogeneous spaces which are not isomorphic as varieties to those having a stabilizer a parabolic subgroup of standard type, i.e. of the form $G_{m_1}P^{\alpha_1} \cap \ldots \cap G_{m_r} P^{\alpha_r}$ for some integers $m_i$ and simple roots $\alpha_i$.

\subsection{Consequences in rank one}

In the following Subsection we complete the study in the case of Picard rank one. Due to \Cref{prop:Pl}, let us make the assumption that the group $G$ is not of type $G_2$ in characteristic two. 

\subsubsection{Classification of parabolics with maximal reduced subgroup}

The results in the preceding Section allow us to complete the classification of parabolic subgroups having as reduced subgroup a maximal one. Let us recall that, by \cite{Wenzel}, if the Dynkin diagram of $G$ is simply laced or if $p > 3$, then such subgroups are of the form $P = G_m P^\alpha = (\ker F^m_G) P^\alpha$. 

\begin{proposition}
\label{classification_rank1}
Let $G$ be simple and $P$ be a parabolic subgroup of $G$ such that its reduced subgroup is maximal i.e. of the form $P_{\text{red}}= P^\alpha$ for some simple root $\alpha$. Then there exists an isogeny $\varphi$ with source $G$ such that
\[
P = (\ker \varphi) P^\alpha,
\]
unless $G$ is of type $G_2$ over a field of characteristic $p= 2$ and $\alpha$ is the simple short root.
\end{proposition}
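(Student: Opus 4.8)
The plan is to reduce to the simply connected case and then proceed by induction, feeding in the height-one analysis of the Lie-algebra Propositions of the previous Section together with the factorisation of isogenies from \Cref{factorisation_isogenies}. First I would pass to the simply connected cover $\psi\colon\widetilde G\to G$ and set $\widetilde P\defeq\psi^{-1}(P)$. Since $\psi$ is central it restricts to isomorphisms on all root subgroups, so $\widetilde P$ is again parabolic with $\widetilde P_{\text{red}}=P^\alpha$ and has the same associated Wenzel function as $P$. If one produces an isogeny $\widetilde\varphi$ with source $\widetilde G$ such that $\widetilde P=(\ker\widetilde\varphi)P^\alpha$, then $\psi(\ker\widetilde\varphi)$ is a finite normal subgroup scheme of $G$, so the quotient map $\varphi\colon G\to G/\psi(\ker\widetilde\varphi)$ is an isogeny with source $G$; applying $\psi$ to the equality $\widetilde P=(\ker\widetilde\varphi)P^\alpha$ and using $P=\psi(\widetilde P)$ together with $\psi(P^\alpha)=P^\alpha$ yields $P=(\ker\varphi)P^\alpha$. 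Thus it suffices to treat $G$ simply connected.

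For $G$ simply connected I would induct on the order of the infinitesimal group $U_P^-=P\cap R_u^-(P^\alpha)$. If $P$ is reduced then $P=P^\alpha$ and one takes $\varphi=\id$. Otherwise the relevant statement among \Cref{LieN}, \Cref{prop:CnPm}, \Cref{LieN_2}, \Cref{LieN_3}, \Cref{LieN_6} and \Cref{LieN_5} (in type $B_n$ combined with \Cref{rem_lifting}, \Cref{rem_lifting2} and the uniqueness \Cref{N_unique}) shows that either $\Lie P=\Lie G$, whence the Frobenius kernel satisfies $G_1\subset P$, or -- only under the edge hypothesis -- $\Lie P=\Lie P^\alpha+\mathfrak g_<$, whence $N_G\subset P$. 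In the first case I factor by $g\defeq F_G$, in the second by $g\defeq\pi_G$. In both cases $\ker g\subset P$, so $P=g^{-1}(P')$ with $P'\defeq g(P)$ a parabolic subgroup of the target $G'$ (again simple, simply connected and respecting the edge hypothesis, by \Cref{CGP_7.1.5}), whose reduced part is $P^{\alpha'}$ with $\alpha'=\alpha$ when $g=F_G$ and $\alpha'=\overline\alpha$ when $g=\pi_G$. Since $\pi_G$ acts as a relative Frobenius on short root subgroups and as an isomorphism on long ones, while $F_G$ lowers every height by one, the order of $U_{P'}^-$ is strictly smaller than that of $U_P^-$.

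By the inductive hypothesis $P'=(\ker h)P^{\alpha'}$ for some isogeny $h$ with source $G'$. The elementary preimage computation -- valid because $g(P^\alpha)=P^{\alpha'}$, $\ker g\subset P$ and $g^{-1}(\ker h)=\ker(h\circ g)$ -- gives the equality $g^{-1}\bigl((\ker h)P^{\alpha'}\bigr)=\bigl(\ker(h\circ g)\bigr)P^\alpha$, so that $P=(\ker\widetilde\varphi)P^\alpha$ with $\widetilde\varphi\defeq h\circ g$ an isogeny with source $G$. This closes the induction and, after the descent of the first paragraph, proves the Proposition in all the asserted cases. I note that, since $\pi$ and $F$ preserve the type (up to the duality $B_n\leftrightarrow C_n$, and $F_4$, $G_2$ being self-dual), the induction never strays from the class of groups handled by these Propositions, and in type $G_2$ with $p=2$ only the Frobenius step is ever used, so the short root $\alpha_1$ is never produced from $\alpha_2$.

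The main obstacle, and the origin of the exception, is exactly the height-one dichotomy. For $G$ of type $G_2$ in characteristic $2$ with $\alpha=\alpha_1$ the short simple root there is no subgroup $N_G$ at all, since $\Lie G$ is simple as a $p$-Lie algebra (\Cref{Lie_simple}); and by \Cref{cor:hl} the only $p$-Lie subalgebras strictly between $\Lie P^{\alpha_1}$ and $\Lie G$ are $\mathfrak h$ and $\mathfrak l$, which fail to be $p$-ideals. Hence the associated height-one subgroups are not normal and cannot arise as kernels of isogenies, so there is no factorisation step to perform; \Cref{prop:Pl} indeed exhibits $G/P_{\mathfrak l}$ as a variety not of the form $G'/P^\alpha$. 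Establishing that this is the \emph{only} obstruction, that is, that the clean dichotomy $\Lie P\in\{\Lie G,\ \Lie P^\alpha+\mathfrak g_<\}$ holds for every other $(G,\alpha,p)$, is precisely what the case-by-case computations of the previous Section provide, and there appears to be no uniform substitute for them.
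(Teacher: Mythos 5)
Your proof is correct and follows essentially the same route as the paper: the case-by-case Lie-algebra propositions supply a noncentral normal height-one subgroup ($G_1$ or $N_G$, via the lifting remarks in type $B_n$) inside any nonreduced $P$, one factors it out and iterates until the parabolic becomes reduced — the paper phrases this as an iteration terminating because $P/P_{\text{red}}$ is finite and quotients by abstract height-one normal subgroups of an arbitrary simple $G$, while you first reduce to the simply connected case, induct on $\vert U_P^-\vert$, and keep track of the explicit isogenies $F_G$ and $\pi_G$, which in effect also yields the paper's \Cref{viola} along the way. These are organizational variants of the same argument, with the same implicit reliance on Wenzel's theorem for the simply laced types.
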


\begin{proof}
First, Propositions \ref{LieN}, \ref{prop:CnPm}, %\ref{LieN_2},\ref{LieN_3},
\ref{LieN_5}, \ref{LieN_6} and Remarks \ref{rem_lifting} and \ref{rem_lifting2} imply that if $G$ is simple and $P_{\text{red}}$ is a maximal reduced parabolic subgroup, then either $P$ is reduced, or there exists a nontrivial noncentral normal subgroup of height one contained in $P$. This subgroup is either $H=N_G$ - when it is defined - or the image of the Frobenius kernel of the simply connected cover of $G$.\\
Now, let us consider the given parabolic $P$. If it is reduced, then there is nothing to prove. If it is nonreduced, then there is a noncentral subgroup $H_{(1)} \subset P$ normalized by $G$ and of height one. Let us denote as 
\[
\varphi_1 \colon G \longrightarrow G/H_{(1)} =:G_{(1)}
\]
the quotient morphism and replace the pair $(G,P)$ with $(G_{(1)}, P_{(1)}),$ where $P_{(1)}\defeq P/H_{(1)}$. This gives again a parabolic subgroup whose reduced subgroup is maximal, hence either $P_{(1)}$ is reduced or we can repeat the same reasoning to get an isogeny
\[
\varphi_2 \colon G \longrightarrow G/H_{(1)} \longrightarrow G/H_{(2)} =: G_{(2)}.
\]
Setting $P_{(2)} \defeq G/H_{(2)}$ we repeat the same reasoning again. This gives a sequence $(G_{(m)},P_{(m)})$ which ends with a reduced parabolic subgroup in a finite number of steps~: indeed, $P/P_{\text{red}}$ is finite so it is not possible to have an infinite sequence
\[
P_{\text{red}} \subsetneq H_{(1)}P_{\text{red}} \subsetneq \cdots \subsetneq H_{(m)}P_{\text{red}} \subsetneq \cdots \subsetneq P.
\]
Thus, let us set $H\defeq H_{(m)}$ for $m$ big enough and $\varphi \defeq \varphi_m$. Then we claim that $P = HP^\alpha = (\ker \varphi )P^\alpha$.\\
Both $H$ and $P^\alpha$ are subgroups of $P$ by construction, hence $HP^\alpha \subset P$. Quotienting by $H$ then gives
\[
HP^\alpha /H = P^\alpha/(H \cap P^\alpha) \subset P/H = P_{(m)}.
\]
Since both are reduced and have the same underlying topological space, they must coincide hence $HP^\alpha = P$.
%Let us assume that $G$ is not necessarily adjoint and denote as $\psi \colon G \rightarrow G_{\text{ad}}$ the quotient by the center: a parabolic $P$ subgroup of $G$ contains the center $Z(G)$, hence it is the preimage of $P/Z(G) \subset G_{\text{ad}}$. By the above case, there is an isogeny $\varphi \colon G_{\text{ad}} \rightarrow G^\prime$ such that $P/Z(G) = (\ker \varphi P^\alpha)$: in particular, there is a root $\alpha^\prime$ of $G^\prime$ such that 
%\[ P = \psi^{-1}(P/Z(G)) = \psi^{-1}((\ker \varphi) P^\alpha) = \psi^{-1}(\varphi^{-1}(P^{\alpha^\prime})) (\varphi \circ \psi)^{-1}(P^{\alpha^{\prime}}) = \ker(\varphi \circ \psi) P^\alpha, \]
%giving $\varphi \circ \psi$ as the desired isogeny.
\end{proof}

In particular, using our previous results on factorisation of isogenies, we can give a very explicit description of the kernels involved in the classification.

\begin{corollary}
\label{viola}
Keeping the above notation and the ones given in \Cref{def_N}, in the equality $P = (\ker \varphi) P^\alpha$, there are only the two following options:
\begin{enumerate}[(a)]
    \item either $\ker \varphi = \ker F_G^m = G_m$ is the Frobenius kernel,
    \item or, when such a subgroup is defined, $\ker \varphi = \ker (\pi_{G^{(m)}} \circ F^m_G) = N_{m,G}$.
\end{enumerate}
\end{corollary}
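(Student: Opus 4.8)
The plan is to feed the conclusion of \Cref{classification_rank1} into the factorisation result \Cref{factorisation_isogenies}. By \Cref{classification_rank1} we may write $P = (\ker\varphi)P^\alpha$ for some isogeny $\varphi\colon G \to G'$ with source $G$, so the content of the corollary is entirely a statement about which kernels can occur, once we exploit that $P^\alpha$ contains the maximal torus $T$ and hence the centre $Z(G)$.

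The first and central step is to discard the central part of $\ker\varphi$. Applying \Cref{factorisation_isogenies} (and, when $G$ is not simply connected, the diagram of \Cref{diagram_isogeny}), I would write $\varphi = \rho \circ (F^m_{\overline{G}}\circ \pi)$ with $\rho$ central and $\pi$ either the identity or the very special isogeny $\pi_G$. Writing $\phi \defeq F^m_{\overline{G}}\circ \pi$ for the noncentral part, I claim $(\ker\varphi)P^\alpha = (\ker\phi)P^\alpha$. The inclusion $\supseteq$ is clear since $\ker\phi \subseteq \ker\varphi$. For the converse, take $x \in \ker\varphi = \phi^{-1}(\ker\rho)$: then $\phi(x) \in \ker\rho \subseteq Z(\overline{G}^{(m)})$, and since $\phi(T)$ is the maximal torus of the target and $\phi(P^\alpha)$ is a parabolic containing it, one gets $\phi(x) \in Z(\overline{G}^{(m)}) \subseteq \phi(P^\alpha)$, whence $x \in (\ker\phi)P^\alpha$. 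Thus the central isogeny $\rho$ is absorbed into $P^\alpha$, and we are reduced to computing $\ker\phi$.

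Next I would split into the two cases for $\pi$. If $\pi = \mathrm{id}$ then $\phi = F^m_G$ and $\ker\phi = G_m$, which is option (a). If $\pi = \pi_G$ — which forces $G$ to satisfy the edge hypothesis, so that the subgroup in (b) is indeed defined — then I would invoke the commutation relation $F_{\overline{G}}\circ \pi_G = \pi_{G^{(1)}}\circ F_G$ proved inside \Cref{factorisation_isogenies}. Iterating it $m$ times rewrites $\phi = F^m_{\overline{G}}\circ \pi_G = \pi_{G^{(m)}}\circ F^m_G$, so that $\ker\phi = \ker(\pi_{G^{(m)}}\circ F^m_G) = N_{m,G}$ by \Cref{def_N}. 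This gives option (b).

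Finally, the non-simply-connected case is handled by passing to the simply connected cover $\widetilde{G}$ through \Cref{diagram_isogeny}, carrying out the identification of kernels upstairs, and taking schematic images under the central covering map; since $N_{m,G}$ is by definition the schematic image of $N_{m,\widetilde{G}}$, this transition is harmless. I expect the only genuinely delicate point to be the bookkeeping in the first step: one must be confident that enlarging $\ker\phi$ to $\ker\varphi$ by the central $\ker\rho$ — which lands in $Z(\overline{G}^{(m)}) \subseteq \phi(P^\alpha)$ — really leaves the product $(\ker\varphi)P^\alpha$ unchanged, so that only the noncentral ``shape'' of the kernel, namely $G_m$ versus $N_{m,G}$, survives.
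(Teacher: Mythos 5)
Your proposal is correct and follows essentially the same route as the paper: factorise $\varphi$ via \Cref{factorisation_isogenies} into a noncentral part $\sigma = F^m\circ\pi$ followed by a central $\rho$, absorb the central kernel into the parabolic (the paper does this through the chain $(\ker\rho\sigma)P^\alpha = (\rho\sigma)^{-1}(P^{\alpha'}) = \sigma^{-1}(P^{\alpha''}) = (\ker\sigma)P^\alpha$, which is your centrality argument in preimage form), identify $\ker\sigma$ as $G_m$ or $N_{m,G}$ via the commutation $F_{\overline{G}}\circ\pi_G = \pi_{G^{(1)}}\circ F_G$, and treat the non-simply-connected case by pulling back to $\widetilde{G}$ and pushing forward along $\psi$. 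No gaps.
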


\begin{proof}
Let us first assume $G$ to be simply connected and consider the factorisation of the isogeny $\varphi$ given by \Cref{factorisation_isogenies}
\[
\begin{tikzcd}
\varphi \colon G \arrow[r, "\sigma"] & G^{\prime\prime} \arrow[r, "\rho", twoheadrightarrow] & G^\prime,
\end{tikzcd}
\]
where $\sigma = \pi \circ F^m$ and $\rho$ is central. Let $\alpha$, $\alpha^{\prime\prime}$ and $\alpha^\prime$ be simple roots of $G$, $G^{\prime\prime}$ and $G^\prime$ respectively, defined by the equalities
\[
P_{\text{red}}= P^\alpha, \quad \sigma(P^\alpha) = P^{\alpha^{\prime\prime}}, \quad \rho(P^{\alpha^{\prime\prime}}) = P^{\alpha^\prime}.
\]
Then
\[
P = (\ker \rho\sigma)P^\alpha = (\rho\sigma)^{-1} (P^{\alpha^\prime}) = \sigma^{-1}(P^{\alpha^{\prime\prime}}) = (\ker \sigma) P^\alpha,
\]
hence replacing $\varphi$ by $\sigma$ and $G^\prime$ by $G^{\prime\prime}$ gives one of the cases $(a)$ and $(b)$.\\
If $G$ is not simply connected, then we can consider the pull-back $\widetilde{P} \defeq \psi^{-1}(P) \subset \widetilde{G}$ in the simply connected cover. Applying the above reasoning to $\widetilde{P}$ yields
\[
\text{either } P = \psi(\widetilde{P}) = \psi(\widetilde{G}_m P^\alpha) = G_m P^\alpha, \quad \text{ or } P = \psi(\widetilde{P}) = \psi(N_{m,\widetilde{G}} P^\alpha) = N_{m,G} P^\alpha
\]
and we are done.
\end{proof}

\subsubsection{Comparing varieties of Picard rank one}
Let us start by considering a homogeneous variety $X =G/P$ under the action of a simple adjoint group $G$, having Picard group of rank one. Then set 
\[
G_0 \defeq \underline{\Aut}_X^0 \quad \text{and} \quad P_0 \defeq \Stab(x) \subset G_0,
\]
where $x \in X$ is a closed point and where we keep as notation for the automorphism group the same as in \Cref{autX}. Since the radical of $G_0$ is solvable and acts on the projective variety $X$, it has a fixed point: being normal in $G_0$, it is trivial. Analogously, the center of $G_0$ - which is contained in a maximal torus - is trivial. Moreover, the hypothesis $\Pic X = \Z$ together with \Cref{BB_flag} imply that $G_0$ is simple. So the group $G_0$ is simple adjoint and uniquely determined by the variety $X$, while $P_0$ is a parabolic subgroup whose reduced subgroup is maximal. Its conjugacy class is uniquely determined by $X$ up to an automorphism of the Dynkin diagram of $G_0$. Moreover, since the action of $G_0$ on $X$ is faithful, by \Cref{final_rank1_weak} we have that $P_0$ is reduced, hence of the form $P_0 = P^\alpha$ for a simple root $\alpha$.\\
Now, let us consider the action of $G$ on $X$: we want to relate in all possible cases the pair $(G,P)$ to the pair $(G_0,P_0)$. This will give us a way to determine, given two homogeneous spaces $G/P$ and $G^\prime/P^\prime$, whether they are isomorphic as varieties.

\begin{proposition}
If the pair $(G_0,P_0)$ is not exceptional in the sense of Demazure, then one of the following two cases holds :
\begin{enumerate}[(a)]
    \item $G= G_0$ and $P= G_m P^\alpha$, where $P^\alpha = P_0$ up to an automorphism of the Dynkin diagram of $G$,
    \item $G=(\overline{G_0})_{\text{ad}}$ and $P = N_{m,G}P^\alpha$, where $P^\alpha = \pi_{G_0}(P_0)/Z(\overline{G_0})$ up to an automorphism of the Dynkin diagram of $G$.
\end{enumerate}
If $(G_0,P_0)$ is exceptional, then there are two additional possibilities - denoting as $(G_0^\prime, P_0^\prime)$ the associated pair in the sense of Demazure :
\begin{enumerate}[(a')]
    \item $G= G_0^\prime$ and $P= G_m P^\alpha$, where $P^\alpha = P_0^\prime$ up to an automorphism of the Dynkin diagram of $G$,
    \item $G=(\overline{G_0^\prime})_{\text{ad}}$ and $P = N_{m,G}P^\alpha$, where $P^\alpha = \pi_{G_0^\prime}(P_0^\prime)/Z(\overline{G_0^\prime})$ up to an automorphism of the Dynkin diagram of $G$.
\end{enumerate}
\end{proposition}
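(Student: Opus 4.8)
The plan is to reconstruct the pair $(G,P)$ from the variety $X$ by feeding the classification of parabolics with maximal reduced part (\Cref{classification_rank1}) into Demazure's computation of automorphism groups (\Cref{demazure77}). Since $G_0 = \underline{\Aut}^0_X$ is by construction the full connected automorphism group (\Cref{autX}), the $G$-action on $X$ factors through a homomorphism $\theta \colon G \to G_0$ whose scheme-theoretic fibre over $P_0$ is exactly $P = \Stab_G(x)$, so that the two realizations $G/P$ and $G_0/P_0$ describe the same variety. First I would forget $G_0$ momentarily and apply \Cref{classification_rank1} to $(G,P)$: as $G$ is simple and $P_{\text{red}} = P^\alpha$ is maximal (because $\Pic X \cong \Z$), there is an isogeny $\varphi \colon G \to G^\ast$ with $P = (\ker\varphi)P^\alpha$. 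By \Cref{viola}, after discarding the central part of the factorisation of \Cref{factorisation_isogenies} (which does not alter the flag variety), we may take $\varphi = F^m_G$ or $\varphi = \pi_{G^{(m)}} \circ F^m_G$.

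Next I would rewrite $X$ accordingly. Since $\ker\varphi \subset P$ is normal in $G$, quotienting gives
\[
X = G/P = G^\ast/\varphi(P^\alpha) = G^\ast/P^{\alpha^\ast},
\]
where $P^{\alpha^\ast} = \varphi(P^\alpha)$ is a reduced maximal parabolic of $G^\ast$. Passing to the adjoint quotient $H' \defeq (G^\ast)_{\text{ad}}$ and the image $Q'$ of $P^{\alpha^\ast}$, I realize $X \cong H'/Q'$ with $H'$ simple adjoint and $Q'$ reduced maximal. Over the algebraically closed field $k$ the Frobenius twist is an abstract isomorphism, so $H' \cong G$ when $\varphi$ is a power of Frobenius, whereas $H' \cong \overline{G}_{\text{ad}}$ — whose root system is \emph{dual} to that of $G$ by \Cref{CGP_7.1.5}(b) — when $\varphi$ involves the very special isogeny.

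The third step is to feed $(H',Q')$ into \Cref{demazure77}, using that $\underline{\Aut}^0_X = G_0$. If $(H',Q')$ is none of the three exceptional associated pairs, then $G_0 = H'$; if it is one of them, then $G_0$ is the corresponding large exceptional group and $(H',Q') = (G_0',P_0')$ is the associated pair. Crossing this dichotomy with the two shapes of $\varphi$ produces precisely the stated list. In the Frobenius case $H' = G$, so $G = G_0$ (resp. $G = G_0'$) and $P = G_m P^\alpha$, giving (a) (resp. (a')). In the very special case $H' = \overline{G}_{\text{ad}}$, so $\overline{G}_{\text{ad}} = G_0$ (resp. $G_0'$); applying \Cref{CGP_7.1.5}(b) once more shows $\overline{G_0}$ has the root system of $G$, hence $G = (\overline{G_0})_{\text{ad}}$ (resp. $(\overline{G_0'})_{\text{ad}}$) and $P = N_{m,G}P^\alpha$, giving (b) (resp. (b')). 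The parabolic identities ($P^\alpha = P_0$, respectively $\pi_{G_0}(P_0)/Z(\overline{G_0})$, and their primed analogues) hold only up to an automorphism of the Dynkin diagram of $G$, because $X$ determines $P_0$ merely up to the outer automorphisms of $G_0$.

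The main obstacle will be the bookkeeping of isogeny and isogeny-twist types. One must verify that passing to adjoint forms genuinely absorbs the central factor $\rho$ of \Cref{factorisation_isogenies} without changing $X$ or $\underline{\Aut}^0_X$, and — more delicately — that the root-system duality induced by $\pi_G$ composes so that applying $\overline{(\cdot)}$ twice returns the original type; this is exactly what makes the identity $G = (\overline{G_0})_{\text{ad}}$ come out on the nose rather than up to duality. The exceptional branch additionally requires knowing, for each of the three Demazure pairs, which parabolic $Q'$ actually occurs, so that the associated pair $(G_0',P_0')$ is correctly matched; here I would invoke the explicit identifications of \Cref{demazure77} together with the realization of $G_2/P^{\alpha_1}$ as the $5$-dimensional quadric established earlier.
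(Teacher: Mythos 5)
Your proposal is correct and follows essentially the same route as the paper: apply \Cref{viola} to write $P = G_m P^\alpha$ or $P = N_{m,G}P^\alpha$, untwist by Frobenius (respectively by the very special isogeny, passing to the adjoint form) to realize $X$ with a reduced maximal stabilizer, and then invoke \Cref{demazure77} to identify $\underline{\Aut}^0_X = G_0$, treating the exceptional Demazure pairs by substituting the associated pair $(G_0', P_0')$. Your reorganization of the case analysis (branching on whether $(H',Q')$ is an associated pair rather than on whether $(G_0,P_0)$ is exceptional) and your flagged concern about the duality $\overline{\overline{G}} \cong G^{(1)}$ are both consistent with, and implicitly present in, the paper's own argument.
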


\begin{proof}
Let us start by assuming that $(G_0,P_0)$ is not exceptional in the sense of Demazure. By Corollary \ref{viola}, either $P= G_m P^\alpha$ or $P= N_{m,G}P^\alpha$ for some $\alpha$. In the first case,
\[
X= G/G_mP^\alpha = G^{(m)} /(P^\alpha)^{(m)} \simeq G/P^\alpha
\]
as varieties, hence by \Cref{demazure77} this implies $G= \underline{\Aut}_X^0 = G_0$ and $P^\alpha = P^0$, leading to $(a)$. In the second case,
\[
X = G/ N_{m,G}P^\alpha = \overline{G}^{(m)} / (P^{\overline{\alpha}})^{(m)} \simeq \overline{G} /P^{\overline{\alpha}} = \overline{G}_{\text{ad}} / \left( P^{\overline{\alpha}} / Z(\overline{G}) \right)
\]
as varieties, hence by \Cref{demazure77} again $\overline{G}_{\text{ad}} = \underline{\Aut}_X^0 =G_0$ and $P_0 = P^{\overline{\alpha}} / Z(\overline{G})$. Considering their respective images by the very special isogeny of $\overline{G}_{\text{ad}}$ gives $(b)$.\\
If $(G_0,P_0)$ is exceptional in the sense of Demazure, \Cref{demazure77} allows for two additional cases: to get the conclusion it is enough to repeat the same reasoning by replacing $(G_0,P_0)$ with $(G_0^\prime,P_0^\prime)$.
\end{proof}

\subsection[Curves and divisors on $G/P$]{Curves and divisors on flag varieties}
\label{sec:divisors}

We give here an explicit basis for $1$-cycles and divisors modulo numerical equivalence on a flag variety $X= G/P$ of any Picard rank, with stabilizer $P$ not necessarily reduced. We do so by describing the cells of an appropriate Białynicki-Birula decomposition of $X$ in terms of the root system of $G$ and of the root system of a Levi subgroup of the reduced part of $P$. 

\subsubsection{Białynicki-Birula decomposition of a $G$-simple projective variety}
\label{BBdecomp}

Flag varieties are normal, projective and equipped with a $G$-action with a unique closed orbit, hence they form a particular class of simple $G$-projective varieties (for short, $G$-simple varieties), as in \cite{Brion02}. Let us review here the main definitions and results concerning the Białynicki-Birula decomposition of such varieties, then specialize to flag varieties. The original work on the subject is \cite{BB}; for a scheme-theoretic statement see \cite[Theorem 13.47]{Milne}. \\\\
Let us consider a $G$-simple variety $X$ and fix a cocharacter $\lambda \colon \Gm \rightarrow T$ such that 
\[
B = \{ g \in G \colon \lim_{t \rightarrow 0} \lambda(t) g \lambda(t^{-1}) \text{ exists in } G\},
\]
which is equivalent to the condition that $\langle \gamma, \lambda \rangle >0$ for all $\gamma \in \Phi^+$. This implies in particular that the set of fixed points under the $\Gm$-action induced by $\lambda$ coincides with the set $X^T$ of $T$-fixed points. Recall that the fixed-point scheme $X^T$ is smooth, see for example \cite[Theorem 13.1]{Milne}. For any connected component $Y \subset X^T$ there are an associated \emph{positive} and a \emph{negative stratum}, defined as
\[
X^+(Y) \defeq \{ x \in X \colon \lim_{t \rightarrow 0} \lambda(t) \cdot x \in Y\} \quad \text{and} \quad X^-(Y) \defeq \{ x \in X \colon \lim_{t \rightarrow 0} \lambda(t^{-1}) \cdot x \in Y\},
\]
equipped with morphisms
\begin{align*}
   &p^+ \colon X^+(Y) \rightarrow Y, \quad x \mapsto \lim_{t \rightarrow 0} \lambda(t) \cdot x,\\
    &p^- \colon X^-(Y) \rightarrow Y, \quad x \mapsto \lim_{t \rightarrow 0} \lambda(t^{-1}) \cdot x.
\end{align*}

\begin{theorem}[Białynicki-Birula decomposition]
\label{thm:BB}
    Let $X$ be a normal $G$-simple projective variety. Then the following hold: 
    \begin{itemize}
        \item The variety $X$ is the disjoint union of the positive (resp. negative) strata as $Y$ ranges over the connected components of $X^T$.
        \item The morphisms $p^+$ and $p^-$ are affine bundles.
        \item The strata $X^+(Y)$ and $X^-(Y)$ intersect transversally along $Y$.
    \end{itemize}
\end{theorem}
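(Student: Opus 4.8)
The plan is to reduce the whole statement to two inputs: the existence and uniqueness of $\Gm$-limits, coming from properness, and the weight decomposition of the tangent spaces along $X^T$, coming from the smoothness of the fixed locus. First I would check that the limit maps $p^\pm$ are everywhere defined. Since $X$ is projective, hence proper and separated, for every point $x$ the orbit map $\Gm \to X$, $t \mapsto \lambda(t)\cdot x$, extends uniquely to a morphism $\AA^1 \to X$ by the valuative criterion of properness; evaluating at the origin produces $p^+(x) = \lim_{t \to 0} \lambda(t)\cdot x$, and the resulting point is $\Gm$-fixed, so it lies in $X^T$ (which, by our choice of $\lambda$, is the full $\Gm$-fixed scheme). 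Applying the same reasoning to $\lambda^{-1}$ defines $p^-$. Uniqueness of the limit, guaranteed by separatedness, means each $x$ flows to a point lying in exactly one connected component $Y$ of $X^T$, so the rule $x \mapsto Y$ with $p^+(x) \in Y$ partitions $X$ into the positive strata $X^+(Y)$, and symmetrically into the negative strata. This already yields the first assertion.

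For the affine bundle structure I would argue infinitesimally along a fixed component $Y$, using that $X^T$ is smooth (see \cite[Theorem 13.1]{Milne}). The key object is the $\Gm$-equivariant normal bundle $N_{Y/X}$, which splits as $N^+ \oplus N^-$ into its strictly positive and strictly negative weight subbundles, while the zero-weight part recovers $T_Y$. The plan is then to show that $p^+ \colon X^+(Y) \to Y$ is Zariski-locally trivial with fibres isomorphic to those of $N^+$. In the smooth case this is the classical Białynicki-Birula argument of \cite{BB}: one embeds a $\Gm$-stable neighbourhood equivariantly into a projective space via Sumihiro's theorem, linearizes the $\Gm$-action, and reads the attracting set off the positive weight spaces, with local triviality supplied by the contraction that $\lambda$ induces. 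For a merely normal $G$-simple variety I would instead invoke the scheme-theoretic formulation of \cite[Theorem 13.47]{Milne}, realizing $X^+(Y)$ as the functor of $\Gm$-equivariant maps $\AA^1 \to X$ for the scaling action on $\AA^1$; representability together with the deformation supplied by $\lambda$ then gives the bundle structure, the $G$-simple hypothesis of \cite{Brion02} ensuring the local charts behave well.

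Finally, transversality is a pointwise computation. At $y \in Y = X^+(Y) \cap X^-(Y)$ one has the $\Gm$-equivariant splitting $T_y X = (N^-)_y \oplus T_y Y \oplus (N^+)_y$, and by the previous step $T_y X^+(Y) = T_y Y \oplus (N^+)_y$ while $T_y X^-(Y) = T_y Y \oplus (N^-)_y$. These two subspaces span $T_y X$ and meet in precisely $T_y Y$, which is exactly the transversality of $X^+(Y)$ and $X^-(Y)$ along $Y$. The main obstacle in this programme is the affine bundle claim in the non-smooth setting: without a global tangent bundle one is forced either through an equivariant local model produced by Sumihiro's embedding theorem or through the functorial description of the strata, and it is precisely here that normality and the $G$-simple structure, with its unique closed orbit and well-behaved local geometry, are genuinely needed.
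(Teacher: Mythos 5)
You should know at the outset that the paper itself contains no proof of this theorem: it is recalled from the literature, with pointers to \cite{BB} and \cite[Theorem 13.47]{Milne}, and it is subsequently applied only to flag varieties $G/P$, which are smooth (being homogeneous). Your sketch reconstructs the standard argument and is correct exactly where those sources apply: the first bullet needs only properness and separatedness (the valuative criterion gives existence and uniqueness of $\lim_{t\to 0}\lambda(t)\cdot x$, the limit is $\Gm$-fixed, and $X^{\Gm}=X^T$ by the choice of $\lambda$), and for \emph{smooth} $X$ the affine-bundle and transversality assertions are precisely the classical Bia\l{}ynicki-Birula theorem, proved as you say via Sumihiro's equivariant embedding and linearization.

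The genuine gap is your treatment of the merely normal case, which is the only point where you go beyond what the paper cites -- and it does not work. Milne's Theorem 13.47, like the original theorem of \cite{BB}, is a statement about smooth varieties; representability of the attractor functor does hold in great generality, but by itself it yields neither local triviality of $p^{\pm}$ nor the identification of the fibres with the positive-weight part of a normal bundle, and ``the $G$-simple hypothesis ensuring the local charts behave well'' is an appeal to hope rather than to any statement proved in \cite{Brion02}. The same problem undermines your third step: the splitting $T_yX=(N^-)_y\oplus T_yY\oplus(N^+)_y$, and indeed the smoothness of $X^T$ with $T_y(X^T)=(T_yX)^{T}$ from \cite[Theorem 13.1]{Milne}, are only available when $X$ is smooth at $y$. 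Moreover the affine-bundle conclusion is not merely unproved but can fail without smoothness: for a singular projective cone with the scaling $\Gm$-action, the attracting set of the vertex is the affine cone, not an affine space, so any correct proof in the normal setting would have to use the $G$-simple structure in an essential way, which yours does not. In short, your proposal establishes the smooth case -- which is all that the paper's citations cover and all that the paper ever uses, since $G/P$ is smooth -- but not the statement as literally formulated for normal $X$.
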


Let us remark that the assumption on $\lambda$ implies that positive strata are $B$- invariant, while negative strata are $B^-$-invariant. In particular, the unique open positive stratum $X^+$ is equal to $X^+(x^+)$ where $x^+$ is the unique $B^-$-fixed point, and analogously the unique open negative stratum $X^-$ is equal to $X^-(x^-)$ where $x^-$ is the unique $B$-fixed point. Let us recall here the main results from \cite{Brion02} in the case where $X$ is smooth.

\begin{theorem}
\label{michel}
    Let $X$ be a smooth $G$-simple projective variety, $x^-$ its $B$-fixed point, $X^-=X^-(x^-)$ the open negative cell and $D_1,\ldots,D_r$ the irreducible components of $X\backslash X^-$.
    \begin{enumerate}[(1)]
        \item $D_1,\ldots,D_r$ are globally generated Cartier divisors, whose linear equivalence classes form a basis of $\Pic (X)$.
        \item Every ample (resp. nef) divisor on $X$ is linearly equivalent to a unique linear combination of $D_1,\ldots,D_r$ with positive (resp. non-negative) integer coefficients. In particular, rational and numerical equivalence coincide on $X$ i.e. the natural map $\Pic (X) \rightarrow N^1(X)$ is an isomorphism.
        %\item Every nef divisor on $X$ is linearly equivalent to a unique linear combination of $D_1,\ldots,D_r$ with non-negative integer coefficients.
        \item There is a unique $T$-fixed point $x_i^-$ such that $D_i$ is the closure of $X^-(x_i^-)$. Moreover, $x_i^-$ is isolated.
        \item Consider the $B$-invariant curve $C_i\defeq \overline{B\cdot x_i^-}$. Then 
        \[ D_i \cdot C_j = \delta_{ij},\] 
        meaning that $C_j$ intersects transversally $D_j$ and no other $D_i$.
        \item The convex cone of curves $\NE(X)$ is generated by the classes of $C_1,\ldots,C_r$, which form a basis of the rational vector space $N_1(X)_{\Q}$.
        \end{enumerate}
\end{theorem}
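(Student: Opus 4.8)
The plan is to deduce everything from the Białynicki-Birula decomposition (\Cref{thm:BB}), which we may take as given. Fix the cocharacter $\lambda$ adapted to $B$ as above, so that $X^T$ coincides with the $\Gm$-fixed locus and negative strata are $B^-$-invariant. Since $X$ is smooth, each negative stratum $X^-(Y)$ is an affine bundle over the component $Y \subset X^T$, with fibre dimension equal to the number of positive $\lambda$-weights in the relevant normal directions; in particular the open stratum $X^- = X^-(x^-)$ is an affine space. I would then identify the irreducible components $D_1, \dots, D_r$ of the complement $X \setminus X^-$ with the closures of the negative strata of codimension one. A codimension-one stratum has exactly one positive weight among its conormal directions, which forces the corresponding component $Y$ to be an isolated $T$-fixed point $x_i^-$; this gives part $(3)$.

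For the Picard group in $(1)$, the key point is that $X^- \cong \mathbb{A}^N$ has trivial divisor class group. Smoothness identifies $\Pic(X)$ with $\operatorname{Cl}(X)$, and the excision sequence $\bigoplus_i \Z\,[D_i] \to \operatorname{Cl}(X) \to \operatorname{Cl}(X^-) \to 0$ attached to the open immersion $X^- \hookrightarrow X$, together with $\operatorname{Cl}(X^-)=0$, shows that $[D_1], \dots, [D_r]$ generate $\Pic(X)$. To upgrade this to a basis I would produce a dual family of curves, which also yields the numerical statements.

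Define $C_i \defeq \overline{B \cdot x_i^-}$, a $B$-invariant curve through $x_i^-$, as in $(4)$. The heart of the argument is the intersection computation $D_i \cdot C_j = \delta_{ij}$: one works locally at the isolated fixed point $x_j^-$, where the transverse intersection of the positive stratum $X^+(x_j^-)$ and the negative stratum $X^-(x_j^-)$ along $\{x_j^-\}$ — guaranteed by \Cref{thm:BB} — lets one read off that $C_j$ is a smooth curve meeting $D_j$ transversally in one point and disjoint from the other $D_i$. Since the resulting pairing matrix is the identity, the $[D_i]$ are linearly independent, hence a basis of $\Pic(X)$, and the $[C_j]$ form the dual basis of $N_1(X)_{\Q}$; moreover the pairing $\Pic(X) \times N_1(X) \to \Z$ being perfect forces $\Pic(X) \to N^1(X)$ to be an isomorphism, yielding the equality of rational and numerical equivalence claimed in $(2)$.

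It remains to establish the positivity statements. Each $D_i$ is a $B$-stable effective divisor whose complement contains the affine cell $X^-$; I would show it is globally generated by exhibiting, through the representation of $B$ on the relevant line bundle, enough sections whose common zero locus is exactly $D_i$, so that in particular the $D_i$ are nef. Combined with $D_i \cdot C_j = \delta_{ij}$ and the fact that $\NE(X)$ is spanned by the classes $[C_i]$ — which holds because the cone of curves of such a $G$-simple variety is generated by $T$-invariant curves, and these reduce to the $C_i$ — Kleiman's criterion then gives that $\sum_i a_i D_i$ is nef (resp. ample) precisely when all $a_i \geq 0$ (resp. $a_i > 0$), completing $(2)$ and $(5)$. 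The main obstacle is the local analysis at the codimension-one fixed points underlying both the global generation of the $D_i$ and the intersection numbers $D_i \cdot C_j = \delta_{ij}$; this transverse-slice computation is exactly the technical core carried out in \cite{Brion02}, and it is what makes the combinatorial basis explicit for the smooth $G$-simple varieties at hand.
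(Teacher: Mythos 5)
First, a point of comparison: the paper does not prove \Cref{michel} at all — it is quoted from \cite{Brion02} ("let us recall here the main results from..."), so your attempt must be measured against Brion's original argument rather than anything in the text. Your overall architecture (Białynicki-Birula decomposition, excision to produce generators of $\Pic(X)$, the dual curves $C_i$, then Kleiman's criterion) is the right one, but it contains genuine gaps at exactly the points where the $G$-action, and not merely the $T$-action, must be used.

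The first gap is in part (3) and in the affineness of $X^-$. You argue that a codimension-one stratum "has exactly one positive weight among its conormal directions, which forces the corresponding component $Y$ to be an isolated $T$-fixed point". This is a non-sequitur: codimension one only says that the positive part of the normal bundle along $Y_i$ has rank one, and puts no constraint whatsoever on $\dim Y_i$; weight counting for the torus alone cannot prove isolatedness. The actual argument needs $G$-simplicity: the fibres of $p^-\colon X^-(Y_i)\to Y_i$ are $B^-$-stable, the closure of each contains a $B^-$-fixed point, and a $G$-simple projective variety has a \emph{unique} $B^-$-fixed point (it lies in the unique closed orbit), which collapses $Y_i$ to a point. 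The same issue undermines your excision step: $\operatorname{Cl}(X^-)=0$ requires $X^-$ to be an affine space, i.e.\ that $x^-$ itself is an isolated fixed point, which again does not follow from the choice of $\lambda$ alone.

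The second gap concerns positivity. As written, your global-generation step is self-defeating: sections "whose common zero locus is exactly $D_i$" would mean the base locus of $|D_i|$ equals $D_i$, which is the opposite of base-point-freeness. The correct argument is global: since $G$ is connected and $\Pic(X)$ is discrete, every translate $gD_i$ is linearly equivalent to $D_i$, and $\bigcap_{g\in G} gD_i$ is a closed $G$-stable subset missing the unique closed orbit (which passes through $x^-\in X^-$), hence empty by $G$-simplicity; this is what gives base-point-freeness and hence nefness. Without it, the Kleiman step in (2) has no starting point. Relatedly, in (5) you invoke the claim that $\NE(X)$ is generated by $T$-invariant curves "and these reduce to the $C_i$": the second half is false as stated (a $G$-simple variety has many $T$-invariant curves besides the $C_i$; in a flag variety, all the curves $\overline{U_\gamma \dot w o}$), and the first half is an unproven theorem you would have to import. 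Neither is needed: once the $[D_i]$ form a basis of $N^1(X)_{\Q}$ dual to the $[C_j]$, every irreducible curve satisfies $[C]=\sum_i (D_i\cdot C)\,[C_i]$, and nefness of the $D_i$ makes all coefficients non-negative, so (5) is a corollary of (1), (2) and (4) rather than an input to them.
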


%%%%%%%%%%%%%%%%%%%%%%%%%%%%%%%%%%%%%%%%%%%%%%%%%%%%%%%5
\subsubsection{Białynicki-Birula decomposition of flag varieties}

Let us now specialize to our case i.e. interpret the results of the above Section in terms of root systems. The first step consists in recalling the Bruhat decomposition of a flag variety with reduced stabilizer, i.e. $X=G/P_I$ where $I \subset \Delta$ is a basis for the root system of a Levi subgroup of $P_I$. In particular, for a simple root $\alpha$ the subgroup $P^\alpha$ - which has been widely used in the previous Sections - coincides with $P_{\Delta \backslash \{ \alpha \}}$. Let us fix a set of representatives $\dot{w} \in N_G(T)$, for $w \in W=W(G,T)$ and let us recall the following (see \cite[8.3]{Springer}).

\begin{theorem}[Bruhat decomposition]
\label{bruhat}
Let $G \supset B \supset T$ be a reductive group, a Borel subgroup and a maximal torus, and $W=W(G,T)$. Then the following hold.
\begin{enumerate}[(1)]
    \item $G$ is the disjoint union of the double cosets $Bw B$, for $w \in W$.
    \item Let $\Phi_w$ be the set of positive roots $\gamma$ such that $w^{-1}\gamma$ is negative. Then
    \[
    U_w \defeq \prod_{\gamma \in \Phi_w} U_\gamma
    \] is a subgroup of the unipotent radical of $B$, with the product being taken in any order.
    \item The map $U_w \times B \rightarrow BwB$ given by $(u,b) \mapsto u\dot{w}b$ is an isomorphism of varieties.
\end{enumerate}
\end{theorem}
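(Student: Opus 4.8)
The plan is to treat this as the classical Bruhat decomposition, for which a self-contained argument can be organized around the Tits system (BN-pair) attached to the data $(G,B,N_G(T))$, exactly as in \cite[8.3]{Springer}; I would recall the main steps rather than reprove the Tits-system axioms from scratch. The essential structural inputs are the decomposition $R_u(B) = \prod_{\gamma \in \Phi^+} U_\gamma$ of the unipotent radical as a directly spanned product of root subgroups, the Chevalley commutator formula controlling the products $U_\gamma U_\delta$, and the conjugation rule $\dot{w} U_\gamma \dot{w}^{-1} = U_{w\gamma}$ for $w \in W$.

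For part (1), I would first show that $BWB \defeq \bigcup_w BwB$ is a subgroup of $G$: it plainly contains $B$ and the representatives $\dot{w}$, and closure under multiplication follows from the defining relation $\dot{s} B \dot{w} \subseteq B\dot{s}\dot{w}B \cup B\dot{w}B$ for a simple reflection $s$, proven by an explicit root-subgroup computation in the rank-one subgroup generated by $U_{\pm\alpha_s}$. Since $\langle B, N_G(T)\rangle = G$, this yields the covering $G = \bigcup_w BwB$. Disjointness, i.e. $BwB = Bw'B \Rightarrow w = w'$, I would obtain by induction on $\ell(w)$ using the same relation together with the non-degeneracy $\dot{s} B \dot{s} \neq B$.

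For part (2), the key point is that $\Phi_w = \{\gamma \in \Phi^+ : w^{-1}\gamma \in \Phi^-\}$ is a closed subset of $\Phi^+$: if $\gamma,\delta \in \Phi_w$ and $\gamma+\delta \in \Phi$, then $w^{-1}(\gamma+\delta)$ is a sum of two negative roots, hence negative, so $\gamma+\delta \in \Phi_w$. Feeding this closedness into the Chevalley commutator formula shows that the set-theoretic product $\prod_{\gamma \in \Phi_w} U_\gamma$ is a subgroup of $R_u(B)$ which is directly spanned, so it is independent of the order of the factors, with dimension $|\Phi_w| = \ell(w)$. For part (3), surjectivity of $(u,b)\mapsto u\dot{w}b$ follows by splitting $R_u(B) = U_w \cdot U_w'$ with $U_w' = \prod_{w^{-1}\gamma \in \Phi^+} U_\gamma$ and moving the $U_w'$-part across $\dot{w}$: since $w^{-1}$ keeps these roots positive, $\dot{w}^{-1} U_w' \dot{w} \subseteq R_u(B) \subseteq B$, so any $b_1\dot{w}b_2$ rewrites as $u_w \dot{w} b$. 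For injectivity, if $u_w \dot{w} b = u_w' \dot{w} b'$ then $\dot{w}^{-1}\bigl((u_w')^{-1}u_w\bigr)\dot{w} \in B$; but $\dot{w}^{-1} U_w \dot{w}$ lies in $U^- = \prod_{\gamma \in \Phi^-} U_\gamma$, and $U^- \cap B = \{e\}$, forcing $u_w = u_w'$ and then $b = b'$.

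The main obstacle, and the reason I would ultimately defer to \cite{Springer} rather than fully reconstruct the argument, is upgrading this bijection to an isomorphism of varieties in arbitrary characteristic: one must know that the multiplication map on a directly spanned product of root subgroups is an isomorphism onto a closed subvariety, and that $U_w \times B \to G$ is an immersion with image precisely the locally closed cell $BwB$. These are exactly the delicate scheme-theoretic points handled by the general structure theory, and they are where the separability and positive-characteristic subtleties would otherwise have to be addressed by hand.
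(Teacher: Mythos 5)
Your proposal is correct, and it matches the paper's treatment: the paper does not prove this statement at all but simply recalls it from \cite[8.3]{Springer}, which is exactly the reference (and the standard Tits-system/root-subgroup argument) your sketch reconstructs and ultimately defers to. The outline you give — closure of $\Phi_w$ under addition, the commutator formula, splitting $R_u(B)=U_w\cdot U_w'$ for surjectivity, and $U^-\cap B=\{e\}$ for injectivity — is the classical proof, with the scheme-theoretic refinement rightly left to the cited source.
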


This gives a decomposition of $G/B$ into the disjoint union of the cells $BwB/B$, which are isomorphic to $U_w$ i.e. to affine spaces of dimension equal to the length of $w$. Since we want to work with $G/P_I$ instead of $G/B$, we shall not consider the whole Weyl group but its quotient by the subgroup $W_I$ generated by the reflections corresponding to simple roots in $I$.
\begin{lemma}
\label{w_minimal}
In any left coset of $W_I$ in $W$ there is a unique element $w$ characterized by the fact that $wI \subset \Phi^+$ or by the fact that the element $w$ is of minimal length in $wW_I$.
\end{lemma}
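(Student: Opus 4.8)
The plan is to deduce the statement from the standard combinatorics of the length function $\ell(w)=|\Phi_w|$ on $W$, where I recall $\Phi_w=\{\gamma\in\Phi^+ : w^{-1}\gamma\in\Phi^-\}$ and $\Phi^-=\Phi\setminus\Phi^+$. It is convenient to use the inversion set $\operatorname{Inv}(w):=\{\beta\in\Phi^+ : w\beta\in\Phi^-\}$, for which $\ell(w)=|\operatorname{Inv}(w)|=|\Phi_w|$ (the two biject via $\gamma\mapsto -w^{-1}\gamma$). The one computational input I would set up first is the behaviour of length under right multiplication by a simple reflection $s_\alpha$ ($\alpha\in\Delta$): since $s_\alpha$ permutes $\Phi^+\setminus\{\alpha\}$ and sends $\alpha$ to $-\alpha$, comparing the inversion sets of $w$ and $ws_\alpha$ shows that
\[
\ell(ws_\alpha)=\ell(w)+1 \ \text{ if } w(\alpha)\in\Phi^+, \qquad \ell(ws_\alpha)=\ell(w)-1 \ \text{ if } w(\alpha)\in\Phi^-.
\]

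First I would settle existence and the easy direction. Given a coset $vW_I$, choose $w\in vW_I$ of minimal length (possible as lengths are non-negative integers). If some $\alpha\in I$ had $w(\alpha)\in\Phi^-$, then $ws_\alpha\in vW_I$ would have length $\ell(w)-1$, contradicting minimality; hence $wI\subset\Phi^+$. This already shows that a minimal-length representative satisfies $wI\subset\Phi^+$, and that at least one such $w$ exists. It remains to show that the condition $wI\subset\Phi^+$ forces $w$ to be the \emph{unique} minimal element, which will simultaneously give uniqueness and the equivalence of the two characterizations.

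The crux is the additivity $\ell(wu)=\ell(w)+\ell(u)$ for every $u\in W_I$, under the hypothesis $wI\subset\Phi^+$. I would isolate two facts about the root subsystem $\Phi_I\subset\Phi$ spanned by $I$ (with $\Phi_I^+=\Phi_I\cap\Phi^+$): first, $wI\subset\Phi^+$ implies $w(\Phi_I^+)\subset\Phi^+$, because any $\gamma\in\Phi_I^+$ is a non-negative integer combination of $I$, so $w\gamma=\sum_{\alpha\in I} c_\alpha\, w(\alpha)$ is a non-negative combination of simple roots which, being a root, must be positive; second, every $u\in W_I$ permutes $\Phi^+\setminus\Phi_I^+$, since each $s_\beta$ with $\beta\in I$ leaves the $\Delta\setminus I$-coefficients of such a root unchanged. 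Consequently $\operatorname{Inv}(u)\subset\Phi_I^+$ while $\operatorname{Inv}(w)\subset\Phi^+\setminus\Phi_I^+$. Feeding these into the count of $\operatorname{Inv}(wu)$ — partitioning $\Phi^+$ according to the sign of $u(\cdot)$ — each of the $\ell(u)$ roots inverted by $u$ stays inverted by $wu$ (its image lies in $-\Phi_I^+$, and $w$ keeps $\Phi_I^+$ positive, so a sign flip occurs), while the roots inverted by $wu$ but not by $u$ correspond bijectively, via $\eta\mapsto u^{-1}\eta$, to the $\ell(w)$ roots inverted by $w$; the two families are disjoint because one lies in $\Phi_I^+$ and the other in $\Phi^+\setminus\Phi_I$. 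This yields $\ell(wu)=\ell(w)+\ell(u)$, hence $\ell(wu)>\ell(w)$ for all $u\neq e$ in $W_I$, proving that such $w$ is the unique minimal element of its coset.

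I expect the main obstacle to be the inversion-set bookkeeping in this last step: keeping the conventions straight (the passage between $\Phi_w$ and $\operatorname{Inv}(w)$, and left versus right cosets) and verifying cleanly that the two contributions neither overlap nor cancel. Once the two structural facts about $\Phi_I$ — preservation of positivity on $\Phi_I^+$ and the permutation of $\Phi^+\setminus\Phi_I^+$ — are in hand, the additivity, and thus the existence, uniqueness, and the equivalence of the two descriptions, all follow formally.
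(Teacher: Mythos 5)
Your proof is correct. Note that the paper does not actually prove this lemma: its ``proof'' is a one-line citation to Borel--Tits \cite[Proposition 3.9]{BorelTits}. What you have written is the standard self-contained argument for minimal-length coset representatives: the length-change formula $\ell(ws_\alpha)=\ell(w)\pm 1$ according to the sign of $w(\alpha)$, the two structural facts about $\Phi_I$ (namely that $wI\subset\Phi^+$ forces $w(\Phi_I^+)\subset\Phi^+$, and that every $u\in W_I$ permutes $\Phi^+\setminus\Phi_I^+$), and the inversion-set count giving the additivity $\ell(wu)=\ell(w)+\ell(u)$ for $u\in W_I$ whenever $wI\subset\Phi^+$. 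All of these steps check out, and the additivity indeed yields existence, uniqueness, and the equivalence of the two characterizations in one stroke: a minimal-length representative must satisfy $wI\subset\Phi^+$, and any element satisfying $wI\subset\Phi^+$ is the strict minimum of the length function on its coset, hence unique. The one ingredient you use tacitly is that a root lying in the $\Z$-span of $I$ already belongs to $\Phi_I$ (so that a positive root outside $\Phi_I$ has a strictly positive coefficient on some element of $\Delta\setminus I$, which is what makes the permutation claim for $\Phi^+\setminus\Phi_I^+$ work); this is standard but deserves a sentence. Compared with the paper's approach, yours buys self-containedness and makes visible exactly which combinatorial facts about the root system are needed, at the cost of about a page of inversion-set bookkeeping that the citation to \cite{BorelTits} outsources.
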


\begin{proof}
    See \cite[Proposition 3.9]{BorelTits}.
\end{proof}

We denote the set of such representatives as $W^I$. In particular, denoting $w_0$ and $w_{0,I}$ the element of longest length of $W$ and $W_I$ respectively, then $w_0^I \defeq w_0w_{0,I}$ is the element of longest length in $W^I$. 

\begin{proposition}[Generalized Bruhat decomposition]
\label{generalized_bruhat}
 For a fixed $I \subset \Delta$, the group $G$ is the disjoint union of the double cosets $Bw P_I$, where $w$ ranges over the set $W^I$.   
\end{proposition}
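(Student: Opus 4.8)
The plan is to deduce this generalized decomposition from the ordinary Bruhat decomposition (\Cref{bruhat}) together with two standard combinatorial facts about the pair $(W,W_I)$: first, that every $u \in W$ admits a unique factorisation $u = wv$ with $w \in W^I$ and $v \in W_I$, and that this factorisation is length-additive, $\ell(u) = \ell(w) + \ell(v)$; second, that the parabolic $P_I$ itself decomposes as $P_I = \bigsqcup_{v \in W_I} BvB$. The first is exactly \Cref{w_minimal} together with the characterisation of $W^I$ by minimal length, while the second is the Bruhat decomposition applied to the group $P_I$, which inherits a $BN$-pair structure from $G$; both may be taken as known.

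First I would write, for a fixed $w \in W^I$,
\[
BwP_I = Bw \Bigl( \bigsqcup_{v \in W_I} BvB \Bigr) = \bigcup_{v \in W_I} BwBvB.
\]
The crucial step is then to collapse each product $BwBvB$ into a single Bruhat cell. Since $w \in W^I$ and $v \in W_I$ satisfy $\ell(wv) = \ell(w) + \ell(v)$, the cell-multiplication rule for a $BN$-pair gives $BwB \cdot BvB = BwvB$; I would establish this by induction on $\ell(v)$, reducing to the elementary relation $BsB \cdot BuB = BsuB$, valid for a simple reflection $s$ whenever $\ell(su) = \ell(u)+1$, and using a reduced expression for $v$ in terms of reflections attached to $I$. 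Consequently
\[
BwP_I = \bigcup_{v \in W_I} BwvB,
\]
which is in fact a disjoint union, the elements $wv$ being pairwise distinct as $v$ ranges over $W_I$ and the Bruhat cells being disjoint.

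It then remains to run over $w \in W^I$. Summing the previous identity and using that $(w,v) \mapsto wv$ is a bijection from $W^I \times W_I$ onto $W$, I obtain
\[
\bigsqcup_{w \in W^I} BwP_I = \bigsqcup_{w \in W^I} \bigsqcup_{v \in W_I} BwvB = \bigsqcup_{u \in W} BuB = G,
\]
which simultaneously yields the covering and the disjointness asserted in the statement: disjointness of the cosets $BwP_I$ across distinct $w \in W^I$ follows at once, since any common point would force two distinct elements of $W$ to index the same Bruhat cell, contradicting \Cref{bruhat}.

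The main obstacle is precisely the cell-multiplication identity $BwBvB = BwvB$ under length-additivity. This is the only non-formal step, and it is exactly where the hypothesis $w \in W^I$ is genuinely used; everything else is an assembly of \Cref{bruhat}, \Cref{w_minimal}, and the unique length-additive factorisation $W = W^I W_I$.
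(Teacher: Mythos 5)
Your proof is correct, but there is nothing in the paper to compare it against: the paper states \Cref{generalized_bruhat} without proof, treating it as a classical fact whose ingredients, the Bruhat decomposition (\Cref{bruhat}, quoted from Springer) and the unique minimal coset representatives (\Cref{w_minimal}, quoted from Borel--Tits), are imported from the literature. Your argument is exactly the standard derivation that this implicit citation covers, and the three auxiliary facts you invoke --- the decomposition $P_I = \bigsqcup_{v \in W_I} BvB$ of a standard parabolic, the cell-multiplication rule $BwB \cdot BvB = BwvB$ under length-additivity, and the length-additive bijection $W^I \times W_I \to W$ --- are all standard and consistent with \Cref{w_minimal}, so quoting them is legitimate. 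Two small refinements. First, since you multiply cells on the right by $BvB$, the elementary step you actually iterate is the right-handed relation $BuB \cdot BsB = BusB$ when $\ell(us) = \ell(u)+1$, applied along a reduced expression $v = s_1 \cdots s_k$ with $s_i$ attached to $I$; the required inequalities $\ell(w s_1 \cdots s_j) = \ell(w) + j$ hold because $s_1 \cdots s_j \in W_I$ and $w \in W^I$. Second, the covering half of the statement needs no cell multiplication at all: since $B \subset P_I$ one has $B\dot{w}B\dot{v}B \subseteq B\dot{w}BP_I = B\dot{w}P_I$, hence $BwvB \subseteq BwP_I$ directly, and $G = \bigcup_{u \in W} BuB$ already gives the covering; it is only the disjointness of the cosets $BwP_I$ across distinct $w \in W^I$ that genuinely uses your key identity (or at least the weaker containment $BwBvB \subseteq \bigcup_{v' \in W_I} Bwv'B$ coming from the Tits-system axiom). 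So your identification of the cell-multiplication identity as the one non-formal ingredient is accurate, but it can be localized to the disjointness claim.
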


In order to get a similar statement as $(3)$ in \Cref{bruhat}, let us consider for any $w \in W^I$ the sets 
\begin{align}
\label{lem:phi_I}
& \Phi_w^I \defeq \{ \gamma \in \Phi^+ \colon w^{-1}\gamma \notin \Phi^+ \text{ and } w^{-1} \gamma \notin \Phi_I\},\\%\Phi_w \cap (\Phi^+\backslash \Phi^+_I) \quad \text{and} \quad
& \Phi_{w,I} \defeq \Phi_w \backslash \Phi^I_w = \Phi_w \cap \Phi_I^+.
\end{align}

%\begin{lemma}
%\label{lem:phi_I}
%We have $\Phi_w^I = \{ \gamma \in \Phi^+ \colon w^{-1}\gamma \notin \Phi^+ \text{ and } w^{-1} \gamma \notin \Phi_I\}$.
%\end{lemma}

\begin{lemma}
With the above notation, let us fix $w \in W^I$.
\begin{enumerate}[(1)]
    \item The groups $U_\gamma$, with $\gamma$ ranging over $\Phi_w^I$ (resp. $\Phi_{w,I}$) generate two subgroups of the unipotent radical of $B$, 
    \[
    U_w^I = \prod_{\gamma \in \Phi_w^I} U_\gamma \quad \text{and} \quad U_{w,I} = \prod_{\gamma \in \Phi_{w,I}} U_\gamma,
    \]
    with the product being taken in any order. 
    \item The product map $U_w^I \times P_I \rightarrow BwP_I$ given by $(u,h) \mapsto u\dot{w}h$ is an isomorphism of varieties. 
\end{enumerate}
\end{lemma}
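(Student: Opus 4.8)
The lemma claims that for $w \in W^I$, the subsets $\Phi_w^I$ and $\Phi_{w,I}$ yield subgroups $U_w^I$ and $U_{w,I}$ of the unipotent radical of $B$ (with the product taken in any order), and that the product map $U_w^I \times P_I \to BwP_I$, $(u,h)\mapsto u\dot w h$, is an isomorphism of varieties.

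Let me sketch the proof approach.

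The plan is to deduce this from the ordinary Bruhat decomposition (Theorem, $8.3$ of Springer) by carefully analyzing how the positive-root set $\Phi_w$ of the classical statement splits when we pass from $G/B$ to $G/P_I$. First I would observe that $P_I = B W_I B$ has unipotent radical structure governed by $\Phi_I$: recall $P_I$ is generated by $B$ together with the $U_{-\beta}$ for $\beta \in I$, so its Lie-algebra roots are $\Phi^+ \cup \Phi_I^-$. The key combinatorial input is the partition $\Phi_w = \Phi_w^I \sqcup \Phi_{w,I}$ already recorded in (\ref{lem:phi_I}): a positive root $\gamma$ with $w^{-1}\gamma \in \Phi^-$ lands in $\Phi_{w,I}$ precisely when $w^{-1}\gamma \in \Phi_I^-$, and in $\Phi_w^I$ otherwise. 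I would check that $\Phi_w^I$ is closed (if $\gamma,\delta \in \Phi_w^I$ and $\gamma+\delta \in \Phi$ then $\gamma+\delta \in \Phi_w^I$), which gives that $U_w^I = \prod_{\gamma \in \Phi_w^I} U_\gamma$ is a subgroup independent of the ordering; this is the standard fact that a product of root subgroups over a closed set of roots forms a group. The same closedness argument applies to $\Phi_{w,I}$ giving $U_{w,I}$.

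Next I would establish the factorization. From the classical Bruhat cell, $U_w \cong U_w^I \cdot U_{w,I}$ (as a product of schemes, using the directed order so the smaller group normalizes appropriately), and $U_{w,I} \subset P_I$ because its roots $\gamma$ satisfy $w^{-1}\gamma \in \Phi_I^-$, hence $\dot w^{-1} U_{w,I} \dot w \subset $ (unipotent radical side of $P_I$) $\subset P_I$, so $U_{w,I}\dot w \subset \dot w P_I$. Therefore $BwB = U_w \dot w B$ rewrites as $U_w^I \dot w (U_{w,I}' B)$ where $U_{w,I}' = \dot w^{-1} U_{w,I} \dot w \subset P_I$, collapsing the $U_{w,I}$ factor into $P_I$. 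This yields surjectivity of $U_w^I \times P_I \to BwP_I$. For injectivity and the scheme-theoretic isomorphism, I would argue that $U_w^I \cap \dot w P_I \dot w^{-1} = \{1\}$: if $u \in U_w^I$ had $\dot w^{-1} u \dot w \in P_I$, then since the roots of $U_w^I$ map under $w^{-1}$ into $\Phi^- \setminus \Phi_I^-$ (that is exactly the defining condition $w^{-1}\gamma \notin \Phi^+$ and $w^{-1}\gamma \notin \Phi_I$), none of these roots lie among the roots of $P_I$, forcing $u = 1$. This triviality of intersection upgrades the surjection to a bijection on points, and because everything is built from root subgroups the morphism is an isomorphism of varieties by comparing tangent spaces or invoking the open-cell structure.

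The main obstacle I anticipate is not surjectivity but the clean verification that the product over $\Phi_w^I$ is a \emph{group} independent of ordering and that the intersection $U_w^I \cap \dot w P_I \dot w^{-1}$ is trivial; both hinge on a precise bookkeeping of which roots $\gamma \in \Phi_w$ satisfy $w^{-1}\gamma \in \Phi_I^-$ versus lying in the genuinely ``new'' part $\Phi_w^I$. Since we work in arbitrary characteristic and $P_I$ here is reduced (it equals $P_\emptyset$-type $P^\alpha$ built from $I = \Delta\setminus\{\alpha\}$, or more generally any standard reduced parabolic), no infinitesimal subtleties intervene and the classical arguments of Borel--Tits and Springer apply verbatim; I would cite \cite[Proposition 3.9]{BorelTits} for the minimality of $w \in W^I$ ensuring $\Phi_w^I$ has the expected cardinality $\ell(w)$, matching $\dim BwP_I/P_I = \ell(w)$, which confirms the dimension count and hence that the bijective morphism is indeed an isomorphism.
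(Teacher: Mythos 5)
Your proposal is correct in substance and its skeleton is close to the paper's proof: part (1) is handled identically (closedness of $\Phi_w^I$ and $\Phi_{w,I}$ inside $\Phi^+$ together with the commutator formula, Springer Prop.\ 8.2.3), and for part (2) both arguments turn on the same conjugation identity $u_\delta(y)\dot w = \dot w\, u_{w^{-1}\delta}(cy)$, which absorbs the root groups indexed by roots $\delta$ with $w^{-1}\delta \in \Phi_I$ into $P_I$. Where you genuinely diverge is uniqueness: the paper decomposes $BwP_I = \bigsqcup_{w' \in W_I} Bww'B$ and inherits uniqueness of the factorization $x = u\dot w h$ from the uniqueness statement in the classical Bruhat decomposition of each double coset $Bww'B$, whereas you prove injectivity directly from the triviality of $U_w^I \cap \dot w P_I \dot w^{-1}$, using that $\dot w^{-1} U_w^I \dot w$ is a product of root groups for roots in $\Phi^- \setminus \Phi_I$, hence lies in the unipotent radical of the opposite parabolic $P_I^-$, which meets $P_I$ trivially. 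Your route is arguably cleaner, and your appeal to the open-cell structure (the multiplication map $R_u(P_I^-) \times P_I \to G$ being an open immersion) is also the right way to obtain the isomorphism of \emph{varieties}, a point the paper's proof leaves implicit.

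Two cautions. First, your closing inference --- bijective morphism plus matching dimensions implies isomorphism --- is false in positive characteristic: the relative Frobenius, and indeed the map $\sigma \colon G/P_I \to G/P$ introduced just after this lemma in the paper, are bijective morphisms between varieties of the same dimension which are not isomorphisms. So the dimension count cannot carry that step; keep instead the open-cell argument (or note that a universally injective \'etale morphism is an open immersion). Second, your surjectivity step as written only shows $BwB \subset U_w^I \dot w P_I$; to conclude you need the (easy) extra observation $BwP_I = (BwB)P_I$, which follows from $BP_I = P_I$, giving $BwP_I \subset U_w^I \dot w P_I P_I = U_w^I \dot w P_I$.
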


\begin{proof}
    To prove $(1)$, let us recall that for any pair of roots $\gamma, \delta \in \Phi$ there exist constants $c_{ij}$ such that 
    \begin{align*}
    %\label{commutator}
    (u_\gamma(x), u_\delta(y)) = \prod_{i,j>0, \, i\gamma+j\delta \in \Phi} u_{i\gamma+j\delta} (c_{ij}x^iy^j), \qquad \text{for all } x,y \in \Ga
    \end{align*}
    (see \cite[Proposition 8.2.3]{Springer}). If $\gamma$ and $\delta$ are both in $\Phi^I_w$, then $w^{-1}(i\gamma+j\delta)$ is still negative and not belonging to $\Phi_I$, hence by \Cref{lem:phi_I} the product of the root subgroups with roots ranging over $\Phi^I_w$ is a group. The same reasoning holds for the second product.\\
    Moving on to $(2)$, let us consider an element $x \in BwP_I$. Let us fix an order on $\Phi_w^I = \{ \gamma_1,\ldots, \gamma_l\}$ and on $\Phi_{w,I}= \{\delta_1,\ldots,\delta_m\}$ . By \Cref{bruhat}, there are a unique $w^\prime \in W_I$, a unique $u= u_{\gamma_1}(x_1) \cdot \ldots \cdot u_{\gamma_l}(x_l) \in U_w^I$, a unique $u^\prime=u_{\delta_1}(y_1)\cdot \ldots \cdot u_{\delta_m}(y_m) \in U_{w,I}$ and a unique $b \in B$ such that $x= uu^\prime \dot{w} \dot{w}^\prime b \in Bww^\prime B$. Moreover, by \cite[8.1.12(2)]{Springer}, there exist constants $c_i \in k$ such that
    \[
    u^\prime \dot{w} = \left( \prod_{i=1}^m u_{\delta_i}(y_i) \right) \dot{w} = \dot{w} \left( \prod_{i=1}^m \dot{w}^{-1} u_{\delta_i}(y_i) \dot{w} \right) = \dot{w} \prod_{i=1}^m u_{w^{-1}\delta_i} (c_iy_i) =: \dot{w}u^{\prime\prime}
    \]
    Since $w^{-1}\delta_i$ is in $\Phi_I$ for all $i$, the product $u^{\prime\prime}$ is an element of $P_I$, as well as $h\defeq u^{\prime\prime}\dot{w}^\prime b$ because $w^\prime \in W_I$. This gives a unique way to write $x$ as product $u\dot{w}h$ for some $u \in U_w^I$ and $h \in P_I$.
\end{proof}

Next, let us go back to our original setting: consider a sequence $G \supset P \supset P_{\text{red}} = P_I \supset B \supset T$ and look at the map 
\[
\begin{tikzcd}
    \widetilde{X} \defeq G/P_I \arrow[rr, "\sigma"] && G/P =: X,
\end{tikzcd}
\]
in order to relate the geometry of $X$ to that of $\widetilde{X}$. The morphism $\sigma$ is finite, purely inseparable and hence a homeomorphism between the underlying topological spaces. Let us denote as $\tilde{o} \in \widetilde{X}$ and $o \in X$ the respective base points.

The decomposition of \Cref{generalized_bruhat} allows us to express the variety $\widetilde{X}$ as the disjoint union of the cells $BwP_I /P_I = Bw\tilde{o}$ as $w \in W^I$. Let us remark that $W^I$ corresponds to the set of isolated points under the $T$-action, i.e. that
\[
({\widetilde{X}})^T = \{ w\tilde{o} \, \colon w \in W/W_I\}
\]
and the same holds for $X$. It is hence natural if such a decomposition coincides with the Białynicki-Birula decomposition of \Cref{thm:BB}. This is useful because the advantage of the first one is that it is more explicit and easier to manipulate, while the second can be defined also on $X$, independently of the smoothness of the stabilizer. Let us denote as ${\widetilde{X}}^+_w$ (resp. $X_w^+$) the positive Białynicki-Birula strata associated to the $T$-fixed point $w\tilde{o}$ (resp. $wo$), and the analogous notation for negative strata.% where $w$ is still chosen to be of minimal length. 

\begin{lemma}
    For any $w \in W/W_I$, we have
    \[
    Bw\tilde{o} = {\widetilde{X}}_w^+ \quad \text{and} \quad Bwo = X_w^+.
    \]
\end{lemma}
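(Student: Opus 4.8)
The plan is to identify, for each $w$, the explicit Bruhat cell with the positive Białynicki-Birula stratum attached to the same $T$-fixed point, by computing limits along $\lambda$ and then exploiting the fact that both give partitions of the same variety indexed by $W/W_I$. Recall from the preceding lemma that $Bw\tilde{o} = U_w^I\dot{w}\tilde{o}$, so every point of this cell can be written as $u\dot{w}\tilde{o}$ with $u = \prod_{\gamma\in\Phi_w^I} u_\gamma(x_\gamma)\in U_w^I$ and all $\gamma\in\Phi^+$.

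First I would prove the inclusion $Bw\tilde{o}\subseteq\widetilde{X}_w^+$. Since $w\tilde{o}$ is $T$-fixed we have $\lambda(t)^{-1}\cdot w\tilde{o} = w\tilde{o}$, and therefore
\[
\lambda(t)\cdot u\dot{w}\tilde{o} = \bigl(\lambda(t)\,u\,\lambda(t)^{-1}\bigr)\cdot w\tilde{o}.
\]
The conjugation formula $\lambda(t)\,u_\gamma(x_\gamma)\,\lambda(t)^{-1} = u_\gamma\bigl(t^{\langle\gamma,\lambda\rangle}x_\gamma\bigr)$, together with $\langle\gamma,\lambda\rangle>0$ for every $\gamma\in\Phi^+\supseteq\Phi_w^I$, shows that $\lambda(t)\,u\,\lambda(t)^{-1}\to e$ as $t\to 0$, hence $\lim_{t\to 0}\lambda(t)\cdot u\dot{w}\tilde{o} = w\tilde{o}$. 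As $w\tilde{o}$ is an isolated (hence a connected component of the) $T$-fixed locus, this places $u\dot{w}\tilde{o}$ in the positive stratum $\widetilde{X}_w^+$. Now both $\{Bw\tilde{o}\}_{w\in W/W_I}$ (by \Cref{generalized_bruhat}) and $\{\widetilde{X}_w^+\}_{w\in W/W_I}$ (by \Cref{thm:BB}, the connected components of $(\widetilde{X})^T$ being exactly the points $w\tilde{o}$) partition $\widetilde{X}$ over the same index set; the inclusions just proved, combined with pairwise disjointness of the strata, force equality, since a point of $\widetilde{X}_w^+$ lying in some cell $Bw'\tilde{o}\subseteq\widetilde{X}_{w'}^+$ would give $\widetilde{X}_w^+\cap\widetilde{X}_{w'}^+\neq\emptyset$, whence $w=w'$. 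This settles the first identity.

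For the second identity I would transport everything through $\sigma$, which is $G$-equivariant, finite purely inseparable, and a homeomorphism, hence in particular proper and $\Gm$-equivariant for the action induced by $\lambda$. Continuity and equivariance give $\sigma\bigl(\lim_{t\to 0}\lambda(t)\cdot\tilde{x}\bigr) = \lim_{t\to 0}\lambda(t)\cdot\sigma(\tilde{x})$, and since $\sigma(w\tilde{o}) = wo$ this yields $\sigma(\widetilde{X}_w^+)\subseteq X_w^+$; as $\sigma$ is bijective and both families partition their respective spaces, the inclusion is an equality. Because $\sigma$ is also $B$-equivariant we have $\sigma(Bw\tilde{o}) = Bwo$, and combining with the first identity gives $X_w^+ = \sigma(\widetilde{X}_w^+) = \sigma(Bw\tilde{o}) = Bwo$.

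The conjugation computation and the partition bookkeeping are routine; the one delicate point — the main obstacle — is to justify that $\lambda$-limits commute with $\sigma$, i.e. that a finite purely inseparable homeomorphism genuinely preserves the limits defining the strata. This follows from properness of $\sigma$ (so that the limit, viewed as a specialization, is transported) together with $\Gm$-equivariance, and is the step where the possible nonreducedness of $P$ must be handled carefully. Should this transport argument feel unsatisfying, I would instead bypass it by repeating the limit computation of the second paragraph \emph{verbatim} on $X$, using $Bwo = U_w^I\dot{w}o = \sigma(Bw\tilde{o})$, the same conjugation formula, and the fact that $wo\in X^T$.
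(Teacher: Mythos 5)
Your proof is correct and takes essentially the same route as the paper: establish the inclusion $Bw\tilde{o}\subseteq \widetilde{X}^+_w$, upgrade it to an equality because the Bruhat cells and the positive strata both partition $\widetilde{X}$ over the same index set $W^I$, and then carry the statement over to $X$ using that $\sigma$ is a $T$-equivariant homeomorphism together with the same partition bookkeeping. The only difference is cosmetic: where the paper simply cites the $B$-invariance of positive strata, you reprove that inclusion by the explicit conjugation-and-limit computation on $U_w^I$, and your transport of limits through $\sigma$ (which in fact needs only that $\sigma$ is an equivariant morphism between proper varieties, not properness of $\sigma$ itself) is exactly the paper's remark that the $T$-equivariant map $\sigma$ respects the Bia{\l}ynicki-Birula decomposition.
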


\begin{proof}
    For the first equality, $w\tilde{o}$ belongs to $\widetilde{X}^+_w$ because it is a $T$-fixed point. Moreover, positive strata are $B$-invariant which means that $Bw\tilde{o} \subseteq \widetilde{X}^+_w$. The other inclusion comes from the fact that $\widetilde{X}$ can be expressed as the disjoint union of both the strata of the two decompositions with the same index set.\\
    Next, let us consider $Bwo = \sigma(Bw\tilde{o})$, which equals $\sigma(\widetilde{X}^+_w)$ by what we just proved. The inclusion $\sigma(\widetilde{X}^+_w) \subset X^+_w$ comes from the fact that $\sigma$ being $T$-equivariant respects the Białynicki-Birula decomposition, while the other inclusion is due to the fact that
    \[
    \bigsqcup_{w \in W^I} Bwo = X = \bigsqcup_{w \in W^I} X^+_w.
    \]
    because $\sigma$ is an homeomorphism.
\end{proof}

\begin{remark}
How can we visualize the morphism $\sigma$ on cells? By \Cref{generalized_bruhat}, the Bruhat cell associated to $w \in W^I$ in $\widetilde{X}$ is an affine space of dimension $l$, equal to the cardinality of $\Phi^I_w = \{\gamma_1,\ldots, \gamma_l\}$. Let us consider the integers $n_i$, which we recall are associated to the roots in $\Phi_w^I$ via the equality
\[
U_{-\gamma_i} \cap P = u_{-\gamma_i} (\alpha_{p^{n_i}}).
\]
If we denote as $Y_i$ the coordinate on the affine line given by $U_{\gamma_i}$, then the morphism $\sigma$ acts on such a line as an $n_i$-th iterated Frobenius morphism, hence its behavior on the cell $Bw\tilde{o} = \widetilde{X}^+_w$ can be summarized in the following diagram
\[
\begin{tikzcd}
    U_w^I \simeq \widetilde{X}_w^+ = \Spec k[Y_1,\ldots, Y_l] \simeq \AA^l \arrow[dd, "\sigma"] \arrow[rr, hookrightarrow] && G/P_I \arrow[dd, "\sigma"] \\&&\\
    X_w^+ = \Spec k[Y_1^{p^{n_1}}, \ldots, Y_l^{p^{n_l}}] \simeq \AA^l \arrow[rr, hookrightarrow] && G/P
\end{tikzcd}
\]
\end{remark}

We reinterpret all the ingredients of \Cref{michel} in order to specialize and state it in the case of flag varieties. First, $X=G/P$ is indeed smooth, projective and $G$-simple. Its unique $B$-fixed point is $x^-=o$ the base point, which gives as open cell $B^- o = B w_0o = B w_0^I o = X^+_{w_0^I}$. Moreover, the irreducible components of $X\backslash X_{w_0^I}$ are the closures of the strata of codimension one, i.e. the cells $B wo$ with $w \in W^I$ of length $l(w) = l(w_0^I)-1$. Those are exactly of the form $w= w_0s_\alpha w_{0,I}$ for $\alpha \in \Delta \backslash I$, since for $\alpha \in I$ we have that $w_0s_\alpha$ is in the same left coset as $w_0^I$. In particular, the divisors in the statement of \Cref{michel} are 
\[
D_\alpha = \overline{B w_0s_\alpha w_{0,I} o} = \overline{B w_0s_\alpha o} = \overline{B^- s_\alpha o}, \quad \text{for } \alpha \in \Delta \backslash I,
\]
hence the unique $T$-fixed point $x_\alpha^-$ such that $D_\alpha$ is the closure of $X^-(x_\alpha^-)$ is $x_\alpha^-= s_\alpha o$, and we are led to consider the $B$-invariant curves 
\[
C_\alpha = \overline{B x_\alpha^-} = \overline{B s_\alpha o}.
\]
We are now able to reformulate the results of \Cref{BBdecomp} in the following:

\begin{theorem}
\label{BB_flag}
    Let us consider a sequence $G \supset P \supset P_{\text{red}}=P_I \supset B \supset T$ and let $X=G/P$ with base point $o$ and open cell $X^-= B^-o$. Then the following hold:
    \begin{enumerate}[(1)]
        \item The irreducible components of $X \backslash X^-$ are the closures $D_\alpha$ of the negative cells associated to the points $s_\alpha o$ for $\alpha \in \Delta \backslash I$. Moreover, they are globally generated Cartier divisors, whose linear equivalence classes form a basis for $\Pic (X)$. 
        \item Every ample (resp. nef) divisor on $X$ is linearly equivalent to a unique linear combination of the $D_\alpha$'s with positive (resp. non-negative) integer coefficients. In particular, the natural map $\Pic(X) \rightarrow N^1(X)$ is an isomorphism.
        \item Considering the $B$-invariant curves $C_\alpha$'s defined above, the intersection numbers satisfy $D_\alpha \cdot C_\beta = \delta_{\alpha\beta}$.
        \item The convex cone of curves $\NE(X)$ is generated by the classes of the $C_\alpha$'s, which form a basis of the rational vector space $N_1(X)_{\Q}$.
    \end{enumerate}
\end{theorem}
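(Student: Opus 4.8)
The plan is to derive all four assertions from the general Białynicki--Birula result \Cref{michel}, due to Brion, after translating its abstract output into root-theoretic terms. First I would verify that $X = G/P$ lies in the scope of that theorem: as a homogeneous space it is smooth, as a quotient of $G$ by a parabolic subgroup it is projective, and being a single $G$-orbit it is $G$-simple, its unique closed orbit being $X$ itself. Thus \Cref{michel} applies and produces, in abstract form, divisors $D_1,\dots,D_r$ realised as closures of the codimension-one negative strata and forming a basis of $\Pic(X)$, the ample/nef characterisation together with the natural map $\Pic(X)\to N^1(X)$ being an isomorphism, isolated $T$-fixed points $x_i^-$ with $D_i=\overline{X^-(x_i^-)}$, curves $C_i=\overline{B\cdot x_i^-}$ satisfying $D_i\cdot C_j=\delta_{ij}$, and the generation of $\NE(X)$ by the classes of the $C_i$. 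It remains only to match these data with the objects named in the statement.

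For the identification I would invoke the lemma established above that the positive strata are the Bruhat cells, $X_w^+=Bwo$ for $w\in W^I$, so that $X^T=\{wo : w\in W^I\}$ is the set of isolated fixed points, indexed by $W/W_I$. Since each $w\in W^I$ is the minimal-length representative of its coset, one has $\Phi_w^I=\Phi_w$ and hence $\dim X_w^+=\vert\Phi_w^I\vert=l(w)$. Applying the transversality of positive and negative strata at an isolated fixed point (the third assertion of \Cref{thm:BB}) gives $\dim X_w^+ + \dim X^-(wo)=\dim X$, so that $X^-(wo)$ has codimension exactly $l(w)$. Consequently the open negative cell is $X^-=X^-(o)=B^-o$, corresponding to $w=e$, while the codimension-one negative strata are precisely those with $l(w)=1$, i.e. $w=s_\alpha$ with $s_\alpha\in W^I$. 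Because a simple reflection $s_\alpha$ is the minimal representative of its own coset exactly when $\alpha\in\Delta\setminus I$ --- this is the characterisation $wI\subset\Phi^+$ of \Cref{w_minimal} --- the components $D_\alpha$ of $X\setminus X^-$ are indexed by $\Delta\setminus I$, with $x_\alpha^-=s_\alpha o$, $D_\alpha=\overline{X^-(s_\alpha o)}=\overline{B^- s_\alpha o}$, and $C_\alpha=\overline{B\, s_\alpha o}$.

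Substituting these identifications into \Cref{michel} then yields the four statements at once: (1) and (2) are the transcription of \Cref{michel}(1)--(2) once the components are labelled by $\Delta\setminus I$; (3) is \Cref{michel}(4) with $x_\alpha^-=s_\alpha o$; and (4) is \Cref{michel}(5). In particular the Picard number of $X$ equals $\vert\Delta\setminus I\vert$, the formula used repeatedly elsewhere in the paper. The only delicate point, and the step I expect to require genuine care, is the codimension bookkeeping for the negative strata: one must apply transversality at the isolated fixed points so that the tangent spaces to $X^+(wo)$ and $X^-(wo)$ are genuinely complementary, and one must pin down the equivalence $s_\alpha\in W^I\iff\alpha\in\Delta\setminus I$. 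Everything else is a faithful transcription of \Cref{michel} through the $T$-equivariant homeomorphism $\sigma\colon G/P_I\to X$, whose equivariance ensures that the Białynicki--Birula stratification of $X$ coincides with the Bruhat stratification.
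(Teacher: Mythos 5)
Your proposal is correct, and its skeleton is the same as the paper's: verify that $X=G/P$ is smooth, projective and $G$-simple, apply \Cref{michel}, and translate its abstract output into root-theoretic data by means of the lemma identifying the Bruhat cells $Bwo$ with the positive Białynicki-Birula strata $X^+_w$.

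Where you genuinely diverge is in the bookkeeping that pins down the codimension-one components of $X\setminus X^-$. The paper argues on the \emph{positive} side: it identifies the open cell with $X^+_{w_0^I}$, lists the codimension-one positive cells as $Bwo$ with $w=w_0s_\alpha w_{0,I}$, $\alpha\in\Delta\setminus I$, and then converts to opposite cells via $\overline{Bw_0s_\alpha o}=\overline{B^-s_\alpha o}$. You argue on the \emph{negative} side: from $\dim X^+_w=\vert\Phi^I_w\vert=l(w)$ for $w\in W^I$ together with the transversality clause of \Cref{thm:BB} at the isolated fixed points, you deduce $\operatorname{codim} X^-(wo)=l(w)$, so the codimension-one negative strata are exactly those attached to $s_\alpha o$ with $s_\alpha\in W^I$, i.e. $\alpha\in\Delta\setminus I$ by \Cref{w_minimal}. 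Your route buys some genuine precision: the paper's intermediate identities $B^-o=Bw_0o$ and $\overline{Bw_0s_\alpha o}=\overline{B^-s_\alpha o}$ are not literal equalities (the two sides are distinct cells, respectively distinct codimension-one subvarieties, exchanged by translation by $w_0$; already for $X=\proj^1$ one has $B^-o\neq Bw_0o$), so the paper tacitly relies on the fact that translation by $w_0$, being an automorphism coming from the connected group $G$, does not affect any of the conclusions — whereas your computation works throughout with the negative strata that actually appear in the statement and never invokes the longest element. The one step you should make explicit is the equality $X^-(wo)=B^-wo$ (in particular $X^-(o)=B^-o$, needed to match the $X^-$ of the statement with the $X^-$ of \Cref{michel}): it follows by mirroring the paper's lemma for positive cells, since negative strata are $B^-$-invariant and both families partition $X$ over the same index set $W^I$. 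With that remark added, your argument is complete.
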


\subsubsection{Contractions}

\Cref{BB_flag} tells us in particular that the Picard group of a flag variety $X=G/P$ is a free $\Z$-module of rank the number of simple roots not belonging to the root system of a Levi factor of $P_{\text{red}}$. This gives a motivation to the study, done in \Cref{sec2}, of parabolic subgroups having maximal reduced part. In order to move on to higher ranks by exploiting the previous results in rank one, we adopt the following strategy : we define a finite collection of 
%change this putting ref to \Cref{BB_flag} and to the stuff above : The structure of the Picard group of the homogeneous space $X = G/P$ shows us the importance of studying the case of unseparated flag varieties with a stabilizer whose reduced subscheme is a maximal smooth parabolic subgroup - which coincide with homogeneous spaces of Picard rank equal to one - in the sense that we will precise in the following subsection. The strategy consists in defining a collection of 
 morphisms which behave nicely, arise naturally from the variety $X$, and whose targets are homogeneous spaces of Picard rank one. As a first step towards such a construction, we recall the notion of a contraction between varieties and some of its properties.

\begin{definition}
Let $X$ and $Y$ be varieties over an algebraically closed field $k$. A \emph{contraction} between them is a proper morphism $f \colon X \rightarrow Y$ such that $f^\hash \colon \mathcal{O}_Y \longrightarrow f_\ast \mathcal{O}_X$ is an isomorphism.
\end{definition}
We will make use of the following results (stated here for reference).

\begin{theorem}
\label{contractions}
Let $f \colon X \rightarrow Y$ be a contraction between projective varieties over $k$. %Assume $f_\ast \mathcal{O}_X \simeq \mathcal{O}_Y$.
Then $f$ is uniquely determined, up to isomorphism, by the convex subcone $\NE(f)$ of $\NE(X)$ generated by the classes of curves which it contracts. Moreover, if $Y^\prime$ is a third projective variety and $f^\prime \colon X \rightarrow Y^\prime$ satisfies $\NE(f) \subset \NE(f^\prime)$, then there is a unique morphism $\psi \colon Y \rightarrow Y^\prime$ such that $f^\prime = \psi \circ f$.
\[
\begin{tikzcd}
X \arrow[rr,"f"] \arrow[rd, "f^\prime"] && Y \arrow[dl, "\psi", dotted]\\
& Y^\prime & 
\end{tikzcd}
\]
\end{theorem}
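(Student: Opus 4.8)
The plan is to deduce the first assertion from the second, so I would concentrate on the factorisation property and recover uniqueness of $Y$ at the end by symmetry. The first step is to reinterpret the cone condition geometrically. Fixing an ample divisor $A$ on $Y$, the pullback $f^\ast A$ is nef and satisfies $f^\ast A \cdot C = A \cdot f_\ast C$ by the projection formula; since $f_\ast C = 0$ exactly when $f$ contracts $C$, one checks that
\[
\NE(f) = \NE(X) \cap (f^\ast A)^\perp,
\]
so that $\NE(f)$ is an extremal face and $[C] \in \NE(f)$ if and only if $C$ is contracted by $f$. Applying the same description to $f^\prime$ with an ample divisor $A^\prime$ on $Y^\prime$, the hypothesis $\NE(f) \subset \NE(f^\prime)$ translates into the purely geometric statement that every irreducible curve contracted by $f$ is also contracted by $f^\prime$.

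Next I would run a rigidity argument to show that $f^\prime$ is constant on the fibres of $f$. Because $f$ is a contraction, i.e. $f_\ast \mathcal{O}_X = \mathcal{O}_Y$, Stein factorisation forces its fibres to be connected; fixing $y \in Y$ and setting $F = f^{-1}(y)$, a connected projective subscheme of $X$, every curve in $F$ is contracted by $f$, hence by the previous paragraph by $f^\prime$, so $(f^\prime)^\ast A^\prime$ restricts to a line bundle of degree zero on every curve of $F$. If $f^\prime|_F$ were non-constant its image would contain a curve $D$ with $A^\prime \cdot D > 0$, and lifting $D$ to an irreducible curve $C \subset F$ dominating it would give $(f^\prime)^\ast A^\prime \cdot C > 0$, a contradiction; hence $f^\prime(F)$ is a single point.

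It remains to promote the resulting set-theoretic map $y \mapsto f^\prime(f^{-1}(y))$ to a morphism $\psi \colon Y \to Y^\prime$, and this descent is the step I expect to be the main obstacle, since it is precisely where the contraction hypothesis is indispensable. The key observation is that for every open $V \subset Y^\prime$ the preimage $(f^\prime)^{-1}(V)$ is saturated for $f$: each fibre of $f$ is sent by $f^\prime$ either into $V$ or into its complement. Using properness of $f$ (whence $f$ is a closed map) one gets $(f^\prime)^{-1}(V) = f^{-1}(U)$ with $U = Y \setminus f\big(X \setminus (f^\prime)^{-1}(V)\big)$ open. Then for a regular function $\phi$ on $V$ the composite $\phi \circ f^\prime$ lies in $\mathcal{O}_X(f^{-1}(U)) = \mathcal{O}_Y(U)$, the last identification being exactly $f_\ast \mathcal{O}_X = \mathcal{O}_Y$; this produces the comorphism $\psi^\#$ and hence a morphism $\psi$ with $f^\prime = \psi \circ f$.

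Finally I would record the two bookkeeping conclusions. Uniqueness of $\psi$ is immediate because $f$ is surjective (the scheme-theoretic image being all of $Y$) and $Y^\prime$ is separated, so $\psi$ is determined on a dense set. The first assertion then follows by symmetry: if $f \colon X \to Y$ and $g \colon X \to Z$ are contractions with $\NE(f) = \NE(g)$, applying the factorisation property in both directions yields morphisms $Y \to Z$ and $Z \to Y$ under $X$, and by the uniqueness just established their composites must be the respective identities, giving an isomorphism $Y \simeq Z$ compatible with the maps from $X$.
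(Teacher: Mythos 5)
Your proof is correct. Note that the paper gives no argument of its own here: its ``proof'' is the citation to Debarre, Proposition 1.14, and what you have written is essentially the standard argument behind that citation --- detect the contracted curves by pairing with the pullback of an ample divisor, use connectedness of the fibres of $f$ (coming from $f_\ast \mathcal{O}_X = \mathcal{O}_Y$ and properness) together with the cone hypothesis to show that $f^\prime$ is constant on those fibres, and then descend $f^\prime$ through $f$. The one place where your implementation differs from Debarre's is the descent step: Debarre considers the image $Z$ of $(f,f^\prime) \colon X \to Y \times Y^\prime$ and shows the projection $p \colon Z \to Y$ is finite with $p_\ast \mathcal{O}_Z = \mathcal{O}_Y$, hence an isomorphism, so that $\psi$ is the second projection composed with $p^{-1}$; you instead build the comorphism directly on the saturated open sets $(f^\prime)^{-1}(V) = f^{-1}(U)$, using $\mathcal{O}_X(f^{-1}(U)) = \mathcal{O}_Y(U)$. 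Both routes are valid (yours makes the role of the contraction hypothesis very transparent, Debarre's avoids checking by hand that a continuous map with regular pullbacks is a morphism), and your closing deduction of the uniqueness statement from the factorisation property by symmetry is exactly the intended one.
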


\begin{proof}
See \cite[Proposition $1.14$]{Debarre}.
\end{proof}

\begin{theorem}[Blanchard's Lemma]
\label{blanchard}
Let $f \colon X \rightarrow Y$ be a contraction between projective varieties over $k$. Assume % $f_\ast \mathcal{O}_X \simeq \mathcal{O}_Y$ and 
that $X$ is equipped with an action of a connected algebraic group $G$. Then there exists a unique $G$-action on $Y$ such that the morphism $f$ is $G$-equivariant.
\end{theorem}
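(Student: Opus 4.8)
The plan is to produce the action on $Y$ by descending the action morphism $a\colon G\times X\to X$ along $f$. Concretely, writing $q\defeq \mathrm{id}_G\times f\colon G\times X\to G\times Y$, I want to construct a morphism $\bar a\colon G\times Y\to Y$ satisfying $f\circ a=\bar a\circ q$. Such an identity says exactly $f(g\cdot x)=\bar a(g,f(x))$, so once $\bar a$ is known to define a group action it will automatically make $f$ equivariant; conversely any equivariant action on $Y$ gives rise to the same factorisation, which will yield uniqueness. Thus the whole statement reduces to the existence and uniqueness of the factorisation $\bar a$.

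Two formal inputs make this possible. First, $q$ is again a contraction: the square with vertical maps $q$ and $f$ and horizontal maps the two projections is Cartesian, and the projection $G\times Y\to Y$ is flat, so flat base change gives $q_*\mathcal O_{G\times X}=\mathcal O_{G\times Y}$. In particular $q$ is a proper morphism with geometrically connected fibres, hence a topological quotient, and any morphism out of $G\times X$ that is constant along the fibres of $q$ descends uniquely to a morphism on $G\times Y$ (this is the same universal property of contractions underlying \Cref{contractions}). Consequently everything comes down to showing that $f\circ a$ is constant along the fibres $\{g\}\times f^{-1}(y)$ of $q$.

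The heart of the argument, and the step I expect to be the main obstacle, is precisely this constancy, which is where connectedness of $G$ enters. I would fix $y_0\in Y$, set $Z\defeq f^{-1}(y_0)$ — a proper \emph{connected} subscheme, connected because $f$ is a contraction — and consider the morphism $\Xi\colon Z\times G\to Y$, $(x,g)\mapsto f(g\cdot x)$. Its restriction to the slice $Z\times\{e\}$ is the \emph{constant} map with value $y_0$. By the rigidity lemma for proper connected schemes (see \cite{Milne}), a morphism from $Z\times G$ to a separated scheme that is constant on one slice $Z\times\{e\}$, with $Z$ proper connected and $G$ connected, must be independent of the $Z$-coordinate. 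Hence $f(g\cdot x)$ depends only on $g$ for $x\in Z$; letting $y_0$ vary, $f\circ a$ is constant on every fibre of $q$, as required. It is here that both hypotheses are essential: properness and connectedness of the fibres of $f$ feed the rigidity lemma, while connectedness of $G$ is what propagates the constancy away from the identity — indeed the conclusion is false for disconnected $G$.

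It then remains to assemble the action axioms and uniqueness, all via the uniqueness of factorisations through the contractions $q$ and $\mathrm{id}_{G\times G}\times f$. The identity axiom $\bar a(e,-)=\mathrm{id}_Y$ follows from $\bar a(e,f(x))=f(x)$ together with the surjectivity of $f$ (a proper morphism with $f_*\mathcal O_X=\mathcal O_Y$ is surjective). For associativity, both $\bar a\circ(m\times\mathrm{id}_Y)$ and $\bar a\circ(\mathrm{id}_G\times\bar a)$, with $m$ the multiplication of $G$, become the single morphism $(g,h,x)\mapsto f(ghx)$ after precomposition with the contraction $\mathrm{id}_{G\times G}\times f$, so they coincide by uniqueness of the descent. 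The same principle yields uniqueness of the whole action, since any $G$-action on $Y$ rendering $f$ equivariant factors $f\circ a$ through $q$, and that factorisation is unique.
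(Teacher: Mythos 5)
Your proposal is correct and follows essentially the same route as the paper, whose entire proof of \Cref{blanchard} is the citation \cite[$7.2$]{Brion17}: the argument given there is exactly yours, namely flat base change to show $\mathrm{id}_G\times f$ is again a contraction, the rigidity lemma applied to $f^{-1}(y_0)\times G$ (using properness and connectedness of the fibre together with connectedness of $G$) to get constancy on fibres, and then uniqueness of descent along contractions to obtain the action axioms and uniqueness. The only points left implicit — that constancy on fibres need only be checked set-theoretically (so possible non-reducedness of $f^{-1}(y_0)$ is harmless), and that the descent of a morphism along a contraction uses the sheaf condition $q_*\mathcal{O}_{G\times X}=\mathcal{O}_{G\times Y}$ and not just the topological quotient property — are exactly as in the cited proof and do not constitute gaps.
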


\begin{proof}
See \cite[$7.2$]{Brion17}.
\end{proof}
The following construction is done here for any globally generated line bundle and is then applied to $\mathcal{O}_X(D_\alpha)$ to define the desired family of contractions.

\begin{lemma}
\label{ProjY}
Let $X$ be a projective variety over $k$ and $\mathcal{L}$ a line bundle over $X$ which is generated by its global sections. Then
\begin{enumerate}[(a)]
    \item There is a well-defined contraction
    \[
    f \colon X \longrightarrow Y \defeq \Proj \bigoplus_{n=0}^\infty H^0(X,\mathcal{L}^{\otimes n}).
    \]
    \item A curve $C$ in $X$ is contracted by $f$ if and only if $\mathcal{L} \cdot C =0$.
\end{enumerate}
\end{lemma}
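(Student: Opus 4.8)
The plan is to realise $f$ as the Stein factorisation of the morphism attached to the complete linear system $|\mathcal{L}|$, and only afterwards to identify its target with $\Proj R$, where $R \defeq \bigoplus_{n \geq 0} H^0(X,\mathcal{L}^{\otimes n})$. First I would use that $\mathcal{L}$ is globally generated: the evaluation map $H^0(X,\mathcal{L}) \otimes_k \mathcal{O}_X \to \mathcal{L}$ is then surjective, so $|\mathcal{L}|$ defines a morphism $\varphi \colon X \to \proj(H^0(X,\mathcal{L})^\vee) =: \proj^N$ with $\varphi^\ast \mathcal{O}(1) \cong \mathcal{L}$. Since $X$ is projective, $\varphi$ admits a Stein factorisation $X \xrightarrow{f} Y \xrightarrow{g} \proj^N$, with $f_\ast \mathcal{O}_X = \mathcal{O}_Y$ — so that $f$ is a contraction onto a projective variety $Y$ — and with $g$ finite. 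Setting $A \defeq g^\ast \mathcal{O}(1)$, the finiteness of $g$ and ampleness of $\mathcal{O}(1)$ guarantee that $A$ is ample on $Y$, while $f^\ast A = \mathcal{L}$ by construction.

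To obtain (a) I would then identify $Y$ with $\Proj R$. The projection formula together with $f_\ast \mathcal{O}_X = \mathcal{O}_Y$ yields $H^0(X,\mathcal{L}^{\otimes n}) = H^0(X, f^\ast A^{\otimes n}) = H^0(Y, A^{\otimes n})$ for every $n$, so that $R = \bigoplus_{n} H^0(Y, A^{\otimes n})$ is precisely the section ring of the ample line bundle $A$ on $Y$. Because $A$ is ample on a projective variety, this section ring is a finitely generated $k$-algebra and there is a canonical isomorphism $\Proj R \cong Y$ under which the tautological morphism $X \to \Proj R$ coincides with $f$ (this is the standard recovery of a projective variety from the $\Proj$ of the section ring of an ample bundle). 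This simultaneously produces the contraction of the statement and sidesteps any a priori finite-generation question for $R$, which now comes for free from the ampleness of $A$.

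For (b), given a curve $C \subset X$, the projection formula for intersection numbers gives $\mathcal{L}\cdot C = f^\ast A \cdot C = A \cdot f_\ast[C]$. If $C$ is contracted by $f$, then $f_\ast[C] = 0$ and hence $\mathcal{L}\cdot C = 0$. Conversely, if $\mathcal{L}\cdot C = 0$ then $A \cdot f_\ast[C] = 0$; since $A$ is ample and $f_\ast[C]$ is an effective $1$-cycle, this forces $f_\ast[C] = 0$, meaning $f(C)$ is a point, i.e. $C$ is contracted. This gives the desired equivalence.

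The only genuinely delicate point is the identification in the second paragraph, namely that the Stein factorisation of $\varphi_{|\mathcal{L}|}$ — which is built from the degree-one part $H^0(X,\mathcal{L})$ alone — agrees with $\Proj$ of the \emph{full} section ring $R$. The key that makes this work is the equality $f^\ast A = \mathcal{L}$ with $A$ ample, which upgrades the projection-formula identity to all tensor powers at once. I expect no additional difficulty in characteristic $2$ or $3$: Stein factorisation, the projection formula, and the $\Proj$-reconstruction of an ample polarisation are all characteristic-free, so the argument applies verbatim.
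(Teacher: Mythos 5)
Your proof is correct, but it reaches part (a) by a genuinely different route than the paper. The paper constructs $f$ directly on the $\Proj$ of the full section ring $S$: global generation ensures that the affine opens $D(t) \subset \Proj S$ and $X_t \subset X$ (for $t$ homogeneous of positive degree) cover both spaces, $f$ is defined by the natural ring inclusions $\bigcup_n H^0(X,\mathcal{L}^{\otimes nd})/t^n \subset \mathcal{O}_X(X_t)$, and \cite[II, Lemma $5.14$]{Hartshorne} upgrades these inclusions to equalities, which is precisely the contraction condition $f_\ast\mathcal{O}_X \simeq \mathcal{O}_Y$; notably, this argument never needs finite generation of $S$. You instead Stein-factorize the morphism $\varphi$ attached to the degree-one piece $\vert\mathcal{L}\vert$ and then identify the Stein target with $\Proj R$ via the section ring of the ample bundle $A = g^\ast\mathcal{O}(1)$. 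This is more modular, yields finite generation of $R$ as a by-product, and concentrates all the delicacy in the cited ``standard recovery'' $Y \cong \Proj\bigoplus_n H^0(Y,A^{\otimes n})$ --- a fact which is, in substance, the very statement the paper proves by hand with Hartshorne's lemma, so your proof mainly relocates that work into a black box. (Note also that for (a) as stated you only need \emph{some} contraction with target $\Proj R$, so composing your Stein map with the isomorphism already suffices; the compatibility with the tautological map, which you assert but do not check, is not needed since you prove (b) directly for your $f$.) Part (b) is essentially identical in the two treatments: pull back an ample class from the target, apply the projection formula, and use positivity of ample classes against effective $1$-cycles.
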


\begin{proof}
$(a)$ : Let us denote as $S$ the graded ring on the right hand side and denote as $S_d = H^0(X,\mathcal{L}^{\otimes d})$ its homogeneous part of degree $d$. The schemes $X$ and $Y = \Proj S$ are covered by the open subset
\[
D(t) = \Spec\left( \bigcup_{n=0}^\infty \frac{H^0(X, \mathcal{L}^{\otimes n})}{t^n}\right) \quad \text{and} \quad  X_t \defeq \{ x \in X , \, t_x \notin \mathfrak{m}_x \mathcal{L}_x\} = X \backslash Z(t),
\]
for $t$ homogeneous in $\oplus_{d\geq 1} S_d$, because by hypothesis $\mathcal{L}$ is globally generated. This allows to define $f$ via the inclusion
\begin{align}
\label{inclusion_rings}
\bigcup_{n=0}^\infty \frac{H^0(X,\mathcal{L}^{\otimes nd}) }{t^n} \subset \mathcal{O}_X(X_t), \quad \text{ for } t \in S_d.
\end{align}
Moreover, \cite[II, Lemma $5.14$]{Hartshorne} - applied to the coherent sheaf $\mathcal{O}_X$ and the line bundle $\mathcal{L}^{\otimes nd}$ - implies that (\ref{inclusion_rings}) is an equality, which gives the condition $f_\ast \mathcal{O}_X \simeq \mathcal{O}_Y$.\\
$(b)$ : Let us consider the sheaf $\mathcal{O}_Y(1)$ defined as in \cite[II, Proposition $5.11$]{Hartshorne}, fix some global section $s \in H^0(X,\mathcal{L})$ and assume the open set on which it does not vanish is affine i.e. $X_s = \Spec \mathcal{O}_X(X_s)$. Recall that we have the trivialization $\mathcal{L}_{\vert X_s} \simeq s \mathcal{O}_{X_s}$, so considering sections over $X_s$ gives
\[
H^0(X_s,f^\ast \mathcal{O}_Y(1)) = \left( \bigcup_{n=0}^\infty \frac{S_{n+1}}{s^n} \right) \otimes_{\bigcup_{n=0}^\infty \frac{S_n}{s^n} } \mathcal{O}_X(X_s) = \frac{s^{n+1}\mathcal{O}_X(X_s)}{s^n} = H^0(X_s,\mathcal{L}).
\]
By covering $X$ with the open sets $X_s$ as $s$ ranges over the global sections, this gives the condition $f^\ast \mathcal{O}_Y(1) =\mathcal{L}$, hence 
\begin{align*}
H^0(X,\mathcal{L}) & = H^0(X, f^\ast \mathcal{O}_Y(1) )= H^0(Y, f_\ast f^\ast \mathcal{O}_Y(1)) = H^0(Y, \mathcal{O}_Y(1) \otimes_{\mathcal{O}_Y} f_\ast \mathcal{O}_X)\\ & = H^0(Y, \mathcal{O}_Y(1)),
\end{align*}
%\[H^0(X,\mathcal{L}^{\otimes n}) = H^0(X,(f^\ast \mathcal{M})^{\otimes n}) =H^0(Y,(f_\ast f^\ast \mathcal{M})^{\otimes n}) = H^0(Y,(\mathcal{M}\otimes_{\mathcal{O}_Y}f_\ast \mathcal{O}_X)^{\otimes n}) = H^0(Y,\mathcal{M}^{\otimes n}),\]
where the last equality comes from $f$ being a contraction. In particular, $\mathcal{O}_Y(1)$ is ample over $Y$, thus it must have strictly positive intersection with any effective $1$-cycle by Kleinman's criterion. In other words, given a nonzero class $C \in \NE(X)$, $f_\ast C =0$ if and only if
\[
0 = \mathcal{O}_Y(1) \cdot f_\ast C = f^\ast \mathcal{O}_Y(1) \cdot C = \mathcal{L} \cdot C,
\]
by the projection formula, and we are done.
\end{proof}

Before going back to our particular case, let us prove a criterion for a morphism between homogeneous spaces to be a contraction.

\begin{lemma}
\label{criterion_contraction}
Consider a chain of algebraic groups $H \subset H^\prime \subset G$ over $k$. The morphism $f \colon G/H \longrightarrow G/H^\prime$
is a contraction if and only if $H^\prime/H$ is proper over $k$ and $\mathcal{O}(H^\prime/H)=k$.
\end{lemma}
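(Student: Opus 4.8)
The plan is to study the projection $f\colon G/H\to G/H^\prime$ fppf-locally on the target, by pulling it back along the faithfully flat quotient map $\pi\colon G\to G/H^\prime$, which is an $H^\prime$-torsor. First I would record that the square
\[
\begin{tikzcd}
G \times (H^\prime/H) \arrow[r] \arrow[d, "\mathrm{pr}_1"'] & G/H \arrow[d, "f"] \\
G \arrow[r, "\pi"'] & G/H^\prime
\end{tikzcd}
\]
is Cartesian, where the top horizontal map is $(g,\,h^\prime H)\mapsto gh^\prime H$; the verification is the elementary check that $(g,\,xH)\mapsto (g,\,(g^{-1}x)H)$ is a well-defined inverse. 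This exhibits $f$ as an fppf-locally trivial fibration with fiber $H^\prime/H$. Since both conditions defining a contraction — properness of $f$, and the requirement that $f^\#\colon\mathcal{O}_{G/H^\prime}\to f_\ast\mathcal{O}_{G/H}$ be an isomorphism — are insensitive to this faithfully flat base change, the two sides of the claimed equivalence will split cleanly along these two properties.

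For properness I would use that properness is fppf-local on the target: $f$ is proper if and only if its pullback $\mathrm{pr}_1\colon G\times(H^\prime/H)\to G$ is proper. As $\mathrm{pr}_1$ is the base change of $H^\prime/H\to\Spec k$ along the faithfully flat map $G\to\Spec k$, descent of properness shows $\mathrm{pr}_1$ is proper exactly when $H^\prime/H$ is proper over $k$. This accounts for the first of the two conditions. For the structure-sheaf condition I would apply flat base change for $R^0$-pushforward along $\pi$ (legitimate because $f$ is of finite type, hence quasi-compact and quasi-separated) to compute
\[
\pi^\ast f_\ast\mathcal{O}_{G/H}\;\cong\;\mathrm{pr}_{1,\ast}\,\mathcal{O}_{G\times(H^\prime/H)}\;\cong\;\mathcal{O}_G\otimes_k\mathcal{O}(H^\prime/H),
\]
the last isomorphism holding because $\mathrm{pr}_1$ is a trivial product projection. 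Under this identification $\pi^\ast(f^\#)$ becomes $\mathrm{id}_{\mathcal{O}_G}\otimes(k\hookrightarrow\mathcal{O}(H^\prime/H))$, where the injection is the structure map. By faithfully flat descent of isomorphisms of quasi-coherent sheaves, $f^\#$ is an isomorphism if and only if $\pi^\ast(f^\#)$ is, i.e. if and only if $k\hookrightarrow\mathcal{O}(H^\prime/H)$ is an equality, i.e. if and only if $\mathcal{O}(H^\prime/H)=k$. Combining the two paragraphs yields the stated equivalence.

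The step requiring the most care will be the bookkeeping around the descent: confirming that the square above is genuinely Cartesian, that $\pi$ and $G\to\Spec k$ are faithfully flat so that fppf descent of both properness and of sheaf isomorphisms applies, and that the relevant quotient schemes $G/H$, $G/H^\prime$ and $H^\prime/H$ exist (which is automatic here, since $H\subset H^\prime$ are closed subgroup schemes of an algebraic group $G$). None of this is conceptually difficult — note in particular that the fibers $H^\prime/H$ may be non-reduced, but the argument is purely scheme-theoretic and never uses reducedness — yet the whole proof rests on setting up the base-change square correctly, so I would make the Cartesian square and the two locality principles explicit rather than implicit.
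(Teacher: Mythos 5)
Your proof is correct and follows essentially the same route as the paper: the same Cartesian square obtained by pulling back $f$ along the faithfully flat quotient $G \to G/H^\prime$, fppf descent of properness to reduce to $H^\prime/H \to \Spec k$, and flat base change of the structure-sheaf pushforward to reduce the condition $f_\ast\mathcal{O}_{G/H} \cong \mathcal{O}_{G/H^\prime}$ to $\mathcal{O}(H^\prime/H)=k$. The only differences are cosmetic: you verify the Cartesian square by hand where the paper cites Milne, and you phrase the last step via descent of isomorphisms, which is if anything slightly cleaner than the paper's wording.
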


\begin{proof}
Let us consider $q \colon G \rightarrow G/H$ and $q^\prime \colon G \rightarrow G/H^\prime$ to be the quotient maps and $m \colon G \times H/H^\prime \rightarrow G/H$ the morphism given by the group multiplication and then by quotienting by $H$: by \cite[Proposition $7.15$]{Milne} we have a cartesian square
\[
\begin{tikzcd}
G \times H^\prime /H \arrow[rr, "pr_G"] \arrow[d, "m"] && G \arrow[d, "q^\prime"]\\
G/H \arrow[rr, "f"] && G/H^\prime
\end{tikzcd}
\]
Since $q^\prime$ is faithfully flat and $pr_G$ is obtained as base change of $f$ via such a morphism, $f$ being proper is equivalent to $pr_G$ being proper; now, the latter is obtained as base change of $H^\prime/H \rightarrow \Spec k$ via the structural morphism of $G$, which is also fppf, hence it is proper if and only if $H^\prime/H$ is proper over $k$. This shows the first condition. \\
Moreover, the formation of the direct image of sheaves also commutes with fppf extensions: more precisely, applying this to the structural sheaves in our case yields
\[
(q^\prime)^\ast f_\ast \mathcal{O}_{G/H} = (pr_G)_\ast \mathcal{O}_{G \times H^\prime /H} = \mathcal{O}_G \otimes \mathcal{O}_{H^\prime/H} (H^\prime /H),
\]
hence by taking $q_\ast^\prime$ on both sides one gets
\[
f_\ast \mathcal{O}_{G/H} = \mathcal{O}_{G/H^\prime} \quad \Longleftrightarrow \quad \mathcal{O}_{H^\prime/H} (H^\prime /H) = k,
\]
which gives the second condition.
\end{proof}

\begin{remark}
\label{ideas}
Let us consider again a fixed parabolic subgroup $P$. We now construct a collection of morphisms $f_\alpha \colon X \rightarrow G/Q_\alpha$, for $\alpha \in \Delta \backslash I$, such that
\begin{enumerate}[(1)]
    \item the target $G/Q_\alpha$ is defined in a concrete geometrical way,
    \item each $f_\alpha$ is a contraction,
    \item the stabilizer $Q_\alpha$ coincides with the smallest subgroup scheme of $G$ containing both $P$ and $P^\alpha$: in particular, $(Q_\alpha)_{\text{red}}$ is a maximal reduced parabolic subgroup,
    \item the collection $(f_\alpha)_{\alpha \in \Delta \backslash I}$ "tells us a lot" about the variety $X$.
\end{enumerate}
\end{remark}

The reason why $Q_\alpha$ is not directly defined as being the algebraic subgroup generated by $P$ and $P^\alpha$ is that this notion does not behave well since $P$ is nonreduced in general.\\
Let us apply \Cref{ProjY} to the variety $X=G/P$ and the line bundle $\mathcal{L}= \mathcal{O}_X(D_\alpha)$, which can be done thanks to \Cref{BB_flag}. This gives a contraction
\begin{align}
%\begin{tikzcd}
\label{definition_falpha}
f_\alpha \colon X \longrightarrow Y_\alpha \defeq  \Proj \bigoplus_{n=0}^\infty H^0(X,\mathcal{O}_X(nD_\alpha)).
%\end{tikzcd}
\end{align}
By \Cref{blanchard}, there is a unique $G$-action on $Y_\alpha$ such that $f_\alpha$ is equivariant. Moreover, since $f_\alpha$ is a dominant morphism between projective varieties, it is surjective, hence the target must be of the form $Y_\alpha = G/Q_\alpha$ for some subgroup scheme $P \subseteq Q_\alpha \subsetneq G$. We take this construction as the definition of the subgroup $Q_\alpha$, so that conditions $(1)$ and $(2)$ are already satisfied. Moreover, by \Cref{BB_flag} and \Cref{ProjY}, a curve $C$ is contracted by $f_\alpha$ if and only if $D_\alpha \cdot C=0$, meaning that this map contracts all $C_\beta$ for $\beta \neq \alpha$ while it restricts to a finite morphism on $C_\alpha$. This leaves one more condition to show.

\begin{lemma}
The smallest subgroup scheme of $G$ containing both $P$ and $P^\alpha$ is $Q_\alpha$.
\end{lemma}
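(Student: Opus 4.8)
The plan is to prove the two inclusions separately. Write $Q$ for the smallest subgroup scheme of $G$ containing both $P$ and $P^\alpha$. Since $P \supseteq B$, both $Q$ and $Q_\alpha$ contain $B$ and are therefore parabolic subgroup schemes, and the goal is to show $Q = Q_\alpha$.

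\textbf{First inclusion $Q \subseteq Q_\alpha$.} By construction $P \subseteq Q_\alpha$, so it suffices to prove $P^\alpha \subseteq Q_\alpha$, and for this it is enough to show that the reduced part $(Q_\alpha)_{\text{red}}$ equals $P^\alpha$. I would argue as follows. The line bundle $\mathcal{O}_X(D_\alpha)$ lies on the boundary ray of the nef cone dual to $\{ C_\beta \colon \beta \neq \alpha\}$, so by \Cref{ProjY} and \Cref{BB_flag} the contraction $f_\alpha$ of (\ref{definition_falpha}) contracts exactly the curves $C_\beta$ with $\beta \in (\Delta \setminus I) \setminus \{\alpha\}$ and restricts to a finite morphism on $C_\alpha$. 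Hence $N_1(Y_\alpha)_\Q$ is one-dimensional, and applying \Cref{BB_flag} to $Y_\alpha = G/Q_\alpha$ (which is licit, as that theorem makes no reducedness assumption on the stabilizer) shows that $\Pic(Y_\alpha)$ has rank one; writing $(Q_\alpha)_{\text{red}} = P_J$, this forces $J = \Delta \setminus \{\beta\}$ for a single simple root $\beta$. It remains to identify $\beta = \alpha$: the image $f_\alpha(C_\alpha) = \overline{B\,s_\alpha\,\bar o}$, where $\bar o$ denotes the base point of $Y_\alpha$, is a genuine curve because $f_\alpha$ is finite on $C_\alpha$, whereas in $G/P_J$ the orbit closure $\overline{B\,s_\alpha\,\bar o}$ is positive-dimensional if and only if $s_\alpha \bar o \neq \bar o$, i.e. $\alpha \notin J$, i.e. $\alpha = \beta$. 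Thus $(Q_\alpha)_{\text{red}} = P^\alpha$, giving $P^\alpha \subseteq Q_\alpha$ and hence $Q \subseteq Q_\alpha$. In particular $Q_{\text{red}} \subseteq (Q_\alpha)_{\text{red}} = P^\alpha \subseteq Q_{\text{red}}$, so $Q_{\text{red}} = P^\alpha$ as well.

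\textbf{Equality.} Since $P \subseteq Q \subseteq Q_\alpha$ and $Q_{\text{red}} = (Q_\alpha)_{\text{red}} = P^\alpha$, the fibre $Q_\alpha/Q$ has a single point as reduced subscheme, so the natural projection $\rho \colon G/Q \to G/Q_\alpha$ is finite and purely inseparable. Moreover the natural projection $G/P \to G/Q_\alpha$ is $G$-equivariant and sends the base point to the base point, so by \Cref{blanchard} it coincides with $f_\alpha$; writing $g \colon G/P \to G/Q$ for the projection to $G/Q$ we obtain $f_\alpha = \rho \circ g$. Now I would exploit that $f_\alpha$ is a contraction, i.e. $(f_\alpha)_\ast \mathcal{O}_{G/P} = \mathcal{O}_{G/Q_\alpha}$. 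Applying the exact functor $\rho_\ast$ (which is exact since $\rho$ is finite, hence affine) to the injection $\mathcal{O}_{G/Q} \hookrightarrow g_\ast \mathcal{O}_{G/P}$ yields the chain of injections
\[
\mathcal{O}_{G/Q_\alpha} \xrightarrow{\ \rho^\sharp\ } \rho_\ast \mathcal{O}_{G/Q} \hookrightarrow \rho_\ast g_\ast \mathcal{O}_{G/P} = (f_\alpha)_\ast \mathcal{O}_{G/P},
\]
whose composite is the isomorphism $(f_\alpha)^\sharp$. A composite of injections that is an isomorphism forces the first map to be an isomorphism, so $\rho^\sharp \colon \mathcal{O}_{G/Q_\alpha} \to \rho_\ast \mathcal{O}_{G/Q}$ is an isomorphism. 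A finite morphism $\rho$ with $\rho_\ast \mathcal{O}_{G/Q} = \mathcal{O}_{G/Q_\alpha}$ is itself an isomorphism, whence $Q = Q_\alpha$.

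\textbf{Main obstacle.} The delicate point is the first step, namely pinning down $(Q_\alpha)_{\text{red}}$ exactly. Everything hinges on the fact that $f_\alpha$ contracts all the extremal curves except $C_\alpha$ while remaining finite on $C_\alpha$, which is precisely where \Cref{BB_flag} together with the numerical description of $f_\alpha$ coming from \Cref{ProjY} are indispensable; the equivariant curve computation then upgrades the bare statement ``Picard rank one'' to the sharp equality $(Q_\alpha)_{\text{red}} = P^\alpha$. By contrast, the equality step is purely formal once one knows that $Q$ and $Q_\alpha$ share the reduced part $P^\alpha$.
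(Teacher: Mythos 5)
Your proof is correct, but it takes a genuinely different route from the paper's. The paper proves both inclusions with the cone formalism of \Cref{contractions}: it first shows $\widetilde{\pi}\colon \widetilde{X} \to G/P^\alpha$ is a contraction via \Cref{criterion_contraction}, notes $\NE(\widetilde{\pi}) \subseteq \NE(f_\alpha \circ \sigma)$, and factors $f_\alpha \circ \sigma$ through $G/P^\alpha$ to get $P^\alpha \subseteq Q_\alpha$; then, for the reverse inclusion, it transports contracted curves through the homeomorphism $\sigma$ to get $\NE(f_\alpha) \subseteq \NE(\pi)$ for the projection $\pi \colon X \to G/H$ ($H$ your $Q$) and invokes \Cref{contractions} again to factor $\pi$ through $f_\alpha$, giving $Q_\alpha \subseteq H$. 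You avoid both \Cref{contractions} and \Cref{criterion_contraction}: you pin down $(Q_\alpha)_{\text{red}} = P^\alpha$ directly, using surjectivity of $(f_\alpha)_\ast$ on $N_1$ together with \Cref{BB_flag} applied to $G/Q_\alpha$ to force rank one, and the image of $C_\alpha$ (a genuine curve, since $f_\alpha$ is finite on it) to identify the missing simple root via the standard fact that $\dot{s}_\beta$ fixes the base point of $G/P_J$ if and only if $\beta \in J$; this yields $Q \subseteq Q_\alpha$ and $Q_{\text{red}} = (Q_\alpha)_{\text{red}} = P^\alpha$. You then replace the paper's second cone argument by a purely sheaf-theoretic one: writing $f_\alpha = \rho \circ g$ with $\rho \colon G/Q \to G/Q_\alpha$ finite, the contraction property $(f_\alpha)_\ast \mathcal{O}_X \simeq \mathcal{O}_{Y_\alpha}$ squeezes $\rho_\ast \mathcal{O}_{G/Q} = \mathcal{O}_{G/Q_\alpha}$, so $\rho$ is an isomorphism and $Q = Q_\alpha$. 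Both routes hinge on the same input, namely that $f_\alpha$ contracts exactly the $C_\beta$ with $\beta \neq \alpha$; the paper's argument is uniform and stays at the level of cones of curves, while yours makes the equality step essentially formal, at the cost of two standard facts you should justify or cite (surjectivity of proper pushforward on $N_1$, and $\dot{s}_\beta \in P_J \Leftrightarrow \beta \in J$). One small imprecision: identifying $f_\alpha$ with the natural projection $G/P \to G/Q_\alpha$ is not literally ``by \Cref{blanchard}''; what \Cref{blanchard} gives is the $G$-action making $f_\alpha$ equivariant, and the identification then follows because two equivariant morphisms out of a homogeneous space that agree at the base point coincide --- which is implicit in the paper's definition of $Q_\alpha$ as the stabilizer of $f_\alpha(o)$.
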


\begin{proof}
By definition of $Y_\alpha$ we have the inclusion $P \subset Q_\alpha$.\\
Let $H$ be the subgroup scheme of $G$ generated by $P$ and $P^\alpha$. Since
\[P_{\text{red}} = P_I = \bigcap_{\alpha \in \Delta \backslash I} P^\alpha,
\]
the subgroup generated by $P_{\text{red}}$ and $P^\alpha$ is just $P^\alpha$. Next, consider the quotient map $\widetilde{\pi} \colon \widetilde{X} \rightarrow G/P^\alpha$ and the composition $f_\alpha \circ \sigma \colon \widetilde{X} \rightarrow G/Q_\alpha$: the latter contracts, by the above discussions, all curves $\widetilde{C}_\beta$ for $\beta \neq \alpha$, hence $\NE(\widetilde{\pi})\subset \NE(f_\alpha \circ \sigma)$. Moreover, $\widetilde{\pi}$ is a contraction by \Cref{criterion_contraction}, because its fiber at the base point is $P^\alpha/P_I$ which is proper and has no nonconstant global regular functions. By \Cref{contractions}, there exists a unique morphism $\varphi$ making the diagram
\[
\begin{tikzcd}
\widetilde{X} = G/P_{\text{red}} \arrow[d, "\sigma"] \arrow[rr, "\widetilde{\pi}"] && G/P^\alpha \arrow[d, "\varphi", dotted]\\
X=G/P \arrow[rr, "f_\alpha"] && G/Q_\alpha
\end{tikzcd}
\]
commute: this shows $P^\alpha \subset Q_\alpha$ hence $H \subset Q_\alpha$.\\
Conversely, let us consider the projection $\pi \colon X \rightarrow G/H$. We already know by \Cref{BB_flag} that $\widetilde{\pi}$ contracts all $\widetilde{C}_\beta$ for $\beta \neq \alpha$; moreover, the square on the left in the following diagram is commutative and its horizontal arrows are both homeomorphisms. This implies that $\pi$ contracts all $C_\beta$ for $\beta \neq \alpha$. In other words, the inclusion $\NE(f_\alpha) \subset \NE(\pi)$ holds.
\[
\begin{tikzcd}
\widetilde{X} \arrow[r, "\sigma"] \arrow[d, "\widetilde{\pi}"] & X \arrow[rr, "f_\alpha"] \arrow[d, "\pi"] && G/Q_\alpha \arrow[dll, "\psi", dotted] \\
G/P^\alpha \arrow[r] & G/H &
\end{tikzcd}
\]
 Since $f_\alpha$ is a contraction by definition, this gives a factorisation by $\psi$ - again by \Cref{contractions} - which means that $Q_\alpha \subset H$.
\end{proof}
%--------------------------------------------------------------------------------------------------------------------------------------------------------------------------------

\begin{remark}
\label{j_closedimm}
%Consider a fixed parabolic subgroup $P$ and define the induced collection of parabolic subgroups $Q_\alpha$ having as reduced subscheme a maximal smooth parabolic - as $\alpha$ ranges over the set of positive simple roots - to be the algebraic subgroup generated by $P$ and $P^\alpha$.
The homogeneous space $X$ is now equipped with a finite number of contractions $f_\alpha$ such that the target of each morphism has Picard group $\Z$, with a unique canonical ample generator, corresponding to the image of $D_\alpha$. The inclusion
\begin{align}
\label{inclusion}
P \subseteq \bigcap_{\alpha \in \Delta} Q_\alpha
\end{align}
holds by definition of $Q_\alpha$. If the characteristic is $p \geq 5$, by \cite{Wenzel} there are nonnegative integers $m_\alpha$ for $\alpha \in \Delta \backslash I$ such that $P$ is the intersection of the $G_{m_\alpha}P^\alpha$, hence $P \subset Q_\alpha \subset G_{m_\alpha}P^\alpha$ and the inclusion (\ref{inclusion}) becomes an equality. Geometrically, this corresponds to saying that the product map 
\[
f \defeq \prod_{\alpha \in \Delta} f_\alpha \colon X \longrightarrow \prod_{\alpha \in \Delta} G/Q_\alpha
\]
is a closed immersion, realizing $X$ as the unique closed orbit of the $G$-action on the target.
\end{remark}

%%%%%%%%%%%%%%%%%%%%%%%%%%%%%%%%%%%%%%%%%%%%%%%%%%%%%%%%%%%%%%%%
%\vfill \pagebreak
\subsection{Examples in Picard rank two}

Let us consider a simple simply connected algebraic group $G$ over $k$, having Dynkin diagram with an edge of multiplicity equal to the characteristic $p \in \{ 2,3 \}$, so that the definitions and properties of Subsection \ref{subsection_N} apply. In what follows, we call a parabolic subgroup \emph{of standard type} if it is of the form $G_{m_1}P^{\alpha_1} \cap \ldots \cap G_{m_r} P^{\alpha_r}$ for some integers $m_i$ and simple roots $\alpha_i$, while a homogeneous space is said to be \emph{of standard type} its underlying variety is isomorphic to some $G^\prime/P^\prime$, where $P^\prime$ is a parabolic subgroup of standard type.\\
The main result in this part is the following, which provides us with a first family of homogeneous projective varieties (in types $B_n$, $C_n$ and $F_4$) which are not of standard type.

\begin{proposition}
\label{main:rank2}
    Let $p=2$ and consider a simple, simply connected group $G$ and two distinct simple roots $\alpha$ and $\beta$ such that: either $G$ is of type $B_n$ or $C_n$ and the pair $(\alpha,\beta)$ is of the form $(\alpha_j,\alpha_i)$ with $i < j < n$ or $j=n$ and $i <n-1$, or $G$ is of type $F_4$ and the pair $(\alpha,\beta)$ is one among
    \[
    (\alpha_1,\alpha_4), \quad (\alpha_2,\alpha_1), \quad (\alpha_2,\alpha_4), \quad (\alpha_3,\alpha_1), \quad (\alpha_3,\alpha_4), \quad (\alpha_4,\alpha_1).
    \]
    Then the homogeneous space $X= G/(N_{r,G}P^\alpha \cap P^\beta)$
    is \emph{not} of standard type.
\end{proposition}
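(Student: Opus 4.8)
The plan is to read off the two extremal contractions of $X$, which are intrinsic to the underlying variety, identify their targets, and then compare the automorphism groups of these targets by means of \Cref{demazure77}. Since $N_G$ is infinitesimal one has $P_{\text{red}} = P^\alpha \cap P^\beta$, so by \Cref{BB_flag} the variety $X$ has Picard rank two and $\NE(X)$ is a plane cone spanned by the two curve classes $C_\alpha, C_\beta$. First I would apply the construction of \Cref{ideas}--\Cref{ProjY} to produce the contractions $f_\alpha \colon X \to G/Q_\alpha$ and $f_\beta \colon X \to G/Q_\beta$; by \Cref{contractions} each is the extremal contraction of one ray of $\NE(X)$, hence determined by $X$ as a variety. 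Because $P \subset P^\beta$ one gets $Q_\beta = P^\beta$, while $Q_\alpha$, being by definition the smallest subgroup scheme containing $P$ and $P^\alpha$, equals $N_G P^\alpha$ (a direct generation check, using that $N_G$ contains the short root subgroups $U_{-\gamma}$ with $\alpha \in \Supp(\gamma)$). As $N_G = \ker\pi_G$ is normal, $G/Q_\alpha = \overline G/\pi_G(P^\alpha) = \overline G/P^{\overline\alpha}$, where $\overline\alpha$ is the image of $\alpha$ under the length-exchanging bijection of \Cref{CGP_7.1.5}. Thus the two intrinsic contraction targets are $Y_\alpha = \overline G/P^{\overline\alpha}$ and $Y_\beta = G/P^\beta$.

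Next I would compute $\underline{\Aut}^0_{Y_\alpha}$ and $\underline{\Aut}^0_{Y_\beta}$ using \Cref{demazure77}, obtaining in each case either the adjoint group of the relevant type or, in a Demazure-exceptional situation, the corresponding enlargement ($A_{2m-1}$, $D_{m+1}$ or $B_3$); the combinatorial hypotheses on $(\alpha,\beta)$ are precisely what pins these down. If $X$ were of Wenzel type, say $X \cong G'/P'$ with $G'$ simple adjoint, then each of its two extremal contractions would land in a flag variety $(G')^{(m_i)}/P^{\gamma_i}$, whose automorphism group is the adjoint group of type $G'$ or one of the three enlargements. Hence the unordered pair of Dynkin types of $\underline{\Aut}^0_{Y_\alpha}, \underline{\Aut}^0_{Y_\beta}$ would have to be contained, for a single simple $G'$, in the set $\{\text{type}(G'),\ \text{enlargement}(\text{type}(G'))\}$. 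For $G$ of type $B_n$ or $C_n$ a short inspection of all admissible $(\alpha,\beta)$ shows that the pair one actually obtains---such as $\{B_n,C_n\}$, $\{B_n,A_{2n-1}\}$ or $\{C_n,D_{n+1}\}$---is never of this form once $n \geq 3$, a bound the hypotheses force. This settles types $B_n$ and $C_n$.

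The genuinely harder case is $F_4$, which is self-dual: both $Y_\alpha$ and $Y_\beta$ are $F_4$-flag varieties with automorphism group of type $F_4$, so the coarse type invariant no longer separates $X$ from a Wenzel-type variety. Here I would pass to the equivariance homomorphism furnished by \Cref{blanchard}: the unique action of $\underline{\Aut}^0_X$ making $f_\alpha$ equivariant induces an isogeny $\rho_\alpha \colon \underline{\Aut}^0_X \to \underline{\Aut}^0_{Y_\alpha}$, which is intrinsic to the contraction. Since the $G$-action on $Y_\alpha = G/N_G P^\alpha$ factors through $\pi_G$, a dimension count giving $\underline{\Aut}^0_X = G_{\text{ad}}$ identifies $\rho_\alpha$ with the very special isogeny of the adjoint group; in particular its associated integers satisfy $q_< \neq q_>$. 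Were $X$ of Wenzel type, the same intrinsic $\rho_\alpha$ would instead be a power of Frobenius, for which $q_< = q_>$; equivalently, by the uniqueness in \Cref{factorisation_isogenies} its $\pi$-part would be trivial, whereas the very special isogeny has $\pi$-part equal to $\pi_G \neq \id$. This contradiction finishes $F_4$.

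The main obstacle, beyond the bookkeeping identification $Q_\alpha = N_G P^\alpha$ and the verification that the listed pairs yield exactly the automorphism types claimed, is precisely the $F_4$ case: with no Dynkin-type discrepancy to exploit, one must extract from the variety the finer datum of how $\underline{\Aut}^0_X$ acts on a contraction target and recognise it as the very special isogeny rather than a Frobenius power. This is where the factorisation theory of \Cref{factorisation_isogenies} does the decisive work, and where establishing $\underline{\Aut}^0_X = G_{\text{ad}}$ (so that $\rho_\alpha$ is exactly $\overline{\pi_G}$) demands the most care.
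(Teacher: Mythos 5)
Your setup is correct and matches the paper as far as it goes: $X$ has Picard rank two, the two extremal contractions are intrinsic, $Q_\beta = P^\beta$, $Q_\alpha = N_GP^\alpha$, and the targets are $G/P^\beta$ and (as a variety) $\overline G/P^{\overline\alpha}$. The gap lies in the premise on which both of your distinguishing arguments rest: that a Wenzel-type variety $G'/P'$, with $P' = G'_{m_1}P^{\gamma_1}\cap G'_{m_2}P^{\gamma_2}$, has extremal contractions landing in $(G')^{(m_i)}/P^{\gamma_i}$, hence with automorphism groups of type $G'$ or a Demazure enlargement, and with induced isogenies that are Frobenius powers. This is false in characteristic $2$ for non-simply-laced $G'$. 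By \Cref{viola}, the smallest subgroup scheme containing $P'$ and $P^{\gamma_1}$ may be of the form $N_{r,G'}P^{\gamma_1}$, and this actually happens whenever every long root whose support contains $\gamma_1$ also contains $\gamma_2$ and $m_2<m_1$: one checks that $N_{m_1-1,G'}P^{\gamma_1}$ contains $P'$ and is strictly smaller than $G'_{m_1}P^{\gamma_1}$. The cleanest counterexample: in type $C_n$ take $j<i$ and $P' = G'_1P^{\alpha_j}\cap P^{\alpha_i}$. Since every long root $2\varepsilon_m$ containing $\alpha_j$ also contains $\alpha_i$, the Wenzel functions of $G'_1P^{\alpha_j}\cap P^{\alpha_i}$ and of $N_{G'}P^{\alpha_j}\cap P^{\alpha_i}$ coincide, so by \Cref{Wenzel_allp} these parabolics are \emph{equal}. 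Thus this Wenzel-type variety has one contraction onto a variety isomorphic to $\overline{G'}/P^{\overline{\alpha_j}}$, whose automorphism group is of type $B_n$, and the action on that target factors through the very special isogeny, not a Frobenius power. Its pair of contraction-target types is $\{B_n,C_n\}$ (or $\{B_n,A_{2n-1}\}$ when $i=1$), exactly the pairs you computed for $X$; so the coarse type-pair invariant cannot conclude in types $B_n$, $C_n$. The same phenomenon in type $F_4$, e.g. with $(\gamma_1,\gamma_2)=(\alpha_3,\alpha_2)$ (one of the excluded pairs), kills the dichotomy you invoke there: a Wenzel-type variety can perfectly well have $\rho_\alpha$ with $q_<\neq q_>$.

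What is missing is the finer combinatorial comparison that the paper's proof is built on. \Cref{alphabeta} compares Wenzel functions directly and shows that, precisely under the stated hypotheses on $(\alpha,\beta)$ --- whose content is the existence of both a short and a long root containing $\alpha$ but not $\beta$ in its support --- neither $P=N_GP^\alpha\cap P^\beta$ nor its pullback $\pi_{\overline G}^{-1}(P)$ is a Wenzel-type parabolic. Then \Cref{rank2} establishes $\underline{\Aut}^0_X = G_{\ad}$ (via Blanchard and \Cref{demazure77}, much as you propose for $F_4$, with the exceptional case $G=\Sp_{2n}$, $\beta=\alpha_1$ treated separately), and uses \Cref{factorisation_isogenies} to classify \emph{every} realization $X\cong G'/P'$: the $G'$-action factors through an isogeny onto $G_{\ad}$ of the form central${}\circ F^m$ or central${}\circ F^m\circ\pi$, so $P'$ is a Frobenius fattening of $P$ or of $\pi^{-1}(P)$; since fattening by Frobenius kernels shifts Wenzel functions by a constant and hence preserves non-Wenzel-type-ness, the combinatorial lemma concludes. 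The case where $G'$ acts through the very special isogeny, i.e. $P' = \overline G_m\,\pi^{-1}(P)$, is unavoidable in this classification and is exactly the case your proposal never addresses; it is the reason the paper must prove the combinatorial lemma for the pullback $\pi^{-1}(P)$ as well, and no argument using only the Dynkin types of the contraction targets or the Frobenius/very-special dichotomy can substitute for it.
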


First, we give a motivation to the fact that we look for an example in rank two, then we prove \Cref{main:rank2} in two consecutive steps.\\

Let us fix a simple root $\alpha \in \Delta$. In order to find a parabolic subgroup not of standard type, the easiest and more natural idea is to consider the very special isogeny $\pi_G \colon G \longrightarrow \overline{G}$ and the subgroup $P \defeq N_G P^\alpha$. Its reduced part $P_{\text{red}} = P^\alpha$ is maximal, but $P$ is not of the form $G_m P^\alpha$ for any $m$. Indeed, its associated function $\varphi_P \colon \Phi^+ \rightarrow \N \cup \{ \infty \}$ is given by
\begin{align*}
    \gamma \longmapsto \infty & \quad \text{ if } \alpha \notin \Supp (\gamma)\\
    \gamma \longmapsto 0 & \quad \text{ if } \alpha \in \Supp (\gamma) \text{ and } \gamma \in \Phi_> \\
    \gamma \longmapsto 1 & \quad \text{ if } \alpha \in \Supp (\gamma) \text{ and } \gamma \in \Phi_<
\end{align*}
while the function associated to a parabolic subgroup of standard type satisfies $\varphi_{G_m P^\alpha}(\gamma) = m$ for all roots $\gamma$ containing $\alpha$ in their support, regardless of their length. There always exist both a short and a long root containing any simple root $\alpha$ in their support, namely
\begin{align}
    \label{bbb}
    & \bullet \, \text{in type } B_n, \, \Supp(\varepsilon_1) %= \Supp(\alpha_1+\ldots +\alpha_n) 
    = \Supp(\varepsilon_1+\varepsilon_2) = \Delta ;\\
    & \bullet \, \text{in type } C_n, \,\Supp(2\varepsilon_1)
     = \Supp(\varepsilon_1+\varepsilon_2)
     = \Delta ;\\
     \label{fff}
     & \bullet \, \text{in type } F_4, \, \Supp(\alpha_1+2\alpha_2+3\alpha_3+2\alpha_4)=\Supp(\alpha_1+2\alpha_2+4\alpha_3+2\alpha_4) = \Delta;\\
     & \bullet \, \text{in type } G_2, \, \Supp(2\alpha_1+\alpha_2)= \Supp(3\alpha_1+2\alpha_2)=\Delta.
\end{align}
Let us remark that the above roots can be constructed in a uniform way: they are respectively the highest short root and the highest (long) root. Thus, we can conclude that $\varphi_P \neq \varphi_{G_mQ}$ for all $m$, proving that $P$ is a parabolic subgroup not of standard type. However, $X= G/P$ is isomorphic as a variety to $\overline{G}/P_{\overline{\alpha}}$, hence the homogeneous space $X$ is still of standard type.\\
The same reasoning applies when one considers the product of a parabolic subgroup of standard type and of a kernel of a noncentral isogeny with source $G$: this might define a new parabolic subgroup, but an homogeneous space which is still of standard type. Together with \Cref{classification_rank1}, this implies that it is not possible to find examples of homogeneous spaces not of standard type having Picard rank one, when the characteristic satisfies the edge hypothesis (see \Cref{subsection_N}). This provides a motivation to the study of the rank two case, which means considering parabolic subgroups whose reduced part is of the form $P^\alpha \cap P^\beta$ for two distinct simple roots $\alpha$ and $\beta$. In such a context we are able to find the desired class of examples.

\begin{lemma}
\label{alphabeta}
Let us consider a simple, simply connected group $G$ having Dynkin diagram with an edge of multiplicity $p$, fix two distinct simple roots $\alpha$ and $\beta$ and an integer $r \geq 0$. Both the parabolic
\[
P \defeq N_{r,G} P^\alpha \cap P^\beta
\]
and its pull-back via the very special isogeny $\pi_{\overline{G}} \colon \overline{G} \rightarrow G$ are \emph{not} of standard type if and only if one of the following conditions is satisfied :
\begin{enumerate}[(i)]
    \item $G$ is of type $B_n$ or $C_n$ and the pair $(\alpha,\beta)$ is of the form $(\alpha_j,\alpha_i)$ with $i < j < n$ or $j=n$ and $i <n-1$ ;
    \item $G$ is of type $F_4$ and the pair $(\alpha,\beta)$ is one amongst
    \[
    (\alpha_1,\alpha_4), \quad (\alpha_2,\alpha_1), \quad (\alpha_2,\alpha_4), \quad (\alpha_3,\alpha_1), \quad (\alpha_3,\alpha_4), \quad (\alpha_4,\alpha_1).
    \]
\end{enumerate}
In particular, this situation can only happen when $p=2$.
\end{lemma}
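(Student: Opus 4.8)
The plan is to translate the whole statement into Wenzel's combinatorial language of functions $\varphi\colon\Phi^+\to\N\cup\{\infty\}$ and then reduce it to a purely root-theoretic condition checkable case by case. By \Cref{Wenzel_allp} a standard parabolic is determined by its associated function, and since the function attached to $\bigcap_i G_{m_i}P^{\alpha_i}$ is exactly $\gamma\mapsto\min\{m_i\colon\alpha_i\in\Supp(\gamma)\}$, a parabolic is of Wenzel type precisely when its function has the shape $\varphi(\gamma)=\min_{\delta\in\Supp(\gamma)}\varphi(\delta)$. So I first compute $\varphi_P$. Using the description of $N_{r,G}$ from \Cref{def_N}, for which $N_{r,G}\cap U_{-\gamma}=u_{-\gamma}(\alpha_{p^{r+1}})$ when $\gamma$ is short and $u_{-\gamma}(\alpha_{p^{r}})$ when $\gamma$ is long, together with $\varphi_{P^\alpha}(\gamma)=0$ or $\infty$ according to whether $\alpha\in\Supp(\gamma)$, $\varphi_{P^\beta}(\gamma)=0$ or $\infty$ according to whether $\beta\in\Supp(\gamma)$, and the fact that $\varphi$ turns intersections into pointwise minima, I get: $\varphi_P(\gamma)=0$ if $\beta\in\Supp(\gamma)$; $\varphi_P(\gamma)=\infty$ if $\alpha,\beta\notin\Supp(\gamma)$; and if $\alpha\in\Supp(\gamma)$, $\beta\notin\Supp(\gamma)$ then $\varphi_P(\gamma)=r+1$ for $\gamma$ short and $r$ for $\gamma$ long. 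Comparing with the Wenzel candidate $\gamma\mapsto\min_{\delta\in\Supp(\gamma)}\varphi_P(\delta)$, whose only finite simple-root values are $\varphi_P(\beta)=0$ and $\varphi_P(\alpha)=r+1$ or $r$ depending on the length of $\alpha$, the two agree except possibly on roots $\gamma$ with $\alpha\in\Supp(\gamma)$ and $\beta\notin\Supp(\gamma)$. Hence $P$ fails to be of Wenzel type exactly when condition (A) holds: there is a positive root $\gamma$ containing $\alpha$ but not $\beta$ in its support whose length is opposite to that of $\alpha$.

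For the pull-back $\overline P\defeq\pi_{\overline G}^{-1}(P)$ I would use that $\pi_{\overline G}$ is the very special isogeny of $\overline G$, whose root bijection $\delta\leftrightarrow\gamma$ exchanges long and short roots (\Cref{CGP_7.1.5}) and multiplies heights by $p$ on short root groups and by $1$ on long ones; the isogeny formula (\ref{def_integers}) then gives $\varphi_{\overline P}(\delta)=\varphi_P(\gamma)$ increased by $1$ when $\delta$ is short and unchanged when $\delta$ is long. The crucial point is that this shift exactly cancels the length asymmetry of $\varphi_P$ on the $\alpha$-part: one finds $\pi_{\overline G}^{-1}(N_{r,G}P^\alpha)=(\overline G)_{r+1}P^{\alpha'}$, which is of Wenzel type, whereas $\pi_{\overline G}^{-1}(P^\beta)=N_{\overline G}P^{\beta'}$ is the very special fattening of $P^{\beta'}$, so that $\overline P=(\overline G)_{r+1}P^{\alpha'}\cap N_{\overline G}P^{\beta'}$. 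The same comparison as above shows $\overline P$ is not of Wenzel type exactly when condition (B) holds: there is a root of length opposite to $\beta'$ containing $\beta'$ in its support. But (B) always holds under the edge hypothesis, since by (\ref{bbb})--(\ref{fff}) both the highest root (long) and the highest short root of each such system have full support $\Delta$, so every simple root lies in the support of a root of either length. Therefore $\overline P$ is always not of Wenzel type, and ``$P$ and $\overline P$ are both not of Wenzel type'' is equivalent to condition (A) alone.

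It remains to verify that (A) is equivalent to (i)--(ii), and in particular that it fails in type $G_2$. For $B_n$ and $C_n$ one checks directly with $\alpha=\alpha_j$, $\beta=\alpha_i$: when $j<n$, the opposite-length roots containing $\alpha_j$ are among those whose support is an interval $\{\alpha_k,\dots,\alpha_n\}$ with $k\le j$, so omitting $\alpha_i$ is possible iff $i<j$; when $j=n$, the opposite-length roots containing $\alpha_n$ all have support $\{\alpha_k,\dots,\alpha_n\}$ with $k\le n-1$, so omitting $\alpha_i$ is possible iff $i<n-1$; this is exactly condition (i). For $F_4$ I would tabulate, for each simple root $\alpha$, the intersection of the supports of all opposite-length roots containing $\alpha$ (using the explicit lists of short and long positive roots recalled in \Cref{F4}); condition (A) holds for $(\alpha,\beta)$ iff $\beta$ lies outside this intersection, which yields precisely the six pairs in (ii). Finally, in type $G_2$ every opposite-length root containing a given simple root also contains the other simple root, so the relevant intersection of supports is all of $\Delta$ and (A) never holds; as $G_2$ is the only edge-hypothesis type in characteristic $3$, this shows the phenomenon occurs only for $p=2$.

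The main obstacle is the careful bookkeeping in the second paragraph: one must check that the height shift produced by the very special isogeny precisely undoes the short/long asymmetry of $\varphi_P$ on the $\alpha$-part while genuinely preserving it on the $\beta$-part, so that the two presentations behave asymmetrically (the $\alpha$-fattening becomes Wenzel after pull-back, the $\beta$-fattening does not). The remaining work is the finite but somewhat lengthy root-combinatorial verification in type $F_4$, the only case where the answer is not transparent from a single uniform family of roots.
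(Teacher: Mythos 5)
Your proposal is correct and follows essentially the same route as the paper: both translate everything into Wenzel's function language via \Cref{Wenzel_allp}, compute $\varphi_P$ and $\varphi_{\pi^{-1}(P)}$ explicitly, show the pull-back is never of Wenzel type using the full-support highest long and short roots, and reduce the lemma to a root-combinatorial condition checked type by type. Your criteria (A) and (B) are equivalent reformulations of the paper's conditions (since $\alpha$ itself always supplies a root of its own length containing $\alpha$ but not $\beta$), and your $B_n$, $C_n$, $G_2$ analysis and deferred $F_4$ tabulation coincide with the paper's explicit verification.
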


\begin{proof}
Let us take a look at the function $\varphi_P \colon \Phi^+ \rightarrow \N \cup \{ \infty \}$ associated to the parabolic $P$ - recall that it is determined by the equality
\[
U_{-\gamma} \cap P = u_{-\gamma}(\alpha_{p^{\varphi(\gamma)}}), \quad \gamma \in \Phi^+
\]
- and let us compare it to the one associated to some $Q= G_m P^\alpha \cap G_nP^\beta$ (i.e. a parabolic of standard type), which is necessarily of this form because $Q_{\text{red}}= P_{\text{red}} = P^\alpha \cap P^\beta$. Our aim is to find in which cases there is a contradiction with the equality $P=Q$. First of all, assuming $\varphi_P(\beta) = \varphi_Q(\beta)$ leads to $n=0$. Now, let us write down the values that $\varphi_P$ and $\varphi_Q$ assume on all positive roots in the following table.
\begin{center}
\begin{tabular}{ |c|c|c|c|c|}
\hline
& $\alpha, \beta \in \Supp(\gamma)$ & \multirow{2}{9em}{$\alpha \in \Supp(\gamma)$, $\beta \notin \Supp(\gamma)$, $\gamma$ short} & \multirow{2}{9em}{$\alpha \in \Supp(\gamma)$, $\beta \notin \Supp(\gamma)$, $\gamma$ long} & $\beta \in \Supp(\gamma)$ \\
& & & & \\
\hline
$\varphi_Q(\gamma)$ & $\infty$ & $m$ & $m$ & $0$ \\
\hline
$\varphi_P(\gamma)$ & $\infty$ & $r+1$ & $r$ & $0$ \\
\hline
\end{tabular}
\end{center}
Thus, the two functions can never coincide if and only if there exist at least one long root and one short root containing $\alpha$ and not $\beta$ in their respective supports. Let us examine each root system to determine when this is the case.
\begin{itemize}
    \item If $G$ is of type $G_2$ in characteristic $p=3$, then all roots distinct from $\alpha_1$ and $\alpha_2$ contain both simple roots in their support, hence the desired condition is never satisfied. Thus from now on we can assume that $p=2$.
    \item If $G$ is of type $B_n$, let $\alpha= \alpha_j$ and $\beta = \alpha_i$ for some $1\leq i,j \leq n$. A positive short root is of the form $\varepsilon_m = \alpha_m + \ldots + \alpha_{n-1} +2\alpha_n$ for $m <n$ or $\varepsilon_n=\alpha_n$: hence if $j<i$ then a short root containing $\alpha$ in its support also contains $\beta$. Let us then assume $i<j$: in this case $\gamma = \varepsilon_j$ satisfies the condition. Moving on to long roots, if $i<j<n$ then $\gamma = \alpha = \varepsilon_j-\varepsilon_{j+1}$ is as wanted, while if $j=n$ then $\gamma = \varepsilon_{n-1} + \varepsilon_n = \alpha_n + 2\alpha_{n-1}$ satisfies the condition when $i<n-1$, while if $i = n-1$ then there is no such $\gamma$.
    \item If $G$ is of type $C_n$, let $\alpha= \alpha_j$ and $\beta = \alpha_i$ for some $1\leq i,j \leq n$. A positive long root is of the form $2\varepsilon_m = 2(\alpha_m + \ldots + \alpha_{n-1} +\alpha_n)$ for $m <n$ or $2\varepsilon_n=\alpha_n$: hence if $j<i$ then a long root containing $\alpha$ in its support also contains $\beta$. Let us then assume $i<j$: in this case $\gamma = 2\varepsilon_j$ satisfies the condition. Moving on to short roots, if $i<j<n$ then $\gamma = \alpha = \varepsilon_j-\varepsilon_{j+1}$ is as wanted, while if $j=n$ then $\gamma = \varepsilon_{n-1}+\varepsilon_n = \alpha_n + \alpha_{n-1}$ satisfies the condition when $i<n-1$, while if $i = n-1$ then there is no such $\gamma$. This completes condition $(i)$.
    \item If $G$ is of type $F_4$, there is no short root containing $\alpha_1$ (resp. $\alpha_1$, resp. $\alpha_2$) in its support and not containing $\alpha_2$ (resp. $\alpha_3$, resp. $\alpha_3$); moreover, there is no long root containing $\alpha_3$ (resp. $\alpha_4$, resp. $\alpha_4$) in its support and not containing $\alpha_2$ (resp. $\alpha_2$, resp. $\alpha_3$). This can be seen by directly looking at the list of positive roots in such a system, recalled at the beginning of Subsection \ref{F4}. The remaining pairs are listed below, which gives condition $(ii)$.
\end{itemize}
\begin{center}
\begin{tabular}{ |c|c|c|c|}
\hline
$\alpha$ & $\beta$ & a short $\gamma \colon \alpha \in \Supp(\gamma), \beta \notin \Supp(\gamma)$ & a long $\gamma \colon \alpha \in \Supp(\gamma), \beta \notin \Supp(\gamma)$ \\
%& & &  \\
\hline
$\alpha_1$ & $\alpha_4$ & $\alpha_1+\alpha_2+\alpha_3$ & $\alpha_1$ \\
\hline
$\alpha_2$ & $\alpha_1$ & $\alpha_2+\alpha_3$ & $\alpha_2$ \\
\hline
$\alpha_2$ & $\alpha_4$ & $\alpha_2+\alpha_3$ & $\alpha_2$ \\
\hline
$\alpha_3$ & $\alpha_1$ & $\alpha_3$ & $\alpha_2+2\alpha_3$\\
\hline
$\alpha_3$ & $\alpha_4$ & $\alpha_3$ & $\alpha_2+2\alpha_3$ \\
\hline
$\alpha_4$ & $\alpha_1$ & $\alpha_4$ & $\alpha_2+2\alpha_3+2\alpha_4$ \\
\hline
\end{tabular}
\end{center}
Up to this point we have only shown that the parabolic $P$ is not of standard type if and only if conditions $(i)$ or $(ii)$ are satisfied. Now, let us consider the pull-back 
\[
\pi_{\overline{G}}^{-1}(P) = \pi_{\overline{G}}^{-1} (N_{r,G}P^\alpha \cap P^\beta) = \overline{G}_{r+1}P^{\overline{\alpha}} \cap N_{\overline{G}}P^{\overline{\beta}}
\]
and compare it with $Q = \overline{G}_m P^{\overline{\alpha}} \cap \overline{G}_n P^{\overline{\beta}}$, analogously as before. This gives in particular, considering a root $\gamma \in \Phi^+$ satisfying $\overline{\alpha}, \overline{\beta} \in \Supp(\gamma)$, that $\varphi_Q(\gamma) = \min (m,n)$ for all $\gamma$, while $\varphi_{\pi^{-1}(P)}(\gamma)$ is equal to $1$ if $\gamma$ is short, and equal to $0$ if $\gamma$ is long. To show that those two parabolics can never coincide it is enough to have both such a long and a short root. This is always the case, as recalled at the beginning of this Subsection in (\ref{bbb})- (\ref{fff}), hence this concludes the proof.
\end{proof}

%%%%%%%%%%%%%%%%%%%%%%%%%%%%%%%%%%%%%%%%%%%%%%%%%%%%%%%%%%%%%%%%%%%%%%%%%%%%%%%%%%%%%%%%%%%%%%%%%%%%%%%%%%%%%%%%%%%%%%%%%%%%%%%%%%%

\begin{lemma}
\label{rank2}
Keeping the above notations, consider two distinct simple positive roots $\alpha$ and $\beta$ satisfying one of the conditions of \Cref{alphabeta}.
Then the parabolic $P \defeq N_{r,G}P^\alpha \cap P^\beta$ gives a variety $X\defeq G/P$ which is \emph{not} of standard type.
\end{lemma}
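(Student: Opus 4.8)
The plan is to show that the underlying variety of $X$ remembers enough group theory that any Wenzel-type presentation must coincide, up to harmless isogenies, with one of the two tautological presentations of $X$; the conclusion then reduces to the group-level statement already proved in \Cref{alphabeta}. First I would record these two presentations. Since $N_{\overline{G}} = \ker \pi_{\overline{G}}$ is contained in the preimage $\pi_{\overline{G}}^{-1}(P)$, the very special isogeny $\pi_{\overline{G}} \colon \overline{G} \to G^{(1)} \cong G$ (see \Cref{CGP_7.1.5}) induces an isomorphism of varieties $X = G/P \cong \overline{G}/\pi_{\overline{G}}^{-1}(P)$. Thus $X$ is simultaneously homogeneous under $G$, with stabilizer $P$, and under $\overline{G}$, with stabilizer $\pi_{\overline{G}}^{-1}(P)$, and these are exactly the two parabolics examined in \Cref{alphabeta}, both of which are there shown to be not of Wenzel type.

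Next I would set up the contraction picture. By \Cref{BB_flag} the cone $\NE(X)$ is two-dimensional, spanned by $C_\alpha$ and $C_\beta$, so $X$ carries exactly two elementary contractions, namely the maps $f_\alpha, f_\beta$ produced by \Cref{ProjY}. A direct identification of the subgroups $Q_\gamma$ gives $Q_\beta = P^\beta$, since $P \subset P^\beta$, and $Q_\alpha = N_G P^\alpha$; hence the targets are the Picard-rank-one varieties $Y_\beta = G/P^\beta$ and $Y_\alpha = G/N_G P^\alpha \cong \overline{G}/P^{\overline{\alpha}}$, and moreover $P = Q_\alpha \cap Q_\beta$, so that $f_\alpha \times f_\beta$ is a closed immersion. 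The core step is then to prove that $G_0 \defeq \underline{\Aut}^0_X$ is intrinsic and rigidifies all presentations of $X$. One checks that $G_0$ is adjoint and reductive (its radical and centre, being normal and fixing a point of the projective variety $X$, are trivial) and that it is simple, because the dimension count $\dim X < \dim Y_\alpha + \dim Y_\beta$ shows $X$ is not a nontrivial product and $\Pic X \cong \Z^2$ is indecomposable. Thus $X = G_0/P_0$ canonically with $(P_0)_{\text{red}}$ of Picard rank two.

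Because contractions are determined by the faces of $\NE(X)$ by \Cref{contractions} and are equivariant by Blanchard's Lemma \Cref{blanchard}, any isomorphism $X \cong G'/P'$ must carry the two elementary contractions of $G'/P'$ onto $f_\alpha$ and $f_\beta$. Applying the rank-one comparison results, through \Cref{demazure77} and the factorisation \Cref{factorisation_isogenies}, to the two targets pins down $G_0$ and forces $G'$ to be, up to a central isogeny and a Frobenius twist, either $G$ or $\overline{G}$, with $P'$ correspondingly the image of $P$ or of $\pi_{\overline{G}}^{-1}(P)$. Since central isogenies and Frobenius twists do not change the property of being of Wenzel type — as one sees directly from the description of the associated function $\varphi$ used in \Cref{alphabeta} — it would follow that if $X$ were of Wenzel type, then one of $P$ or $\pi_{\overline{G}}^{-1}(P)$ would itself be of Wenzel type, contradicting \Cref{alphabeta}. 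This yields the claim.

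The main obstacle is exactly this identification of $G_0 = \underline{\Aut}^0_X$ together with the reduction of the variety-level isomorphism to the group-level statement of \Cref{alphabeta}. The delicate point is controlling how the two Picard-rank-one contraction targets constrain $G_0$ when one of them is a Demazure-exceptional flag variety — for instance $Y_\beta = \Sp_{2n}/P^{\alpha_1}$, whose automorphism group is of type $A_{2n-1}$ by \Cref{demazure77}(a), a case genuinely allowed since the hypotheses of \Cref{alphabeta} permit $i=1$ — and verifying that such an exceptional target does not give rise to a new simple group acting on all of $X$ rather than on a single contraction. Once one checks that the only presentations of $X$ as a flag variety, up to the operations preserving the Wenzel-type property, are the two tautological ones, \Cref{alphabeta} closes the argument.
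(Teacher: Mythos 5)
Your proposal reproduces the paper's architecture faithfully: the two contractions coming from \Cref{BB_flag} and \Cref{ProjY}, the identifications $Q_\beta = P^\beta$ and $Q_\alpha = N_G P^\alpha$ (the latter, by the way, is not a "direct identification" but uses \Cref{final_rank1} applied to $Q_\alpha$), the closed immersion $f_\alpha \times f_\beta$, the intrinsic nature of this pair of contractions, and the final reduction to \Cref{alphabeta} via \Cref{factorisation_isogenies}. But the step that actually carries the proof is missing. Everything hinges on identifying $G_0 = \underline{\Aut}^0_X$ with $G_{\ad}$: only then does a second presentation $X \cong G^\prime/P^\prime$ produce a single isogeny $G^\prime \to G_{\ad}$ whose factorisation exhibits $P^\prime$ as a Frobenius thickening of $P$ or of $\pi^{-1}(P)$, where \Cref{alphabeta} applies. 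Your argument that $G_0$ is simple and adjoint (radical and centre fix points; the dimension count $\dim X < \dim Y_\alpha + \dim Y_\beta$ rules out a product decomposition) is fine, and is in fact a small observation not spelled out in the paper, but simplicity does not exclude $G_0 \supsetneq G_{\ad}$. That jump is exactly the Demazure phenomenon which makes the whole statement delicate: for $G_2/P^{\alpha_1} = Q \subset \proj^6$ the automorphism group is the strictly larger $\SO_7$, and nothing in your text rules out an analogous jump for $X$. The sentence "applying the rank-one comparison results \ldots{} pins down $G_0$" is precisely the assertion requiring proof: \Cref{demazure77} controls the automorphism groups of the two targets $Y_\alpha$, $Y_\beta$, not that of $X$, and you provide no mechanism for transporting this control from the targets back to $X$. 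You acknowledge as much in your last paragraph, so what you have is a correct plan whose central step is left open.

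The paper closes this gap with a short retraction argument that your proposal lacks. Blanchard's Lemma (\Cref{blanchard}) applied to $f_\beta$ yields a homomorphism $\xi \colon \underline{\Aut}^0_X \to \underline{\Aut}^0_{X_\beta}$; when $(G_{\ad}, P^\beta/Z(G))$ is not exceptional, \Cref{demazure77} gives $\underline{\Aut}^0_{X_\beta} = G_{\ad}$, and since the descended action of $G_{\ad}$ on $X_\beta$ is the standard one, $\xi$ is split by the natural map $G_{\ad} \to \underline{\Aut}^0_X$. Hence $\underline{\Aut}^0_X$ is a semidirect product with normal factor $\ker \xi$ and complement $G_{\ad}$, and connectedness together with reducedness of $\underline{\Aut}^0_X$ forces $\ker \xi = 1$, so $\underline{\Aut}^0_X = G_{\ad}$. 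The exceptional case you single out, $G = \Sp_{2n}$ with $\beta = \alpha_1$, is then treated separately in the paper: there $\underline{\Aut}^0_{X_\beta} = \PGL_{2n}$, the paper argues that $\underline{\Aut}^0_X = \PGL_{2n}$ as well, and since a group of type $A_{2n-1}$ admits no very special isogeny, any isogeny from $G^\prime$ onto it is an iterated Frobenius followed by a central isogeny, so the stabilizer is of the form $G_m P$ and is again not of Wenzel type. So your instinct about where the difficulty sits is correct, but both the generic and the exceptional cases need the retraction argument (or a substitute for it), and the proposal as written contains neither; without it, the reduction of the variety-level statement to the group-level \Cref{alphabeta} is not justified.
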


%In order to prove this, we use \Cref{demazure77}. 

\begin{proof} %(of \Cref{rank2})\\
The reduced part of the parabolic subgroup $P$ is $P_{\text{red}} = P^\alpha \cap P^\beta$: by \Cref{BB_flag}, the convex cone of curves of the variety $X$ is generated by the classes of the curves
%\[ \NE(X) = \Eff_1(X) \otimes_\Z \R, \quad \text{with } \Eff_1(X) = \N C_\alpha \oplus \N C_\beta \subset N_1(X)_\Z = \Z C_\alpha \oplus \Z C_\beta, \]
\[
C_\alpha = \overline{Bs_\alpha o} \quad \text{and} \quad C_\beta = \overline{Bs_\beta o}.
\]
%\[C_\alpha = \left[ \overline{B (s_\alpha \cdot o)} \right] \quad \text{and} \quad C_\beta = \left[ \overline{B (s_\beta \cdot o)} \right]\] the respective classes of $1$-cycles.
Next, let us consider the two contractions 
\[
f_\alpha \colon X \longrightarrow G/Q_\alpha \quad \text{and} \quad f_\beta \colon X \longrightarrow G/Q_\beta
\]
defined by (\ref{definition_falpha}). Clearly, $Q_\beta = \langle Q,P^\beta \rangle = P^\beta $ is smooth because $P \subset P^\beta$.
On the other hand, let us show that $Q_\alpha = N_{r,G}P^\alpha$. Since both $P$ and $P^\alpha$ are subgroups of the right hand term, the inclusion $Q_\alpha \subset N_{r,G}P^\alpha$ holds. To prove the other inclusion, let us notice that the hypothesis on $\alpha$ and $\beta$, as shown in the proof of \Cref{alphabeta}, guarantees the existence of some short positive root $\gamma$ containing $\alpha$ and not $\beta$ in its support. In particular, this implies that
\[
P \cap U_{-\gamma} = (N_{r,G}P^\alpha \cap U_{-\gamma}) \cap (P^\beta \cap U_{-\gamma}) = u_{-\gamma}(\alpha_{p^{r+1}}),
\]
hence $Q_\alpha \cap U_{-\gamma}$ is the image of a Frobenius kernel of height at least equal to $r+1$. By the factorisation of isogenies in \Cref{factorisation_isogenies}, the only two possibilities are thus $Q_\alpha= G_{r+1}P^\alpha$ and $Q_\alpha= N_{r,G}P^\alpha$, which allows to conclude that $Q_\alpha= N_{r,G}P^\alpha$. 
%On the other hand, both $P$ and $P^\alpha$ are by definition subgroups of $N_GP^\alpha$ hence
%\[P^\alpha = (Q_\alpha)_{\text{red}} \subset Q_\alpha \subset N_GP^\alpha\]
%Moreover, let us remark that $P^\alpha \subset (Q_\alpha )_{\text{red}} \subset (N_GP^\alpha)_{\text{red}} = P^\alpha$, so $(Q_\alpha)_{\text{red}}= P^\alpha$. However, $Q_\alpha$ is not equal to $P^\alpha$: this follows from the fact that $U_{-\alpha} \cap P^\alpha = 1$, while 
%\[U_{-\alpha} \cap P = (U_{-\alpha} \cap N_GP^\alpha) \cap (U_{-\alpha} \cap P^\beta ) = u_{-\alpha}(\alpha_p) \cap u_{-\alpha}(\Ga) = u_{-\alpha}(\alpha_p)\]
%because $\alpha$ is a short root. By \Cref{final_rank1} - applied to $Q_\alpha$, we have that $\Lie N_G \subset \Lie Q_\alpha$: the subgroup $N_G$ being of height one, this means that $N_G \subset Q_\alpha$ hence finally $Q_\alpha = N_GP^\alpha$.
This means that the product of the contractions 
\[
\begin{tikzcd}
f = f_\alpha \times f_\beta \colon X \arrow[rr, hookrightarrow] && X_\alpha \times X_\beta
\end{tikzcd}
\]
is a closed immersion, where $X_\alpha$ (resp. $X_\beta$) is the underlying variety of $G/N_GP^\alpha$ (resp. $G/P^\beta$). Moreover, these maps are - up to a permutation - uniquely determined by the variety $X$, because the monoid $\N C_\alpha \oplus \N C_\beta \subset N_1(X)$ of effective $1$-cycles does not depend on the group action on it: the two contractions are uniquely determined by its two generators and by the fact that the first is a nonsmooth morphism while the second is smooth.\\
The following step consists in studying the automorphisms of the varieties $X$ and $X_\beta$. First, we can apply \Cref{demazure77} to the variety $X_\beta = G/P^\beta$ since its stabilizer is smooth and since by \Cref{alphabeta} the pair $(G_{\ad},P^\beta/Z(G))$ is not associated to any of the exceptional pairs, except in the case of $G= \Sp_{2n}$ and $(\alpha,\beta) = (\alpha_j, \alpha_1)$, which we treat later.
This implies
\[
\underline{\Aut}_{X_\beta}^0 = G_{\ad}.
\]
Next, let us consider the group $\underline{\Aut}_X^0$: its natural action on $X$ gives, applying \Cref{blanchard} to the contraction $f_\beta \colon X \longrightarrow X_\beta$, an action on $X_\beta$ i.e. a morphism
\[
\begin{tikzcd}
\underline{\Aut}^0_X \arrow[rr, "\xi"] && \underline{\Aut}_{X_\beta}^0 = G_{\ad}.
\end{tikzcd}
\]
In particular, the isogeny $\xi$ is a section of the natural morphism given by the action of $G_{\ad}$ on $X$, thus giving a semidirect product $\underline{\Aut}_X^0 = G_{\text{ad}} \rtimes \ker\xi$. Since $\underline{\Aut}_X^0$ is reduced, $\ker \xi$ must be finite, smooth and connected, so it is trivial and we conclude that $\underline{\Aut}_X^0 = G_{\ad}.$\\
Finally, let us consider another action of a semisimple, simply connected $G^\prime$ onto the variety $X$; realizing it as a quotient $G^\prime / P^\prime$ for some parabolic subgroup $P^\prime$. Since it is simply connected, $G^\prime$ is either simple or the direct product $G_{(1)} \times \cdots \times G_{(l)}$ where each $G_{(i)}$ is simple.\\
$\bullet$ If $G^\prime$ is simple, then its action on $X$ induces a morphism $G^\prime \longrightarrow \underline{\Aut}_X^0=G_{\text{ad}}$, which is in particular an isogeny. By \Cref{factorisation_isogenies}, this morphism can be factorised as
\[
\begin{tikzcd}
G^\prime \arrow[rr, "F^m"] && G \arrow[r, twoheadrightarrow]  & \underline{\Aut}_X^0 & \text{or} & G^\prime \arrow[rr, "F^m \circ \pi"] && G \arrow[r, twoheadrightarrow]  &  \underline{\Aut}_X^0,
\end{tikzcd}
\]
where the second possibility only can happen whenever $G$ satisfies the edge hypothesis. The stabilizer of the $G^\prime$-action is the preimage of the stabilizer of the $G$-action via such an isogeny, hence it is either of the form $G_m P$ for some $m$ or of the form $\overline{G}_m \pi^{-1}(P)$. Now, a parabolic $Q$ is of standard type if and only if $G_m Q $ is for any integer $m$, since the associated functions satisfy $\varphi_Q(\gamma) + m = \varphi_{G_m Q}(\gamma)$. This means that $P^\prime$ is of standard type if and only if $P$ (resp. $\pi^{-1}(P)$) is. This remark, together with \Cref{alphabeta} allows us to conclude that, due to our choice of roots $\alpha$ and $\beta$, $P^\prime$ is still a parabolic subgroup not of standard type.\\
If $G=\Sp_{2n}$ and $P^\beta= P^{\alpha_1}$, then \Cref{demazure77} yields $\underline{\Aut}_{X_{\beta}}^0= \PGL_{2n}$. Repeating the above reasoning implies that $\underline{\Aut}_X^0= \PGL_{2n}$ as well, hence the isogeny with source $G^\prime$ is necessarily the composition of an iterated Frobenius and a central isogeny. This implies that the stabilizer of the $G^\prime$-action is of the form $P^\prime = G_mP$ hence still not of standard type.\\
$\bullet$ If $G^\prime = G_{(1)} \times \cdots \times G_{(l)}$ is not simple, consider the morphism
\[
\begin{tikzcd}
G_{(1)} \times \cdots \times G_{(l)} \arrow[rr, "\phi"] && G \arrow[r, twoheadrightarrow] & G_{\text{ad}}
\end{tikzcd}
\]
determined by the action: then $H\defeq \ker \phi$ is a normal subgroup of $G^\prime$ and the image of $\phi$ is simple, thus $H$ is necessarily of the form
\[
H = \prod_{i \neq i_0} G_{(i)} \times K, \quad \text{for some } K \subset Z(G_{(i_0)}),
\]
thus $K$ is trivial because the quotient $G$ is also simply connected.
In particular, denoting as $P_{(i_0)} \defeq P^\prime \cap G_{(i_0)}$, we have 
\[
X = G^\prime/P^\prime = G^\prime / \left( \prod_{i \neq i_0} G_{(i)} \times P_{(i_0)}\right) = G_{(i_0)} / P_{(i_0)}%, \quad \text{since} \quad P^\prime = \rho^{-1}\rho(P^\prime) = \prod_{i \neq i_0} G_{(i)} \times \ho(P^\prime) 
\]
Applying the reasoning above to $G_{(i_0)}$ instead of $G^\prime$ leads to the conclusion that the associated function of $P_{(i_0)}$ is not of standard type, hence the same is true for the stabilizer $P^\prime = \prod_{i \neq i_0} G_{(i)} \times P_{(i_0)}$.
\end{proof}

Notice that, except for the group of type $G_2$ in characteristic $2$, \Cref{rank2} covers the classification of all homogeneous spaces of Picard rank two that "we know the existence of" i.e. those of the form $G/P$ with $P = (\ker \varphi)P^\alpha \cap (\ker \psi) P^\beta$ for a couple of isogenies $\varphi$ and $\psi$ with source $G$.\\
Indeed, \Cref{factorisation_isogenies} implies that one of the two kernels must be contained in the other, hence up to permuting $\alpha$ and $\beta$ the inclusion $\ker\psi \subset \ker \varphi$ holds. Taking the quotient by $\ker \psi$ allows to assume either $P = G_r P^\alpha \cap P^\beta$, which is the standard type case, or $P = N_{r,G}P^\alpha \cap P^\beta$ for some $r \geq 0$. The latter gives a variety not of standard type if and only if $p=2$ and the above hypothesis on roots is satisfied.\\

%%%%%%%%%%%%%%%%%%%%%%%%%%%%%%%%%%%%%%%%%%%%%%%%%%%%%%%%%%%%%%%%%%%%%%%%%%%%%%%%%%%%%%%%%%%%%%%%%%%%%%%%%%%%%%%

\textbf{Problem}: Let us consider a simple group and a parabolic subgroup $P \subset G$ with reduced part $P_{\text{red}}=P_I$ which is not maximal. The associated family of contractions give a natural inclusion
\[
P \subset \bigcap_{\alpha \in \Delta \backslash I} Q_\beta.
\]
The question whether there exist a parabolic subgroup $P$ for which the inclusion is strict is still open. At this point, we are neither able to exclude their existence nor to exhibit an explicit example.

\vfill\pagebreak

\section{Appendix}
\label{sec4}

Let us resume here a short description of the Chevalley embedding of the group of type $G_2$, which holds in any characteristic. We will then specialize to characteristic two which is the interesting one for our purposes. First we describe its action on the algebra of octonions, then we use it to compute some of the root subgroups of such a group, which are fundamental in order to study the parabolic subgroups $P_{\mathfrak{h}}$ and $P_{\mathfrak{l}}$ (see \Cref{def:PhPl}).

\subsection{The Chevalley embedding of $G_2$}
\label{G2_description}

Let $G$ be the simple group of type $G_2$ in characteristic $p >0$. It can be viewed - as illustrated in \cite{SV}, from which we will keep most of the notation - as the automorphism group of an octonion algebra. The latter is the algebra
\[
\mathbb{O} = \left\{ (u,v) \colon u,v \text{ are } 2\times 2 \text{ matrices}\right\},
\]
with basis
\begin{align*}
%\label{basis:O}
    & e_{11}=  \left(\begin{pmatrix} 1 & 0 \\ 0 & 0 \end{pmatrix}, \begin{pmatrix} 0 & 0 \\ 0 & 0 \end{pmatrix} \right), & e_{12}=  \left(\begin{pmatrix} 0 & 1 \\ 0 & 0 \end{pmatrix}, \begin{pmatrix} 0 & 0 \\ 0 & 0 \end{pmatrix} \right),\\
    & e_{21}=  \left(\begin{pmatrix} 0 & 0 \\ 1 & 0 \end{pmatrix}, \begin{pmatrix} 0 & 0 \\ 0 &  0 \end{pmatrix} \right), & e_{22}=  \left(\begin{pmatrix} 0 & 0 \\ 0 & 1 \end{pmatrix}, \begin{pmatrix} 0 & 0 \\ 0 & 0 \end{pmatrix} \right),\\
    & f_{11}=  \left(\begin{pmatrix} 0 & 0 \\ 0 & 0 \end{pmatrix}, \begin{pmatrix} 1 & 0 \\ 0 & 0 \end{pmatrix} \right), & f_{12}=  \left(\begin{pmatrix} 0 & 0 \\ 0 & 0 \end{pmatrix}, \begin{pmatrix} 0 & 1 \\ 0 & 0 \end{pmatrix} \right),\\
    & f_{21}=  \left(\begin{pmatrix} 0 & 0 \\ 0 & 0 \end{pmatrix}, \begin{pmatrix} 0 & 0 \\ 1 & 0 \end{pmatrix} \right), & f_{22}=  \left(\begin{pmatrix} 0 & 0 \\ 0 & 0 \end{pmatrix}, \begin{pmatrix} 0 & 0 \\ 0 & 1 \end{pmatrix} \right),
\end{align*}
unit $e=(1,0)= e_{11}+e_{22}$, and which is equipped with a norm
\[
q(u,v) = \det (u) - \det(v).
\]
Let us write here for reference a table of products of the basis vectors :
%\begin{center}
\begin{align}
\label{table:octonions}
\begin{tabular}{ |c|c|c|c|c|c|c|c|c|}
\hline 
$\diagdown$ & $e_{11}$ & $e_{21}$ & $e_{12}$ & $e_{22}$ & $f_{11}$ & $f_{21}$ & $f_{12}$ & $f_{22}$\\
\hline
$e_{11}$ & $e_{11}$ & $0$ & $e_{12}$ & $0$ & $f_{11}$ & $f_{21}$ & $0$ & $0$\\
\hline
$e_{21}$ & $e_{21}$ & $0$ & $e_{22}$ & $0$ & $0$ & $0$ & $f_{11}$ & $f_{21}$\\
\hline
$e_{12}$ & $0$ & $e_{11}$ & $0$ & $e_{12}$ & $f_{12}$ & $f_{22}$ & $0$ & $0$\\
\hline
$e_{22}$ & $0$ & $e_{21}$ & $0$ & $e_{22}$ & $0$ & $0$ & $f_{12}$ & $f_{22}$\\
\hline
$f_{11}$ & $0$ & $0$ & $-f_{12}$ & $f_{11}$ & $0$ & $-e_{21}$ & $0$ & $e_{11}$\\
\hline
$f_{21}$ & $0$ & $0$ & $-f_{22}$ & $f_{21}$ & $e_{21}$ & $0$ & $-e_{11}$ & $0$\\
\hline
$f_{12}$ & $f_{12}$ & $-f_{11}$ & $0$ & $0$ & $0$ & $-e_{22}$ & $0$ & $e_{12}$\\
\hline
$f_{22}$ & $f_{22}$ & $-f_{21}$ & $0$ & $0$ & $e_{22}$ & $0$ & $-e_{12}$ & $0$\\
\hline
\end{tabular}
 \end{align}
%\end{center}
An embedding of the group $G_2$ into $\SO_7$ - which gives an irreducible representation in all characteristics but two - can be seen as follows: let us consider its action on the vector space 
\begin{align}
\label{e_orthogonal}
V \defeq e^\perp = \{ (u,v) \colon \det(1+u) - \det(u) = 1\} = \{ (u,v) \colon u_{11} + u_{22} = 0 \}.
\end{align}
%Since $p= 2$, we have that $e \in V$ hence the group $G$ also acts on the quotient $W\defeq V/ke$, which has dimension $6$.
By \cite[Lemma $2.3.1$]{SV}, a maximal torus of $G$ - with respect to the basis $(e_{12}, e_{21},f_{11}, e_{11}-e_{22}, f_{12}, f_{21}, f_{22})$ of $W$ - acts on $V$ as 
\[
{\Gm}^2 \ni (\xi, \eta) \longmapsto \diag(\xi \eta, \xi^{-1}\eta^{-1}, \eta^{-1}, 1, \xi, \xi^{-1}, \eta) \in \GL_7
\]
Let us re-parameterize it with $\xi= a$, $\eta=ab$, this gives the torus 
\[
{\Gm}^2 \ni (a,b) \longmapsto \diag(a^2b,a^{-2}b^{-1},a^{-1}b^{-1},1,a,a^{-1},ab) =: t \in \GL_7,
\]
and the basis of simple roots we fix is $\alpha_1(t) \defeq a$ and $\alpha_2(t) \defeq b$. Such a torus acts on $V$ with the following weight spaces :
\begin{align*}
V_0 = k(e_{11}-e_{22}), \, & V_{\alpha_1} = kf_{12}, \, V_{-\alpha_1}= kf_{21}, \, V_{\alpha_1+\alpha_2} = kf_{22},\\
& V_{-\alpha_1-\alpha_2}= kf_{11}, \, V_{2\alpha_1+\alpha_2} = ke_{12}, \, V_{-2\alpha_1-\alpha_2} = ke_{21},
\end{align*}
which correspond to $0$ and the short roots. Re-arranging $V$ as 
\begin{align}
\label{V_roots}
V = kf_{12} \oplus kf_{11} \oplus ke_{12} \oplus k(e_{11}-e_{22}) \oplus ke_{21} \oplus kf_{22} \oplus kf_{21}
\end{align}
gives the maximal torus $T$
\begin{align}
\label{T_typeG}
{\Gm}^2 \ni (a,b) \longmapsto \diag(a,a^{-1}b^{-1},a^2b, 1,a^{-2}b^{-1}, ab,a^{-1}) = t \in T \subset \GL_7.
\end{align}
This way, $T$ can be identified with the maximal torus in \cite[page 13]{Heinloth}: in his description of the embedding $G \subset \GL_7$, the group $G$ is generated by the two following copies of $\GL_2$,
\[
\theta_1 \colon A \longmapsto \begin{pmatrix} A && \\ & \Sym^2(A) \det A^{-1} & \\ && A\end{pmatrix} \quad \! \text{and} \! \quad \theta_2 \colon B \longmapsto \begin{pmatrix} \det B^{-1} &&&& \\ & \widetilde{B} &&& \\ && 1 && \\ &&& B & \\ &&&& \det B\end{pmatrix},
\]
where \[
\widetilde{A} = \begin{pmatrix} 0 & 1 \\ 1 & 0 \end{pmatrix} \, ^tA^{-1} \begin{pmatrix} 0 & 1 \\ 1 & 0 \end{pmatrix}.\]
However, in characteristic $p=2$, due to the fact that $e \in V$ and that $G$ acts on the quotient $W=V/ke$, these become the following two copies embedded in $\GL(W) = \GL_6$:
\[
\theta_1 \colon A \longmapsto \begin{pmatrix} A && \\ & A^{(1)}\det A^{-1} & \\ && A\end{pmatrix} \quad \text{and} \quad \theta_2 \colon B \longmapsto \begin{pmatrix} \det B^{-1} &&& \\ & B && \\ && B & \\ &&& \det B\end{pmatrix},
\]
where $A^{(1)}$ denotes the Frobenius twist applied to $A$. 

\begin{lemma}
    The subgroups $\theta_1(\GL_2)$ and $\theta_2(\GL_2)$ have root system with positive root respectively $\beta_1 \defeq 2\alpha_1+\alpha_2$ and $\beta_2 \defeq -3\alpha_1-2\alpha_2$.
\end{lemma}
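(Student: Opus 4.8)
The plan is to read off the two root subgroups directly from the explicit $6\times 6$ matrices defining $\theta_1$ and $\theta_2$, using the weights of the $T$-action recorded before \eqref{T_typeG}: with respect to the basis $(f_{12},f_{11},e_{12},e_{21},f_{22},f_{21})$ of $W$, the torus $T$ acts on these six vectors with weights $\alpha_1$, $-\alpha_1-\alpha_2$, $2\alpha_1+\alpha_2$, $-2\alpha_1-\alpha_2$, $\alpha_1+\alpha_2$, $-\alpha_1$ respectively. The idea is that the image of the standard upper (resp. lower) triangular unipotent one-parameter subgroup of $\GL_2$ under $\theta_i$ is a one-parameter unipotent subgroup of $G$ which I will identify with a $T$-root subgroup $U_{\pm\beta_i}$, the root $\beta_i$ being read off from the weights of the two basis vectors it connects.

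First I would treat $\theta_1$. Since $\theta_1$ places the block $A$ on the pairs $(f_{12},f_{11})$ and $(f_{22},f_{21})$ and the Frobenius-twisted block $A^{(1)}\det A^{-1}$ on $(e_{12},e_{21})$, the image $u(x)$ of $\begin{pmatrix}1&x\\0&1\end{pmatrix}$ acts by $f_{11}\mapsto f_{11}+xf_{12}$, $f_{21}\mapsto f_{21}+xf_{22}$ and $e_{21}\mapsto e_{21}+x^2e_{12}$, fixing the other vectors. I would then check that $u(\Ga)$ is the root subgroup $U_{\beta_1}$ for $\beta_1=2\alpha_1+\alpha_2$ by conjugating with $t\in T$: the entries scaled by $x$ acquire the factor $(2\alpha_1+\alpha_2)(t)$, while the entry scaled by $x^2$ acquires $(4\alpha_1+2\alpha_2)(t)=\big((2\alpha_1+\alpha_2)(t)\big)^2$, so that $t\,u(x)\,t^{-1}=u\big((2\alpha_1+\alpha_2)(t)\,x\big)$. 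The lower triangular unipotent gives $U_{-\beta_1}$ symmetrically. The same recipe applied to $\theta_2$ — whose blocks are $\det B^{-1}$ on $f_{12}$, $B$ on $(f_{11},e_{12})$ and on $(e_{21},f_{22})$, and $\det B$ on $f_{21}$ — sends the upper triangular unipotent to $v(y)$ with $e_{12}\mapsto e_{12}+y\,f_{11}$ and $f_{22}\mapsto f_{22}+y\,e_{21}$; here the weight lowers by $(2\alpha_1+\alpha_2)-(-\alpha_1-\alpha_2)=3\alpha_1+2\alpha_2$, so $t\,v(y)\,t^{-1}=v\big((-3\alpha_1-2\alpha_2)(t)\,y\big)$ and $v(\Ga)=U_{\beta_2}$ with $\beta_2=-3\alpha_1-2\alpha_2$. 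Since each $\theta_i(\GL_2)\cong\GL_2$ is reductive of semisimple rank one and contains the opposite root subgroups $U_{\pm\beta_i}$, its root system relative to $T$ is exactly $\{\pm\beta_i\}$, with the image of the upper triangular Borel of $\GL_2$ singling out $\beta_i$ as the positive root.

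The one genuine subtlety, and the step I would be most careful about, is the Frobenius twist in $\theta_1$: in characteristic $2$ the parameter enters the $(e_{12},e_{21})$ block quadratically, so $u(x)$ is not visibly a single root subgroup. The resolution is precisely that $e_{12}$ and $e_{21}$ carry the weights $\pm\beta_1=\pm(2\alpha_1+\alpha_2)$, whence the relevant matrix entry transforms with weight $2\beta_1$ and the $x^2$ is perfectly compatible with the linear rescaling by $\beta_1$; this is the infinitesimal manifestation of the very special isogeny, $\beta_1$ being a short root. To legitimise computing the roots as characters of $T$ itself, I would finally record that the diagonal torus of each $\theta_i(\GL_2)$ is a maximal torus of $G$ meeting $T$ in the coroot line $\beta_i^\vee(\Gm)$: a direct check gives $\theta_1(\diag(s,s^{-1}))=t(s,1)=\beta_1^\vee(s)$ and likewise $\theta_2(\diag(u,u^{-1}))=t(1,u^{-1})=\beta_2^\vee(u)$, with $\langle\beta_i,\beta_i^\vee\rangle=2$ in both cases. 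This confirms $\beta_1=2\alpha_1+\alpha_2$ and $\beta_2=-3\alpha_1-2\alpha_2$, as asserted.
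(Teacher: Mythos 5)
Your proof is correct, and it takes a genuinely different route from the paper's. The paper's proof is a one-line pointer forward to the Appendix computations of the root homomorphisms: there $u_{\pm\beta_1}$ and $u_{\pm\beta_2}$ are pinned down by imposing the torus weights, the homomorphism property, preservation of the octonion norm $q$, and compatibility with the octonion product, and the lemma follows because the resulting matrices are visibly the triangular parts of $\theta_1(\GL_2)$ and $\theta_2(\GL_2)$. You argue in the opposite direction: you compute the images under $\theta_i$ of the standard unipotents of $\GL_2$ and identify them as root subgroups of $(G,T)$ purely from the conjugation formula $t\,\theta_i(u(x))\,t^{-1}=\theta_i\bigl(u(\beta_i(t)x)\bigr)$, invoking the uniqueness of the smooth connected one-dimensional $T$-stable unipotent subgroup with a given weight. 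This bypasses the norm and octonion computations altogether; all you need are Heinloth's matrices and the torus, and your weight bookkeeping checks out (including the subtle point that the $x^2$ entry on the $(e_{12},e_{21})$ block transforms by $2\beta_1$, hence is compatible with rescaling the parameter by $\beta_1(t)$). The trade-off: the paper gets the lemma for free from computations it needs anyway for the study of $P_{\mathfrak{h}}$ and $P_{\mathfrak{l}}$, while your argument is self-contained and more economical. Your closing coroot verification --- $\theta_1(\diag(s,s^{-1}))=t(s,1)=\beta_1^\vee(s)$ and $\theta_2(\diag(u,u^{-1}))=t(1,u^{-1})=\beta_2^\vee(u)$, both of which I confirmed, with $\beta_1(t(s,1))=s^2$ and $\beta_2(t(1,u^{-1}))=u^2$ giving $\langle\beta_i,\beta_i^\vee\rangle=2$ --- is a useful addition that the paper leaves implicit: since $\theta_i(\GL_2)$ does not contain $T$ but meets it exactly in the coroot line, this compatibility is what makes it legitimate to read the roots of $\theta_i(\GL_2)$, a priori characters of its own diagonal torus, as the characters $\pm\beta_i$ of $T$.
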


\begin{proof}
    %A direct calculation shows that the maps
    %\[
    %\Ga \rightarrow \theta_1(\GL_2), \, \lambda \mapsto \begin{pmatrix} 1 &&&&& \\ & 1 & \lambda &&& \\ & 0 & 1 &&& \\ &&& 1 & \lambda & \\ &&& 0 & 1 & \\ &&&&& 1\end{pmatrix}, \quad \Ga \rightarrow \theta_2(\GL_2), \, \mu \mapsto \begin{pmatrix} 1 & \mu &&&& \\ 0 & 1 &&&& \\ && 1 & \mu^2 && \\ && 0 & 1 && \\ &&&& 1 & \mu \\ &&&& 0 & 1  \end{pmatrix}
    %\]
    See the computation of the root homomorphisms associated respectively to $\beta_1$ and $\beta_2$, done in \Cref{rootsubspaces:G2}: these are respectively the intersection of $\theta_1(\GL_2)$ and $\theta_2(\GL_2)$ with the upper triangular matrices of $\GL_7$.
\end{proof}

Let us remark that $\{ \beta_1, \beta_2\}$ is indeed a basis for the root system of type $G_2$, with corresponding set of positive roots being 
\[
-3\alpha_1-2\alpha_2, \, \alpha_1-\alpha_2, \, -\alpha_2, \, \alpha_2, \, 3\alpha_1+\alpha_2, \, 2\alpha_1+\alpha_2
\]
and with Borel subgroup given by the intersection of $G$ with the upper triangular matrices in $\GL_7$.

%%%%%%%%%%%%%%%%%%%%%%%%%%%%%%%%%%%%%%%%%%%%%%%%%%%%%%%%%%%%%%%%%%%%%%%%%%%%%%%%%%%%%%%%%%%%%%%%%%%%%%%%%%%%%%%%%%%%5

\subsection{Root subgroups}

Let us move on to the explicit computation of some of the root subgroups in type $G_2$. As before, we will do everything considering the action on a $7$-dimensional vector space - the orthogonal of the identity element of $\mathbb{O}$ - so that the computations hold in any characteristic, then at the end we will summarize what we get in characteristic $2$.\\

Let us consider the group $G$ acting on the vector space $V$ arranged as in (\ref{V_roots}). Denoting as $x_0,\ldots,x_6$ the coordinates on $V$, the norm becomes 
\begin{align}
\label{norm:q}
q(x) = -x^2_3 -x_2x_4-x_1x_5+x_0x_6,
\end{align}
while the maximal torus $T$ given in (\ref{T_typeG}) acts on $V$ through this table of characters
%\begin{center}
\begin{align}
\label{table:G2}
\begin{tabular}{ |c|c|c|c|c|c|c|}
\hline
$1$ & $a^2b$ & $a^{-1}b^{-1}$ & $a$ & $a^3b$ & $b^{-1}$ & $a^2$ \\
\hline
$a^{-2}b^{-1}$ & $1$ & $a^{-3}b^{-2}$ & $a^{-1}b^{-1}$ & $a$ & $a^{-2}b^{-2}$ & $b^{-1}$ \\
\hline
$ab$ & $a^3b^2$ & $1$ & $a^2b$ & $a^4b^2$ & $a$ & $a^3b$ \\
\hline
$a^{-1}$ & $ab$ & $a^{-2}b^{-1}$ & $1$ & $a^2b$ & $a^{-1}b^{-1}$ & $a$ \\
\hline
$a^{-3}b^{-1}$ & $a^{-1}$ & $a^{-4}b^{-2}$ & $a^{-2}b^{-1}$ & $1$ & $a^{-3}b^{-2}$ & $a^{-1}b^{-1}$ \\
\hline
$b$ & $a^2b^2$ & $a^{-1}$ & $ab$ & $a^3b^2$ & $1$ & $a^2b$ \\
\hline
$a^{-2}$ & $b$ & $a^{-3}b^{-1}$ & $a^{-1}$ & $ab$ & $a^{-2}b^{-1}$ & $1$ \\
\hline
\end{tabular}
\end{align}
%\end{center}
The idea is the following: we know that - for any root $\gamma \in \Phi$ - the root subgroup $U_\gamma \subset G$ is determined by being the unique subgroup of $\GL(V)$ (resp. $\GL(W)$ in characteristic $2$), which is smooth, unipotent, is acted on by $T$ via the character $\gamma$, and whose elements are automorphisms of octonions. We will impose some of these necessary condition - such as $u_\gamma(\lambda)$ being an isometry for any $\lambda \in \Ga$ - to determine the root homomorphism $u_\gamma \colon \Ga \longrightarrow U_\gamma$.

$\bullet$ First, let us consider the root $\alpha_1$. By (\ref{table:G2}) and the condition for $u_{\alpha_1}$ to be a group homomorphism, there exist some constants $\eta_1,\ldots,\eta_5 \in k$ such that for any $\lambda \in \Ga$, $u_{\alpha_1}(\lambda)$ acts on $V$ as
\[
\begin{pmatrix}
    1 & 0 & 0 & \eta_1\lambda & 0 & 0 & \eta_5 \lambda^2\\
    0 & 1 & 0 & 0 & \eta_2\lambda & 0 & 0\\
    0 & 0 & 1 & 0 & 0 & \eta_3\lambda & 0\\
    0 & 0 & 0 & 1 & 0 & 0 & \eta_4\lambda\\
    0 & 0 & 0 & 0 & 1 & 0 & 0\\
    0 & 0 & 0 & 0 & 0 & 1 & 0\\
    0 & 0 & 0 & 0 & 0 & 0 & 1
\end{pmatrix}.
\]
Moreover, $u_{\alpha_1}(\lambda)$ being an isometry means, by (\ref{norm:q}), that
\begin{align*}
    q(x) & = q(u_{\alpha_1}(\lambda) \cdot x) = q(x_0+\eta_1\lambda x_3 + \eta_5\lambda^2 x_6, x_1+\eta_2\lambda x_4, x_2+\eta_3\lambda x_5, x_3+\eta_4\lambda x_6, x_4, x_5, x_6)\\
    & = q(x) + (-2\eta_4+\eta_1)\lambda x_3 x_6 + (\eta_5-\eta_4^2)\lambda^2 x_6^2 - (\eta_3+\eta_2)\lambda x_4x_5,
\end{align*}
hence $\eta_1 = 2\eta_4$, $\eta_5= \eta_4^2$ and $\eta_2= -\eta_3$. This still leaves two independent parameters $\eta_3$ and $\eta_4$ instead of one, so let us also impose the condition of $u_{\alpha_1}(\lambda)$ respecting the product $f_{22}e_{21} = -f_{21}$ - see (\ref{table:octonions}) :
\begin{align*}
    (u_{\alpha_1}(\lambda) \cdot f_{22}) (u_{\alpha_1}(\lambda)\cdot e_{21} ) = & \, u_{\alpha_1}(\lambda) \cdot (-f_{21})\\
    (\eta_3\lambda(e_{11}-e_{22})+f_{22}) (-\eta_3\lambda f_{11} +e_{21}) = & -\eta_4^2\lambda^2 f_{12}-\eta_4 \lambda(e_{11}-e_{22})-f_{21}\\
    -\eta_3^2\lambda^2 f_{12} -\eta_3 \lambda (e_{11}-e_{22}) -f_{21} = & -\eta_4^2\lambda^2 f_{12}-\eta_4 \lambda(e_{11}-e_{22})-f_{21},
\end{align*}
implying $\eta_3= \eta_4$. Let us reparametrise the root homomorphism such that $\eta_3=1$: this, together with an analogous computation for $-\alpha_1$, gives the desired representations, of the form
\[
u_{\alpha_1} \colon \lambda \mapsto
\begin{pmatrix}
    1 & 0 & 0 & 2\lambda & 0 & 0 & \lambda^2\\
    0 & 1 & 0 & 0 & -\lambda & 0 & 0\\
    0 & 0 & 1 & 0 & 0 & \lambda & 0\\
    0 & 0 & 0 & 1 & 0 & 0 & \lambda\\
    0 & 0 & 0 & 0 & 1 & 0 & 0\\
    0 & 0 & 0 & 0 & 0 & 1 & 0\\
    0 & 0 & 0 & 0 & 0 & 0 & 1
\end{pmatrix}, \quad 
u_{-\alpha_1} \colon \lambda \mapsto
\begin{pmatrix}
    1 & 0 & 0 & 0 & 0 & 0 & 0\\
    0 & 1 & 0 & 0 & 0 & 0 & 0\\
    0 & 0 & 1 & 0 & 0 & 0  & 0\\
    \lambda & 0 & 0 & 1 & 0 & 0 & 0\\
    0 & \lambda & 0 & 0 & 1 & 0 & 0\\
    0 & 0 & -\lambda & 0 & 0 & 1 & 0\\
    \lambda^2 & 0 & 0 & 2\lambda & 0 & 0 & 1
\end{pmatrix}.
\]

$\bullet$ Let us consider the root $\alpha_2$.  By (\ref{table:G2}) and the condition for $u_{\alpha_2}$ to be a group homomorphism, there exist some constants $\eta_1$ and $\eta_2 \in k$ such that for any $\lambda \in \Ga$, $u_{\alpha_2}(\lambda)$ acts on $V$ as
\[
u_{\alpha_2}(\lambda) \cdot x = (x_0, x_1, x_2, x_3, x_4, \eta_1\lambda x_0 x_5, \eta_2\lambda x_1+x_6).
\]
Moreover, the isometry condition means that
\begin{align*}
    q(x) & = q(u_{\alpha_2}(\lambda) \cdot x) = -x_3^2-x_2x_4 -\eta_1\lambda x_0x_1 -x_1x_5 +\eta_2\lambda x_0x_1 + x_0x_6\\ 
    & = q(x) +(\eta_2-\eta_1) \lambda x_0x_1,
\end{align*}
hence $\eta_1= \eta_2$. As before, we can conclude that the associated root subgroups are of the form 
\[
u_{\alpha_2} \colon \lambda \mapsto
\begin{pmatrix}
    1 & 0 & 0 & 0 & 0 & 0 & 0\\
    0 & 1 & 0 & 0 & 0 & 0 & 0\\
    0 & 0 & 1 & 0 & 0 & 0 & 0\\
    0 & 0 & 0 & 1 & 0 & 0 & 0\\
    0 & 0 & 0 & 0 & 1 & 0 & 0\\
    \lambda & 0 & 0 & 0 & 0 & 1 & 0\\
    0 & \lambda & 0 & 0 & 0 & 0 & 1
\end{pmatrix}, \quad 
u_{-\alpha_2} \colon \lambda \mapsto
\begin{pmatrix}
    1 & 0 & 0 & 0 & 0 & \lambda & 0\\
    0 & 1 & 0 & 0 & 0 & 0 & \lambda\\
    0 & 0 & 1 & 0 & 0 & 0  & 0\\
    0 & 0 & 0 & 1 & 0 & 0 & 0\\
    0 & 0 & 0 & 0 & 1 & 0 & 0\\
    0 & 0 & 0 & 0 & 0 & 1 & 0\\
    0 & 0 & 0 & 0 & 0 & 0 & 1
\end{pmatrix}.
\]

$\bullet$ Let us consider the root $2\alpha_1+\alpha_2$.  By (\ref{table:G2}) and the condition for $u_{2\alpha_1+\alpha_2}$ to be a group homomorphism, there exist some constants $\eta_1, \ldots, \eta_5 \in k$ such that for any $\lambda \in \Ga$, $u_{2\alpha_1+\alpha_2}(\lambda)$ acts on $V$ as
\[
\begin{pmatrix}
    1 & \eta_1\lambda & 0 & 0 & 0 & 0 & 0\\
    0 & 1 & 0 & 0 & 0 & 0 & 0\\
    0 & 0 & 1 & \eta_2\lambda & \eta_5\lambda^2 & 0 & 0\\
    0 & 0 & 0 & 1 & \eta_3\lambda & 0 & 0\\
    0 & 0 & 0 & 0 & 1 & 0 & 0\\
    0 & 0 & 0 & 0 & 0 & 1 & \eta_4\lambda\\
    0 & 0 & 0 & 0 & 0 & 0 & 1
\end{pmatrix}.
\]
Moreover, the isometry condition implies
\begin{align*}
    q(u_{2\alpha_1+\alpha_2}(\lambda)& \cdot x) = q(x_0+\eta_1\lambda x_1, x_1, x_2+\eta_2 \lambda x_3 + \eta_5 \lambda^2 x_4, x_3 + \eta_3 \lambda x_4, x_4, x_5+\eta_4 \lambda x_6, x_6)\\
    & = q(x) + (\eta_1-\eta_4)\lambda x_1 x_6 - (\eta_3^2+\eta_5)\lambda^2 x_4^2 - (2\eta_3-\eta_2)\lambda x_3x_4 = q(x),
\end{align*}
hence $\eta_1 = \eta_4$, $\eta_5 = -\eta_3^2$ and $\eta_2 = 2\eta_3$. This still leaves two independent parameters $\eta_3$ and $\eta_4$ instead of one, so let us also impose the condition of $u_{2\alpha_1+\alpha_2}(\lambda)$ respecting the product $f_{22}e_{21} = -f_{21}$ :
\begin{align*}
    (u_{2\alpha_1+\alpha_2}(\lambda) \cdot f_{22}) (u_{2\alpha_1+\alpha_2}(\lambda)\cdot e_{21} ) = & \, u_{2\alpha_1+\alpha_2}(\lambda) \cdot (-f_{21})\\
    f_{22} (-\eta_3^2 \lambda^2 e_{12} + \eta_3 \lambda (e_{11}-e_{22})+e_{21}) = & -\eta_4\lambda f_{22} +f_{21}\\
    \eta_3\lambda f_{22} -f_{21} = & -\eta_4\lambda f_{22} -f_{21},
\end{align*}
implying $\eta_3= - \eta_4$, so we can conclude that the associated root subgroups are of the form 
\begin{align*}
   % \label{U_beta1}
u_{2\alpha_1+\alpha_2} \colon \lambda \mapsto
\begin{pmatrix}
    1 & \lambda & 0 & 0 & 0 & 0 & 0\\
    0 & 1 & 0 & 0 & 0 & 0 & 0\\
    0 & 0 & 1 & 2\lambda & -\lambda^2 & 0 & 0\\
    0 & 0 & 0 & 1 & -\lambda & 0 & 0\\
    0 & 0 & 0 & 0 & 1 & 0 & 0\\
    0 & 0 & 0 & 0 & 0 & 1 & \lambda\\
    0 & 0 & 0 & 0 & 0 & 0 & 1
\end{pmatrix}, \, 
u_{-2\alpha_1-\alpha_2} \colon \lambda \mapsto
\begin{pmatrix}
    1 & 0 & 0 & 0 & 0 & 0 & 0\\
    \lambda & 1 & 0 & 0 & 0 & 0 & 0\\
    0 & 0 & 1 & 0 & 0 & 0  & 0\\
    0 & 0 & -\lambda & 1 & 0 & 0 & 0\\
    0 & 0 & -\lambda^2 & 2\lambda & 1 & 0 & 0\\
    0 & 0 & 0 & 0 & 0 & 1 & 0\\
    0 & 0 & 0 & 0 & 0 & \lambda & 1
\end{pmatrix}.
\end{align*}

$\bullet$ Let us consider the root $\alpha_1+\alpha_2$.  By (\ref{table:G2}) and the condition for $u_{\alpha_1+\alpha_2}$ to be a group homomorphism, there exist some constants $\eta_1, \ldots, \eta_5 \in k$ such that for any $\lambda \in \Ga$, $u_{\alpha_1+\alpha_2}(\lambda)$ acts on $V$ as
\[
\begin{pmatrix}
    1 & 0 & 0 & 0 & 0 & 0 & 0\\
    0 & 1 & 0 & 0 & 0 & 0 & 0\\
    \eta_1\lambda & 0 & 1 & 0 & 0 & 0  & 0\\
    0 & \eta_2\lambda & 0 & 1 & 0 & 0 & 0\\
    0 & 0 & 0 & 0 & 1 & 0 & 0\\
    0 & \eta_5\lambda^2 & 0 & \eta_3\lambda & 0 & 1 & 0\\
    0 & 0 & 0 & 0 & \eta_4\lambda & 0 & 1
\end{pmatrix}.
\]
Moreover, the isometry condition implies
\begin{align*}
    q(x) = & q(u_{\alpha_1+\alpha_2}(\lambda) \cdot x )= q(x_0,x_1,\eta_1\lambda x_0+x_2, \eta_2\lambda x_1+x_3, x_4, \eta_5\lambda^2 x_1 + \eta_3\lambda x_3 + x_5, \eta_4\lambda x_4+x_6)\\
    = & q(x) - (2\eta_2+\eta_3)\lambda x_1x_3 + (\eta_4-\eta_1)\lambda x_0x_4 - (\eta_2^2+\eta_5)\lambda^2 x_1^2,
\end{align*}
hence $\eta_3= -2\eta_2$, $\eta_1= \eta_4$ and $\eta_5 = -\eta_2^2$. Reasoning as in the above cases, let us also impose the condition of $u_{\alpha_1+\alpha_2}(\lambda)$ respecting the product $f_{11}f_{21} = -e_{21}$ :
\begin{align*}
    (u_{\alpha_1+\alpha_2}(\lambda) \cdot f_{11}) (u_{\alpha_1+\alpha_2}(\lambda) \cdot f_{21}) & =  u_{\alpha_1+\alpha_2}(\lambda) \cdot (-e_{21})\\
    (f_{11} +\eta_2\lambda(e_{11}-e_{22})-\eta_2^2\lambda^2f_{22}) f_{21} & = -e_{21} -\eta_4\lambda f_{21}\\
    -e_{21} +\eta_2\lambda f_{21} = -e_{21} -\eta_4\lambda f_{21},
\end{align*}
implying $\eta_2 = -\eta_4$. Reparametrizing and doing an analogous computation for the negative root allows to conclude that the root subgroups are as follows :
\[
u_{\alpha_1+\alpha_2} \colon \lambda \mapsto
\begin{pmatrix}
    1 & 0 & 0 & 0 & 0 & 0 & 0\\
    0 & 1 & 0 & 0 & 0 & 0 & 0\\
    \lambda & 0 & 1 & 0 & 0 & 0 & 0\\
    0 & -\lambda & 0 & 1 & 0 & 0 & 0\\
    0 & 0 & 0 & 0 & 1 & 0 & 0\\
    0 & -\lambda^2 & 0 & 2\lambda & 0 & 1 & 0\\
    0 & 0 & 0 & 0 & \lambda & 0 & 1
\end{pmatrix}, \quad 
u_{-\alpha_1-\alpha_2} \colon \lambda \mapsto
\begin{pmatrix}
    1 & 0 & \lambda & 0 & 0 & 0 & 0\\
    0 & 1 & 0 & 2\lambda & 0 & -\lambda^2 & 0\\
    0 & 0 & 1 & 0 & 0 & 0  & 0\\
    0 & 0 & 0 & 1 & 0 & -\lambda & 0\\
    0 & 0 & 0 & 0 & 1 & 0 & \lambda\\
    0 & 0 & 0 & 0 & 0 & 1 & 0\\
    0 & 0 & 0 & 0 & 0 & 0 & 1
\end{pmatrix}.
\]

$\bullet$ As last computation, let us consider the root $-3\alpha_1-2\alpha_2$. By (\ref{table:G2}) and the condition for $u_{-3\alpha_1-2\alpha_2}$ to be a group homomorphism, there exist some constants $\eta_1$ and $\eta_2 \in k$ such that for any $\lambda \in \Ga$, $u_{-3\alpha_1-2\alpha_2}(\lambda)$ acts on $V$ as
\[
u_{-3\alpha_1-2\alpha_2}(\lambda) \cdot x= (x_0,x_1+\eta_1\lambda x_2,x_2,x_3,x_4+\eta_2\lambda x_5, x_5, x_6).
\]
The isometry condition implies
\begin{align*}
    q(x) & =  q(u_{-3\alpha_1-2\alpha_2}(\lambda) \cdot x) = -x_3^2-x_2x_4-\eta_2\lambda x_2 x_5 -x_1x_5 -\eta_1\lambda x_2x_5 +x_0x_6\\
     & = q(x) + (\eta_2+\eta_1)\lambda x_2x_5,
\end{align*}
hence $\eta_2= -\eta_1$ and we can conclude that the root subgroups have the following form :
\begin{align}
\label{U_beta2}
u_{-3\alpha_1-2\alpha_2} \colon \lambda \mapsto
\begin{pmatrix}
    1 & 0 & 0 & 0 & 0 & 0 & 0\\
    0 & 1 & \lambda & 0 & 0 & 0 & 0\\
    0 & 0 & 1 & 0 & 0 & 0 & 0\\
    0 & 0 & 0 & 1 & 0 & 0 & 0\\
    0 & 0 & 0 & 0 & 1 & -\lambda & 0\\
    0 & 0 & 0 & 0 & 0 & 1 & 0\\
    0 & 0 & 0 & 0 & 0 & 0 & 1
\end{pmatrix}, \quad 
u_{3\alpha_1+2\alpha_2} \colon \lambda \mapsto
\begin{pmatrix}
    1 & 0 & 0 & 0 & 0 & 0 & 0\\
    0 & 1 & 0 & 0 & 0 & 0 & 0\\
    0 & -\lambda & 1 & 0 & 0 & 0  & 0\\
    0 & 0 & 0 & 1 & 0 & 0 & 0\\
    0 & 0 & 0 & 0 & 1 & 0 & 0\\
    0 & 0 & 0 & 0 & \lambda & 1 & 0\\
    0 & 0 & 0 & 0 & 0 & 0 & 1
\end{pmatrix}.
\end{align} 

%\vfill
%\pagebreak

\begin{remark}
\label{rootsubspaces:G2}
Let us recall that in characteristic $2$ the group $G$ acts on $W=V/ke$, giving an embedding $G \subset \Sp_6$: we list below what the root subspaces we need become in that case.
\begin{align*}
     u_{\alpha_1}(\lambda) & = 
    \begin{pmatrix}
        1 & 0 & 0 & 0 & 0 & \lambda^2\\
        0 & 1 & 0 & \lambda & 0 & 0\\
        0 & 0 & 1 & 0 & \lambda & 0\\
        0 & 0 & 0 & 1 & 0 & 0\\
        0 & 0 & 0 & 0 & 1 & 0\\
        0 & 0 & 0 & 0 & 0 & 1
    \end{pmatrix}, 
    & u_{-\alpha_1}(\lambda) = 
    \begin{pmatrix}
        1 & 0 & 0 & 0 & 0 & 0\\
        0 & 1 & 0 & 0 & 0 & 0\\
        0 & 0 & 1 & 0 & 0 & 0\\
        0 & \lambda & 0 & 1 & 0 & 0\\
        0 & 0 & \lambda & 0 & 1 & 0\\
        \lambda^2 & 0 & 0 & 0 & 0 & 1
    \end{pmatrix}\\
     u_{\alpha_2}(\lambda) & =
    \begin{pmatrix}
        1 & 0 & 0 & 0 & \lambda & 0\\
        0 & 1 & 0 & 0 & 0 & \lambda \\
        0 & 0 & 1 & 0 & 0 & 0\\
        0 & 0 & 0 & 1 & 0 & 0\\
        0 & 0 & 0 & 0 & 1 & 0\\
        0 & 0 & 0 & 0 & 0 & 1
    \end{pmatrix}, 
    & u_{-\alpha_2}(\lambda) =
    \begin{pmatrix}
        1 & 0 & 0 & 0 & 0 & 0\\
        0 & 1 & 0 & 0 & 0 & 0\\
        0 & 0 & 1 & 0 & 0 & 0\\
        0 & 0 & 0 & 1 & 0 & 0\\
        \lambda & 0 & 0 & 0 & 1 & 0\\
        0 & \lambda & 0 & 0 & 0 & 1
    \end{pmatrix}\\
    u_{2\alpha_1+\alpha_2}(\lambda) & = 
    \begin{pmatrix}
        1 & \lambda & 0 & 0 & 0 & 0\\
        0 & 1 & 0 & 0 & 0 & 0\\
        0 & 0 & 1 & \lambda^2 & 0 & 0\\
        0 & 0 & 0 & 1 & 0 & 0\\
        0 & 0 & 0 & 0 & 1 & \lambda\\
        0 & 0 & 0 & 0 & 0 & 1
    \end{pmatrix}, 
   & u_{-2\alpha_1-\alpha_2}(\lambda) = 
    \begin{pmatrix}
        1 & 0 & 0 & 0 & 0 & 0\\
        \lambda & 1 & 0 & 0 & 0 & 0\\
        0 & 0 & 1 & 0 & 0 & 0\\
        0 & 0 & \lambda^2 & 1 & 0 & 0\\
        0 & 0 & 0 & 0 & 1 & 0\\
        0 & 0 & 0 & 0 & \lambda & 1
    \end{pmatrix}\\
    %\end{align*}
   % \begin{align*}
    u_{\alpha_1+\alpha_2}(\lambda) & = 
    \begin{pmatrix}
        1 & 0 & 0 & 0 & 0 & 0\\
        0 & 1 & 0 & 0 & 0 & 0\\
        \lambda & 0 & 1 & 0 & 0 & 0\\
        0 & 0 & 0 & 1 & 0 & 0\\
        0 & \lambda^2 & 0 & 0 & 1 & 0\\
        0 & 0 & 0 & \lambda & 0 & 1
    \end{pmatrix}, 
    & u_{-\alpha_1-\alpha_2}(\lambda) = 
    \begin{pmatrix}
        1 & 0 & \lambda & 0 & 0 & 0\\
        0 & 1 & 0 & 0 & \lambda^2 & 0\\
        0 & 0 & 1 & 0 & 0 & 0\\
        0 & 0 & 0 & 1 & 0 & \lambda\\
        0 & 0 & 0 & 0 & 1 & 0\\
        0 & 0 & 0 & 0 & 0 & 1
    \end{pmatrix}\\
    u_{3\alpha_1+2\alpha_2}(\lambda) & = 
    \begin{pmatrix}
        1 & 0 & 0 & 0 & 0 & 0\\
        0 & 1 & 0 & 0 & 0 & 0\\
        0 & \lambda & 1 & 0 & 0 & 0\\
        0 & 0 & 0 & 1 & 0 & 0\\
        0 & 0 & 0 & \lambda & 1 & 0\\
        0 & 0 & 0 & 0 & 0 & 1
    \end{pmatrix}, 
    & u_{-3\alpha_1-2\alpha_2}(\lambda) =
    \begin{pmatrix}
        1 & 0 & 0 & 0 & 0 & 0\\
        0 & 1 & \lambda & 0 & 0 & 0\\
        0 & 0 & 1 & 0 & 0 & 0\\
        0 & 0 & 0 & 1 & \lambda & 0\\
        0 & 0 & 0 & 0 & 1 & 0\\
        0 & 0 & 0 & 0 & 0 & 1
    \end{pmatrix}\\
\end{align*}
\end{remark}

\vfill\pagebreak

\end{document}